\numberwithin{equation}{section}
\newcommand{\R}{\mathbb{R}}
\newcommand{\E}{\mathcal{E}}
\newcommand{\G}{\mathcal{G}}
\newcommand{\T}{\mathbb{T}}
\newcommand{\C}{\mathbb{C}}
\newcommand{\Z}{\mathbb{Z}}
\newcommand{\eps}{\epsilon}
\newcommand{\va}{\vartheta}
\numberwithin{equation}{section} 
\newtheorem{theorem}{Theorem}[section]
\newtheorem{lemma}[theorem]{Lemma}
\newtheorem{proposition}[theorem]{Proposition}
\newtheorem{remark}[theorem]{Remark}
\begin{document}

\title{Inviscid damping near the Couette flow in a channel}

\author{Alexandru D. Ionescu}
\address{Princeton University}
\email{aionescu@math.princeton.edu}

\author{Hao Jia}
\address{University of Minnesota}
\email{jia@umn.edu}

\thanks{The first author was supported in part by NSF grant DMS-1600028 and by NSF-FRG grant DMS-1463753.  The second author was supported in part by DMS-1600779 and he is also
grateful to IAS where part of the work was carried out}

\begin{abstract}
We prove asymptotic stability of the Couette flow for the 2D Euler equations in the domain $\T\times[0,1]$. More precisely we prove that if we start with a small and smooth perturbation (in a suitable Gevrey space) of the Couette flow, then the velocity field converges strongly to a nearby shear flow. 

Our solutions are defined on the compact set $\T\times[0,1]$ (``the channel") and therefore have finite energy. The vorticity perturbation, which is initially assumed to be supported in the interior of the channel, will remain supported in the interior of the channel at all times, will be driven to higher  frequencies by the linear flow, and will converge weakly to another shear flow as $t\to\infty$.

\end{abstract}

\maketitle

\setcounter{tocdepth}{1}

\tableofcontents

\section{Introduction}

In this paper we consider the two dimensional Euler equation
\begin{equation}\label{euler}
\partial_tu+u\cdot\nabla u+\nabla p=0,\qquad {\rm div}\,u=0,
\end{equation}
in $(x,y,t)\in \mathbb{T}\times [0,1]\times[0,T)$, with the boundary condition $u^y|_{y=0,\,1}\equiv 0$. Letting
$$\omega:=\nabla^{\perp}u=-\partial_yu^x+\partial_xu^y$$
as the vorticity, the equation (\ref{euler}) can be written in vorticity form as
\begin{equation}\label{Euler}
  \partial_t\omega+u\cdot\nabla \omega=0,
 \end{equation}
for $(x,y,t)\in \mathbb{T}\times [0,1]\times[0,\infty)$. Here
\begin{equation}\label{uvort}
 u=\nabla^{\perp}\psi=(-\partial_y\psi, \partial_x\psi),
\end{equation}
and the stream function $\psi$ is given by 
\begin{equation}\label{stream}
 \Delta \psi=\omega,\quad{\rm on}\,\,\mathbb{T}\times[0,1],\qquad \psi(x,0)\equiv 0,\,\psi(x,1)\equiv C_0.
\end{equation}
The constant $C_0$ is given by 
$$C_0:=-\frac{1}{2\pi}\int_{\mathbb{T}\times[0,1]}u^x(t,x,y)\,dxdy,$$
 which is constant over time. 
 
The two dimensional Euler equation \eqref{Euler}--\eqref{stream} is globally well-posed for smooth initial data and the solution remains smooth for all times, by the Beale-Kato-Majda criteria. See also Yudovich \cite{Yudovich1, Yudovich2} for wellposedness with initial data having bounded vorticity only.

An important open question for the two dimensional Euler equation is the long time behavior of the vorticity $\omega$. There are a variety of nontrivial behavior. For instance,  there are many steady states such as shear flows  and periodic in time
solutions such as the Kirchhoff elliptic vortices. It is quite possible that quasi-periodic in time solutions may arise from perturbations of periodic in time solutions. Recently solutions whose vorticity gradients grow double exponentially over time have also been constructed in \cite{KiselevSverak}. 

Stability analysis of special solutions to the two dimensional Euler equations is a classical topic in hydrodynamics, studied by prominent figures such as Rayleigh and Kelvin with focus on the linearized spectral stability. The important work of Arnold, see \cite{Arnold}, provides a rare criteria for nonlinear stability. General statements on the dynamics in the non-perturbative regime are harder to obtain. We mention here a qualitative result due to \v{S}ver\'ak (see section 35 in \cite{SverakNotes}) on the general existence of solutions $\omega(t)$ with {\it pre-compact} trajectory. 

Numerical simulations seem to suggest that, in the generic case, the solutions, while exhibiting complicated 
fine-scale behavior, may have certain structure on the unit scale. A proposed mathematical explanation is that the vorticity $\omega(t)$ converges {\it weakly} but not strongly as $t\to\infty$. This would explain the local chaos versus global structure phenomenon. It is an attractive conjecture, but it  seems hard
to rigorously formulate, let alone prove such a conjecture. We refer to \cite{SverakNotes} for some very interesting discussions in this direction.

\subsection{Asymptotic stability} Here we consider a perturbative regime for the Euler equation (\ref{euler}). More precisely, we consider velocity fields of the form $(b(y),0))+u(x,y)$, which are close to the steady solution $(b(y),0)$ of \eqref{euler}. In vorticity form, the system we are considering is{\footnote{Compared to the system \eqref{euler}--\eqref{stream} there is a slight abuse of notation, in the sense that $u$ is replaced by $(b(y),0))+u(x,y)$ and $\omega$ is replaced by $-b'(y)+\omega(x,y)$. The identity $\psi(x,1)=0$ in \eqref{eu2} and \eqref{eu4} can be assumed to hold after modifying $b$ by a linear flow $c_0+c_1y$.}}
\begin{equation}\label{eu1}
\partial_t\omega+b(y)\partial_x\omega+u\cdot \nabla(-b'(y)+w)=0,
\end{equation}
where $b:\mathbb{R}\to\mathbb{R}$ is a smooth function. Here $\omega:\mathbb{T}\times [0,1]\times I\to\R$ is the main dynamical variable (the vorticity deviation). The velocity field $u$ can be recovered from $\omega$ by first solving the Dirichlet problem 
\begin{equation}\label{eu2}
\Delta\psi=\omega,\qquad \psi(x,0)\equiv\psi(x,1)\equiv0,
\end{equation}
to find the associated stream function $\psi$, and then setting
\begin{equation}\label{eu3}
u=(u^x,u^y):=\nabla^\perp\psi=(-\partial_y\psi,\partial_x\psi).
\end{equation}
Without loss of generality (modifying $b(y)$ by a linear flow $c_0+c_1y$ if necessary), we may assume that $u,\omega$ satisfy the normalization
\begin{equation}\label{eu4}
\int_{\mathbb{T}\times[0,1]}u^x(x,y,t)\,dxdy=\int_{\mathbb{T}\times[0,1]}\omega(x,y,t)\,dxdy\equiv 0\qquad\text{ for any }t\in I,
\end{equation}
which is propagated by the flow.

In this paper we will take $b(y):=y$, and the main equation \eqref{eu1} becomes
\begin{equation}\label{Euler1}
\partial_t\omega+y\partial_x\omega+u\cdot\nabla\omega=0.
\end{equation}
The vector-field $u$ and the stream function $\psi$ are obtained as before. The general case of linear flow $b(y)=c_0+c_1y$ with $c_1\neq0$ can be treated with the same method. Our goal is to understand the long time dynamics of (\ref{Euler1}) with small initial data, that is, we will study the asymptotic stability of the shear flow $(b(y),0)$. 

A first step is to understand the linearized equation
$$\partial_t\omega+y\partial_x\omega=0,$$
which was studied by Orr \cite{Orr}.

To simplify the discussion, let us  ignore the boundary effects for the moment. Hence we assume $y\in \R$. One can then solve this equation explicitly and calculate
$$\omega(t,x,y)=\omega_0(x-yt,y).$$
The equation for the stream function becomes
$$\Delta\psi(t,x,y)=\omega(t,x,y)=\omega_0(x-yt,y)$$
for $(x,y)\in \mathbb{T}\times \R$ and hence
\begin{equation}\label{decaycostderivative}
\widetilde{\psi}(t,k,\xi)=-\frac{\widetilde{\omega}(t,k,\xi)}{k^2+|\xi|^2}=-\frac{\widetilde{\omega_0}(k,\xi+kt)}{k^2+|\xi|^2}.
\end{equation}
In the above, $\widetilde{h}$ denotes the Fourier transform of $h$ in $x,\,y$.
Assume that $\omega_0$ is smooth, so $\widetilde{\omega}_0(k,\xi)$ decays fast in $k,\,\xi$. Then we can view $\xi$ as
$$\xi=-kt+O(1),$$ 
and hence $\widetilde{\psi}(t,k,\xi)$ decays like $|k|^{-2}\langle t\rangle^{-2}$ for each $k\neq 0$. Similarly, using the relations $u^x=-\partial_y\psi$ and $u^y=\partial_x\psi,$
we conclude that $\widetilde{u^x}$ decays like $|k|^{-1}\langle t\rangle^{-1}$ and $\widetilde{u^y}$ decays like $|k|^{-1}\langle t\rangle^{-2}$ for all $k\neq0$. Hence, the velocity field decays to another shear flow $(u_{\infty}(y),0)$. 

One can obtain rigorous and precise estimates with more careful work (with boundary), see \cite{ZhiWu}. The linear damping problem for the case of general monotone shear flow (with boundary) has recently been solved by Wei--Zhang--Zhao in \cite{dongyi}, see also earlier work of Zillinger \cite{Zillinger1,Zillinger2},  and a recent new approach by Grenier et al \cite{Grenier} using techniques from the study of Schr\"{o}dinger operators. 
We also refer the reader to important developments for the linear inviscid damping in the case of non-monotone shear flows \cite{Dongyi2, Dongyi3} and circular flows \cite{Bed2,Zillinger3}.

\subsubsection{Nonlinear stability} The natural question to consider is whether this linear damping mechanism persists for the full nonlinear problem \eqref{Euler1}. One immediately meets several key difficulties in extending the linear analysis to the nonlinear problem, such as the fact that $\partial_y\omega$ grows linearly in time for the linear flow, the decay of $u^x$ is slow and not integrable over time, and the decay of the stream function $\psi$ ``costs" derivatives. In a first approximation, these issues, and others, can be seen by analyzing the explicit formulas above (\ref{decaycostderivative}).

In recent remarkable work Bedrossian--Masmoudi \cite{BedMas} introduced several new and important ideas which addressed these difficulties, and successfully extended the inviscid damping from the linear to the nonlinear level. We will give a brief overview of their methods in subsection \ref{subsection:reviewBedMas} below. Our goal in this paper is to improve on this work in two main directions:

\setlength{\leftmargini}{2.8em}
\begin{itemize}
  \item[(1)] We work in a bounded domain, the channel $\T\times[0,1]$, with the natural no penetration boundary conditions $u^y|_{y=0,1}=0$. The main point is to be able to work with {\it{finite energy}} solutions $u$, which was not possible in the case of the unbounded domain $\T\times\R$ analyzed in \cite{BedMas}. As a result, our analysis has to take into account nontrivial boundary effects.
\smallskip
  \item[(2)] We work with small data in the critical Gevrey regularity space $\G^{\beta,s}$ with exponent $s=1/2$. This is a natural choice in view of the recent work of  Deng--Masmoudi \cite{Deng}, who constructed examples of instability for initial perturbations in a slightly less regular space. It is known that inviscid damping fails for $H^s$ perturbations of the Couettte flow with $s<3/2$, see \cite{ZhiWu}.
\end{itemize}

Before stating our main theorem we remark also that the inviscid damping phenomena have been studied in other contexts, most famously in the Landau damping effect for Vlasov-Poisson equations. We refer to the pioneering work of Landau \cite{Landau} on the linear damping and the celebrated breakthrough of Mouhot--Villani \cite{Villani} on the nonlinear damping, for the physical background and more references. We also refer the interested reader to recent results \cite{Bed4}-\cite{Bed7} and \cite{Dongyi4}, where mixing of the vorticity still plays an important role when considering Navier Stokes equations with small viscosity near the Couette flow.

\subsection{The main theorem} To state our main theorem we define the Gevrey spaces $\mathcal{G}^{\lambda,s}\big(\mathbb{T}\times \R\big)$ as the space of $L^2$ functions $f$ on $\mathbb{T}\times \R$ defined by the norm
\begin{equation}\label{Gev}
\|f\|_{\G^{\lambda,s}(\mathbb{T}\times \R)}:=\big\|e^{\lambda \langle k,\xi\rangle^s}\widetilde{f}(k,\xi)\big\|_{L^2_{k\in\mathbb{Z},\xi\in\R}}<\infty.
\end{equation}
In the above, $\widetilde{f}$ denotes the Fourier transform of $f$ in $x,y$; $s\in(0,1]$ and $\lambda>0$. 

More generally, for any interval $I\subseteq\R$ we define the Gevrey spaces $\mathcal{G}^{\lambda,s}\big(\mathbb{T}\times I\big)$ by
\begin{equation}\label{Gev2}
\|f\|_{\G^{\lambda,s}(\mathbb{T}\times I)}:=\|Ef\|_{\G^{\lambda,s}(\mathbb{T}\times \R)},
\end{equation}
where $Ef(x):=f(x)$ if $x\in I$ and $Ef(x):=0$ if $x\notin I$. For any function $H(x,y)$ let $\langle H\rangle(y)$ denote the average of $H$ in $x$. Our main theorem in this paper is the following:

\begin{theorem}\label{maintheoremINTRO}
Assume that $\beta_0,\vartheta_0\in (0,1/8]$. Then there are constants $\beta_1=\beta_1(\beta_0,\vartheta_0)>0$  and $\overline{\eps}=\overline{\eps}(\beta_0,\vartheta_0)>0$ such that the following statement is true:

Assume that the initial data $\omega_0$ has compact support in $\T\times [2\vartheta_0,1-2\vartheta_0]$ and satisfies
\begin{equation}\label{Eur0}
\|\omega_0\|_{\G^{\beta_0,1/2}(\mathbb{T}\times \R)}=\epsilon\leq\overline{\epsilon},\qquad\int_{\T\times[0,1]}\omega_0(x,y)\,dxdy=0.
\end{equation}
 Let $\omega(t)$ be the smooth solution to the system 
 \begin{equation}\label{Eur1}
 \begin{split}
 &\partial_t\omega+y\partial_x\omega+u\cdot\nabla\omega=0,\\
 &u=(u^x,u^y)=(-\partial_y\psi,\partial_x\psi),\qquad \Delta\psi=\omega,\qquad \psi(x,0)=\psi(x,1)=0,
 \end{split}
 \end{equation}
for $(t,x,y)\in [0,\infty)\times\mathbb{T}\times [0,1]$ with initial data $\omega_0$.
Then we have the following conclusions:

(i) For all $t\ge 0$, ${\rm supp}\,\omega(t)\subseteq \mathbb{T}\times[\vartheta_0,1-\vartheta_0]$.

(ii) There exists $F_{\infty}(x,y) \in \G^{\beta_1,1/2}$ with ${\rm supp}\,F_{\infty}\subseteq \mathbb{T}\times [\vartheta_0,1-\vartheta_0]$ such that for all $t\ge 0$,
\begin{equation}\label{convergence}
\left\|\omega(t,x+ty+\Phi(t,y),y)-F_{\infty}(x,y)\right\|_{\G^{\beta_1,1/2}(\mathbb{T}\times[0,1])} \lesssim_{\beta_0,\vartheta_0}\frac{\epsilon}{\langle t\rangle}.
\end{equation}
Here 
\begin{equation}\label{DefPhi}
\Phi(t,y):=\int_0^t\langle u^x\rangle(\tau,y)\,d\tau
\end{equation}
satisfies
\begin{equation}\label{AsymPhi}
|\Phi(t,y)-tu_{\infty}(y)|\lesssim_{\beta_0,\vartheta_0}\epsilon,
\end{equation}
where $u_{\infty}(y):=\lim\limits_{t\to\infty}\langle u^x\rangle(t,y)$ is given explicitly as
\begin{equation}\label{AsymPhi2}
u_{\infty}(y)=-\partial_y(\Delta^{-1}\langle F_{\infty}\rangle)(y).
\end{equation}
In the above $\Delta$ is inverted with zero Dirichlet boundary condition at $y=0$ and $y=1$. 
 
(iii) The velocity field $u=(u^x,u^y)$ satisfies
\begin{equation}\label{convergenceofux}
\left\|\langle u^x\rangle(t,y)-u_{\infty}(y)\right\|_{\G^{\beta_1,1/2}(\mathbb{T}\times [0,1])}\lesssim_{\beta_0,\vartheta_0}\frac{\epsilon}{\langle t\rangle^2},
\end{equation}

\begin{equation}\label{convergencetomean}
\left\|u^x(t,x,y)-\langle u^x\rangle(t,y)\right\|_{L^{\infty}(\mathbb{T}\times [0,1])}\lesssim_{\beta_0,\vartheta_0}\frac{\epsilon}{\langle t\rangle},
\end{equation}

\begin{equation}\label{convergenceuy}
\left\|u^y(t,x,y)\right\|_{L^{\infty}(\mathbb{T}\times [0,1])}\lesssim_{\beta_0,\vartheta_0}\frac{\epsilon}{\langle t\rangle^2}.
\end{equation}
\end{theorem}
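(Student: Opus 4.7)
The plan is to adapt the Bedrossian--Masmoudi scheme \cite{BedMas} to the channel geometry and to the critical Gevrey exponent $s=1/2$. I would first pass to the modified Lagrangian variables $z:=x-ty-\Phi(t,y)$ and $v:=y$, with $\Phi$ as in (\ref{DefPhi}) so that $\partial_t\Phi=\langle u^x\rangle$, and study the profile $f(t,z,v):=\omega(t,z+tv+\Phi(t,v),v)$. A direct computation gives
\[
\partial_t f+U^v\,\partial_v f+\big(U^z-\langle U^z\rangle-(t+\partial_v\Phi)U^v\big)\,\partial_z f=0,
\]
where $(U^z,U^v)$ is the pullback of $(u^x,u^y)$ to the new coordinates. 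The fast linear mode $y\partial_x$ has been absorbed into the coordinate change, at the price that the stream function must still be recovered from $f$ by solving $\Delta\psi=\omega$ in the fixed domain $\T\times[0,1]$ and pulling back, which is the source of the main nonlocal difficulties.

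The core of the argument is a bootstrap of the weighted energy
\[
E(t):=\sum_{k\in\Z}\int_{\R}\big|A_k(t,\xi)\widetilde{f}(t,k,\xi)\big|^2\,d\xi,
\]
where $A_k(t,\xi)=e^{\lambda(t)\langle k,\xi\rangle^{1/2}}\J_k(t,\xi)\,w_k(t,\xi)$ is a carefully tuned Gevrey multiplier. The radius $\lambda(t)$ decreases slowly from $\beta_0$ down to $\beta_1$ to absorb a small commutator loss; $\J_k$ is a \emph{ghost weight} concentrated near the critical resonances $\xi\sim -kt$, designed so that $-\dot{\J}_k/\J_k$ produces a large positive dissipation term precisely where the reaction nonlinearity is dangerous; and $w_k$ is a commutator weight adapted to inverting the Laplacian in the channel. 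The target energy inequality has the schematic form
\[
\frac{d}{dt}E(t)+\mathcal D(t)\lesssim \eps\,\mathcal D(t)+\text{(lower order)},
\]
with $\mathcal D(t)$ collecting the positive contributions from $\dot\lambda\langle k,\xi\rangle^{1/2}$ and $\dot{\J}_k/\J_k$. The reaction and transport nonlinearities would be estimated by a paraproduct decomposition in $(z,v)$, exploiting the sharp triangle inequality $\langle k,\xi\rangle^{1/2}\le \langle\ell,\eta\rangle^{1/2}+\langle k-\ell,\xi-\eta\rangle^{1/2}$ characteristic of $s=1/2$.

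A separate effort is required to handle the boundary. I would prove the support statement (i) first, by integrating the Lagrangian ODE for $\omega$: using the decay of $u^y$ together with Gevrey smallness, fluid particles move vertically by at most $O(\eps)$ for all time, which is smaller than $\vartheta_0$. This ensures that $\omega(t)$ stays compactly supported away from $\{y=0,1\}$ and justifies treating (\ref{eu2}) as essentially a whole-line problem up to exponentially small corrections from the channel Green's function; these corrections enter the energy inequality multiplied by $e^{-c|\xi|}$ from the off-diagonal decay of the Dirichlet Green's function on a strip, and are therefore absorbable into $\mathcal D(t)$. The commutator of the inverse Laplacian with the coordinate change $(x,y)\mapsto(z,v)$ is mild because $\Phi$ depends only on $v$.

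Once the bootstrap closes on $[0,\infty)$, the limit $F_\infty$ arises as the strong $\G^{\beta_1,1/2}$ limit of $f(t)$, and the rate in (\ref{convergence}) follows from the integrated bound on $\partial_t f$. The velocity estimates (\ref{convergenceofux})--(\ref{convergenceuy}) follow from the Fourier-side identity $\widetilde{\psi}(t,k,\xi)\sim -(k^2+(\xi-kt)^2)^{-1}\widetilde{f}(t,k,\xi+kt)$, which yields the Orr linear decay rates for each nonzero $k$ once $f$ is uniformly bounded in the Gevrey norm; the zero mode $\langle u^x\rangle$ stabilizes because $\partial_t\langle u^x\rangle=-\partial_y\langle u^y\omega\rangle$ is quadratic in the perturbation and therefore integrable in time. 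The principal obstacle will be the construction of $\J_k$ and $w_k$: at the critical exponent $s=1/2$, the Gevrey weight gain from $\J_k$ and the loss from the Laplacian inversion through the plasma-echo chain of resonances are exactly of the same order (this threshold being sharp by \cite{Deng}), so no slack is available, in contrast to the subcritical analysis of \cite{BedMas}, and the weight must be matched almost exactly to the echo resonance pattern.
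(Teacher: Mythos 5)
Your overall architecture (adapted coordinates, weighted Gevrey bootstrap, ghost weights concentrated on resonances, paraproduct decomposition, and treating the boundary via the support property of $\omega$) is the right framework and matches the paper's strategy. But there is a genuine gap in the choice of the $v$-coordinate, and a serious over-simplification in the boundary estimate.

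\textbf{The $v$-coordinate.} You keep $v=y$ and only shift $z$, whereas the paper uses $z=x-tv$, $v=y+\frac{1}{t}\int_0^t\langle u^x\rangle(\tau,y)\,d\tau$; the two constructions agree in $z$ but differ crucially in $v$. With $v=y$, the pulled-back Laplacian becomes
$\partial_z^2\phi+\bigl(\partial_v-(t+\partial_v\Phi)\partial_z\bigr)^2\phi=f$,
and since $\partial_v\Phi=t(V'-1)=O(\eps t)$, the perturbation from the constant-coefficient operator $\partial_z^2+(\partial_v-t\partial_z)^2$ contains, after expanding, the terms $-t(\partial_v^2\Phi)\,\partial_z\phi$, $-2(\partial_v\Phi)\partial_z(\partial_v-t\partial_z)\phi$, and $(\partial_v\Phi)^2\partial_z^2\phi$. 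On the Fourier side near the resonance $\xi\approx kt$ (with $|k|\lesssim\delta^2\sqrt{|\xi|}$), these carry prefactors $\eps t|k|\sim\eps|\xi|$ and $\eps^2 t^2k^2\sim\eps^2\xi^2$, while the leading multiplier is only $k^2$. The ratio $\eps t/|k|$ diverges in $t$; the bootstrap bound $|\widehat\Theta|\lesssim\eps$ would then not make the elliptic error small. In the paper's coordinate the same perturbation is $(1-(V')^2)(\partial_v-t\partial_z)^2\phi+V''(\partial_v-t\partial_z)\phi$, whose multipliers $(\xi-kt)^2$ and $(\xi-kt)$ \emph{vanish} at resonance. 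This cancellation is exactly what the nonlinear $v$-coordinate buys; with $v=y$ the resonance location itself is shifted to $\xi\approx k(t+\partial_v\Phi(v))$, which depends on $v$ and therefore cannot be matched by a pure Fourier multiplier weight $A_k(t,\xi)$. You would either have to rebuild the weights to track a $v$-dependent resonance (which defeats the Fourier-multiplier structure of the whole argument) or absorb a relative error of size $\eps t$ near resonance, which does not close.

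\textbf{The boundary.} You claim the Dirichlet corrections enter ``multiplied by $e^{-c|\xi|}$'' and can be absorbed into the dissipation. This is not true in the profile coordinates. The boundary contributions (the terms $g_2=2\partial_v\Psi\,(\partial_v-t\partial_z)\phi$, $g_3=\partial_v^2\Psi\,\phi$ in the paper's decomposition of $\Theta$) are of the form $\sum_k e^{ik(z+tv)}a_k^\iota(t)\sinh(k(v-c_0-\iota))/k$ near the edges; their Fourier support in $v$ sits at $\xi\approx kt$, i.e.\ precisely at resonance, not at small $\xi$. The Gevrey weight at these frequencies is $e^{\lambda\langle k,tk\rangle^{1/2}}$, which grows like $e^{c\sqrt{|k|t}}$, and this growth is compensated only by the rapid decay of the traces $a_k^\iota(t)$, which inherit $e^{-c\vartheta_0|k|}e^{-c'\sqrt{|kt|}}$ from the Gevrey bound on $f$ at $\xi\approx kt$ via the Green's function. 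As the paper emphasizes, this compensation leaves \emph{no room to spare}; the estimate is not a superficial exponential tail but a delicate balance that also feeds back a factor $e^{+c\vartheta_0|k|}$ from the support of $\partial_v\Psi$. A direct whole-line approximation would miss this.

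\textbf{The $s=1/2$ weights.} You correctly identify that the weight must be matched almost exactly to the echo pattern, but you do not address how. The paper's specific new ideas here are the introduction of the intermediate scale $\delta$ with $\eps\ll\delta\ll 1$ (so the Cauchy--Kowalevski term is already active for $t\ll\delta^{-3/2}\sqrt{|\xi|}$), and, crucially, the mollification of the weights over $\xi$-intervals of time-dependent length $L_\kappa(t,\xi)$ to restore enough smoothness in $\xi$ for the symmetrization in the transport terms. Without this mollification the weight $w_k$ is discontinuous in $\xi$ (because $k_0(\xi)$ jumps), and the symmetrized transport commutator does not improve. Your proposal of a generic ghost weight $\J_k$ does not address this loss.

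Finally, deriving the support statement (i) before any bootstrap, as you propose, is circular: the quadratic decay of $u^y$ in \eqref{convergenceuy} is itself a consequence of the bootstrap. The paper handles this by a continuity argument in which the support containment is part of the bootstrap hypothesis; this is not difficult but needs to be said.
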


\begin{remark}\label{MainRemarks} (1) We will show in (\ref{rea8}) that for some $c_0\in\R$,
 $$\langle u^x\rangle(t,y)\equiv c_0\qquad{\rm for}\qquad y\in [0,1]\backslash[\vartheta_0,1-\vartheta_0]\quad{\rm and}\quad t\ge0.$$
In particular, $\langle u^x\rangle(t,y)-u_{\infty}(y)$ is compactly supported in $[\vartheta_0,1-\vartheta_0]$.

(2) The assumption that the support of $\omega_0$ is separated from the boundaries $y=0$ and $y=1$ is important. Remarkably, this property persists through the flow for all times, due to the quadratic time decay of the component $u^y$ in \eqref{convergenceuy}. We note that some vanishing assumption on $\omega_0$ at the boundary is likely needed, as suggested by the work \cite{Zillinger1, Zillinger2}, where it was shown that even in the linear case, ``scattering of vorticity" does not hold up to the boundary in high Sobolev spaces if the vorticity does not vanish at the boundary.

(3) The integral normalization condition in (\ref{Eur0}) can be dropped by considering more general linear flows $b(y)=C_0+C_1y$ with $C_1\neq 0$ in equation (\ref{eu1}), without significant changes in the proof.
\end{remark}

\subsection{Main ideas}\label{subsection:reviewBedMas} We describe now some of the main ideas involved in the proof.

\subsubsection{Nonlinear adapted coordinates} This is an important construction introduced by Be\-dro\-ssian-Mas\-mou\-di in  \cite{BedMas}. The idea is to work in a new system of coordinates $z,v$ which are defined as
\begin{equation}\label{changeofcoordinateIntro}
z=x-tv\,\qquad {\rm and}\qquad v=y+\frac{1}{t}\int_0^t\langle u^x\rangle(\tau,y)\,d\tau,
\end{equation}
where $\langle h\rangle$ denotes the average over $x$ of $\langle h\rangle$ for any $h$. The change of variable (\ref{changeofcoordinateIntro}) is a nonlinear refinement of the linear change of coordinates $z=x-ty$, and we refer the reader to Section \ref{sec:variables} for motivation and the detailed calculation. We remark that the nonlinear choice of $v$ in \eqref{changeofcoordinateIntro} is optimal to minimize the transport in $x$ as much as possible. 

The change of variables (\ref{changeofcoordinateIntro}) automatically ``adapts" to the asymptotic profile $u_{\infty}(y)$, which has to be determined by the nonlinear flow, as $t\to\infty$. Given  (\ref{changeofcoordinateIntro}), define 
$$f(t,z,v)=\omega(t,x,y),\,\,\,\phi(t,z,v)=\psi(t,x,y)$$
and 
$$V'(t,v)=\partial_yv(t,y),\,\,\,\,V''(t,v)=\partial_y^2v(t,y),\,\,\,\dot{V}(t,v)=\partial_tv(t,y).$$
Then direct calculations show that $f$ and $\phi$ satisfy the equations
\begin{equation}\label{eq:fIntroFull}
\partial_tf-V'\partial_vP_{\neq0}\phi\,\partial_zf+\dot{V}\,\partial_vf+V'\partial_z\phi\,\partial_vf=0
\end{equation}
and 
\begin{equation}\label{eq:phiIntroFull}
\partial_z^2\phi+(V')^2(\partial_v-t\partial_z)^2\phi+V''(\partial_v-t\partial_z)\phi=f.
\end{equation}
In the above, $P_{\neq0}$ is the projection off the zero mode, i.e., for any function $h(t,z,v)$, 
$$P_{\neq0}h(t,z,v)=h(t,z,v)-\langle h\rangle(t,v).$$
We refer again to Section \ref{sec:variables} for the detailed calculations. 

By the definition of $V',\,V''$, as we are in a perturbative regime, we can expect that
$V'\approx 1$
and that $V''$ is small. The analysis of the functions $V'$, $V''$ and $\dot{V}$ requires significant ideas; however, we ignore these issues for now and make the significant simplifying assumptions
$$V'\equiv 1,\qquad V''\equiv 0, \qquad{\rm and}\qquad \dot{V}\equiv0.$$
 With this (significant) simplification, the equations for $f,\,\phi$ become
\begin{equation}\label{fIntro}
\partial_tf-\partial_vP_{\neq0}\phi\,\partial_zf+\partial_z\phi\,\partial_vf=0
\end{equation}
and
\begin{equation}\label{phiIntro}
\partial_z^2\phi+(\partial_v-t\partial_z)^2\phi=f
\end{equation}
for $(z,v,t)\in \mathbb{T}\times \R\times[0,\infty)$. 
The main task is to show that $f$ remains uniformly regular over all times, from which the inviscid damping of $\omega$ follows relatively easily. 

\subsubsection{Time-dependent imbalanced weights} In Fourier variables, (\ref{phiIntro}) gives
$$\widetilde{\phi}(t,k,\xi)=-\frac{\widetilde{f}(t,k,\xi)}{k^2+(\xi-kt)^2}.$$
Hence for  fixed $k\neq 0,\,\xi\in \R$, $\widetilde{\phi}(t,k,\xi)$, $\mathcal{F}\big(\partial_vP_{\neq0}\phi\big)(t,k,\xi)$ and $\mathcal{F}\big(\partial_z\phi\big)(t,k,\xi)$ have decay rate $\langle t\rangle^{-2}$ as $t\to\infty$. Here $\mathcal{F}(h)(k,\xi)$ denotes the Fourier transform of $h$ in $(z,v)$. Equation (\ref{fIntro}) is thus a transport equation for $f$ with velocity fields having integrable decay rate, and we can expect $f(t)$ to stabilize over time. The proof of stability is however rather complicated due to the ``loss of derivatives" when one tries to take advantage of the decay. More precisely, note that
\begin{equation}\label{partialphi1}
\mathcal{F}\big(\partial_vP_{\neq0}\phi\big)(t,k,\xi)=-\frac{i\xi}{k^2}\frac{\widetilde{f}(t,k,\xi)}{1+|t-\xi/k|^2}\mathbf{1}_{k\neq0}.
\end{equation}
When $|\xi|\gg k^2$, the factor $\xi/k^2$ in (\ref{partialphi1}) indicates a loss of one full derivative in $v$, which is the main difficulty in proving stability.

An important idea, going back to the classical work of Cauchy-Kolwalevski, is to use time dependent norms to control quantities that lose regularity over time. Following \cite{BedMas}, we shall apply this idea and use a carefully designed time-dependent energy functional to control $f$ in (\ref{fIntro}). Define the energy functional
$$\mathcal{E}(t):=\frac{1}{2}\sum_{k\in\mathbb{Z}}\int_{\mathbb{R}}A_k^2(t,\xi)\big|\widetilde{f}(t,k,\xi)\big|^2\,d\xi.$$

The goal is to find a suitable $A_k(t,\xi)$ which is decreasing in $t$, so that the associated energy $\mathcal{E}(t)$ is decreasing over time. To illustrate the essential difficulty, we shall only consider the term $\partial_vP_{\neq0}\phi\,\partial_zf$ in the equation (\ref{fIntro}) for $\partial_tf$. We shall also assume that $\phi$ has much higher frequency than $f$. These reductions are motivated by the fact that precisely in this case {\it decay costs the most regularity}, which forces the choice of the main weight $A_k(t,\xi)$.

 Assuming this simplification and considering all other terms as error terms, we arrive at
\begin{equation}\label{dEIn}
\begin{split}
\frac{d}{dt}\mathcal{E}(t)&\leq\sum_{k\in\mathbb{Z}}\int_{\mathbb{R}}\frac{\partial_tA_k(t,\xi)}{A_k(t,\xi)}A_k^2(t,\xi)\big|\widetilde{f}(t,k,\xi)\big|^2\,d\xi\\
&+\epsilon\sum_{k=\ell+O(1)}\int_{\xi=\eta+O(1)}A_k^2(t,\xi)\left|\widetilde{f}(t,k,\xi)\right|\frac{|\eta(k-\ell)|\mathbf{1}_{\ell\neq0}}{\langle t-\eta/\ell\rangle\ell^2}\left|\widetilde{f}(t,\ell,\eta)\right|\,d\xi\,d\eta+{\rm error}\\
&=CK_f+R+{\rm error}.
\end{split}
\end{equation}
In the above, $CK_f$ denotes ``Cauchy-Kowalevski" as this favorable term comes from the decrease in weight, analogous to the classical ideas of Cauchy and Kowalevski, and $\epsilon$ is a small number coming from the smallness of $f$.

The main difficulty is to deal with the case $|\eta|\gg\ell^2$. In this case the factor 
$$\frac{\eta}{\ell^2}\frac{1}{1+| t-\eta/\ell|^2}$$
cannot be controlled in the standard way, using the $CK_f$ term. The key original idea of \cite{BedMas} is to define and use {\it imbalanced weights} to absorb this large factor, taking advantage of the asymmetry between $k$ and $\ell$. Algebraically, the main observation is that if $\left|t-\eta/\ell\right|\ll\big|\eta/\ell^2\big|$, $k\neq\ell$, $\xi=\eta+O(1)$, $k\sim\ell$, $|\eta|\gg \ell^2$, then
$$\left|t-\xi/k\right|\ge\left|\eta/\ell-\eta/k\right|-\left|t-\eta/\ell\right|-\left|\eta/k-\xi/k\right|\gtrsim\left|\xi/k^2\right|.$$
Hence, if $t$ is close to the critical time $\eta/\ell$ then $t$ is far from the critical time $\xi/k$, which is the asymmetry we indicated earlier. This asymmetry turns out to allow one to define a weight which is so imbalanced that
\begin{equation}\label{imbaweight}
\frac{A_{\ell}(t,\eta)}{A_{k}(t,\xi)}\sim\left|\frac{\eta}{\ell^2}\right|\frac{1}{1+|t-\eta/\ell|},
\end{equation}
when $k\neq \ell$, $\xi=\eta+O(1)$, $k=\ell+O(1)$, and $t$ is resonant for the frequency $(\ell,\eta)$ (meaning $\left|t-\eta/\ell\right|\ll|\eta|/\ell^2$). Therefore, even though the factor 
$\left|\frac{\eta}{\ell^2}\right|\frac{1}{1+|t-\eta/\ell|}$ in the term $R$ in \eqref{dEIn}
can be very large, the switch from $A_k(t,\xi)$ to $A_{\ell}(t,\eta)$ helps absorb this factor, and the resulting contribution can be estimated. See the proof of \eqref{TLXH1.2} (in particular the bounds \eqref{nonRtR}) for the precise details.

The use of imbalanced weights and imbalanced energy functionals successfully resolves the large factor problem. However, it also brings numerous new issues, which are not seen in the standard case of balanced weights, i.e. weights $B$ satisfying $B(k+O(1),\xi+O(1))\approx B(k,\xi)$.  As a result, one has to deal with a large number of cases and the argument becomes correspondingly complicated. One also needs to obtain sufficiently strong control on the coordinate functions $V',\,V'',\,\dot{V}$ which we have ignored in our discussion. 

\subsubsection{The effect of boundary} For the Couette flow in a channel, in addition to the ideas outlined above that were introduced in \cite{BedMas}, we also need to control the boundary effect. The boundary effect is most visible from the equation (\ref{eu2}) for the stream function $\psi(t,x,y)$, which now has zero Dirichlet boundary conditions at $y=0,1$. 

Since the argument in \cite{BedMas} depends crucially on Fourier transforms, a possible idea would be to extend, in a suitable way, the stream function $\psi$ from $[0,1]$ to $\mathbb{R}$. One can, for example, set $\psi$ to be zero for $y\in \mathbb{R}\backslash [0,1]$. However, such an extension is not smooth at $y=0,1$, as $\partial_y\psi|_{y=0,\,1}$ does not vanish in general. This is a problem as we need to work in the Gevrey space.  To obtain an extension that is compatible with Gevrey regularity, one would need to match infinitely many derivatives $\partial_y^k\psi$ at $y=0,\,1$. This seems complicated, as all the derivatives depend on the nonlinear flow.

To resolve this difficulty, we first make the key observation that if initially the vorticity $\omega_0$ is compactly supported away from the boundary $y=0,\,1$, then by the decay of $u^y$, $$\left|u^y(t,x,y)\right|\lesssim\frac{\epsilon}{\langle t\rangle^2},$$
we can expect that the support of $\omega(t)$ will stay away from $y=0,\,1$  for all $t\ge0$. In the transport term 
$$u\cdot\nabla\omega=-\partial_y\psi\,\partial_x\omega+\partial_x\psi\,\partial_y\omega,$$
we can therefore replace $\psi$ by $\Psi\psi$ with some Gevrey class cutoff function $\Psi\in C_0^{\infty}(0,1)$ with $\Psi\equiv 1$ on the support of $\omega$. In the variables $z,\,v$, the corresponding $\phi$ can be replaced $\Phi\phi$ where $\Phi(v)=\Psi(y)$. Formally the localized stream function $\Phi\phi$ is in Gevrey space and we can adapt the method of \cite{BedMas} to our case.

One still needs to control the effect of the Dirichlet boundary condition. The main property we need is, roughly speaking,
\begin{equation}\label{eq:ellipticIntroRough}
\big(\partial_z^2+(\partial_v-t\partial_z)^2\big)(\Phi\phi)\sim f.
\end{equation}
See (\ref{eq:phiIntroFull}) for the equation for $\phi$. Here ``$h_1\sim h_2$" means that $h_1$ and $h_2$ have ``similar" regularity. By the equation (\ref{eq:phiIntroFull}) for $\phi$, in order to show (\ref{eq:ellipticIntroRough}), we need to bound a number of terms including those coming from the boundary. The key fact for us is that the localized Green function $\Phi(v)\,G_k(j,v)$, $j=0\,\,{\rm or}\,\,1$, is in $\mathcal{G}^{\lambda,s}$ for some $s>\frac{1}{2}$ since $\Phi$ is supported away from $v=0,1$. Here $G_k$ is the Green function of  $-\frac{d^2}{dy^2}+k^2$ with zero Dirichlet boundary conditions. The regularity of $\Phi(v)\,G_k(j,v)$ with $j=0,\,1$  allows us to obtain crucial bounds on $\partial_v\phi$ at the boundary, which encodes the boundary effect. One can show that the boundary normal derivative $\partial_v\phi|_{v=0, {\rm or}\,1}$ decays rapidly over time, faster than the reciprocal of any polynomial, see \eqref{dar21} below. However, the functions ``generated" by such boundary effect do not have uniform regularity, see \eqref{dar5} below. The issue can already be seen by considering the simple function $\cos{x}$ which, written in the new coordinates $(z,v)$, is $\cos{(z+tv)}$ and loses regularity very quickly as $t\to\infty$. In then end,  the fast decay of $\partial_v\phi|_{v=0, {\rm or}\,1}$ exactly compensates the loss of regularity, and there is no room to spare in our estimates, which demonstrates, on a technical level at least, the necessity of assuming the support of $\omega$ to be separated from the boundary of the channel. 

\subsubsection{Sharper $\G^{\lambda,1/2}$ regularity} Our second aim is to work in the ``sharp" $\mathcal{G}^{\lambda,1/2}$ Gevrey space for the perturbation, and we need to be much more precise with the definition of the main weights $A_k(t,\xi)$. Here our first main idea is to introduce an intermediate scale $\delta$, with
$$\epsilon\ll\delta\ll 1.$$
In the work \cite{BedMas}, for small times $t\lesssim\sqrt{\xi}$, the weight $w_k(t,\xi)$ has no growth and hence provide us with no useful control of the nonlinearity. The choice is dictated by the delicate balance between the requisite continuity of $w_k(t,\xi)$ in $\xi$ and the discrete nature of the definition of the weight $w_k(t,\xi)$. In this work, the weight $w_k(t,\xi)$ is designed to satisfy
$$\frac{\partial_tw_k(t,\xi)}{w_k(t,\xi)}\sim \delta^3$$
for small times $t\ll \delta^{-\frac{3}{2}}\sqrt{\xi}$, hence the weight will provide us with powerful control on the nonlinearity even for small times. See section \ref{weightsdefin} for the definitions of the weights. 

Such a definition necessarily creates a discontinuity in $\xi$ for the weight $w_k(t,\xi)$. On the other hand, the smoothness of the weights in $\xi$ is important to treat several terms in the nonlinearity when we wish to take advantage of cancellations, such as in the transport structure. To resolve this issue, we redefine our main weights by taking a suitable average over $\xi$ (see \eqref{dor1}). The average is designed carefully (in time-dependent fashion) so that the necessary bounds are preserved while one can also restore certain smoothness for the weight in $\xi$, especially for $t\lesssim \sqrt{|\xi|}$. The weights thus obtained enjoy optimal smoothness in $\xi$ and this allows us to analyze efficiently the transport terms in the problem.

\subsection{Organization} The rest of the paper is organized as follows. In section 2 we introduce the change of variables and set up the main bootstrap Proposition \ref{MainBootstrap}. In section 3 we give a proof of the main theorem \ref{maintheoremINTRO} assuming the bootstrap proposition. In sections 4-6 we prove the main bootstrap Proposition \ref{MainBootstrap}. In section 7 we define the weights and prove some of their basic properties. In section 8 we prove several weighted multiplicative inequalities involving the weights needed in the proof of the main bootstrap proposition. We note that the bounds on the weights proved in section 7-8 are used throughout sections 4-6. Finally, in Appendix A we review some properties of the Gevrey spaces and provide a self-contained proof of local well-posedness of the two dimensional Euler equation in Gevrey spaces.

\section{The main equations and the bootstrap proposition}

\subsection{The change of variables}\label{sec:variables}
Assume that $\omega:[0,T]\times\mathbb{T}\times[0,1]$ is a sufficiently smooth solution of the system \eqref{Eur1} on some time interval $[0,T]$. Due to the transport term $y\partial_x\omega$, as time goes to infinity, $\omega(t)$ has larger and larger derivatives in $y$. To obtain a uniform estimate, we need to
 choose correct coordinates, which ``unwind'' the transport. 

To this end, let us define
\begin{equation}\label{rea0}
 v=y+h(t,y),\qquad z=x-tv.
\end{equation}
We illustrate now what are the right choices for $h$. Since we are in the perturbative regime, we can imagine that $h$ is small, so that the transformation 
from $y$ to $v$ is invertible. Denote
\begin{equation}\label{rea1}
f(t,z,v):=\omega(t,x,y),\qquad \phi(t,z,v):=\psi(t,x,y).
\end{equation}
Our first task is to rewrite the equation \eqref{Euler1} in terms of $f$ in the variables $z,v,t$. Direct calculation yields
\begin{equation}\label{rea2}
\begin{split}
\partial_t\omega&=\partial_tf-\partial_t(tv)\,\partial_zf+\partial_tv\,\partial_vf;\\
\partial_y\omega&=-t\partial_yv\,\partial_zf+\partial_yv\,\partial_vf;\\
\partial_x\omega&=\partial_zf.
\end{split}
\end{equation}
Similar relations hold if we replace $f$ and $\omega$ by $\phi$ and $\psi$. We shall denote
$V',V''$ and $\dot{V}$ as the functions $\partial_yv(t,y),\,\partial_{yy}v(t,y)$ and $\partial_tv(t,y)$ in the variables $t,v$. That is,
\begin{equation}\label{rea3}
V'(t,v):=\partial_yv(t,y);\qquad V''(t,v):=\partial_{yy}v(t,y);\qquad \dot{V}(t,v):=\partial_tv(t,y).
\end{equation}
We also denote $U(t,z,v)=(U^z, U^v)(t,z,v):=u(t,x,y)$, and notice that 
\begin{equation}\label{eq:uz}
U^z(t,z,v)=-\partial_y\psi(t,x,y)=tV'\,\partial_z\phi-V'\,\partial_v\phi
\end{equation}
and
\begin{equation}\label{eq:uv}
 U^v(t,z,v)=\partial_z\phi.
\end{equation}
Then, we obtain
\begin{equation*}
\begin{split}
 0&=\,\partial_tf-\partial_t(tv)\partial_zf+\partial_tv\,\partial_vf+y\partial_zf+\,\big(tV'\,\partial_z\phi-V'\,\partial_v\phi\big)\,\partial_zf+\partial_z\phi\,\big(V'\,\partial_vf-tV'\,\partial_zf\big)\\
 &=\,\partial_tf+\big[-\partial_t(tv)+y+tV'\,\partial_z\phi-V'\,\partial_v\phi-t\partial_z\phi\,V'\big]\partial_zf+\,\big(\dot{V}+\partial_z\phi\,V'\big)\partial_vf\\
 &=\,\partial_tf+\big[-\partial_t(tv)+y-V'\,\partial_v\phi\big]\partial_zf+\,\big[\dot{V}+\partial_z\phi\,V'\big]\partial_vf.
\end{split}
\end{equation*}
In order to avoid terms with power of $t$, it is convenient to write
\begin{equation}\label{rea7}
h(t,y)=\frac{1}{t}\int_0^tg(\tau,y)\,d\tau.
\end{equation}
Then, using \eqref{rea0}, $$-\partial_t(tv)+y=-g(t,y).$$
Consequently, the equation (\ref{Euler1}) in the variables $z,v,t$ becomes
\begin{equation}\label{changeofvariableequationf}
\partial_tf-\big[g(t,y)+V'\partial_v\phi\big]\partial_zf+(\dot{V}+V'\partial_z\phi)\,\partial_vf=0.
\end{equation}

 We still have freedom to choose $g$. We remark that the main purpose of the change of variables is to unwind the mixing in the $x$ direction. Denoting 
 $\big<w\big>(t,y)$ as the average of $w$ in the $x$ direction (and equivalently in the $z$ direction), we have
 $$\big<V'\partial_v\phi\big>=-\big<tV'\,\partial_z\phi-V'\,\partial_v\phi\big>=-\big<u^x\big>(t,y),$$
see \eqref{eq:uz}. Thus, to optimally cancel the mixing in the $x$ direction, it is reasonable to choose $g(t,y)=\big<u^x\big>(t,y)$.
To summarize, we make the change of variables 
 \begin{equation}\label{changeofvariables1}
 v=y+\frac{1}{t}\int_0^t\big<u^x\big>(\tau,y)\,d\tau,\qquad z=x-tv.
 \end{equation}
Under this change of variable, the equation (\ref{Euler1}) becomes
\begin{equation}\label{main}
 \partial_tf-V'\partial_vP_{\neq 0}\phi\,\partial_zf+(\dot{V}+V'\partial_z\phi)\,\partial_vf=0,
\end{equation}
where $P_{\neq 0}$ is projection off the zero mode, i.e., for any function $H(t,z,v)$
$$P_{\neq 0}H(t,z,v)=H(t,z,v)-\langle H\rangle(t,v).$$

There is a technical issue in the change of variables that we need to deal with. We know that $y\in[0,1]$, and we would like to find the range of the variable $v$. In this paper, we work with the assumption that ${\rm supp}\,\omega(t)\subset \mathbb{T}\times[\vartheta_0,1-\vartheta_0]$ for all $t\in[0,T]$ (we need to prove, of course, that such an assumption is consistent as long as the initial vorticity is supported in a smaller region and is sufficiently small). 
With this assumption, we show that 
\begin{equation}\label{rea8}
\big<u^x\big>(t,y)=c_0\qquad\text{ for any }t\in[0,T]\text{ and }y\in[0,\vartheta_0]\cup [1-\vartheta_0, 1].
\end{equation}
Indeed, since $\psi$ is harmonic in $\mathbb{T}\times\left[[0,\vartheta_0]\cup [1-\vartheta_0, 1]\right]$ we have
\begin{equation}\label{harmonic}
 \partial_y\big<u^x\big>=-\big<\partial_{yy}\psi\big>=\big<\partial_{xx}\psi\big>=0,\qquad {\rm for}\,\,y\in[0,\vartheta_0]\cup [1-\vartheta_0, 1].
\end{equation}
On the other hand, using the Euler equation for the velocity component $u^x+y$, we have
\begin{equation}\label{perEu}
\partial_tu^x+(u^x+y)\partial_xu^x+u^y\partial_y(u^x+y)+\partial_xp=0.
\end{equation}
Therefore, from (\ref{harmonic}), we get that
\begin{equation}\label{rea9}
\partial_t\big<u^x\big>(t,y)=0\qquad {\rm for}\,\,y\in[0,\vartheta_0]\cup [1-\vartheta_0, 1].
\end{equation}
Furthermore, using $-\partial_yu^x+\partial_xu^y=\omega$ and the assumption \eqref{eu4}, integrating over $\mathbb{T}\times[0,1]$, we get that $\big<u^x\big>(t,1)=\big<u^x\big>(t,0)$. The conclusion \eqref{rea8} follows. Therefore 
\begin{equation}\label{rea10}
v\in[c_0,1+c_0]\,\,\text{ and }\,\,{\rm supp}\,f(t)\subset \mathbb{T}\times[c_0+\vartheta_0,\,c_0+1-\vartheta_0]\,\,\text{ for any }\,\,t\in[0,T].
\end{equation}

We consider now the equation for $\phi$. From the relations 
\begin{equation}\label{rea12}
\partial_x\psi=\partial_z\phi,\qquad \partial_y\psi=V'(\partial_v\phi-t\partial_z\phi)=V'(\partial_v-t\partial_z)\phi,
\end{equation}
 we get that
\begin{equation}\label{rea12.1}
\partial_{xx}\psi=\partial_{zz}\phi;\quad \partial_{yy}\psi=(V')^2(\partial_v-t\partial_z)^2\phi+V''(\partial_v-t\partial_z)\phi.
\end{equation}
Recalling the equation $\Delta\psi=\omega$, we see that $\phi$ satisfies
\begin{equation}\label{eq:Delta_t}
\partial_z^2\phi+(V')^2(\partial_v-t\partial_z)^2\phi+V''(\partial_v-t\partial_z)\phi=f,
\end{equation}
with $\phi(t,x,c_0)=\phi(t,x,1+c_0)=0$ for any $t\in[0,T]$ and $x\in\mathbb{T}$.

\subsubsection{The functions $V'$ and $\dot{V}$} We also need to understand the (nonlinear) change of coordinates. Using \eqref{changeofvariables1} and the observation $-\partial_y\big<u^x\big>=\big<-\partial_yu^x+\partial_xu^y\big>=\langle\omega\rangle$, we have
\begin{equation}\label{rea15}
\begin{split}
\partial_yv(t,y)&=1-\frac{1}{t}\int_0^t\big<\omega\big>(\tau,y)\,d\tau,\\
\partial_tv(t,y)&=\frac{1}{t}\Big[-\frac{1}{t}\int_0^t\big<u^x\big>(\tau,y)\,d\tau+\big<u^x\big>(t,y)\Big],\\
\partial_y\partial_tv(t,y)&=\frac{1}{t}\Big[\frac{1}{t}\int_0^t\big<\omega\big>(\tau,y)\,d\tau-\big<\omega\big>(t,y)\Big].
\end{split}
\end{equation}
Thus
\begin{equation}\label{rea15.5}
-\frac{1}{t}\int_0^t\big<\omega\big>(\tau,y)d\tau=V'(t,v(t,y))-1.
\end{equation}
By the chain rule it follows that
\begin{equation}\label{rea16}
\partial_t\big[t(V'(t,v)-1)\big]+t\dot{V}(t,v)\partial_vV'(t,v)=-\big<f\big>(t,v):=-\frac{1}{2\pi}\int_{\mathbb{T}}f(t,z,v)\,dz.
\end{equation}
We notice that
\begin{equation}\label{rea17}
\partial_y(\partial_tv(t,y))=\partial_y\big[\dot{V}(t,v(t,y))\big]=V'(t,v(t,y))\partial_v\dot{V}(t,v(t,y)).
\end{equation}
Hence, using the last identity in \eqref{rea15} and the identities \eqref{rea15.5} and \eqref{rea17}, we have
\begin{equation}\label{rea18}
tV'(t,v)\partial_v\dot{V}(t,v)=1-V'(t,v)-\big<f\big>(t,v).
\end{equation}

We derive now our main evolution equations. It follows from \eqref{rea16} and \eqref{rea18} that
\begin{equation}\label{rea19}
\partial_t(V'-1)=V'\partial_v\dot{V}-\dot{V}\partial_v(V'-1).
\end{equation}
Set
\begin{equation}\label{rea19.5}
\mathcal{H}:=tV'\partial_v\dot{V}=1-V'-\big<f\big>.
\end{equation}
Using \eqref{rea19} and \eqref{main} we calculate
\begin{equation*}
\partial_t\mathcal{H}=-\partial_t(V'-1)-\partial_tf_0=-V'\partial_v\dot{V}+\dot{V}\partial_v(V'-1)-V'\big<\partial_vP_{\neq 0}\phi\,\partial_zf\big>+\big<(\dot{V}+V'\partial_z\phi)\,\partial_vf\big>
\end{equation*}
Using again \eqref{rea19.5} and simplifying, we get 
\begin{equation*}
\partial_t\mathcal{H}=-\frac{\mathcal{H}}{t}-\dot{V}\partial_v\mathcal{H}-V'\big<\partial_vP_{\neq0}\phi\,\partial_zf\big>+V'\big<\partial_z\phi\,\partial_vf\big>.
\end{equation*}

We summarize our calculations so far in the following:

\begin{proposition}\label{ChangedEquations} Assume $\omega:[0,T]\times\mathbb{T}\times[0,1]\to\mathbb{R}$ is a sufficiently smooth solution of the system \eqref{Eur1} on some time interval $[0,T]$. Assume that $\omega(t)$ is supported in $\mathbb{T}\times[\va_0,1-\va_0]$ and that $\|\langle\omega\rangle(t)\|_{H^{10}}\ll 1$  for all $t\in[0,T]$.\footnote{The smallness of $\|\langle\omega\rangle(t)\|_{H^{10}}$ is a qualitative condition that is only needed to guarantee that the map $y\to v$ is indeed a smooth bijective change of coordinates.} Then there is $c_0\in\R$ such that
\begin{equation}\label{rea20.5}
\big<u^x\big>(t,y)=c_0\qquad\text{ for any }t\in[0,T]\text{ and }y\in [0,\vartheta_0]\cup [1-\vartheta_0, 1].
\end{equation}

We define the change-of-coordinates functions $(z,v):\mathbb{T}\times[0,1]\to\mathbb{T}\times[c_0,c_0+1]$, 
 \begin{equation}\label{rea20}
 v:=y+\frac{1}{t}\int_0^t\big<u^x\big>(\tau,y)\,d\tau,\qquad z:=x-tv,
 \end{equation}
and the new variables $f,\phi:[0,T]\times\mathbb{T}\times[c_0,c_0+1]\to\mathbb{R}$ and $V',V'',\dot{V},\mathcal{H}:[0,T]\times[c_0,c_0+1]\to\mathbb{R}$, 
\begin{equation}\label{rea21}
f(t,z,v):=\omega(t,x,y),\qquad \phi(t,z,v):=\psi(t,x,y),
\end{equation}
\begin{equation}\label{rea22}
V'(t,v):=\partial_yv(t,y),\qquad V''(t,v)=\partial_{yy}v(t,y),\qquad \dot{V}(t,v)=\partial_tv(t,y),
\end{equation}
\begin{equation}\label{rea23}
\mathcal{H}(t,v):=tV'(t,v)\partial_v\dot{V}(t,v)=1-V'(t,v)-\langle f\rangle(t,v).
\end{equation}
Then the new variables $f$, $V'-1$, and $\mathcal{H}$ are supported in $[0,T]\times\mathbb{T}\times[c_0+\va_0,c_0+1-\va_0]$ and satisfy the evolution equations
\begin{equation}\label{rea23.1}
 \partial_tf=V'\partial_vP_{\neq 0}\phi\,\partial_zf-(\dot{V}+V'\partial_z\phi)\,\partial_vf,
\end{equation}
\begin{equation}\label{rea24}
\partial_t(V'-1)=\mathcal{H}/t-\dot{V}\partial_v(V'-1),
\end{equation}
\begin{equation}\label{rea25}
\partial_t\mathcal{H}=-\mathcal{H}/t-\dot{V}\partial_v\mathcal{H}-V'\big<\partial_vP_{\neq0}\phi\,\partial_zf\big>+V'\big<\partial_z\phi\,\partial_vf\big>.
\end{equation}
The variables $\phi$, $V''$, and $\dot{V}$ satisfy the elliptic-type identities
\begin{equation}\label{rea26}
\partial_z^2\phi+(V')^2(\partial_v-t\partial_z)^2\phi+V''(\partial_v-t\partial_z)\phi=f,
\end{equation}
\begin{equation}\label{rea27}
\partial_v\dot{V}=\mathcal{H}/(tV'),\qquad \dot{V}(t,c_0)=\dot{V}(t,1+c_0)=0,\qquad V''=V'\partial_vV'.
\end{equation}
\end{proposition}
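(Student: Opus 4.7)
This proposition is essentially a bookkeeping statement: most of the identities are already derived in the preceding discussion, so the plan is to (i) verify the boundary identity \eqref{rea20.5}, (ii) justify that $y\mapsto v(t,y)$ is a genuine diffeomorphism so that the chain-rule computations are legitimate, and (iii) assemble the remaining evolution equations, boundary conditions, and support statements. No serious analytic obstacle is present; the one point requiring a bit of care is the boundary behavior of $\dot V$.

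For \eqref{rea20.5}, the plan is to repeat the argument in \eqref{harmonic}--\eqref{rea9}. Since $\omega(t)$ vanishes on $\mathbb{T}\times([0,\vartheta_0]\cup[1-\vartheta_0,1])$, the stream function $\psi(t)$ is harmonic there, and so $\partial_y\langle u^x\rangle=-\langle\partial_{yy}\psi\rangle=\langle\partial_{xx}\psi\rangle=0$, making $\langle u^x\rangle(t,\cdot)$ constant on each boundary strip. Averaging the Euler equation \eqref{perEu} in $x$, using $\langle\partial_xp\rangle=0$, the divergence-free identity $\langle u^y\rangle\equiv 0$, and $\partial_yu^x=\partial_xu^y$ (valid wherever $\omega=0$) to rewrite $\langle u^y\partial_yu^x\rangle=\langle u^y\partial_xu^y\rangle=0$, yields $\partial_t\langle u^x\rangle\equiv 0$ on each strip. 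Integrating $-\partial_yu^x+\partial_xu^y=\omega$ over $\mathbb{T}\times[0,1]$ and using \eqref{eu4} forces $\langle u^x\rangle(t,0)=\langle u^x\rangle(t,1)$, so the two boundary constants coincide and can be called $c_0$. With \eqref{rea20.5} in hand, differentiating \eqref{rea20} and using $-\partial_y\langle u^x\rangle=\langle\omega\rangle$ gives $\partial_yv=1-t^{-1}\int_0^t\langle\omega\rangle\,d\tau$, which is bounded below by $1/2$ under the qualitative smallness assumption on $\|\langle\omega\rangle(t)\|_{H^{10}}$; hence $y\mapsto v(t,y)$ is a smooth diffeomorphism, and since $v=y+c_0$ on the boundary strips its range is exactly $[c_0,1+c_0]$.

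The evolution equation \eqref{rea23.1} is then a restatement of \eqref{main} and \eqref{rea26} is \eqref{eq:Delta_t}. In \eqref{rea27}, the identity $\partial_v\dot V=\mathcal{H}/(tV')$ is just the definition \eqref{rea19.5} divided by $tV'$, while $V''=V'\partial_vV'$ follows from $\partial_{yy}v=\partial_y[V'(t,v(t,y))]=V'\partial_vV'$ via the chain rule. For the boundary conditions $\dot V(t,c_0)=\dot V(t,1+c_0)=0$, the middle line of \eqref{rea15} reads $\dot V(t,v(t,y))=t^{-1}[\langle u^x\rangle(t,y)-t^{-1}\int_0^t\langle u^x\rangle\,d\tau]$; on each boundary strip $\langle u^x\rangle(\tau,y)\equiv c_0$ for all $\tau$, so the bracket vanishes identically, and since $v=y+c_0$ there this is exactly $\dot V=0$ at $v=c_0$ and $v=1+c_0$. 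Equations \eqref{rea24} and \eqref{rea25} have already been derived in \eqref{rea16}--\eqref{rea19} and in the paragraph immediately preceding the proposition; it only remains to record them. The support statements for $f$, $V'-1$, and $\mathcal{H}$ follow from the fact that $v=y+c_0$ on the boundary strips together with \eqref{rea23}, completing the proof.
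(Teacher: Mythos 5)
Your proposal is correct and follows essentially the same route as the paper: the evolution equations \eqref{rea23.1}--\eqref{rea26} are restatements of \eqref{main}, \eqref{rea19}/\eqref{rea19.5}, the computation immediately preceding the proposition, and \eqref{eq:Delta_t}, and \eqref{rea20.5} is the argument from \eqref{harmonic}--\eqref{rea9}. You also correctly fill in two details the paper leaves implicit: the vanishing of $\langle u^y\partial_y u^x\rangle$ on the boundary strips (via $\partial_yu^x=\partial_xu^y$ when $\omega=0$, giving $\tfrac12\langle\partial_x(u^y)^2\rangle=0$, equivalent to the paper's $-\langle\partial_x\psi\,\partial_y^2\psi\rangle=\langle\partial_x\psi\,\partial_x^2\psi\rangle=0$ route), and the boundary condition $\dot V(t,c_0)=\dot V(t,1+c_0)=0$, which you derive cleanly from the middle line of \eqref{rea15} together with $\langle u^x\rangle\equiv c_0$ on the boundary strips.
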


\subsection{Weights, energy functionals, and the bootstrap proposition}\label{weightsdef} In this subsection we construct our main energy functionals and state our main bootstrap proposition.

\subsubsection{Definition of the main weights} As discussed in the introduction, the key idea in controlling the nonlinear effect is to estimate the increment of suitable energy functionals, which are defined using special weights. These special weights are ``imbalanced" and can distinguish ``resonant" and ``non-resonant" times.

To state our main bootstrap proposition we need to define three main weights $A_{NR}$, $A_R$, and $A_k$. Fix $\delta_0>0$ and for small $\sigma_0>0$ (say $\sigma_0=0.01$), we define the function $\lambda$ by
\begin{equation}\label{reb10.5}
\lambda(0)=\frac{3}{2}\delta_0,\,\,\,\,\lambda'(t)=-\frac{\delta_0\sigma_0^2}{\langle t\rangle^{1+\sigma_0}}.
\end{equation}
In particular, $\lambda$ is decreasing on $[0,\infty)$ and $\lambda(t)\in[5\delta_0/4,3\delta_0/2]$. Define
\begin{equation}\label{reb11}
A_R(t,\xi):=\frac{e^{\lambda(t)\langle\xi\rangle^{1/2}}}{b_R(t,\xi)}e^{\sqrt{\delta}\langle\xi\rangle^{1/2}},\qquad A_{NR}(t,\xi):=\frac{e^{\lambda(t)\langle\xi\rangle^{1/2}}}{b_{NR}(t,\xi)}e^{\sqrt{\delta}\langle\xi\rangle^{1/2}},
\end{equation}
where $\delta>0$ is a small parameter that may depend only on $\delta_0$ and $\vartheta_0$. Then we define
\begin{equation}\label{reb12}
A_k(t,\xi):=e^{\lambda(t)\langle k,\xi\rangle^{1/2}}\Big(\frac{e^{\sqrt{\delta}\langle\xi\rangle^{1/2}}}{b_k(t,\xi)}+e^{\sqrt{\delta}|k|^{1/2}}\Big).
\end{equation}

The precise definitions of the weights $b_{NR},\,b_R,\,b_k$ are very important; all the details are provided in section \ref{weights}. For now we simply note that, for any $t,\xi,k$,
\begin{equation}\label{reb13}
e^{-\delta\sqrt{|\xi|}}\leq b_R(t,\xi)\leq b_k(t,\xi)\leq b_{NR}(t,\xi)\leq 1,
\end{equation}
 In other words, the weights $1/b_{NR},\,1/b_R,\,1/b_k$ are small when compared to the main factors $e^{\lambda(t)\langle\xi\rangle^{1/2}}$ and $e^{\lambda(t)\langle k,\xi\rangle^{1/2}}$ in the weights $A_{NR},\,A_R,\,A_k$. However, their relative contributions are important as they are used to distinguish between ``resonant" and ``non-resonant" times. 

\subsubsection{The main bootstrap proposition} As is Proposition \ref{ChangedEquations}, assume that $\omega:[0,T]\times\mathbb{T}\times[0,1]\to\mathbb{R}$ is a sufficiently smooth solution of the system \eqref{eu2}--\eqref{Euler1} on some time interval $[0,T]$, which is supported in $\mathbb{T}\times[\va_0,1-\va_0]$ and satisfies $\|\langle\omega\rangle(t)\|_{H^{10}}\ll 1$  for all $t\in[0,T]$. Define $f,V',\mathcal{H},\phi$ as in \eqref{rea21}--\eqref{rea23}, and recall that $f,V'-1,\mathcal{H}$ are supported in $\mathbb{T}\times[c_0+\va_0,c_0+1-\va_0]\times[0,T]$. We define
\begin{equation}\label{defgellip} 
\Theta(t,z,v):=(\partial_z^2+(\partial_v-t\partial_z)^2)\left(\Psi(v)\,\phi(t,z,v)\right),
\end{equation}
 where $\Psi:\mathbb{R}\to[0,1]$ is a Gevrey class cut-off function, satisfying
\begin{equation}\label{rec0}
\begin{split}
&\big\|e^{\langle\xi\rangle^{3/4}}\widetilde{\Psi}(\xi)\big\|_{L^\infty}\lesssim 1,\\
&{\rm supp}\,\Psi\subseteq \big[c_0+\va_0/4,c_0+1-\va_0/4\big], \quad\Psi\equiv 1\text{ in }\big[c_0+\va_0/3,c_0+1-\va_0/3\big].
 \end{split}
\end{equation}
See subsection \ref{GevSec} for the construction of such functions $\Psi$.

We define the energy functionals
\begin{equation}\label{rec1}
\mathcal{E}_f(t):=\sum_{k\in \mathbb{Z}}\int_{\R}A_k^2(t,\xi)\big|\widetilde{f}(t,k,\xi)\big|^2\,d\xi,
\end{equation}
\begin{equation}\label{rec2}
\mathcal{E}_{V'-1}(t):=\int_{\R}A_R^2(t,\xi)\big|\widetilde{(V'-1)}(t,\xi)\big|^2\,d\xi,
\end{equation}
\begin{equation}\label{rec3}
\mathcal{E}_{\mathcal{H}}(t):=K_\delta^{2}\int_{\R}A_{NR}^2(t,\xi)\big(\langle t\rangle/\langle\xi\rangle\big)^{3/2}\big|\widetilde{\mathcal{H}}(t,\xi)\big|^2\,d\xi,
\end{equation}
\begin{equation}\label{rec3.5}
\mathcal{E}_{\Theta}(t):=\sum_{k\in \mathbb{Z}\setminus\{0\}}\int_{\R}A_k^2(t,\xi)\frac{|k|^2\langle t\rangle^2}{|\xi|^2+|k|^2\langle t\rangle^2}\big|\widetilde{\Theta}(t,k,\xi)\big|^2\,d\xi,
\end{equation}
where $K_\delta\geq 1$ is a large constant that depends only on $\delta$ (in fact, it depends on the implicit constants in Lemmas \ref{bweights}--\ref{lm:CDW}). We define also $\dot{A}_\ast(t,\xi):=(\partial_t A_\ast)(t,\xi)$, $\ast\in\{NR,R,k\}$, and the space-time integrals
\begin{equation}\label{rec4}
\mathcal{B}_f(t):=\int_1^t\sum_{k\in \mathbb{Z}}\int_{\R}|\dot{A}_k(s,\xi)|A_k(s,\xi)\big|\widetilde{f}(s,k,\xi)\big|^2\,d\xi ds,
\end{equation}
\begin{equation}\label{rec5}
\mathcal{B}_{V'-1}(t):=\int_1^t\int_{\R}|\dot{A}_R(s,\xi)|A_R(s,\xi)\big|\widetilde{(V'-1)}(s,\xi)\big|^2\,d\xi ds,
\end{equation}
\begin{equation}\label{rec6}
\mathcal{B}_{\mathcal{H}}(t):=K_\delta^{2}\int_1^t\int_{\R}|\dot{A}_{NR}(s,\xi)|A_{NR}(s,\xi)\big(\langle s\rangle/\langle\xi\rangle\big)^{3/2}\big|\widetilde{\mathcal{H}}(s,\xi)\big|^2\,d\xi ds,
\end{equation}
\begin{equation}\label{rec6.5}
\mathcal{B}_{\Theta}(t):=\int_1^t\sum_{k\in \mathbb{Z}\setminus\{0\}}\int_{\R}|\dot{A}_k(s,\xi)|A_k(s,\xi)\frac{|k|^2\langle s\rangle^2}{|\xi|^2+|k|^2\langle s\rangle^2}\big|\widetilde{\Theta}(s,k,\xi)\big|^2\,d\xi ds.
\end{equation}

Our main proposition is the following: 

\begin{proposition}\label{MainBootstrap}
Assume $T\geq 1$ and $\omega\in C([0,T]:\G^{2\delta_0,1/2})$ is a sufficiently smooth solution of the system \eqref{Eur1}, with the property that $\omega(t)$ is supported in $\mathbb{T}\times[\va_0,1-\va_0]$ and that $\|\langle\omega\rangle(t)\|_{H^{10}}\ll 1$  for all $t\in[0,T]$. Define $f,\phi,\Theta,\,V',V'',\dot{V},\mathcal{H}$ as above. Assume that $\eps_1$ is sufficiently small depending on $\delta_0$,
\begin{equation}\label{boot1}
\sum_{g\in\{f,V'-1,\mathcal{H},\Theta\}}\mathcal{E}_g(t)\leq\eps_1^3\qquad\text{ for any }t\in[0,1],
\end{equation}
and
\begin{equation}\label{boot2}
\sum_{g\in\{f,V'-1,\mathcal{H},\Theta\}}\big[\mathcal{E}_g(t)+\mathcal{B}_g(t)\big]\leq\eps_1^2\qquad\text{ for any }t\in[1,T].
\end{equation}
Then for any $t\in[1,T]$ we have the improved bounds
\begin{equation}\label{boot3}
\sum_{g\in\{f,V'-1,\mathcal{H},\Theta\}}\big[\mathcal{E}_g(t)+\mathcal{B}_g(t)\big]\leq\eps_1^2/2.
\end{equation}
Moreover, for $g\in\{f,\Theta\}$, we have the stronger bounds for $t\in[1,T]$
\begin{equation}\label{boot3'}
\sum_{g\in\{f,\Theta\}}\big[\mathcal{E}_g(t)+\mathcal{B}_g(t)\big]\lesssim_{\delta}\eps_1^3.
\end{equation}

\end{proposition}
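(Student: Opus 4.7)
My plan is to carry out a standard weighted energy estimate on each of the four quantities $f$, $V'-1$, $\mathcal{H}$, $\Theta$, combining the Cauchy--Kowalevski (CK) gain coming from $\dot{A}_\ast<0$ (which produces the dissipation $\mathcal{B}_g$) with careful frequency-localized bounds on all nonlinear terms, and then absorb the result using the smallness parameter $\eps_1$. Differentiating \eqref{rec1}--\eqref{rec3.5} in $t$ yields, for each $g\in\{f,V'-1,\mathcal{H},\Theta\}$, an identity of the schematic form
\begin{equation*}
\frac{d}{dt}\mathcal{E}_g(t)+2|\dot{\mathcal{B}}_g(t)|=2\,\mathrm{NL}_g(t)+\mathrm{BDY}_g(t)+\mathrm{LOT}_g(t),
\end{equation*}
where $|\dot{\mathcal{B}}_g|$ is the integrand of $\mathcal{B}_g$, $\mathrm{NL}_g$ collects the main bilinear interactions, $\mathrm{BDY}_g$ encodes the boundary contributions in the elliptic identity \eqref{rea26}, and $\mathrm{LOT}_g$ contains lower-order pieces (including the extra factor $\langle t\rangle^2/(|\xi|^2+k^2\langle t\rangle^2)$ in $\mathcal{E}_\Theta$, which produces a favorable sign when differentiated in $t$, and the $(\langle t\rangle/\langle\xi\rangle)^{3/2}$ factor in $\mathcal{E}_{\mathcal{H}}$, which combines with the $-\mathcal{H}/t$ in \eqref{rea25} to yield a controlled term). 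The goal is to show each right-hand side is bounded by $\eps_1\,|\dot{\mathcal{B}}_g(s)|$ plus a negligible remainder, integrate in $s\in[1,t]$, and close.

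Before attacking the nonlinearities I would establish the elliptic estimate $\mathcal{E}_\Theta(t)+\mathcal{B}_\Theta(t)\gtrsim$ an $A_k$-weighted control of $\Psi\phi$ weighted by $k^2\langle t\rangle^2/(|\xi|^2+k^2\langle t\rangle^2)$; this is the mechanism that captures inviscid damping of $\partial_vP_{\neq 0}\phi$ and $\partial_z\phi$ with the correct $\langle t\rangle^{-2}$ decay while losing no more than what the weights $A_k$ can absorb. The boundary contributions from applying $\partial_z^2+(\partial_v-t\partial_z)^2$ to $\Psi\phi$ produce terms of the form $[\Psi,\partial_v^2]\phi$ which, because $\Psi$ and its derivatives are supported strictly inside $[c_0,c_0+1]$ and lie in $\G^{1,3/4}\supset\G^{\lambda(t),1/2}$, are absorbed using the boundary decay of $\partial_v\phi|_{v=c_0,c_0+1}$ coming from the Green's function representation discussed after \eqref{eq:ellipticIntroRough}. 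For $V'-1$ and $\mathcal{H}$ the estimates are comparatively soft: the driving term $\mathcal{H}/t$ in \eqref{rea24} is controlled by $\mathcal{E}_{\mathcal{H}}$ thanks to the factor $(\langle t\rangle/\langle\xi\rangle)^{3/2}$ and the constant $K_\delta$, while the transport piece $\dot V\partial_v(V'-1)$ is handled by the weighted multiplicative inequalities of section 8 together with \eqref{rea27} which recovers $\dot V$ from $\mathcal{H}$; the equation \eqref{rea25} for $\mathcal{H}$ has a beneficial $-\mathcal{H}/t$ and bilinear forcings that are estimated as in $\mathrm{NL}_f$ after taking the zero mode in $z$.

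The core difficulty, and where all the weight design of section 7 is used, is the reaction term $V'\partial_vP_{\neq 0}\phi\,\partial_zf$ in \eqref{rea23.1} and the analogous terms in the $\Theta$-equation, in the regime where the output frequency $k\neq 0$ is close to the input frequency $\ell$ of $\phi$ and $|\eta|\gg \ell^2$ (the ``resonant, high-frequency input'' case). On the Fourier side this produces the oversized factor $|\eta|/(\ell^2\langle t-\eta/\ell\rangle^2)$ illustrated in \eqref{dEIn}--\eqref{imbaweight}, and the imbalanced ratio $A_\ell(t,\eta)/A_k(t,\xi)$ must supply the gain. Concretely I would split the reaction integral into the four regions determined by whether $t$ is resonant for $(k,\xi)$ and for $(\ell,\eta)$, and in each region invoke the appropriate bound from section 7 relating $b_k,b_R,b_{NR}$ to absorb the singular factor into $|\dot A_\ell(s,\eta)|A_\ell(s,\eta)$ (contributing to $|\dot{\mathcal{B}}_f|$) times the small $\eps_1$ coming from $\|f(s,\ell,\cdot)\|$; the non-reaction (transport) contributions, where frequencies are balanced, are absorbed into $|\dot{\mathcal{B}}_f|$ using the smoothed-in-$\xi$ version of the weights constructed via the averaging procedure described in section \ref{weightsdef}.

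Assuming the above bilinear bounds, the last step is bookkeeping: adding the four energy inequalities gives
\begin{equation*}
\sum_g\bigl[\mathcal{E}_g(t)+\mathcal{B}_g(t)\bigr]\leq \sum_g \mathcal{E}_g(1)+C\eps_1\sum_g\mathcal{B}_g(t)+C\eps_1^3,
\end{equation*}
and choosing $\eps_1$ small depending on $\delta_0$ (and $K_\delta$ for the $\mathcal{H}$ estimate) yields \eqref{boot3}. For $g\in\{f,\Theta\}$ the transport/reaction nonlinearities always contain a factor from $V'-1$, $\dot V$, $\mathcal{H}$, or $\phi$ itself whose energy is already $O(\eps_1^2)$, so the same bookkeeping gives the strengthened $O_\delta(\eps_1^3)$ bound \eqref{boot3'}. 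The principal obstacle, as anticipated, is the case-by-case analysis of the reaction term, which is why sections 7 and 8 (weight construction and weighted product inequalities) are invoked throughout and why the boundary cutoff $\Psi$ has to be placed with enough room between $\va_0$ and its support so that the loss of regularity at $v=c_0,c_0+1$ is exactly balanced by the rapid decay of $\partial_v\phi$ there.
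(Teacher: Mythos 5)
The high-level outline for $f$, $V'-1$, and $\mathcal{H}$ (energy estimate, symmetrization, CK absorption, frequency case analysis for the reaction term using the imbalanced weights) is consistent with the paper's strategy. However, there are two genuine gaps.

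First, your treatment of $\Theta$ is wrong. You propose to differentiate $\mathcal{E}_\Theta$ in time and claim that the factor $\frac{k^2\langle t\rangle^2}{|\xi|^2+k^2\langle t\rangle^2}$ in \eqref{rec3.5} ``produces a favorable sign when differentiated in $t$.'' It does not: this factor is increasing in $t$, so its time derivative contributes a \emph{positive} (unfavorable) term, and there is no accompanying damping in an evolution equation for $\Theta$ to compensate. In fact the paper never differentiates $\mathcal{E}_\Theta$ in time at all. Since $\Theta$ is defined purely by the elliptic identity \eqref{defgellip}, the paper bounds it pointwise in $t$: it uses \eqref{rea26} to decompose $\Theta=\Psi f+g_{11}+g_{12}+g_2+g_3$ as in \eqref{har4}, and then estimates each of the five pieces directly. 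The terms $\Psi f$, $g_{11}$, $g_{12}$ are handled with the product estimates of Lemma \ref{Multi0} and the bootstrap bounds on $f$, $V'-1$, $V''$, while the boundary terms $g_2,g_3$ (involving $\partial_v\Psi$, $\partial_v^2\Psi$) require the Green-function representation, the bounds on the Fourier coefficients $a_k^\iota(t)$ in Lemma \ref{dar20}, and the subtle trade-off between the exponential decay $e^{-c|k|}$ and the loss of regularity $e^{-|\xi-kt|^{2/3}}$ from the change of variables. This elliptic route is a key structural choice, not a detail, and your $d/dt$ route for $\Theta$ would need a completely separate justification that you have not supplied and that would be problematic given the sign issue.

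Second, your description of the $(V'-1)$--$\mathcal{H}$ system as ``comparatively soft'' and your final bookkeeping inequality, in which every nonlinear contribution is claimed to appear with a factor $\eps_1$ multiplying $\sum_g\mathcal{B}_g(t)$, both miss the genuinely quadratic (not cubic) coupling in \eqref{rea24}. The linear driving term $\mathcal{H}/t$ produces the bilinear energy exchange $\mathcal{L}_{1,2}(t)=2\Re\int_1^t\int A_R^2(s,\xi)s^{-1}\widetilde{\mathcal{H}}\,\overline{\widetilde{(V'-1)}}$ which carries no smallness factor and therefore cannot be absorbed by $C\eps_1\sum_g\mathcal{B}_g(t)$. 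The paper handles it (Lemma \ref{yar10}) by Cauchy--Schwarz followed by the precise weight inequality
\begin{equation*}
\frac{w_{NR}^2(s,\xi)}{w_R^2(s,\xi)}\lesssim_\delta\langle s\rangle^2\frac{\langle s\rangle}{\langle\xi\rangle}\left[\frac{\langle\xi\rangle^{1/2}}{\langle s\rangle^{1+\sigma_0}}+\Big|\frac{\partial_sw_{NR}}{w_{NR}}\Big|\right]\left[\frac{\langle\xi\rangle^{1/2}}{\langle s\rangle^{1+\sigma_0}}+\Big|\frac{\partial_sw_{R}}{w_{R}}\Big|\right],
\end{equation*}
and this is exactly where the large constant $K_\delta$ in \eqref{rec3}, \eqref{rec6} is needed: it lets $\mathcal{L}_{1,2}$ be split as $\tfrac12\mathcal{B}_{V'-1}$ plus a term that fits under $\mathcal{B}_{\mathcal{H}}$. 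Without identifying this mechanism the $\mathcal{H}$--$(V'-1)$ estimate does not close, so the step you describe as soft bookkeeping is in fact one of the places where the weight design is most delicate.
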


The proof of Proposition \ref{MainBootstrap} is the main part of this paper, and covers sections \ref{fimprov}--\ref{coimprov2}. In the next section we show how to use this proposition to prove our main theorem. As a matter of notation, all implicit constants in inequalities such as \eqref{boot3'} are allowed to depend on $\va_0$, here and in the rest of the paper.

\section{Proof of the main theorem}\label{mainProof}
In this section, we show how to use Proposition \ref{MainBootstrap} to prove our main Theorem \ref{maintheoremINTRO}. We will also need the following local regularity lemma.

\begin{lemma}\label{lm:persistenceofhigherregularity}
Assume that $s\in[1/4,3/4]$, $\lambda_0\in(0,1)$, $\vartheta\in(0,1/4]$, and that $\mathrm{supp}\,\omega_0\subseteq \T\times [\vartheta,1-\vartheta]$. Assume also that
\begin{equation}\label{ini1}
A:=\left\|\langle \nabla\rangle^3\,\omega_0\right\|_{\mathcal{G}^{\lambda_0,s}}<\infty,\qquad \int_{\mathbb{T}\times[0,1]}\omega_0(x,y)\,dxdy=0.
\end{equation}
 Let $\omega\in C([0,\infty):H^{10})$ denote the unique smooth solution of the system \eqref{Eur1}. Assume that for some $T>0$ and all $t\in[0,T]$,
 \begin{equation}\label{SupA1}
 \mathrm{supp}\,\omega(t)\subseteq \T\times [\vartheta/2,1-\vartheta/2].
 \end{equation}
   Then, for any $t\in[0,T]$ we have
\begin{equation}\label{ini2}
\begin{split}
\left\|\langle\nabla\rangle^3\,\omega(t)\right\|_{\mathcal{G}^{\lambda(t),s}}&\leq \exp{\Big[C_{\ast}\int_0^t(\|\omega(s)\|_{H^6}+1)ds\Big]}\|\langle\nabla\rangle^3\omega_0\|_{\mathcal{G}^{\lambda_0,s}},\\
\end{split}
\end{equation}
if we choose 
\begin{equation}\label{fdLam1}
\lambda(t):=\lambda_0\exp{\Big\{-C'_{\ast}A\,t\exp\Big[C_\ast\int_0^t(\|\omega(s)\|_{H^6}+1)ds\Big]-C_{\ast}'t\Big\}},
\end{equation}
where $C_\ast=C_\ast(\vartheta)$ and $C'_{\ast}(\vartheta)$ are suitable large constants.
\end{lemma}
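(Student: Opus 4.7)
The plan is to close a Gr\"onwall-type estimate on $E(t):=\|\langle\nabla\rangle^3\omega(t)\|_{\mathcal{G}^{\lambda(t),s}(\T\times\R)}^2$, in which the Cauchy-Kowalevski dissipation generated by the decreasing radius $\lambda(t)$ absorbs the one-derivative loss from the nonlinear transport. First I would fix a Gevrey-class cutoff $\chi\in C_c^\infty(0,1)$ with $\chi\equiv 1$ on $[\vartheta/3,1-\vartheta/3]$, so that $\chi\equiv 1$ on ${\rm supp}\,\omega(t)$ for every $t\in[0,T]$ by assumption \eqref{SupA1}, and extend $\omega$ by zero to $\T\times\R$. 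On ${\rm supp}\,\omega$ one may replace $y\partial_x\omega$ by $b(y)\partial_x\omega$, with $b$ a fixed Gevrey-class compactly-supported function equal to $y$ on ${\rm supp}\,\chi$, and $u\cdot\nabla\omega$ by $\nabla^\perp(\chi\psi)\cdot\nabla\omega$, since $\nabla\omega$ vanishes outside $\{\chi=1\}$.

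The key elliptic input is the bound $\|\chi\psi\|_{\mathcal{G}^{\lambda,s}(\T\times\R)}\lesssim_\vartheta\|\omega\|_{\mathcal{G}^{\lambda,s}(\T\times\R)}$, which I would derive from interior Gevrey regularity for the Dirichlet problem $\Delta\psi=\omega$ on $\T\times[0,1]$. Applying the commutator identity $\Delta(\chi\psi)=\chi\omega+[\Delta,\chi]\psi$ and inverting $\Delta$ on $\T\times\R$ (where $\chi\psi$ is compactly supported inside the strip), one gets
$\|\chi\psi\|_{\mathcal{G}^{\lambda,s}(\T\times\R)}\lesssim\|\chi\omega\|_{\mathcal{G}^{\lambda,s}}+\|[\Delta,\chi]\psi\|_{\mathcal{G}^{\lambda,s}}$;
the second term involves only $\partial_y\psi$ restricted to ${\rm supp}\,\nabla\chi$, which is compactly contained in $(0,1)$ and bounded in Gevrey by $\|\omega\|_{\mathcal{G}^{\lambda',s}}$ for some $\lambda'<\lambda$ (a small loss of radius that is absorbed by the Cauchy-Kowalevski dissipation below). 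The interior cutoff is essential because $\psi$ itself has nonzero normal derivatives at $y=0,1$, so its zero extension fails to be smooth and $\psi\notin\mathcal{G}^{\lambda,s}(\T\times[0,1])$ in the sense of \eqref{Gev2}; this is precisely the localization mechanism used around \eqref{defgellip}.

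With the elliptic bound in hand, standard Gevrey bilinear inequalities (cf.\ Appendix~A) give an energy estimate of the schematic form
\begin{equation*}
\frac{d}{dt}E(t)+|\dot\lambda(t)|\int_{\Z\times\R}\langle k,\xi\rangle^s e^{2\lambda\langle k,\xi\rangle^s}\langle k,\xi\rangle^6|\widetilde\omega(t,k,\xi)|^2\,dk\,d\xi\leq C_\ast\bigl(\|\omega(t)\|_{H^6}+1\bigr)E(t)+C_\ast'E(t)^{3/2},
\end{equation*}
where the linear-in-$E$ term collects the high-low product interactions (one factor placed in $H^6$) and the cubic term collects the high-high ones (all factors in the full Gevrey norm, using the algebra property for $s<1$). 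Since $\langle k,\xi\rangle^s\geq 1$, the cubic term is absorbed into the dissipation as soon as $|\dot\lambda(t)|\geq C_\ast'E(t)^{1/2}$, and Gr\"onwall on the remaining linear term produces $E(t)^{1/2}\leq A\,M(t)$, where $M(t):=\exp[C_\ast\int_0^t(\|\omega(s)\|_{H^6}+1)\,ds]$, which is exactly \eqref{ini2}. Substituting this bound back into the requirement $|\dot\lambda(t)|\geq C_\ast'(E(t)^{1/2}+1)$ and integrating yields precisely the ODE solution \eqref{fdLam1}.

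The main obstacle is the Gevrey elliptic estimate for $\chi\psi$: without the interior cutoff the bound would simply fail because of the boundary-layer loss of Gevrey regularity of $\psi$ itself, so one must carefully control the commutator $[\Delta,\chi]\psi$, whose support is strictly interior to $(0,1)$ but which still involves the unknown $\partial_y\psi$. Once this is granted, the rest is a clean Cauchy-Kowalevski scheme, and the somewhat unusual double-exponential structure of \eqref{fdLam1} is merely the result of closing the Gr\"onwall loop by feeding the bound for $E^{1/2}$ back into the defining ODE for $\lambda$.
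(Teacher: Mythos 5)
Your overall scheme -- localize with a Gevrey cutoff away from $y=0,1$, replace $\psi$ by $\chi\psi$ in the nonlinearity, prove an elliptic estimate for $\chi\psi$, then run a Cauchy--Kowalevski energy estimate with a decreasing radius $\lambda(t)$ and close by Gr\"onwall -- is exactly the paper's approach. But two of the links in your chain are incomplete or incorrect.

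First, the elliptic bound for $\chi\psi$ is the crux, and you assert rather than prove its hardest piece. Writing $\Delta(\chi\psi)=\chi\omega+[\Delta,\chi]\psi$ and inverting on $\T\times\R$ is fine, but the claim that $[\Delta,\chi]\psi=2\chi'\partial_y\psi+\chi''\psi$ is ``bounded in Gevrey by $\|\omega\|_{\mathcal{G}^{\lambda',s}}$ for some $\lambda'<\lambda$'' is exactly what needs proof, and it is not a small-radius-loss fact: $\partial_y\psi$ on $\operatorname{supp}\nabla\chi$ is controlled by $\omega$ because $\psi$ is harmonic in the two boundary collars where $\operatorname{supp}\nabla\chi$ lives, with boundary slopes $b_k^\iota=\partial_y\psi^\ast(k,\iota)$ governed by the Green's function against $\omega$, and it is the \emph{separation of supports} that forces $|b_k^\iota|\lesssim e^{-c|k|}\|\widetilde\omega(k,\cdot)\|$ and makes the commutator term smoothing (in fact much better than any Gevrey norm with a loss of radius -- see the estimates \eqref{ini15.3}--\eqref{ini18} and the proof of Lemma \ref{ineq6}). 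Without that quantitative boundary analysis you cannot break the apparent circularity in ``bound $\partial_y\psi$ near the boundary by $\omega$.''

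Second, your stated energy inequality and absorption condition are not consistent with the formula \eqref{fdLam1} you arrive at. The one-derivative loss in the transport terms is tamed by a mean-value/commutator estimate on the weight $e^{\lambda(t)\langle v\rangle^s}$; differentiating this weight in frequency produces a factor $\lambda(t)\langle v\rangle^{s-1}$, so the troublesome term in the energy estimate is of the schematic form $\lambda(t)\,E^{1/2}\cdot\mathrm{CK}$, \emph{not} $E^{3/2}$ (compare \eqref{ini50} and the term $\lambda(t)Y(Y')^2$ in the paper's Step 3). Consequently the absorption requirement is multiplicative, $|\dot\lambda(t)|/\lambda(t)\gtrsim E(t)^{1/2}+1$, which integrates to the exponential form \eqref{fdLam1} and in particular keeps $\lambda(t)>0$ for all time. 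Your additive condition $|\dot\lambda(t)|\geq C_\ast'(E(t)^{1/2}+1)$ integrates instead to linear decay $\lambda(t)=\lambda_0-C_\ast'\int_0^t(E^{1/2}+1)$, which neither matches \eqref{fdLam1} nor stays positive; and the exponential $\lambda$ of \eqref{fdLam1} does \emph{not} satisfy your additive ODE since $\lambda(t)\leq\lambda_0<1$. The $\lambda(t)$ factor in the loss term is not cosmetic -- it is what makes the self-consistent choice of $\lambda$ exponential rather than linear, and its absence in your derivation is a genuine gap.
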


In our case, the support assumption (\ref{SupA1}) on $\omega(t)$ is satisfied if $T=2$, as a consequence of the smallness and the support assumptions on $\omega_0$, and the standard local well-posedness theory in Sobolev spaces of the Euler equation \eqref{Eur1}. In fact, as we show below, it is satisfied as part of the bootstrap argument for all $t\in[0,\infty)$.

The conclusions of the lemma can be deduced by following the argument in \cite{Vicol}, see Theorem 6.1 and Remark 6.2.  For the sake of completeness, we provide a self-contained proof  (in our simpler particular case) using the Fourier transforms in appendix \ref{appendix}. The arguments are well known, see also \cite{Foias,Levermore,Vicol2}. An important aspect of the regularity theory for Euler equations in Gevrey spaces is the shrinking in time, at a fast rate, of the radius of convergence (the function $\lambda(t)$ in Lemma \ref{lm:persistenceofhigherregularity}). 

We are now ready to  proceed to the proof of our main theorem.

\begin{proof}[Proof of Theorem \ref{maintheoremINTRO}] For the purpose of proving continuity in time of the energy functionals $\mathcal{E}_g$ and $\mathcal{B}_g$, we make the {\it{a priori} } assumption that $\omega_0\in\mathcal{G}^{1,2/3}$. Indeed, we may replace $\omega_0$ with $\omega_0^n:=\omega_0\ast K_n$, where $K_n\in\mathcal{G}^{1,2/3}$ is an approximation of the identity sequence and $\mathrm{supp}\,K_n\subseteq [-2^{-n},2^{-n}]$ (see  subsection \ref{GevCut} for an axplicit construction of such kernels). Then we prove uniform bounds in $n$ on the solutions generated by the mollified data $\omega_0^n$ , and finally pass to the limit $n\to\infty$ on any finite time interval $[0,T]$.

{\bf{Step 1.}} Given small data $\omega_0$ satisfying \eqref{Eur0} we apply first Lemma \ref{lm:persistenceofhigherregularity}. Therefore $\omega\in C([0,2]:\G^{\lambda_1,2/3})$, $\lambda_1>0$, satisfies the quantitative estimates
\begin{equation}\label{smallnessofomega}
\sup_{t\in[0,2]}\big\|e^{\beta'_0\langle k,\xi\rangle^{1/2}}\widetilde{\omega}(t,k,\xi)\big\|_{L^2_{k,\xi}}\lesssim \eps,
\end{equation}
for some $\beta'_0=\beta'_0(\beta_0,\vartheta_0)>0$. Using also Lemma \ref{ineq6}, and letting $\Psi'\in\mathcal{G}^{1,3/4}$ denote a cutoff function supported in $[\vartheta_0/8,1-\vartheta_0/8]$ and equal to $1$ in $[\vartheta_0/4,1-\vartheta_0/4]$, the localized stream function $\Psi'\psi$ satisfies similar bounds,
\begin{equation}\label{smallnessofpsi}
\sup_{t\in[0,2]}\big\|\langle k,\xi\rangle^2e^{\beta'_0\langle k,\xi\rangle^{1/2}}\widetilde{(\Psi'\psi)}(t,k,\xi)\big\|_{L^2_{k,\xi}}\lesssim \eps.
\end{equation}

Using the formula (see \eqref{rea15})
$$(\partial_yv-1)(t,y)=-\frac{1}{t}\int_0^t\langle\omega\rangle(\tau,y)\,d\tau,$$
and Lemma \ref{lm:Gevrey}, it follows that, for some constant $K_1=K_1(\beta_0,\vartheta_0)$,
\begin{equation}\label{proo1}
|D^\alpha_y v(t,y)|\leq K_1^m(m+1)^{2m},\qquad \partial_yv(t,y)\ge 1/2,
\end{equation}
for any $(t,y)\in[0,2]\times\mathbb{R}$, $m\geq 1$, and $|\alpha|\in[1,m]$. Using now Lemma \ref{GPF} (ii) and letting 
$\mathcal{Y}(t,v)$ denote the inverse of the function $y\to v(t,y)$, we have
\begin{equation}\label{proo2}
|D^\alpha_v \mathcal{Y}(t,v)|\leq K_2^m(m+1)^{2m},
\end{equation}
for any $(t,v)\in[0,2]\times\mathbb{R}$, $m\geq 1$, and $|\alpha|\in[1,m]$. Recall the formulas (see Proposition \ref{ChangedEquations})
\begin{equation*}
\begin{split}
&f(t,z,v)=\omega(t,z+tv,\mathcal{Y}(t,v)),\qquad\phi(t,z,v)=\psi(t,z+tv,\mathcal{Y}(t,v)),\\
&(V'-1)(t,v)=(\partial_yv-1)(t,\mathcal{Y}(t,v)),\qquad \mathcal{H}=1-V'-\langle f\rangle.
\end{split}
\end{equation*}
Using these identities, the bounds \eqref{smallnessofomega}--\eqref{proo2}, and Lemma \ref{GPF} (i), we have
\begin{equation*}
\sup_{t\in[0,2]}\big\|e^{\beta''_0\langle k,\xi\rangle^{1/2}}\widetilde{f}(t,k,\xi)\big\|_{L^2_{k,\xi}}
+\sup_{t\in[0,2]}\big\|e^{\beta''_0\langle k,\xi\rangle^{1/2}}\widetilde{\Theta}(t,k,\xi)\big\|_{L^2_{k,\xi}}\lesssim \eps,
\end{equation*}
for some constant $\beta''_0=\beta''_0(\beta_0,\vartheta_0)>0$. The desired bounds \eqref{boot1} follow if $\delta_0$ is sufficiently small and $\eps_1\approx \eps^{2/3}$, see \eqref{reb11}-\eqref{reb13}.

Assume now that the solution $\omega$ satisfies the bounds in the hypothesis of Proposition \ref{MainBootstrap} on a given interval $[0,T]$, $T\geq 1$. We would like to show that the support of $\omega(t)$ is contained in $\T\times\big[3\vartheta_0/2,1-3\vartheta_0/2\big]$ for any $t\in[0,T]$. Indeed, for this we notice that only transportation in the $y$ direction, given by the term $u^y\,\partial_y\omega$, could enlarge the support of $\omega$ in $y$. Notice that on $\mathbb{T}\times[\vartheta_0,1-\vartheta_0]$,
\begin{equation}\label{uyM}
u^y(t,x,y)=(\partial_x\psi)(t,x,y)=\partial_zP_{\neq0}\big(\Psi\phi\big)(t,x-tv(t,y),v(t,y)).
\end{equation}
Using the bound on $\mathcal{E}_{\Theta}$ from \eqref{boot2}, we can bound, for all $t\in[0,T]$,
\begin{equation}\label{integuy}
\sup_{(x,y)\in \mathbb{T}\times [\vartheta_0,1-\vartheta_0]}\big|u^y(x,y,t)\big|\lesssim \epsilon_1\langle t\rangle^{-2}.
\end{equation}
Since the support of $\omega(0)$ is contained in $\mathbb{T}\times[2\vartheta_0,1-2\vartheta_0]$, we can conclude that ${\rm supp}\,\omega(t)\subseteq \mathbb{T}\times\big[3\vartheta_0/2,1-3\vartheta_0/2\big]$ for any $t\in[0,T]$, 
as long as $\epsilon_1$ is sufficiently small.  

We can now use a simple continuity argument to show that if $\omega_0\in\G^{1,2/3}$ has compact support in $\T\times [2\vartheta_0,1-2\vartheta_0]$ and satisfies the assumptions \eqref{Eur0}, then the solution $\omega$ is in $C([0,\infty):\G^{1,3/5})$, has compact support in $[\vartheta_0,1-\vartheta_0]$ and satisfies $\|\langle\omega\rangle(t)\|_{H^{10}}\lesssim\eps^{2/3}$  for all $t\in[0,\infty)$. Moreover, if we define $f,\Theta,V',\mathcal{H}$ as before then
\begin{equation}\label{proo6}
\sum_{g\in\{f,V'-1,\mathcal{H},\Theta\}}\big[\mathcal{E}_g(t)+\mathcal{B}_g(t)\big]\leq\eps_1^2\qquad\text{ for any }t\in[0,\infty).
\end{equation}
Additionally, $f,\Theta$ satisfy the stronger bounds
\begin{equation}\label{proo6'}
\sum_{g\in\{f,\Theta\}}\big[\mathcal{E}_g(t)+\mathcal{B}_g(t)\big]\lesssim_{\delta}\eps_1^3\qquad\text{ for any }t\in[0,\infty).
\end{equation}
{\bf Step 2.} We turn now to the main conclusions stated in Theorem \ref{maintheoremINTRO}.
By the definition of the weights we have $A_k(t,\xi)\ge e^{\delta_0\langle k,\xi\rangle^{1/2}}$ and $A_R(t,\xi)\ge A_{NR} (t,\xi)\geq e^{\delta_0\langle\xi\rangle^{1/2}}$ for any $(t,\xi,k)\in[0,\infty)\times\R\times\Z$. Using \eqref{proo6} and \eqref{proo6'} it follows that
\begin{equation}\label{uniformf}
\big\|e^{\delta_0\langle k,\xi\rangle^{1/2}}\widetilde{f}(t,k,\xi)\big\|_{L^2_{k,\xi}}+\big\|\mathbf{1}_{k\neq 0}e^{\delta_0\langle k,\xi\rangle^{1/2}}\widetilde{\Theta}(t,k,\xi)\big\|_{L^2_{k,\xi}}+\big\|e^{\delta_0\langle\xi\rangle^{1/2}}\widetilde{(V'-1)}(t,\xi)\big\|_{L^2_{\xi}}\lesssim \epsilon_1,
\end{equation}
and
\begin{equation}\label{uniformf'}
\big\|e^{\delta_0\langle k,\xi\rangle^{1/2}}\widetilde{f}(t,k,\xi)\big\|_{L^2_{k,\xi}}+\big\|\mathbf{1}_{k\neq 0}e^{\delta_0\langle k,\xi\rangle^{1/2}}\widetilde{\Theta}(t,k,\xi)\big\|_{L^2_{k,\xi}}\lesssim_{\delta} \epsilon_1^{3/2}. 
\end{equation}

We show first that if $t\in[0,\infty)$ then
 \begin{equation}\label{DdtvM}
\big\|e^{\delta_1\langle \xi\rangle^{1/2}}\widetilde{\partial_tv}(t,\xi)\big\|_{L^2_\xi}\lesssim\frac{\epsilon_1^2}{\langle t\rangle^2},
\end{equation}
for some $\delta_1=\delta_1(\beta_0,\vartheta_0)>0$. We start from \eqref{rea15}, so
\begin{equation}\label{proo7}
\partial_tv(t,y)=\frac{1}{t}\left[\,-\frac{1}{t}\int_0^t\langle u^x\rangle(\tau,y)\,d\tau+\langle u^x\rangle(t,y)\right]=\frac{1}{t^2}\int_0^t\int_{\tau}^t\partial_s\langle u^x\rangle(s,y)\,ds \,d\tau.
\end{equation}
By the perturbed Euler equation (\ref{perEu}) for $u^x$, we have
\begin{equation}\label{decayofuxDt}
\partial_t\langle u^x\rangle(t,y)+\langle u^y\partial_y u^x\rangle(t,y)=0.
\end{equation}
Using $u^x=-\partial_y\psi$, $u^y=\partial_x\psi$ we see that 
\begin{equation}\label{decayofuxVorticity}
\big\langle u^y\partial_y u^x\big\rangle=-\big\langle\partial_x\psi\,\partial_y^2\psi\big\rangle.
\end{equation}
Moreover $\partial_t\langle u^x\rangle(t,y)=0$ if $y\in[0,\vartheta_0]\cup[1-\vartheta_0,1]$, see \eqref{rea9}. Therefore we can choose an appropriate cutoff function $\Xi(v)\in\mathcal{G}^{1,2/3}$ supported in $[c_0+2\vartheta_0/3,c_0+1-2\vartheta_0/3]$ and equal to $1$ in $[c_0+\vartheta_0,c_0+1-\vartheta_0]$,   so that equation \eqref{decayofuxDt} can be rewritten as 
\begin{equation}\label{dtduxMain}
\partial_t\langle u^x\rangle(t,y)=\Xi(v(t,y))\big\langle\partial_x\psi\,\partial_y^2\psi\big\rangle=F(t,v(t,y)),
\end{equation}
where (see \eqref{rea12}--\eqref{rea12.1})
\begin{equation}\label{proo10}
\begin{split}
F(t,v):=&\,\Xi(v)\,|V'|^2\big\langle\partial_z(\Psi\,\phi)\,(\partial_v-t\partial_z)^2P_{\neq0}(\Psi\,\phi)\big\rangle(t,v)\\
&+\,\Xi(v)\,V''\big\langle\partial_z(\Psi\phi)\,(\partial_v-t\partial_z)P_{\neq0}(\Psi\phi)\big\rangle(t,v).
\end{split}
\end{equation}
The key point is that on the right hand side of \eqref{proo10}
the term with $t^2$ coefficient vanishes
$$t^2\big\langle\partial_z(\Psi\,\phi)\,\partial_z^2P_{\neq0}(\Psi\,\phi)\big\rangle=0.$$
Hence $\partial_t\langle u^x\rangle(t,y)$ decays with rate $\langle t\rangle^{-3}$. More precisely, using \eqref{uniformf} and Lemma \ref{Multi0} we have
\begin{equation}\label{eq:decaydtdvMain}
\big\|e^{(\delta_0/2)\langle\xi\rangle^{1/2}}\widetilde{F}(t,\xi)\big\|_{L^2_{\xi}}\lesssim \epsilon_1^{2}\langle t\rangle^{-3}.
\end{equation} 

Notice that $\partial_v\mathcal{Y}(t,v)=(1/V')(t,v)$, where $\mathcal{Y}(t,.)$ the inverse of the function $y\to v(t,y)$. Using \eqref{uniformf} and Lemmas \ref{lm:Gevrey} and \ref{GPF}, we have, for some constant $K_3=K_3(\beta_0,\vartheta_0)$,
\begin{equation}\label{proo12}
|D^\alpha_v \mathcal{Y}(t,v)|\leq K_3^m(m+1)^{2m},\qquad |D^\alpha_y v(t,y)|\leq K_3^m(m+1)^{2m},
\end{equation}
for all $m\geq 1$ and $|\alpha|\in[1,m]$. Using again Lemma \ref{GPF} and \eqref{dtduxMain}--\eqref{eq:decaydtdvMain}, we have
\begin{equation}\label{eq:cubicdecaydtu}
\big\|e^{\delta_1\langle\xi\rangle^{1/2}}\widetilde{\langle\partial_tu^x\rangle}(t,\xi)\big\|_{L^2_\xi}\lesssim \epsilon_1^2\langle t\rangle^{-3},
\end{equation}
for some $\delta_1=\delta_1(\beta_0,\vartheta_0)>0$.
From \eqref{proo7}, we get that 
\begin{equation*}
\begin{split}
\left\|e^{\delta_1\langle\xi\rangle^{1/2}}\,\widetilde{\partial_tv}(t,\xi)\right\|_{L^2_\xi}&\lesssim\frac{1}{\langle t\rangle^2}\int_0^t\int_{\tau}^t\left\|e^{\delta_1\langle\xi\rangle^{1/2}}\,\widetilde{\langle\partial_tu^x\rangle}(s,\xi)\right\|_{L^2_\xi}\,ds\,d\tau\lesssim \frac{\epsilon_1^2}{\langle t\rangle^2},
\end{split}
\end{equation*}
which gives \eqref{DdtvM}. Consequently $v_{\infty}(y):=\lim_{t\to\infty}v(t,y)$ exists in $ \mathcal{G}^{\delta_1,1/2}$, and we have
\begin{equation}\label{convergencedvty}
\left\|e^{\delta_1\langle\xi\rangle^{1/2}}\big[\widetilde{v}(t,\xi)-\widetilde{v_{\infty}}(\xi)\big]\right\|_{L^2_\xi}\lesssim \epsilon_1^2\langle t\rangle^{-1}.
\end{equation}

{\bf Step 3.} We prove now convergence of the profile $f$. Using (\ref{rea23.1}) and ${\rm supp}\,f\subseteq[c_0+\vartheta_0,c_0+1-\vartheta_0]$, we have
\begin{equation}\label{LfM}
\partial_tf-V'\partial_vP_{\neq 0}(\Psi\phi)\,\partial_zf+\dot{V}\,\partial_vf+V'\partial_z(\Psi\phi)\,\partial_vf=0.
\end{equation}
Using the bounds \eqref{proo6} on $\mathcal{E}_{\Theta}$, we get that
\begin{equation}\label{QDPhi}
\big\|\mathbf{1}_{k\neq0}\,e^{\delta_0\langle k,\xi\rangle^{1/2}}\widetilde{\Psi\phi}(t,k,\xi)\big\|_{L^2_{k,\xi}}\lesssim\frac{\epsilon_1}{\langle t\rangle^2}.
\end{equation}
Since $\dot{V}(t,v)=\partial_tv(t,y)$, we can use (\ref{DdtvM}),  \eqref{proo12}, and Lemma \ref{GPF} to conclude that
\begin{equation}\label{DdVM}
\left\|e^{\delta_1'\langle \xi\rangle^{1/2}}\,\widetilde{\dot{V}}(t,\xi)\right\|_{L^2_\xi}\lesssim\frac{\epsilon_1}{\langle t\rangle^2},
\end{equation}
for some $\delta_1'=\delta'_1(\beta_0,\vartheta_0)>0$.
Using (\ref{LfM})-(\ref{DdVM}),  and the bounds (\ref{uniformf}), we have
\begin{equation}\label{QDtf}
\big\|e^{\delta_2\langle k,\xi\rangle^{1/2}}\widetilde{\partial_tf}(t,k,\xi)\big\|_{L^2_{k,\xi}}\lesssim \frac{\epsilon_1^2}{\langle t\rangle^2},
\end{equation}
for some $\delta_2=\delta_2(\beta_0,\vartheta_0)>0$. In particular $f(t,z,v)$ converges to $f_{\infty}(z,v)$ in $\mathcal{G}^{\delta_2,1/2}$, with
\begin{equation}\label{convergencef0}
\left\|e^{\delta_2\langle k,\xi\rangle^{1/2}}\big[\,\widetilde{f}(t,k,\xi)-\widetilde{f_{\infty}}(k,\xi)\,\big]\right\|_{L^2_{k,\xi}}\lesssim\frac{\epsilon_1^2}{\langle t\rangle}.
\end{equation}

Therefore, using (\ref{convergencedvty}), (\ref{convergencef0}), and Lemma \ref{GPF},  we have
$$\omega(t,x+tv(t,y),y)=f(t,x,v(t,y))$$
converges to $f_{\infty}(x,v_{\infty}(y))$ with
\begin{equation}\label{Cf1}
\left\|e^{\delta_2'\langle k,\xi\rangle^{1/2}}\big[\,\mathcal{F}(\omega(t,x+tv(t,y),y))(t,k,\xi)-\mathcal{F}(f_{\infty}(x,v_{\infty}(y)))(k,\xi)\,\big]\right\|_{L^2_{k,\xi}}\lesssim \frac{\epsilon_1^2}{\langle t\rangle}.
\end{equation}

{\bf Step 4.} We are now ready to prove the bounds (\ref{convergence})-(\ref{convergenceuy}). Let $F_{\infty}(x,y):=f_{\infty}(x,v_{\infty}(y))$. The bounds (\ref{convergence}) follow from (\ref{Cf1}) and the definitions of $v(t,y)$ and $\Phi(t,y)$. Moreover, let
$$u_{\infty}(y):=\lim_{t\to\infty}\langle u^x\rangle(t,y).$$
The existence of the limit in $\G^{\delta_1,1/2}$ follows from (\ref{eq:cubicdecaydtu}), and the bounds (\ref{convergenceofux}) and (\ref{AsymPhi}) follow from definitions. To identify $u_\infty$, according to \eqref{AsymPhi2}, from the Biot-Savart law and the equation for stream function $\psi$, we have
$\langle u^x\rangle=-\langle\partial_y\psi\rangle$
and
$$\partial^2_{y}\langle\psi\rangle(t,y)=\langle\omega\rangle(t,y)=\langle f\rangle(t,v(t,y))$$
with $\langle\psi\rangle|_{y=0,\,1}=0$. The desired identity \eqref{AsymPhi2} follows from the convergence of $f(t,x,v(t,y))$ and $v(t,y)$, see  (\ref{convergencef0}) and (\ref{convergencedvty}).

To prove the decay estimates \eqref{convergencetomean} and \eqref{convergenceuy} for $u^x-\langle u^x\rangle$ and $u^y$ we use properties of the stream function $\psi$. The starting point is the equation
\begin{equation*}
\Delta\psi(t,x,y)=\omega(t,x,y)=f(t,x-tv(t,y),v(t,y)),\qquad \psi(x,0)=\psi(x,1)=0
\end{equation*}
for $(x,y)\in \mathbb{T}\times[0,1]$. Taking partial Fourier transforms in $x$, we get
\begin{equation}\label{formulapsikMain}
\psi^\ast(t,k,y)=-\int_0^1G_k(y,z)\,f^\ast(t,k,v(t,z))\,e^{-iktv(t,z)}\,dz,
\end{equation}
and
\begin{equation}\label{formulapsikMain'}
(\partial_y\psi^\ast)(t,k,y)=-\int_0^1\partial_yG_k(y,z)\,f^\ast(t,k,v(t,z))\,e^{-iktv(t,z)}\,dz.
\end{equation}
See Lemma \ref{ineq6} for such identities and for the formulas of the Green functions $G_k$. Moreover
$$\big|u^y(t,x,y)\big|\lesssim\sum_{k}|k|\big|\psi^\ast(t,k,y)\big|\lesssim \sup_{k\neq0}|k|^3\big|\psi^\ast(t,k,y)\big|$$
and
$$\big|u^x(t,x,y)-\langle u^x\rangle(t,y)\big|\lesssim\sum_{k\neq0}\big|\partial_y\psi^\ast(t,k,y)\big|\lesssim \sup_{k\neq0}|k|^2\big|(\partial_y\psi^\ast)(t,k,y)\big|.$$
We can now integrate by parts in $z$ in the identities (\ref{formulapsikMain}) (twice) and (\ref{formulapsikMain'}) (once), and use the formulas \eqref{ini15.1} and the smoothness of the functions $f$ and $v$. Thus
\begin{equation}\label{pduy}
\big|u^y(t,x,y)\big|\lesssim_{\delta}\frac{\epsilon_1^{3/2}}{\langle t\rangle^2},\qquad \big|u^x(t,x,y)-\langle u^x\rangle(t,y)\big|\lesssim_{\delta}\frac{\epsilon_1^{3/2}}{\langle t\rangle}.
\end{equation}
The desired bounds (\ref{convergencetomean}) and (\ref{convergenceuy}) follow, which completes the proof of Theorem \ref{maintheoremINTRO}.
\end{proof}

\section{Improved control of the normalized vorticity $f$}\label{fimprov}

We prove first the main bounds \eqref{boot3} for the function $f$. More precisely:

\begin{proposition}\label{BootImp1}
With the definitions and assumptions in Proposition \ref{MainBootstrap}, we have
\begin{equation}\label{nar1}
\mathcal{E}_f(t)+\mathcal{B}_f(t)\lesssim_\delta\eps_1^3\qquad\text{ for any }t\in[1,T].
\end{equation}
\end{proposition}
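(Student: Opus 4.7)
The plan is to run a weighted energy estimate on $\mathcal{E}_f$, extract the favorable Cauchy--Kowalevski contribution (which is exactly $\mathcal{B}_f$) from the monotonicity of the weights $A_k$, and then control the resulting trilinear nonlinear contributions via the weighted multiplicative inequalities developed in sections 7--8. Concretely, differentiating \eqref{rec1} in time, inserting the evolution equation \eqref{rea23.1}, and applying Plancherel produces
$$\frac{d}{dt}\mathcal{E}_f(t) = -2\,\mathcal{CK}_f(t) + \mathcal{N}(t),$$
where $\mathcal{CK}_f(t):=\sum_k \int_{\R} |\dot{A}_k(t,\xi)| A_k(t,\xi) |\widetilde{f}(t,k,\xi)|^2\,d\xi \ge 0$ is the sign-definite contribution coming from $\dot A_k \le 0$, and $\mathcal{N}(t)$ collects the three trilinear terms coming from $V'\partial_v P_{\neq 0}\phi\,\partial_z f$, $\dot V\,\partial_v f$, and $V'\partial_z\phi\,\partial_v f$. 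After integration on $[1,T]$ and use of \eqref{boot1} to bound $\mathcal{E}_f(1) \lesssim \eps_1^3$, the identity $\mathcal{B}_f(t) = \int_1^t \mathcal{CK}_f(s)\,ds$ reduces \eqref{nar1} to proving $\big|\int_1^t \mathcal{N}(s)\,ds\big| \lesssim_\delta \eps_1^3$.

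Since $f$ is supported in $\mathbb{T}\times[c_0+\va_0, c_0+1-\va_0]$ by Proposition \ref{ChangedEquations}, in each nonlinear piece the stream function $\phi$ may be freely replaced by $\Psi\phi$. The elliptic identity \eqref{defgellip} identifies $\Psi\phi$, up to boundary-correction terms handled separately via the Gevrey regularity of the localized Green function (as sketched in the introduction), with the solution of $(\partial_z^2 + (\partial_v - t\partial_z)^2)(\Psi\phi) = \Theta + (\text{boundary remainder})$. In Fourier variables this reads $\widetilde{\Psi\phi}(t,k,\xi) \approx -\widetilde{\Theta}(t,k,\xi)/(k^2 + (\xi - kt)^2)$, so $\mathcal{E}_\Theta$ supplies both the $\langle t\rangle^{-2}$ decay at non-resonant frequencies and the sharp Gevrey-$1/2$ regularity of the velocity inputs, while $V'-1$ and $\dot V$ are controlled through $\mathcal{E}_{V'-1}$ and, using \eqref{rea27}, through $\mathcal{E}_{\mathcal{H}}$.

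I would then decompose each trilinear piece of $\mathcal{N}$ by a paraproduct according to which factor carries the highest frequency, distinguishing reaction, transport, and remainder regions. The main obstacle is the reaction-type contribution of $V'\partial_v P_{\neq 0}\phi\,\partial_z f$ in which $\phi$ sits at the highest frequency $(\ell,\eta)$ near its critical time $t\approx\eta/\ell$: by \eqref{partialphi1} the symbol of $\partial_v P_{\neq 0}\phi$ carries an almost-full derivative loss $|\eta|/\ell^2$, which cannot be absorbed by $\mathcal{CK}_f$ in any balanced manner. The algebraic asymmetry highlighted around \eqref{imbaweight} is what saves us: if $|t - \eta/\ell|\ll |\eta|/\ell^2$, then for the output frequency $(k,\xi)$ with $\xi = \eta + O(1)$, $k = \ell + O(1)$, and $k\neq\ell$, one has $|t - \xi/k|\gtrsim |\xi|/k^2$, so $t$ is non-resonant for $(k,\xi)$. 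The imbalanced weight ratio $A_\ell(t,\eta)/A_k(t,\xi)$ is precisely designed to absorb the loss $(|\eta|/\ell^2)/\langle t - \eta/\ell\rangle$, reducing the reaction integrand to a shape controlled by a $\mathcal{CK}_\Theta$-type density (bounded in time by $\mathcal{B}_\Theta^{1/2}\le \eps_1$) times a Gevrey-bounded convolution factor in $f$, which yields the required $O(\eps_1^3)$ bound after Cauchy--Schwarz in time.

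The remaining transport and remainder pieces of the paraproduct are handled by more routine arguments: in the transport region the two large frequencies coincide up to $O(1)$, so the weight ratio is essentially balanced and commutator-type estimates apply with $\mathcal{CK}_f^{1/2}$ recovering the needed derivative gain; in the remainder region the symbol $|\eta|/\ell^2\cdot\langle t - \eta/\ell\rangle^{-2}$ is directly integrable in frequency and time against the bootstrap norms. The contributions involving $\dot V$ and the zero-mode factor $\langle V'\partial_z\phi\rangle$ are tamed using the enhanced $(\langle t\rangle/\langle\xi\rangle)^{3/2}$ weight in $\mathcal{E}_{\mathcal{H}}$ together with \eqref{rea25}. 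Assembling everything and invoking the bootstrap bounds \eqref{boot2}, namely $\mathcal{E}_g + \mathcal{B}_g \le \eps_1^2$ for $g \in \{f, V'-1, \mathcal{H}, \Theta\}$, together with the cubic initial bound from \eqref{boot1}, each trilinear piece is at worst $O(\eps_1 \cdot \eps_1 \cdot \eps_1) = O(\eps_1^3)$, which closes \eqref{nar1}.
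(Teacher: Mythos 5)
Your plan matches the paper's proof of Proposition~\ref{BootImp1}: compute $\frac{d}{dt}\mathcal{E}_f$, absorb the sign-definite Cauchy--Kowalevski piece into $\mathcal{B}_f$, reduce to the trilinear space-time integral \eqref{nar2}, split $\partial_sf$ into $\mathcal{N}_1,\mathcal{N}_2,\mathcal{N}_3$, convert $\Psi\phi$-factors to $\Theta$-controlled inputs (Lemmas~\ref{nar13}, \ref{nar33}), and run the paraproduct/frequency-region decomposition $R_0,\ldots,R_3$ fed into the weighted bilinear estimates of section~\ref{BilinWeights}. Three points you should make explicit or correct before you could call this a proof. First, the crucial mechanism in the transport region ($R_1$, low frequency carried by the $\phi$- or $\dot V$-factor) is the \emph{symmetrization}: because $H_2,H_4,\dot V$ are real, the integral is rewritten with the antisymmetric multiplier $\ell A_k^2(s,\xi)-kA_\ell^2(s,\eta)$ (resp.\ $\eta A_k^2-\xi A_\ell^2$), as in \eqref{exN1}; Lemmas~\ref{TLXH1}--\ref{TLXH3}(i) then bound exactly this \emph{difference} by $\sqrt{|A_k\dot A_k|}\sqrt{|A_\ell\dot A_\ell|}\,A_\sigma\,e^{-c\langle\sigma,\rho\rangle^{1/2}}$, which is where the ``commutator structure recovers the derivative'' actually happens. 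Your phrase ``commutator-type estimates apply'' gestures at this but you never display the symmetrized kernel, and without it the transport term is not controllable. Second, no ``boundary remainder'' appears here: $\Theta$ is \emph{defined} as $(\partial_z^2+(\partial_v-t\partial_z)^2)(\Psi\phi)$ in \eqref{defgellip}, so the relation $\widetilde{\Psi\phi}=-\widetilde{\Theta}/(k^2+(\xi-kt)^2)$ is an identity, and section~4 simply feeds in the bootstrap hypothesis on $\mathcal{E}_\Theta,\mathcal{B}_\Theta$; the boundary work is entirely in section~\ref{ellip}, where the bound on $\Theta$ is improved, not here. Third, the remark about ``the zero-mode factor $\langle V'\partial_z\phi\rangle$'' is a slip, since $\langle\partial_z\phi\rangle\equiv 0$ anyway; the route by which $\mathcal{E}_{\mathcal{H}}$ and its enhanced weight enter this proposition is through $\dot V$ (Lemma~\ref{nar8}\,(ii), used in the $\mathcal{N}_3$ estimate), not through $\mathcal{N}_2$, which is handled directly via $\Theta$ in Lemma~\ref{nar33}.
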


The rest of the section is concerned with the proof of this proposition. Recall the definitions \eqref{rec1}--\eqref{rec6.5}. We calculate
\begin{equation}\label{nar1.5}
\begin{split}
\frac{d}{dt}\E_f(t)=&\sum_{k\in \mathbb{Z}}\int_\R 2\dot{A}_k(t,\xi)A_k(t,\xi)\big|\widetilde{f}(t,k,\xi)\big|^2\,d\xi\\
&+2\Re\sum_{k\in \mathbb{Z}}\int_{\R}A_k^2(t,\xi)\partial_t\widetilde{f}(t,k,\xi)\overline{\widetilde{f}(t,k,\xi)}\,d\xi.
\end{split}
\end{equation}
Therefore, since $\partial_tA_k\leq 0$, for any $t\in[1,T]$ we have
\begin{equation*}
\begin{split}
&\E_f(t)+\int_1^t\sum_{k\in \mathbb{Z}}\int_\R 2|\dot{A}_k(s,\xi)|A_k(s,\xi)\big|\widetilde{f}(s,k,\xi)\big|^2\,d\xi ds\\
&=\E_f(1)+\int_1^t\Big\{2\Re\sum_{k\in \mathbb{Z}}\int_{\R}A_k^2(s,\xi)\partial_s\widetilde{f}(s,k,\xi)\overline{\widetilde{f}(s,k,\xi)}\,d\xi\Big\}ds.
\end{split}
\end{equation*}
Since $\E_f(1)\lesssim\eps_1^3$ (see \eqref{boot1}), for \eqref{nar1} it suffices to prove that, for any $t\in[0,T]$,
\begin{equation}\label{nar2}
\Big|2\Re\int_1^t\sum_{k\in \mathbb{Z}}\int_{\R}A_k^2(s,\xi)\partial_s\widetilde{f}(s,k,\xi)\overline{\widetilde{f}(s,k,\xi)}\,d\xi ds\Big|\lesssim_\delta \eps_1^3.
\end{equation}

Before we proceed with the proof of \eqref{nar2}, we record several bounds on the functions $\mathcal{H}$, $\dot{V}$, $V'-1$, and $V''=\partial_v(V'-1)+(V'-1)\partial_v(V'-1)$. In sections 4-6 we will often use bounds on the main weights proved in sections \ref{weights} and \ref{BilinWeights}.

\begin{lemma}\label{nar8}
(i) For any $t\in[1,T]$ and $F\in\{V'-1,(V'-1)^2,\langle\partial_v\rangle^{-1}V''\}$ we have
\begin{equation}\label{nar4}
\begin{split}
&\int_{\R}A_R^2(t,\xi)\big|\widetilde{F}(t,\xi)\big|^2\,d\xi\lesssim_\delta\eps_1^2,\\
&\int_1^t\int_{\R}|\dot{A}_R(s,\xi)|A_R(s,\xi)\big|\widetilde{F}(s,\xi)\big|^2\,d\xi ds\lesssim_\delta\eps_1^2.
\end{split}
\end{equation}

(ii) Moreover, for any $t\in[1,T]$,
\begin{equation}\label{nar6}
\begin{split}
&\int_{\R}A_{NR}^2(t,\xi)\big(1+\langle\xi\rangle^{-3/2}\langle t\rangle^{3/2}\big)\big|\widetilde{\mathcal{H}}(t,\xi)\big|^2\,d\xi\lesssim_\delta\eps_1^2,\\
&\int_1^t\int_{\R}|\dot{A}_{NR}(s,\xi)|A_{NR}(s,\xi)\big(1+\langle\xi\rangle^{-3/2}\langle s\rangle^{3/2}\big)\big|\widetilde{\mathcal{H}}(s,\xi)\big|^2\,d\xi ds\lesssim_\delta\eps_1^2.
\end{split}
\end{equation}
and
\begin{equation}\label{nar7}
\begin{split}
&\int_{\R}A_{NR}^2(t,\xi)\big(\langle\xi\rangle^{2}\langle t\rangle^2+\langle\xi\rangle^{1/2}\langle t\rangle^{7/2}\big)\big|\widetilde{\dot{V}}(t,\xi)\big|^2\,d\xi\lesssim_\delta\eps_1^2,\\
&\int_1^t\int_{\R}|\dot{A}_{NR}(s,\xi)|A_{NR}(s,\xi)\big(\langle\xi\rangle^{2}\langle t\rangle^2+\langle\xi\rangle^{1/2}\langle t\rangle^{7/2}\big)\big|\widetilde{\dot{V}}(s,\xi)\big|^2\,d\xi ds\lesssim_\delta\eps_1^2.
\end{split}
\end{equation}
\end{lemma}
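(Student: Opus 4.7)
The plan is to reduce each of the six bounds to the bootstrap hypotheses \eqref{boot2} combined with the structural identities \eqref{rea23} and \eqref{rea27}, invoking either pointwise comparisons between the weights $A_R,A_{NR},A_k$ (from section 7) or the weighted bilinear multiplicative inequalities (from section 8).

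For part (i), the case $F=V'-1$ is immediate from the definitions \eqref{rec2} and \eqref{rec5}. For $F=(V'-1)^2$ I would invoke the bilinear multiplicative inequality for the weight $A_R$ from section 8, writing $(V'-1)^2=(V'-1)\cdot(V'-1)$ and using $\mathcal{E}_{V'-1},\mathcal{B}_{V'-1}\lesssim \eps_1^2$. For the space-time $\mathcal{B}$-part one distributes the factor $|\dot{A}_R|A_R$ between the two inputs via a standard high-low paraproduct, placing $|\dot{A}_R|A_R$ on the higher-frequency factor and $A_R^2$ on the lower one. For $F=\langle\partial_v\rangle^{-1}V''$ I would rewrite $V''=V'\partial_vV'=\partial_v(V'-1)+\tfrac{1}{2}\partial_v[(V'-1)^2]$ via \eqref{rea27}, so $\langle\partial_v\rangle^{-1}V''$ becomes a bounded Fourier multiplier applied to $V'-1$ and to $(V'-1)^2$, reducing the bound to the two preceding cases.

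For part (ii), the $\mathcal{H}$-bound weighted by $\langle t\rangle^{3/2}\langle\xi\rangle^{-3/2}$ is literally $\mathcal{E}_{\mathcal{H}}(t)/K_\delta^{2}$ and $\mathcal{B}_{\mathcal{H}}(t)/K_\delta^{2}$; for the unweighted piece I would use $\mathcal{H}=-(V'-1)-\langle f\rangle$ from \eqref{rea23}. Since $b_R\leq b_{NR}$ gives $A_{NR}\leq A_R$ pointwise by \eqref{reb13}, and since $A_{NR}(t,\xi)$ is comparable to the $k=0$ weight $A_0(t,\xi)$, the first piece is dominated by $\mathcal{E}_{V'-1}+\mathcal{B}_{V'-1}$ while the second is dominated by the $k=0$ slice of $\mathcal{E}_f+\mathcal{B}_f$. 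For the $\dot{V}$-bound I start from $\partial_v\dot{V}=\mathcal{H}/(tV')$ of \eqref{rea27}, invert $\partial_v$ (a factor $1/(i\xi)$ in Fourier, with the zero mode absorbed via $\dot{V}(t,c_0)=0$ and the compact support of $\mathcal{H}$), and expand $1/V'=\sum_{n\geq 0}(1-V')^n$ as a Neumann series, convergent in $A_R$-norm by the bilinear inequality once $\eps_1$ is small enough. The target weights then collapse exactly: $\langle\xi\rangle^2\langle t\rangle^2/(t\xi)^{2}\sim 1$ and $\langle\xi\rangle^{1/2}\langle t\rangle^{7/2}/(t\xi)^{2}\sim \langle\xi\rangle^{-3/2}\langle t\rangle^{3/2}$, which matches the two pieces of the $\mathcal{H}$-bound \eqref{nar6} just established.

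The principal technical obstacle is the multiplicative step. The imbalanced weight $A_R$ does not form an algebra in any naive sense; products are controlled only through the carefully engineered bilinear inequalities of section 8, which in turn impose a case analysis based on which input frequency carries the resonance. The Neumann-series inversion of $V'$ is likewise subtle because it must be summed in a non-algebra weighted space, and the smallness of $\eps_1$ is used essentially at that step. A minor subtlety is verifying that the comparison $A_{NR}(t,\xi)\simeq A_0(t,\xi)$ survives both in $\mathcal{E}_\ast$ and in $\mathcal{B}_\ast$; this is a purely technical check on the definitions in section 7.
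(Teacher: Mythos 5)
Your proposal is correct and follows essentially the same route as the paper: part (i) via the bootstrap on $V'-1$ together with Lemma \ref{Multi0}(i) and Lemma \ref{lm:Multi} (with $V''=\partial_v(V'-1)+(V'-1)\partial_v(V'-1)$ reducing the last case to the first two); the unweighted half of \eqref{nar6} via $\mathcal{H}=1-V'-\langle f\rangle$, $A_{NR}\leq\min(A_R,A_0)$ from \eqref{reb13}, and \eqref{vfc30.5}; and \eqref{nar7} via the Neumann expansion $\partial_v\dot{V}=\sum_{n\geq 0}(-1)^n(V'-1)^n\,\mathcal{H}/t$ combined with the compact-support (uncertainty-principle) treatment of the low frequencies of $\dot{V}$.
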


\begin{proof}
(i) The bounds \eqref{nar4} follow from the bootstrap assumption $\E_{V'-1}+\mathcal{B}_{V'-1}\leq\eps_1^2$ and the bilinear estimates in Lemma \ref{Multi0} (i) and Lemma \ref{lm:Multi}. 

(ii) Recall that $A_{NR}(t,\xi)\leq \min(A_R(t,\xi),A_0(t,\xi))$ (see \eqref{reb13}), and the bounds \eqref{vfc30.5}. The estimates \eqref{nar6} follow from the identity $\mathcal{H}=1-V'-\langle f\rangle$ and the bootstrap assumptions on $V'-1$ and $f$ in \eqref{boot2}. 

To prove the bounds \eqref{nar7} we examine \eqref{rea23}, thus $\mathcal{H}/t=V'\partial_v{\dot{V}}$. Therefore
\begin{equation*}
\partial_v{\dot{V}}=\sum_{n\geq 0}(-1)^n(V'-1)^n\cdot (\mathcal{H}/t).
\end{equation*}
The functions $(V'-1)^n$ satisfy bounds similar to \eqref{nar4}, with additional decaying $2^{-n}$ factors in the right-hand side, while $\mathcal{H}$ satisfies \eqref{nar6}. The small frequencies $|\xi|\ll 1$ of $\dot{V}$ can be controlled by the uncertainty principle, due to the compact support in $v$ of $\dot{V}$ (write $\dot{V}(t,v)=\dot{V}(t,v)\cdot \Psi(v)$ and estimate the low frequencies in $L^\infty$). The desired bounds \eqref{nar7} follow using again Lemma \ref{Multi0} (i) and Lemma \ref{lm:Multi}.
\end{proof}

We examine now the space-time integrals in the left-hand side of \eqref{nar2}, and use the identity \eqref{rea23.1}. Therefore, recalling the support property of $f$,
\begin{equation}\label{nar8.1}
\begin{split}
&\partial_sf=\mathcal{N}_1+\mathcal{N}_2+\mathcal{N}_3,\\
&\mathcal{N}_1:=V'\partial_vP_{\neq 0}(\Psi\phi)\,\partial_zf,\qquad \mathcal{N}_2:=-V'\partial_z(\Psi\phi)\,\partial_vf,\qquad \mathcal{N}_3:=-\dot{V}\,\partial_vf.
\end{split}
\end{equation}
We bound the contributions of the terms $\mathcal{N}_1$, $\mathcal{N}_2$ and $\mathcal{N}_3$ in the next three subsections. 

We will sometimes use the following elementary lemma:

\begin{lemma}\label{veryelem}
Let $a,\,b\in \mathbb{R}^d$ with $d\ge 1$, $\beta\in [0,1]$. Then
\begin{equation}\label{<a>to|b|}
 \,\,\langle a\rangle \ge \beta\langle b\rangle \qquad{\rm implies}\qquad |a|\ge \beta|b|-1,
\end{equation}
\begin{equation}\label{|a|to<b>}
|a|\ge\beta|b|\qquad{\rm implies}\qquad \langle a\rangle\ge \beta\langle b\rangle,
\end{equation}
and
\begin{equation}\label{b>a}
    \langle b\rangle\ge \beta \langle a-b\rangle \qquad{\rm implies} \qquad \langle a\rangle^{1/2} \leq \langle b\rangle ^{1/2}+\big(1-\sqrt{\beta}/2\big)\langle a-b\rangle^{1/2}.
    \end{equation} 

\end{lemma}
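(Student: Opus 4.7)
The plan is to verify each of the three implications by elementary algebra involving the Japanese bracket $\langle\cdot\rangle=\sqrt{1+|\cdot|^2}$. None of them offers serious resistance; the only one needing any care is the last. For the first, I would use the trivial estimate $\langle a\rangle\leq 1+|a|$, which combined with the hypothesis and $\langle b\rangle\geq|b|$ gives $|a|\geq\langle a\rangle-1\geq\beta\langle b\rangle-1\geq\beta|b|-1$. For the second, squaring the hypothesis yields $|a|^2\geq\beta^2|b|^2$, and since $\beta\in[0,1]$ we have $1\geq\beta^2$, hence $\langle a\rangle^2=1+|a|^2\geq\beta^2+\beta^2|b|^2=\beta^2\langle b\rangle^2$, and the claim follows by taking square roots.

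For the third implication I would first establish the sub-additivity bound $\langle a\rangle\leq\langle b\rangle+\langle a-b\rangle$. Starting from $|a|^2=|b|^2+2b\cdot(a-b)+|a-b|^2\leq|b|^2+2|b||a-b|+|a-b|^2$, adding $1$, and then using $|b||a-b|\leq\langle b\rangle\langle a-b\rangle$ together with the slack $1\leq\langle b\rangle\langle a-b\rangle$, one obtains $\langle a\rangle^2\leq\langle b\rangle^2+2\langle b\rangle\langle a-b\rangle+\langle a-b\rangle^2=(\langle b\rangle+\langle a-b\rangle)^2$.

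The final step is the sharpened square-root inequality. Setting $s:=\langle b\rangle^{1/2}$ and $t:=\langle a-b\rangle^{1/2}$, the hypothesis becomes $s\geq\sqrt{\beta}\,t$, and thanks to the sub-additivity above the claim reduces to $\sqrt{s^2+t^2}\leq s+(1-\sqrt{\beta}/2)\,t$. Squaring and rearranging turns this into $\bigl(1-(1-\sqrt{\beta}/2)^2\bigr)t\leq 2(1-\sqrt{\beta}/2)\,s$, i.e.\ $\tfrac{\sqrt{\beta}}{2}(2-\tfrac{\sqrt{\beta}}{2})\,t\leq(2-\sqrt{\beta})\,s$, which after substituting $s\geq\sqrt{\beta}\,t$ collapses to the numerical inequality $3\sqrt{\beta}/2\leq 2$, valid since $\beta\leq 1$. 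The main obstacle, if one can call it that, is purely arithmetic bookkeeping: arranging the simplification so that the coefficient comes out as precisely $1-\sqrt{\beta}/2$ rather than some weaker constant, which is why the slack $1\leq\langle b\rangle\langle a-b\rangle$ used in the sub-additivity step (together with the explicit factorization $1-c^2=(\sqrt{\beta}/2)(2-\sqrt{\beta}/2)$ with $c=1-\sqrt{\beta}/2$) is essential.
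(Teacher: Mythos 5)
Your proof is correct and follows essentially the same route as the paper's: for \eqref{b>a} both arguments pass through the sub-additivity $\langle a\rangle\leq\langle b\rangle+\langle a-b\rangle$, square, and then invoke the hypothesis $\langle b\rangle\geq\beta\langle a-b\rangle$ to reduce to the same elementary numerical inequality in $\sqrt{\beta}$ (your $3\sqrt{\beta}/2\leq2$ is exactly the paper's $\sqrt{\beta}(2-\sqrt{\beta})+(1-\sqrt{\beta}/2)^2\geq1$). The paper dismisses \eqref{<a>to|b|}, \eqref{|a|to<b>}, and the sub-additivity step as following from definitions, whereas you spell them out, but the substance is identical.
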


\begin{proof} The estimates \eqref{<a>to|b|} and \eqref{|a|to<b>} follow from definitions. For \eqref{b>a} it suffices to prove that
\begin{equation}\label{a>b2}
\langle b\rangle+(2-\sqrt{\beta})\langle b\rangle^{1/2}\langle a-b\rangle^{1/2}+(1-\sqrt{\beta}/2)^2\langle a-b\rangle\ge \langle b\rangle+\langle a-b\rangle.
\end{equation}
Simplifying (\ref{a>b2}) and then using $ \langle b\rangle\ge \beta \langle a-b\rangle$, we reduce to prove
\begin{equation*}
\sqrt{\beta}\,(2-\sqrt{\beta})+(1-\sqrt{\beta}/2)^2\ge 1.
\end{equation*}
This follows from a simple calculation and the assumption $\beta\in[0,1]$.
\end{proof}

\subsection{The nonlinearity $\mathcal{N}_1$} We prove first the following lemma:

\begin{lemma}\label{nar10}
With $\mathcal{N}_1$ defined as above, for any $t\in[1,T]$ we have
\begin{equation}\label{nar11}
\Big|2\Re\int_1^t\sum_{k\in \mathbb{Z}}\int_{\R}A_k^2(s,\xi)\widetilde{\mathcal{N}_1}(s,k,\xi)\overline{\widetilde{f}(s,k,\xi)}\,d\xi ds\Big|\lesssim_\delta \eps_1^3.
\end{equation}
\end{lemma}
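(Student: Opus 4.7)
The plan is to bound the trilinear space-time integral by a careful Fourier/paraproduct decomposition of $\mathcal{N}_1$. First, I would split $V'=1+(V'-1)$ to separate a main quadratic contribution $\partial_v P_{\neq 0}(\Psi\phi)\partial_z f$ from a cubic correction containing $(V'-1)$; the latter is handled by combining the bilinear weighted inequalities of Section \ref{BilinWeights} with the uniform control on $V'-1$ provided by Lemma \ref{nar8}(i), so the essential task is to estimate the main term. For that, I would rewrite $\widetilde{(\Psi\phi)}(s,\ell,\eta)$ through the elliptic identity underlying the definition \eqref{defgellip}, giving, up to remainders that absorb the contributions of $V'-1$ and $V''$ in the same way,
$$|\widetilde{(\Psi\phi)}(s,\ell,\eta)|\lesssim \frac{|\widetilde{\Theta}(s,\ell,\eta)|}{\ell^2+(\eta-\ell s)^2}+\text{l.o.t.},$$
so that the quantity to estimate becomes a trilinear integral in $(s,k,\xi,\ell,\eta)$ with the kernel $|\eta(k-\ell)|\,[\ell^2+(\eta-\ell s)^2]^{-1}$ acting on one copy of $\widetilde{\Theta}$ and two copies of $\widetilde{f}$ (one weighted by $A_k^2$).

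Next I would perform a paraproduct-style frequency decomposition of the convolution into three regions: the reaction region $\mathcal{R}$ where $\langle\ell,\eta\rangle\gtrsim 2\langle k-\ell,\xi-\eta\rangle$ and $\Theta$ carries the high frequency, the transport region $\mathcal{T}$ where $f$ carries the high frequency, and the balanced region where the two are comparable. In $\mathcal{R}$ one has $\langle k,\xi\rangle\sim\langle\ell,\eta\rangle$, and by \eqref{b>a} the weight ratio $A_k(s,\xi)/A_\ell(s,\eta)$ is bounded up to a Gevrey tail $e^{C\sqrt{\delta}\langle k-\ell,\xi-\eta\rangle^{1/2}}$ absorbed by the spare exponential on $\widetilde{f}(s,k-\ell,\xi-\eta)$; inside $\mathcal{R}$ I would isolate the non-resonant times, where $|s-\eta/\ell|\gtrsim|\eta|/\ell^2$ or $|\eta|\lesssim\ell^2$, and close them by straight Cauchy--Schwarz using the Cauchy--Kowalevski terms $\mathcal{B}_f$ and $\mathcal{B}_\Theta$. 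In $\mathcal{T}$ the weights satisfy $A_k(s,\xi)\lesssim A_{k-\ell}(s,\xi-\eta)\,e^{C\sqrt{\delta}\langle\ell,\eta\rangle^{1/2}}$, and the small factor $|\eta|/[\ell^2+(\eta-\ell s)^2]\lesssim 1$ is harmless; the only subtle point is to avoid losing a $\partial_z$ on $f$, which I would handle by a mild derivative reassignment through a commutator exploiting the smoothness of $A_k$ in $\xi$ built into the averaged weights of Section \ref{weights}. The balanced region is estimated by a direct Cauchy--Schwarz using both $\mathcal{E}_f$ and $\mathcal{E}_\Theta$.

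The main obstacle is the resonant reaction subcase, where $k\neq\ell$, $|\eta|\gg\ell^2$, and $|s-\eta/\ell|\ll|\eta|/\ell^2$, producing the apparently unbounded multiplier $|\eta|/\ell^2$. Here one invokes the imbalanced-weight mechanism \eqref{imbaweight}, a consequence of the construction of $b_k,b_R,b_{NR}$ in Section \ref{weights}, in the quantitative form
$$\frac{|\eta(k-\ell)|}{\ell^2(1+|s-\eta/\ell|)}\,A_k(s,\xi)\lesssim \sqrt{|\dot A_k(s,\xi)|\,A_k(s,\xi)}\,\sqrt{|\dot A_\ell(s,\eta)|\,A_\ell(s,\eta)}\,e^{C\sqrt{\delta}\langle k-\ell,\xi-\eta\rangle^{1/2}},$$
so that a double Cauchy--Schwarz in $(s,k,\xi)$ and $(s,\ell,\eta)$ converts the bad factor into $\sqrt{\mathcal{B}_f(t)\,\mathcal{B}_\Theta(t)}$ times a uniform Gevrey norm of $f$. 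Under the bootstrap hypothesis \eqref{boot2} and the improved bound \eqref{boot3'} on $\mathcal{E}_\Theta,\mathcal{E}_f$ this yields the desired estimate $\lesssim_\delta\eps_1^{3}$. All remaining cases, together with the $(V'-1)$-correction, reduce to the same mechanism with one extra power of $\eps_1$ coming from Lemma \ref{nar8}(i).
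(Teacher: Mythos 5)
Your proposal follows the same overall architecture as the paper — split off $V'-1$, convert $\Psi\phi$ to $\Theta$ via the elliptic identity, decompose by frequency trichotomy, invoke the imbalanced weights for the resonant reaction — so the plan is in the right spirit. But two of your three regions are handled in a way that, as written, would not close, and there is a minor logical slip.

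\textbf{Transport region.} Your statement that in $\mathcal{T}$ the multiplier factor is ``harmless'' and that the ``only subtle point is to avoid losing a $\partial_z$ on $f$'' misses the central difficulty. With $\Theta$ at low frequency and both copies of $f$ at high frequency, the pointwise weight comparison $A_k(s,\xi)\lesssim A_{k-\ell}(s,\xi-\eta)e^{C\sqrt{\delta}\langle\ell,\eta\rangle^{1/2}}$ only bounds the integrand by $A_k\widetilde f \cdot A_{k-\ell}\widetilde f$ in $L^\infty_s$, and the resulting time integral $\int_1^t(\cdots)\,ds$ diverges: no smallness in $s$ has been produced. The essential step — which you gesture at but do not isolate — is the antisymmetrization of the multiplier, writing the integrand as $\big[\ell A_k^2(s,\xi)-kA_\ell^2(s,\eta)\big]$ and then proving the \emph{commutator} estimate of \eqref{TLXH1.1}: this difference is bounded by $\sqrt{|A_k\dot A_k|(s,\xi)}\sqrt{|A_\ell\dot A_\ell|(s,\eta)}A_\sigma(s,\rho)e^{-c\langle\sigma,\rho\rangle^{1/2}}$, producing Cauchy--Kowalevski factors on \emph{both} copies of $f$. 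It is the square-integrability in time of $\sqrt{|A_k\dot A_k|}\widetilde f$ (the $\mathcal B_f$ budget) that closes the transport region, not a Gevrey convolution estimate. A ``mild derivative reassignment'' that does not output these CK factors cannot work.

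\textbf{Reaction region.} Your stated quantitative imbalance replaces the paper's $A_\sigma(s,\rho)\,e^{-(\delta_0/200)\langle\sigma,\rho\rangle^{1/2}}$ on the third slot by the much smaller $e^{C\sqrt{\delta}\langle k-\ell,\xi-\eta\rangle^{1/2}}$. That is a strictly stronger claim than \eqref{TLXH1.2}, and it is not justified by \eqref{imbaweight}: the rigorous imbalance \eqref{nonRtR} carries an unavoidable correction factor of order $A_\sigma$ (in the paper's labeling of frequencies) coming from \eqref{vfc27}--\eqref{eq:comparisonweights1} and the triangle inequality on $\langle\cdot\rangle^{1/2}$. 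The correct conclusion places the full weight $A_\ell e^{-(\delta_0/200)\langle\ell,\eta\rangle^{1/2}}$ on the low-frequency copy of $f$, and that is exactly what allows it to be paired with $\mathcal E_f^{1/2}$. You would need to either reproduce that factor or verify directly that the stronger estimate holds, which it does not in general. Finally, note that invoking \eqref{boot3'} to close a lemma that is itself part of the proof of Proposition \ref{BootImp1} is circular; fortunately \eqref{boot2} alone suffices and is what the paper uses.
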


The rest of this subsection is concerned with the proof of this lemma. Let
\begin{equation}\label{nar12}
H_1:=\partial_vP_{\neq 0}(\Psi\phi),\qquad H_2:=V'\partial_vP_{\neq 0}(\Psi\phi).
\end{equation}

\begin{lemma}\label{nar13}
For any $t\in[1,T]$ and $a\in\{1,2\}$ we have
\begin{equation}\label{nar14}
\begin{split}
&\sum_{k\in \mathbb{Z}\setminus\{0\}}\int_{\R}A_k^2(t,\xi)\frac{\langle t\rangle^2}{|\xi/k|^2+\langle t\rangle^2}\frac{\langle t-\xi/k\rangle^4}{(\langle\xi\rangle/k^2)^2}\big|\widetilde{H_a}(t,k,\xi)\big|^2\,d\xi\lesssim_\delta\eps_1^2,\\
&\int_1^t\sum_{k\in \mathbb{Z}\setminus\{0\}}\int_{\R}|\dot{A}_k(s,\xi)|A_k(s,\xi)\frac{\langle s\rangle^2}{|\xi/k|^2+\langle s\rangle^2}\frac{\langle s-\xi/k\rangle^4}{(\langle\xi\rangle/k^2)^2}\big|\widetilde{H_a}(s,k,\xi)\big|^2\,d\xi ds\lesssim_\delta\eps_1^2.
\end{split}
\end{equation}
\end{lemma}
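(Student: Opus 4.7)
The plan is to reduce the bounds in \eqref{nar14} to the energy and dissipation bounds on $\Theta$ furnished by the bootstrap assumption \eqref{boot2}, and then to handle the extra factor $V'$ appearing in $H_2$ by a weighted product estimate.

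First I would use the elliptic identity \eqref{defgellip}, which in Fourier gives
\[
\widetilde{\Theta}(t,k,\xi) = -(k^2 + (\xi-kt)^2)\,\widetilde{\Psi\phi}(t,k,\xi) = -k^2\langle t-\xi/k\rangle^2\,\widetilde{\Psi\phi}(t,k,\xi),\qquad k\neq 0.
\]
Hence $\widetilde{H_1}(t,k,\xi) = -i\xi\,\widetilde{\Theta}(t,k,\xi)/(k^2\langle t-\xi/k\rangle^2)$, and substituting this into the left-hand side of \eqref{nar14} with $a=1$, together with the identity $|\xi/k|^2 + \langle t\rangle^2 = (|\xi|^2 + |k|^2\langle t\rangle^2)/k^2$, collapses the integrand exactly to
\[
A_k^2(t,\xi)\,\frac{|k|^2\langle t\rangle^2}{|\xi|^2 + |k|^2\langle t\rangle^2}\cdot\frac{|\xi|^2}{\langle\xi\rangle^2}\,|\widetilde{\Theta}(t,k,\xi)|^2.
\]
Since $|\xi|^2/\langle\xi\rangle^2\leq 1$, this is pointwise dominated by the integrand defining $\mathcal{E}_\Theta$ in \eqref{rec3.5}. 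The first inequality in \eqref{nar14} for $a=1$ therefore follows immediately from $\mathcal{E}_\Theta(t)\leq\eps_1^2$, and the corresponding time-integrated bound follows in the same way from $\mathcal{B}_\Theta(t)\leq\eps_1^2$.

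Next, to treat $a=2$, I would decompose $H_2 = V' H_1 = H_1 + (V'-1) H_1$. The $H_1$ contribution is already handled above. For the remaining piece, since $V'-1$ depends only on $v$ its Fourier transform sits at $k=0$, so the product is a pure $\xi$-convolution,
\[
\widetilde{(V'-1)H_1}(t,k,\xi) = \int_\R \widetilde{(V'-1)}(t,\xi-\eta)\,\widetilde{H_1}(t,k,\eta)\,d\eta.
\]
Invoking the weighted bilinear estimates from Section \ref{BilinWeights} (in particular Lemma \ref{lm:Multi}), I would split this convolution according to whether $|\eta|\geq|\xi-\eta|$ or $|\eta|<|\xi-\eta|$ and transfer the anisotropic target weight onto the factor carrying the dominant frequency. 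In the first region $\langle t-\xi/k\rangle\sim\langle t-\eta/k\rangle$ and $\langle\xi\rangle\sim\langle\eta\rangle$ up to harmless constants, so the entire weight in \eqref{nar14} transfers cleanly onto $H_1$; in the second region the weight $A_R$ goes onto $V'-1$ and $H_1$ is controlled in a weighted $L^\infty_\xi$ norm controlled by the $a=1$ case already established. The attendant Gevrey mismatch between $\langle k,\xi\rangle^{1/2}$ and $\langle k,\eta\rangle^{1/2}+\langle\xi-\eta\rangle^{1/2}$ is absorbed using \eqref{b>a} and the exponential margin $e^{\sqrt{\delta}\langle\cdot\rangle^{1/2}}$ built into the weights at \eqref{reb11}--\eqref{reb12}.

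The main technical issue I anticipate is the time-integrated bound, where the factor $|\dot A_k(t,\xi)|A_k(t,\xi)$ must also be split across the two factors in the convolution. This is handled by a pointwise CK-triangle inequality of the form $|\dot A_k(t,\xi)|A_k(t,\xi)\lesssim|\dot A_k(t,\eta)|A_k(t,\eta)\cdot e^{C\sqrt{\delta}\langle\xi-\eta\rangle^{1/2}}+|\dot A_R(t,\xi-\eta)|A_R(t,\xi-\eta)\cdot e^{C\sqrt{\delta}\langle k,\eta\rangle^{1/2}}$, which follows from the explicit form of $\lambda'(t)$ in \eqref{reb10.5} and the weight derivative bounds established in Section \ref{weights}. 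Combined with Cauchy--Schwarz in $\eta$, the bounds on $V'-1$ from Lemma \ref{nar8}(i), and the $H_1$ bound proved above, this yields \eqref{nar14} for $a=2$ with an extra $\eps_1$ factor to spare that is not needed here.
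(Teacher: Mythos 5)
Your argument for $a=1$ is exactly the paper's: unwinding \eqref{defgellip} gives $\widetilde{H_1}(t,k,\xi)=-i\xi\,\widetilde{\Theta}(t,k,\xi)/(k^2\langle t-\xi/k\rangle^2)$, and with $|\xi/k|^2+\langle t\rangle^2=(|\xi|^2+k^2\langle t\rangle^2)/k^2$ the integrand collapses to $\frac{\xi^2}{\langle\xi\rangle^2}$ times the $\mathcal{E}_\Theta$ (resp. $\mathcal{B}_\Theta$) integrand, so \eqref{boot2} finishes it. For $a=2$ you also follow the paper's route: write $H_2=H_1+(V'-1)H_1$, reduce to a weighted bilinear estimate via Lemma \ref{Multi0}(ii), and absorb Gevrey losses using the $e^{\sqrt\delta\langle\cdot\rangle^{1/2}}$ margin.

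Two imprecisions in your $a=2$ argument are worth flagging. First, in the region $|\eta|\gtrsim|\xi-\eta|$ the assertion ``$\langle t-\xi/k\rangle\sim\langle t-\eta/k\rangle$ up to harmless constants'' is false: the ratio can grow like $1+|\xi-\eta|/|k|$, which is unbounded. The paper's estimate \eqref{nar17} handles precisely this by showing the full anisotropic factor transfers at the cost of $e^{\delta\min(\langle\xi-\eta\rangle,\langle k,\eta\rangle)^{1/2}}$, which is then absorbed by the Gevrey margin — since you do reach for that margin immediately afterward, this is more a misstatement than a real gap, but as written it would not survive scrutiny. Second, the additive ``CK-triangle inequality'' you state for $|\dot A_k A_k|$ is not in the form that plugs into the bilinear framework: what is needed (and what \eqref{nar16} provides) is a bound on $|(\dot A_k A_k)(t,\xi)|^{1/2}$ by $\big[|(\dot A_R/A_R)(t,\xi-\eta)|^{1/2}+|(\dot A_k/A_k)(t,\eta)|^{1/2}\big]A_R(t,\xi-\eta)A_k(t,\eta)$ times the integrable decay, so that after Cauchy--Schwarz the dot-factor lands on exactly one side of the product. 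Your version would need to be reshaped into that multiplicative structure before Lemma \ref{Multi0}(ii) applies; the ingredients (\eqref{TLX7}, \eqref{vfc30.7}, \eqref{DtVMulti}) are the right ones.
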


\begin{proof} The bounds on $H_1$ follow directly from the bootstrap assumptions on $\mathcal{E}_\Theta$ and $\mathcal{B}_\Theta$, and the definitions \eqref{defgellip}. 

Notice that $H_2=H_1+(V'-1)H_1$.  We use Lemma \ref{Multi0} (ii) to prove the bounds \eqref{nar14} for $a=2$. In view of \eqref{nar4} and \eqref{nar14} (with $a=1$), it suffices to prove the multiplier bounds
\begin{equation}\label{nar15}
\begin{split}
A_k(t,\xi)&\frac{\langle t\rangle}{|\xi/k|+\langle t\rangle}\frac{\langle t-\xi/k\rangle^2}{\langle\xi\rangle/k^2}\\
&\lesssim_\delta A_R(t,\xi-\eta)\cdot A_k(t,\eta)\frac{\langle t\rangle}{|\eta/k|+\langle t\rangle}\frac{\langle t-\eta/k\rangle^2}{\langle\eta\rangle/k^2}\cdot \{\langle\xi-\eta\rangle^{-2}+\langle k,\eta\rangle^{-2}\}
\end{split}
\end{equation}
and
\begin{equation}\label{nar16}
\begin{split}
\big|(\dot{A}_k A_k)&(t,\xi)\big|^{1/2}\frac{\langle t\rangle}{|\xi/k|+\langle t\rangle}\frac{\langle t-\xi/k\rangle^2}{\langle\xi\rangle/k^2}\lesssim_\delta \left[\big|(\dot{A}_R/A_R)(t,\xi-\eta)\big|^{1/2}+\big|(\dot{A}_k/A_k)(t,\eta)\big|^{1/2}\right]\\
&\times A_R(t,\xi-\eta)\cdot A_k(t,\eta)\frac{\langle t\rangle}{|\eta/k|+\langle t\rangle}\frac{\langle t-\eta/k\rangle^2}{\langle\eta\rangle/k^2}\cdot \{\langle\xi-\eta\rangle^{-2}+\langle k,\eta\rangle^{-2}\},
\end{split}
\end{equation}
for any $t\in[1,T]$, $\xi,\eta\in\R$, and $k\in\mathbb{Z}\setminus\{0\}$.

To prove \eqref{nar15}--\eqref{nar16} we use Lemma \ref{TLX40}. In addition, by considering the cases $|\xi-\eta|\leq 10|k,\eta|$ and $|\xi-\eta|\geq 10|k,\eta|$, it is easy to see that
\begin{equation}\label{nar17}
\frac{\langle t\rangle}{|\xi/k|+\langle t\rangle}\frac{\langle t-\xi/k\rangle^2}{\langle\xi\rangle/k^2}\lesssim_\delta \frac{\langle t\rangle}{|\eta/k|+\langle t\rangle}\frac{\langle t-\eta/k\rangle^2}{\langle\eta\rangle/k^2}\cdot e^{\delta\min(\langle\xi-\eta\rangle,\langle k,\eta\rangle)^{1/2}}
\end{equation}
for any $t\in[1,T]$, $\xi,\eta\in\R$, and $k\in\mathbb{Z}\setminus\{0\}$. The bounds \eqref{nar15} follow from \eqref{TLX7} and \eqref{nar17}, while the bounds \eqref{nar16} follow from \eqref{TLX7}--\eqref{DtVMulti} and \eqref{nar17}.
\end{proof}

We turn now to the proof of \eqref{nar11}. We write
\begin{equation}\label{exN1}
\begin{split}
&\Big|2\Re\int_1^t\sum_{k\in \mathbb{Z}}\int_{\R}A_k^2(s,\xi)\widetilde{\mathcal{N}_1}(s,k,\xi)\overline{\widetilde{f}(s,k,\xi)}\,d\xi ds\Big|\\
&=C\Big|2\Re\Big\{\sum_{k,\ell\in \mathbb{Z}}\int_1^t\int_{\R^2}A_k^2(s,\xi)\widetilde{H_2}(s,k-\ell,\xi-\eta)i\ell\widetilde{f}(s,\ell,\eta)\overline{\widetilde{f}(s,k,\xi)}\,d\xi d\eta ds\Big\}\Big|\\
&=C\Big|\int_1^t\sum_{k,\ell\in \mathbb{Z}}\int_{\R^2}\big[\ell A_k^2(s,\xi)-k A_\ell^2(s,\eta)\big]\widetilde{H_2}(s,k-\ell,\xi-\eta)\widetilde{f}(s,\ell,\eta)\overline{\widetilde{f}(s,k,\xi)}\,d\xi d\eta ds\Big|,
\end{split}
\end{equation}
where the second identity is proved by symmetrization (recall that $H_2$ is real-valued). 

We define the sets
\begin{equation}\label{nar18.1}
\begin{split}
R_0:=\Big\{&((k,\xi),(\ell,\eta))\in (\Z\times \R)^2:\\
&\min(\langle k,\xi\rangle,\,\langle\ell,\eta\rangle,\,\langle k-\ell,\xi-\eta\rangle)\geq \frac{\langle k,\xi\rangle+\langle\ell,\eta\rangle+\langle k-\ell,\xi-\eta\rangle}{20}\Big\},\\
\end{split}
\end{equation}
\begin{equation}\label{nar18.2}
R_1:=\Big\{((k,\xi),(\ell,\eta))\in (\Z\times \R)^2:\,\langle k-\ell,\xi-\eta\rangle\leq \frac{\langle k,\xi\rangle+\langle\ell,\eta\rangle+\langle k-\ell,\xi-\eta\rangle}{10}\Big\},
\end{equation}
\begin{equation}\label{nar18.3}
R_2:=\Big\{((k,\xi),(\ell,\eta))\in (\Z\times \R)^2:\,\langle\ell,\eta\rangle\leq \frac{\langle k,\xi\rangle+\langle\ell,\eta\rangle+\langle k-\ell,\xi-\eta\rangle}{10}\Big\},
\end{equation}
\begin{equation}\label{nar18.4}
R_3:=\Big\{((k,\xi),(\ell,\eta))\in (\Z\times \R)^2:\,\langle k,\xi\rangle\leq \frac{\langle k,\xi\rangle+\langle\ell,\eta\rangle+\langle k-\ell,\xi-\eta\rangle}{10}\Big\}.
\end{equation}
Then we define the corresponding integrals
\begin{equation}\label{nar19}
\begin{split}
\mathcal{U}_n:=\int_1^t\sum_{k,\ell\in \mathbb{Z}}\int_{\R^2}&\mathbf{1}_{R_n}((k,\xi),(\ell,\eta))\big|\ell A_k^2(s,\xi)-k A_\ell^2(s,\eta)\big|\,|\widetilde{H_2}(s,k-\ell,\xi-\eta)|\\
&\times|\widetilde{f}(s,\ell,\eta)|\,|\widetilde{f}(s,k,\xi)|\,d\xi d\eta ds.
\end{split}
\end{equation}
For $n=0,1$, we use (i) of Lemma \ref{TLXH1}. We remark that $\widetilde{H_a}(t,0,\cdot)\equiv0$ for $a\in\{1,2\}$. Denote $(\sigma,\rho)=(k-\ell,\xi-\eta)$. Using also Lemma \ref{nar13} and \eqref{boot2} we can bound
\begin{equation*}
\begin{split}
\mathcal{U}_n&\lesssim_{\delta}\int_1^t\sum_{k,\ell\in \mathbb{Z}}\int_{\R^2}\sqrt{|(A_k\dot{A}_k)(s,\xi)|}\,\big|\widetilde{f}(s,k,\xi)\big|\sqrt{|(A_{\ell}\dot{A}_{\ell})(s,\eta)|}\,\big|\widetilde{f}(s,\ell,\eta)\big|\frac{\langle s\rangle}{|\rho/\sigma|+\langle s\rangle}\\
&\qquad\times\, \mathbf{1}_{\sigma\neq0}\frac{\langle s-\rho/\sigma\rangle^2}{\langle \rho\rangle/\sigma^2}A_{\sigma}(s,\rho)\big|\widetilde{H_2}(s,\sigma,\rho)\big|e^{-(\delta_0/200)\langle \sigma,\rho\rangle^{1/2}}\,d\xi d\eta ds\\
&\lesssim_{\delta} \Big\|\sqrt{|(A_k\dot{A}_k)(s,\xi)|}\,\widetilde{f}(s,k,\xi)\Big\|_{L^2_{s}L^2_{k,\xi}}\Big\|\sqrt{|(A_{\ell}\dot{A}_{\ell})(s,\eta)|}\,\widetilde{f}(s,\ell,\eta)\Big\|_{L^2_sL^2_{\ell,\eta}}\\
&\qquad\times \Big\|\mathbf{1}_{\sigma\neq0}A_{\sigma}(s,\rho)\frac{\langle s\rangle}{|\rho/\sigma|+\langle s\rangle}\frac{\langle s-\rho/\sigma\rangle^2}{\langle\rho\rangle/\sigma^2}e^{-(\delta_0/300)\langle \sigma,\rho\rangle^{1/2}}\widetilde{H_2}(s,\sigma,\rho)\Big\|_{L^{\infty}_sL^2_{\sigma,\rho}}\\
&\lesssim_{\delta}\epsilon_1^3.
\end{split}
\end{equation*}
Similarly, for $n=2$ we use (ii) of Lemma \ref{TLXH1}. Using also Lemma \ref{nar13} and \eqref{boot2} we can bound
\begin{equation*}
\begin{split}
\mathcal{U}_2&\lesssim_{\delta}\int_1^t\sum_{k,\ell\in \mathbb{Z}}\int_{\R^2}\mathbf{1}_{\sigma\neq0}\sqrt{|(A_{\sigma}\dot{A}_{\sigma})(s,\rho)|}\,\frac{\langle s\rangle}{|\rho/\sigma|+\langle s\rangle}\frac{\langle s-\rho/\sigma\rangle^2}{\langle \rho\rangle/\sigma^2}\big|\widetilde{H_2}(s,\sigma,\rho)\big|\\
&\qquad\times\sqrt{|(A_k\dot{A}_k)(s,\xi)|}\,\big|\widetilde{f}(s,k,\xi)\big|A_{\ell}(s,\eta) e^{-(\delta_0/200)\langle \ell,\eta\rangle^{1/2}}|\widetilde{f}(s,\ell,\eta)|\,d\xi d\eta ds\\
&\lesssim_{\delta} \Big\|\sqrt{|(A_k\dot{A}_k)(s,\xi)|}\,\widetilde{f}(s,k,\xi)\Big\|_{L^2_{s}L^2_{k,\xi}}\Big\|A_{\ell}(s,\eta)\,e^{-(\delta_0/300)\langle \ell,\eta\rangle^{1/2}}\widetilde{f}(s,\ell,\eta)\Big\|_{L^{\infty}_sL^2_{\ell,\eta}}\\
&\qquad\times \Big\|\mathbf{1}_{\sigma\neq0}\sqrt{|(A_{\sigma}\dot{A}_{\sigma})(s,\rho)|}\,\frac{\langle s\rangle}{|\rho/\sigma|+\langle s\rangle}\frac{\langle s-\rho/\sigma\rangle^2}{\langle\rho\rangle/\sigma^2}\widetilde{H_2}(s,\sigma,\rho)\Big\|_{L^{2}_sL^2_{\sigma,\rho}}\\
&\lesssim_{\delta}\epsilon_1^3.
\end{split}
\end{equation*}
The case $n=3$ is identical to the case $n=2$, by symmetry. Thus $\mathcal{U}_n\lesssim_\delta\epsilon_1^3$ for all $n\in\{0,1,2,3\}$, and the desired bounds \eqref{nar11} follow.

\subsection{The nonlinearity $\mathcal{N}_2$} We prove now the following:

\begin{lemma}\label{nar30}
With $\mathcal{N}_2$ defined as \eqref{nar8.1}, for any $t\in[1,T]$ we have
\begin{equation}\label{nar31}
\Big|2\Re\int_1^t\sum_{k\in \mathbb{Z}}\int_{\R}A_k^2(s,\xi)\widetilde{\mathcal{N}_2}(s,k,\xi)\overline{\widetilde{f}(s,k,\xi)}\,d\xi ds\Big|\lesssim_\delta \eps_1^3.
\end{equation}
\end{lemma}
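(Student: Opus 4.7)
The strategy directly parallels the proof of Lemma \ref{nar10}. Introduce
\[
L_1 := \partial_z(\Psi\phi), \qquad L_2 := V'\partial_z(\Psi\phi) = L_1 + (V'-1)L_1,
\]
and establish for $a\in\{1,2\}$ a weighted estimate analogous to Lemma \ref{nar13}, with the weight adjusted for the fact that $\partial_z$ produces a factor of $k$ (rather than the $\xi$ produced by $\partial_v$ in the case of $H_a$). Concretely, from $\widetilde{L_1}(k,\xi)=ik\,\widetilde{\Psi\phi}(k,\xi)$ and the (approximate) elliptic identity $\widetilde{\Theta}\approx-[k^2+(\xi-kt)^2]\widetilde{\Psi\phi}$, the bootstrap bounds on $\mathcal{E}_\Theta$ and $\mathcal{B}_\Theta$ yield the desired control of $L_1$ by a weight in which the ratio $\xi/k^2$ of Lemma \ref{nar13} is replaced by $1/k$; the corresponding bound for $L_2$ then follows by combining \eqref{nar4} and Lemmas \ref{Multi0}, \ref{lm:Multi}, exactly as in the passage from $H_1$ to $H_2$.

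Next, perform the Fourier symmetrization. Writing $\widetilde{\mathcal{N}_2}(k,\xi)=-\sum_{\ell}\int\widetilde{L_2}(k-\ell,\xi-\eta)\,i\eta\,\widetilde{f}(\ell,\eta)\,d\eta$ and using the reality of $L_2$ exactly as in \eqref{exN1} gives
\[
\Big|2\Re\int_1^t\sum_{k}\int A_k^2(s,\xi)\widetilde{\mathcal{N}_2}\,\overline{\widetilde{f}(s,k,\xi)}\,d\xi\,ds\Big|=C\Big|\int_1^t\sum_{k,\ell}\int\bigl[\eta A_k^2(s,\xi)-\xi A_\ell^2(s,\eta)\bigr]\widetilde{L_2}(s,k-\ell,\xi-\eta)\widetilde{f}(s,\ell,\eta)\overline{\widetilde{f}(s,k,\xi)}\,d\xi\,d\eta\,ds\Big|.
\]
The diagonal $k=\ell$ contributes nothing, since $\widetilde{L_2}(0,\cdot)\equiv 0$. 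Split the integration domain into the four regions $R_0,R_1,R_2,R_3$ of \eqref{nar18.1}--\eqref{nar18.4}, define the analogous quantities $\mathcal{U}_n$, and apply the appropriate version of Lemma \ref{TLXH1} for the commutator $\eta A_k^2(\xi)-\xi A_\ell^2(\eta)$; alternatively, decompose $\eta A_k^2(\xi)-\xi A_\ell^2(\eta)=\eta\bigl[A_k^2(\xi)-A_\ell^2(\eta)\bigr]-(\xi-\eta)A_\ell^2(\eta)$, where the weight-difference piece is absorbed by the standard $\sqrt{|\dot A|A}$ commutator factors while the transport piece trades its $(\xi-\eta)$ factor for one $v$-derivative on $L_2$. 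Exactly as in the $\mathcal{N}_1$ argument, on $R_0$ and $R_1$ we pay both halves of the bilinear term with $\sqrt{|\dot A_k A_k||\dot A_\ell A_\ell|}$ against the $L^\infty_s L^2_{\sigma,\rho}$-norm of $L_2$, while on $R_2$ (and $R_3$, by symmetry) we move the $\sqrt{|\dot A|A}$ factor onto $L_2$ and estimate $f$ in $L^\infty_s L^2$; Cauchy--Schwarz together with $\mathcal{B}_f,\mathcal{E}_f,\mathcal{B}_\Theta,\mathcal{E}_\Theta\le\eps_1^2$ then gives $\mathcal{U}_n\lesssim_\delta\eps_1^3$ in each region.

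The main obstacle is that, in contrast with Lemma \ref{nar10}, the symmetrized commutator now involves the \emph{continuous} $v$-frequencies $\eta,\xi$ rather than the integer $z$-frequencies $k,\ell$. The factor $\eta$ represents exactly the one $v$-derivative lost when $\partial_v$ hits $f$, which is precisely the loss the imbalanced weights were designed to control. In the resonant regime for $(\ell,\eta)$ (that is, when $|t-\eta/\ell|\ll |\eta|/\ell^2$), one must exploit the imbalanced ratio \eqref{imbaweight} together with the observation that $t$ is then non-resonant for $(k,\xi)$, so that $A_\ell(\eta)/A_k(\xi)$ absorbs the large $\eta/\ell^2$ factor coming from the elliptic decay of $\widetilde{\Psi\phi}$. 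Verifying that this absorption combines consistently with the four-region decomposition, especially in $R_0$ where all three frequencies are comparable and none of the trivial size bounds helps, is the key technical calculation.
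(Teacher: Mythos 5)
Your plan mirrors the paper's proof of Lemma~\ref{nar30} very closely: your $L_1,L_2$ are precisely the paper's $H_3=\partial_zP_{\neq0}(\Psi\phi)$ and $H_4=V'\partial_zP_{\neq0}(\Psi\phi)$, the weighted elliptic estimate you want is Lemma~\ref{nar33}, and the symmetrization, four-region splitting, $L^2_s$--$L^\infty_s$ pairings, and commutator multiplier bound (Lemma~\ref{TLXH3}, the $\mathcal{N}_2$ analogue of the Lemma~\ref{TLXH1} you invoke) are all exactly as you describe. One correction to your remarks on where the work is: $R_0$ is the \emph{easy} region (all three frequencies high, so the gain $e^{-c\langle\sigma,\rho\rangle^{1/2}}$ from $A_\sigma(t,\rho)\ge e^{\lambda(t)\langle\sigma,\rho\rangle^{1/2}}$ dominates every polynomial loss and no symmetrization is needed); the delicate region is $R_2$/$R_3$, and in the symmetrized picture the imbalanced-weight gain, when used, pairs $(k,\xi)$ against the stream-function frequency $(\sigma,\rho)=(k-\ell,\xi-\eta)$ rather than against $(\ell,\eta)$---indeed the paper observes at the end of the proof of \eqref{TLXH3.2.2} that the most dangerous resonant subcase of the $\mathcal{N}_1$ argument does not even arise for $\mathcal{N}_2$, because the commutator factor $|\eta|$ is itself small on $R_2$ and there is no derivative loss to absorb.
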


The rest of this subsection is concerned with the proof of this lemma. Let
\begin{equation}\label{nar32}
H_3:=\partial_zP_{\neq 0}(\Psi\phi),\qquad H_4:=V'\partial_zP_{\neq 0}(\Psi\phi).
\end{equation}

\begin{lemma}\label{nar33}
For any $t\in[1,T]$ and $a\in\{3,4\}$ we have
\begin{equation}\label{nar34}
\begin{split}
&\sum_{k\in \mathbb{Z}\setminus\{0\}}\int_{\R}A_k^2(t,\xi)\frac{k^2\langle t\rangle^4\langle t-\xi/k\rangle^4}{(|\xi/k|^2+\langle t\rangle^2)^2}\big|\widetilde{H_a}(t,k,\xi)\big|^2\,d\xi\lesssim_\delta\eps_1^2\\
&\int_1^t\sum_{k\in \mathbb{Z}\setminus\{0\}}\int_{\R}|\dot{A}_k(s,\xi)|A_k(s,\xi)\frac{k^2\langle s\rangle^4\langle s-\xi/k\rangle^4}{(|\xi/k|^2+\langle s\rangle^2)^2}\big|\widetilde{H_a}(s,k,\xi)\big|^2\,d\xi ds\lesssim_\delta\eps_1^2.
\end{split}
\end{equation}
\end{lemma}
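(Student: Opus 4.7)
The plan is to reduce both $H_3$ and $H_4$ to the bootstrap control of $\mathcal{E}_\Theta$ and $\mathcal{B}_\Theta$, using the elliptic identity \eqref{defgellip} defining $\Theta$, and then handle the $V'$ factor in $H_4$ by a bilinear estimate of the same shape as in Lemma \ref{nar13}.

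For $a=3$: for $k\neq 0$, the identity \eqref{defgellip} gives
\begin{equation*}
\widetilde{H_3}(t,k,\xi) = ik\,\widetilde{\Psi\phi}(t,k,\xi) = -\frac{ik\,\widetilde{\Theta}(t,k,\xi)}{k^2+(\xi-kt)^2} = -\frac{i\,\widetilde{\Theta}(t,k,\xi)}{k\,\langle t-\xi/k\rangle^2},
\end{equation*}
since $k^2+(\xi-kt)^2=k^2\langle t-\xi/k\rangle^2$. Substituting into the integrand of the first inequality of \eqref{nar34}, the factors $k^2\langle t-\xi/k\rangle^4$ cancel and yield
\begin{equation*}
A_k^2(t,\xi)\frac{\langle t\rangle^4}{(|\xi/k|^2+\langle t\rangle^2)^2}\bigl|\widetilde{\Theta}\bigr|^2 \;\leq\; A_k^2(t,\xi)\frac{k^2\langle t\rangle^2}{|\xi|^2+k^2\langle t\rangle^2}\bigl|\widetilde{\Theta}\bigr|^2,
\end{equation*}
using $\langle t\rangle^2\leq |\xi/k|^2+\langle t\rangle^2$. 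This is precisely the integrand of $\mathcal{E}_\Theta$, and the analogous calculation with $|\dot A_k|A_k$ yields the integrand of $\mathcal{B}_\Theta$. Both bounds for $H_3$ follow immediately from the bootstrap hypothesis \eqref{boot2}.

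For $a=4$, I would write $H_4=H_3+(V'-1)H_3$; the first piece is already controlled. For $(V'-1)H_3$ I apply the bilinear product estimate Lemma \ref{Multi0}(ii), with $V'-1$ controlled by \eqref{nar4} and $H_3$ by the bound just proved. In analogy with \eqref{nar15}--\eqref{nar16}, this reduces to the multiplier inequality
\begin{equation*}
A_k(t,\xi)\frac{|k|\langle t\rangle^2\langle t-\xi/k\rangle^2}{|\xi/k|^2+\langle t\rangle^2} \;\lesssim_\delta\; A_R(t,\xi-\eta)\,A_k(t,\eta)\frac{|k|\langle t\rangle^2\langle t-\eta/k\rangle^2}{|\eta/k|^2+\langle t\rangle^2}\bigl\{\langle\xi-\eta\rangle^{-2}+\langle k,\eta\rangle^{-2}\bigr\},
\end{equation*}
and its $|\dot A_k A_k|^{1/2}$-variant with the usual Cauchy--Kowalevski terms on both $A_R$ and $A_k$. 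The weight ratios are handled by \eqref{TLX7}--\eqref{DtVMulti} of Lemma \ref{TLX40}, while the time-dependent factor is compared via the pointwise bound
\begin{equation*}
\frac{\langle t\rangle^2\langle t-\xi/k\rangle^2}{|\xi/k|^2+\langle t\rangle^2} \;\lesssim_\delta\; \frac{\langle t\rangle^2\langle t-\eta/k\rangle^2}{|\eta/k|^2+\langle t\rangle^2}\cdot e^{\delta\min(\langle\xi-\eta\rangle,\langle k,\eta\rangle)^{1/2}},
\end{equation*}
proved exactly as in \eqref{nar17} by splitting into the regimes $|\xi-\eta|\leq 10|k,\eta|$ (where the two ratios are comparable up to $O(1)$ constants) and $|\xi-\eta|\geq 10|k,\eta|$ (where the Gevrey gain in the exponential absorbs the mismatch between $\xi/k$ and $\eta/k$).

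The only genuinely new calculation is this last pointwise comparison: the denominator $|\xi/k|^2+\langle t\rangle^2$ and numerator $\langle t-\xi/k\rangle^2$ both depend on $\xi/k$ in a way that can swing substantially when $\xi-\eta$ is comparable to $\eta$, so the shift $\xi\to\eta$ really must be absorbed by the $\langle\xi-\eta\rangle^{1/2}$-exponential gain in the non-local regime. Once this inequality is in place, the rest of the argument is a direct transcription of the scheme used for $H_1,H_2$ in Lemma \ref{nar13}, so I expect no further obstacle.
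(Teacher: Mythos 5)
Your proposal is correct and reproduces the paper's argument: for $a=3$ the Fourier identity $\widetilde{H_3}(t,k,\xi)=-i\widetilde{\Theta}(t,k,\xi)/(k\langle t-\xi/k\rangle^2)$ reduces the bounds directly to $\mathcal{E}_\Theta,\mathcal{B}_\Theta$, and for $a=4$ the decomposition $H_4=H_3+(V'-1)H_3$ together with Lemma~\ref{Multi0}(ii) leads to the same multiplier bounds \eqref{nar34.01}--\eqref{nar34.02}, handled via Lemma~\ref{TLX40} and the pointwise comparison \eqref{nar34.03}. A small caveat on your explanation: in the regime $|\xi-\eta|\le 10|k,\eta|$ the two time-dependent ratios are \emph{not} comparable up to $O(1)$ constants but only up to polynomial factors in $\langle\xi-\eta\rangle$ (take, say, $k=1$, $t=\eta=N^2$, $\xi=N^2-N$, which gives a ratio of order $N^2$ with $\langle\xi-\eta\rangle\approx N$), so the exponential gain $e^{\delta\langle\xi-\eta\rangle^{1/2}}$ is genuinely needed in both regimes --- but the displayed inequality is correct as stated, so this is a flaw in the commentary rather than in the argument.
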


\begin{proof} The bounds on $H_3$ follow directly from the bootstrap assumptions on $\mathcal{E}_\Theta$ and $\mathcal{B}_\Theta$, and the definitions \eqref{defgellip}. 

Notice that $H_4=H_3+(V'-1)H_3$.  We use Lemma \ref{Multi0} (ii) to prove the bounds \eqref{nar34} for $a=4$. In view of \eqref{nar4} and \eqref{nar34} (with $a=3$), it suffices to prove the multiplier bounds
\begin{equation}\label{nar34.01}
\begin{split}
A_k(t,\xi)&\frac{|k|\langle t\rangle^2\langle t-\xi/k\rangle^2}{|\xi/k|^2+\langle t\rangle^2}\\
&\lesssim_\delta A_R(t,\xi-\eta)\cdot A_k(t,\eta)\frac{|k|\langle t\rangle^2\langle t-\eta/k\rangle^2}{|\eta/k|^2+\langle t\rangle^2}\cdot \{\langle\xi-\eta\rangle^{-2}+\langle k,\eta\rangle^{-2}\}
\end{split}
\end{equation}
and
\begin{equation}\label{nar34.02}
\begin{split}
\big|\dot{A}_k(t,\xi)&A_k(t,\xi)\big|^{1/2}\frac{|k|\langle t\rangle^2\langle t-\xi/k\rangle^2}{|\xi/k|^2+\langle t\rangle^2}\lesssim_\delta \left[\big|(\dot{A}_R/A_R)(t,\xi-\eta)\big|^{1/2}+\big|(\dot{A}_k/A_k)(t,\eta)\big|^{1/2}\right]\\
&\times A_R(t,\xi-\eta)\cdot A_k(t,\eta)\frac{|k|\langle t\rangle^2\langle t-\eta/k\rangle^2}{|\eta/k|^2+\langle t\rangle^2}\cdot \{\langle\xi-\eta\rangle^{-2}+\langle k,\eta\rangle^{-2}\},
\end{split}
\end{equation}
for any $t\in[1,T]$, $\xi,\eta\in\R$, and $k\in\mathbb{Z}\setminus\{0\}$.

To prove \eqref{nar34.01}--\eqref{nar34.02} we use Lemma \ref{TLX40}. In addition, by considering the cases $|\xi-\eta|\leq 10|k,\eta|$ and $|\xi-\eta|\geq 10|k,\eta|$, it is easy to see that
\begin{equation}\label{nar34.03}
\frac{|k|\langle t\rangle^2\langle t-\xi/k\rangle^2}{|\xi/k|^2+\langle t\rangle^2}\lesssim_\delta \frac{|k|\langle t\rangle^2\langle t-\eta/k\rangle^2}{|\eta/k|^2+\langle t\rangle^2}\cdot e^{\delta\min(\langle\xi-\eta\rangle,\langle k,\eta\rangle)^{1/2}}
\end{equation}
for any $t\in[1,T]$, $\xi,\eta\in\R$, and $k\in\mathbb{Z}\setminus\{0\}$. The bounds \eqref{nar34.01} follow from \eqref{TLX7} and \eqref{nar34.03}, while the bounds \eqref{nar34.02} follow from \eqref{TLX7}--\eqref{DtVMulti} and \eqref{nar34.03}.
\end{proof}

We now turn to the proof of (\ref{nar31}). We write
\begin{equation*}
\begin{split}
&\Big|2\Re\int_1^t\sum_{k\in \mathbb{Z}}\int_{\R}A_k^2(s,\xi)\widetilde{\mathcal{N}_2}(s,k,\xi)\overline{\widetilde{f}(s,k,\xi)}\,d\xi ds\Big|\\
&=C\Big|2\Re\Big\{\sum_{k,\ell\in \mathbb{Z}}\int_1^t\int_{\R^2}A_k^2(s,\xi)\widetilde{H_4}(s,k-\ell,\xi-\eta)i\eta\widetilde{f}(s,\ell,\eta)\overline{\widetilde{f}(s,k,\xi)}\,d\xi d\eta ds\Big\}\Big|\\
&=C\Big|\int_1^t\sum_{k,\ell\in \mathbb{Z}}\int_{\R^2}\big[\eta A_k^2(s,\xi)-\xi A_\ell^2(s,\eta)\big]\widetilde{H_4}(s,k-\ell,\xi-\eta)\widetilde{f}(s,\ell,\eta)\overline{\widetilde{f}(s,k,\xi)}\,d\xi d\eta ds\Big|,
\end{split}
\end{equation*}
where the second identity is proved by symmetrization (recall that $H_4$ is real-valued). 

With $R_0,R_1,R_2,R_3$ as in (\ref{nar18.1})-(\ref{nar18.4}), we define the corresponding integrals
\begin{equation}\label{nar34.1}
\begin{split}
\mathcal{V}_n:=\int_1^t\sum_{k,\ell\in \mathbb{Z}}\int_{\R^2}&\mathbf{1}_{R_n}((k,\xi),(\ell,\eta))\big|\eta A_k^2(s,\xi)-\xi A_\ell^2(s,\eta)\big|\,|\widetilde{H_4}(s,k-\ell,\xi-\eta)|\\
&\times|\widetilde{f}(s,\ell,\eta)|\,|\widetilde{f}(s,k,\xi)|\,d\xi d\eta ds.
\end{split}
\end{equation}

For $n=0,1$, we use (i) of Lemma \ref{TLXH3}. We remark that $\widetilde{H_a}(t,0,\cdot)\equiv0$ for $a\in\{3,4\}$. Denote $(\sigma,\rho)=(k-\ell,\xi-\eta)$. We can bound
\begin{equation*}
\begin{split}
\mathcal{V}_n&\lesssim_{\delta}\int_1^t\sum_{k,\ell\in \mathbb{Z}}\int_{\R^2}\mathbf{1}_{\sigma\neq0}\cdot\frac{\sigma \langle s\rangle^2}{|\rho/\sigma|^2+\langle s\rangle^2}\langle s-\rho/\sigma\rangle^2\,A_{\sigma}(s,\rho)e^{-(\delta_0/200)\langle \sigma,\rho\rangle^{1/2}}\big|\widetilde{H_4}(s,\sigma,\rho)\big|\\
&\qquad\times\sqrt{|A_k\dot{A}_k(s,\xi)|}\sqrt{|A_{\ell}\dot{A}_{\ell}(s,\eta)|}\,\big|\widetilde{f}(s,\ell,\eta)\big|\,\big|\widetilde{f}(s,k,\xi)\big|\,d\xi d\eta ds\\
&\lesssim_{\delta}\left\|\mathbf{1}_{\sigma\neq0}\cdot\frac{\sigma \langle s\rangle^2}{|\rho/\sigma|^2+\langle s\rangle^2}\langle s-\rho/\sigma\rangle^2\,A_{\sigma}(s,\rho)e^{-(\delta_0/300)\langle \sigma,\rho\rangle^{1/2}}\widetilde{H_4}(s,\sigma,\rho)\right\|_{L^{\infty}_sL^2_{\sigma,\rho}}\\
&\qquad\times\left\|\sqrt{|A_k\dot{A}_k(s,\xi)|}\,\widetilde{f}(s,k,\xi)\right\|_{L^2_sL^2_{k,\xi}}\cdot\left\|\sqrt{|A_{\ell}\dot{A}_{\ell}(s,\eta)|}\,\widetilde{f}(s,\ell,\eta)\right\|_{L^2_sL^2_{\ell,\eta}}\\&\lesssim_{\delta}\epsilon_1^3,
\end{split}
\end{equation*}
using (\ref{nar34}) and (\ref{boot2}). Moreover, for $n=2$, we use (ii) of Lemma \ref{TLXH3} to estimate
\begin{equation*}
\begin{split}
\mathcal{V}_2&\lesssim_{\delta}\int_1^t\sum_{k,\ell\in \mathbb{Z}}\int_{\R^2}\mathbf{1}_{\sigma\neq0}\cdot\frac{\sigma \langle s\rangle^2}{|\rho/\sigma|^2+\langle s\rangle^2}\langle s-\rho/\sigma\rangle^2\,\sqrt{|A_{\sigma}\dot{A}_{\sigma}(s,\rho)|}\big|\widetilde{H_4}(s,\sigma,\rho)\big|\\
&\qquad\times\sqrt{|A_k\dot{A}_k(s,\xi)|}\,\big|\widetilde{f}(s,k,\xi)\big|\,A_{\ell}(s,\eta)\,e^{-(\delta_0/200)\langle \ell,\eta\rangle^{1/2}}\big|\widetilde{f}(s,\ell,\eta)\big|\,d\xi d\eta ds\\
&\lesssim_{\delta}\left\|\mathbf{1}_{\sigma\neq0}\cdot\frac{\sigma \langle s\rangle^2}{|\rho/\sigma|^2+\langle s\rangle^2}\langle s-\rho/\sigma\rangle^2\,\sqrt{|A_{\sigma}\dot{A}_{\sigma}(s,\rho)|}\,\,\widetilde{H_4}(s,\sigma,\rho)\right\|_{L^{2}_sL^2_{\sigma,\rho}}\\
&\qquad\times\left\|\sqrt{|A_k\dot{A}_k(s,\xi)|}\,\widetilde{f}(s,k,\xi)\right\|_{L^2_sL^2_{k,\xi}}\cdot\left\|A_{\ell}(s,\eta)\,e^{-(\delta_0/300)\langle \ell,\eta\rangle^{1/2}}\,\widetilde{f}(s,\ell,\eta)\right\|_{L^{\infty}_sL^2_{\ell,\eta}}\\
&\lesssim_{\delta}\epsilon_1^3,
\end{split}
\end{equation*}
using (\ref{nar34}) and (\ref{boot2}). The case $n=3$ is identical to the case $n=2$, by symmetry. Thus $\mathcal{V}_n\lesssim_\delta\epsilon_1^3$ for all $n\in\{0,1,2,3\}$, and the desired bounds \eqref{nar31} follow.

\subsection{The nonlinearity $\mathcal{N}_3$} We now prove the following
\begin{lemma}\label{nar35}
With $\mathcal{N}_3$ defined as \eqref{nar8.1}, for any $t\in[1,T]$ we have
\begin{equation}\label{nar36}
\Big|2\Re\int_1^t\sum_{k\in \mathbb{Z}}\int_{\R}A_k^2(s,\xi)\widetilde{\mathcal{N}_3}(s,k,\xi)\overline{\widetilde{f}(s,k,\xi)}\,d\xi ds\Big|\lesssim_\delta \eps_1^3.
\end{equation}
\end{lemma}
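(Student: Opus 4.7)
The plan is to proceed exactly as in the proofs of Lemmas \ref{nar10} and \ref{nar30}. Since $\dot{V}=\dot{V}(s,v)$ depends only on $v$, its Fourier transform is supported on the zero $z$–mode, so
\[
\widetilde{\mathcal{N}_3}(s,k,\xi)=-C\int_\R \widetilde{\dot{V}}(s,\xi-\eta)\,(i\eta)\,\widetilde{f}(s,k,\eta)\,d\eta.
\]
Because $\dot{V}$ is real-valued, symmetrizing under $\xi\leftrightarrow\eta$ (as in \eqref{exN1}) recasts the left-hand side of \eqref{nar36} as
\[
C\Big|\int_1^t\sum_{k\in\Z}\int_{\R^2}\bigl[\eta A_k^2(s,\xi)-\xi A_k^2(s,\eta)\bigr]\widetilde{\dot{V}}(s,\xi-\eta)\widetilde{f}(s,k,\eta)\overline{\widetilde{f}(s,k,\xi)}\,d\xi d\eta ds\Big|.
\]
I would then partition $(\xi,\eta)\in\R^2$ into the four regions $R_0,R_1,R_2,R_3$ of \eqref{nar18.1}--\eqref{nar18.4} applied to the triple $((k,\xi),(k,\eta),(0,\xi-\eta))$.

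The analytic core is a single multiplier estimate playing the role of Lemmas \ref{nar13} and \ref{nar33} in this commutator form: on each region,
\[
\bigl|\eta A_k^2(s,\xi)-\xi A_k^2(s,\eta)\bigr|\lesssim_\delta A_k(s,\xi)\,A_k(s,\eta)\,A_{NR}(s,\xi-\eta)\,\langle s\rangle^{2}\langle\xi-\eta\rangle^{2}\,\mathcal{M}(s,k,\xi,\eta)\,e^{-(\delta_0/100)\langle\xi-\eta\rangle^{1/2}},
\]
with suitable CK factors $|\dot{A}_\ast/A_\ast|^{1/2}$ distributed on two of the three inputs according to which region one is in. This bound follows by writing
\[
\eta A_k^2(\xi)-\xi A_k^2(\eta)=(\eta-\xi)A_k^2(\xi)+\xi\bigl[A_k^2(\xi)-A_k^2(\eta)\bigr],
\]
so that the factor $(\eta-\xi)$ is absorbed by one derivative on $\dot V$ (and is compensated by the $\langle\xi-\eta\rangle^2$ above), while the second term is estimated using the smoothness of the weights in $\xi$ exactly as in Lemmas \ref{TLX40} and \ref{TLXH3}; the extra factor of $\langle s\rangle^2$ is gained by appealing to the quadratic-in-$s$ weight $\langle\xi\rangle^{2}\langle s\rangle^{2}$ on $\dot V$ in \eqref{nar7}.

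With this multiplier estimate in hand, the space-time integrals are handled by Cauchy--Schwarz exactly as for $\mathcal{U}_n$ and $\mathcal{V}_n$ in Sections 4.1--4.2. On $R_0\cup R_1$ I place both CK factors on the two $f$ inputs, control them in $L^2_sL^2_{k,\xi}$ by $\mathcal{B}_f^{1/2}\lesssim_\delta\eps_1$, and control
\[
\bigl\|A_{NR}(s,\xi-\eta)\langle\xi-\eta\rangle\langle s\rangle\,\widetilde{\dot V}(s,\xi-\eta)\,e^{-(\delta_0/200)\langle\xi-\eta\rangle^{1/2}}\bigr\|_{L^\infty_sL^2_{\xi-\eta}}\lesssim_\delta\eps_1
\]
by the $L^\infty_s$-version of \eqref{nar7}; the remaining $\langle s\rangle^{-1}$ is integrable. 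On $R_2$ (and by symmetry $R_3$) I place one CK factor on the $\dot V$ input and one on the output $\widetilde{f}(k,\xi)$, and use the uniform-in-$s$ bound \eqref{uniformf} on $A_k(s,\eta)e^{-(\delta_0/300)\langle k,\eta\rangle^{1/2}}\widetilde{f}(s,k,\eta)$ for the low-frequency input; the $L^2_s$ factors are then paired against $\mathcal{B}_{\dot V}^{1/2}$ and $\mathcal{B}_f^{1/2}$, and \eqref{nar7} closes the estimate.

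The main obstacle is the verification of the commutator multiplier bound above, and in particular the fact that the smoothness in $\xi$ of the full weight $A_k(s,\xi)=e^{\lambda(s)\langle k,\xi\rangle^{1/2}}(e^{\sqrt\delta\langle\xi\rangle^{1/2}}/b_k(s,\xi)+e^{\sqrt\delta|k|^{1/2}})$ is consistent with the averaging of the $b_k$ weights in $\xi$ built in \eqref{dor1}; once the required smoothness estimate on $A_k^2(s,\xi)-A_k^2(s,\eta)$ (with the gain $\langle\xi-\eta\rangle e^{C\sqrt\delta\langle\xi-\eta\rangle^{1/2}}$) is extracted from the weight analysis of Section \ref{weights}, the remainder of the argument is a direct transcription of Sections 4.1--4.2, with the extra quadratic time decay on $\dot V$ replacing the structural decay coming from the elliptic equation for $\phi$.
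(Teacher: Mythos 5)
Your outline matches the paper's strategy (symmetrize using that $\dot V$ is real, decompose into the regions $\Sigma_n$ with $k=\ell$, prove a commutator multiplier bound, then Cauchy--Schwarz in all variables as for $\mathcal{U}_n$ and $\mathcal{V}_n$), but the multiplier bound you propose does not close the estimate, and this is not a detail you can defer.

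Concretely, you claim
\[
\bigl|\eta A_k^2(s,\xi)-\xi A_k^2(s,\eta)\bigr|\lesssim_\delta A_k(s,\xi)\,A_k(s,\eta)\,A_{NR}(s,\rho)\,\langle s\rangle^{2}\langle\rho\rangle^{2}\,\mathcal{M}\,e^{-(\delta_0/100)\langle\rho\rangle^{1/2}},\qquad \rho=\xi-\eta,
\]
and then try to absorb the $\langle s\rangle^2\langle\rho\rangle^2$ using \eqref{nar7}. But \eqref{nar7} controls $\widetilde{\dot V}$ only against the $L^2_\rho$ weight $A_{NR}(s,\rho)\sqrt{\langle\rho\rangle^2\langle s\rangle^2+\langle\rho\rangle^{1/2}\langle s\rangle^{7/2}}$, i.e.\ effectively $A_{NR}(s,\rho)\bigl[\langle\rho\rangle\langle s\rangle+\langle\rho\rangle^{1/4}\langle s\rangle^{7/4}\bigr]$. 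When $\rho$ is bounded and $s$ is large this is $\approx\langle s\rangle^{7/4}$, not $\langle s\rangle^{2}$, so a multiplier with a bare $\langle s\rangle^{2}$ factor cannot be fed into \eqref{nar7}. Your own accounting confirms this: after placing $A_{NR}(s,\rho)\langle\rho\rangle\langle s\rangle$ on $\dot V$ and spending one $\langle\rho\rangle$ into the exponential, the residue from $\langle s\rangle^{2}\langle\rho\rangle^{2}$ is $\langle s\rangle$ (a growing factor), not $\langle s\rangle^{-1}$; the ``remaining $\langle s\rangle^{-1}$ is integrable'' step is a sign error, and the argument diverges logarithmically (or worse) as written. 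Your reading of \eqref{nar7} as a uniform $\langle\xi\rangle^2\langle s\rangle^2$ weight on $\widetilde{\dot V}$ is also a misreading: the first summand $\langle\xi\rangle^2\langle s\rangle^2$ dominates only for $\langle\xi\rangle\gtrsim\langle s\rangle^{2}$, and for moderate $\xi$ the binding constraint is the second, $\langle\xi\rangle^{1/2}\langle s\rangle^{7/2}$ summand.

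What is actually needed, and what the paper proves (Lemma~\ref{TLXH2}), is the sharper commutator estimate
\[
\bigl|\eta A_k^2(t,\xi)-\xi A_k^2(t,\eta)\bigr|\lesssim_\delta\bigl[\langle\rho\rangle\langle t\rangle+\langle\rho\rangle^{1/4}\langle t\rangle^{7/4}\bigr]\sqrt{|(A_k\dot A_k)(t,\xi)|}\,\sqrt{|(A_k\dot A_k)(t,\eta)|}\,A_{NR}(t,\rho)\,e^{-(\delta_0/200)\langle\rho\rangle^{1/2}}
\]
on $\Sigma_0\cup\Sigma_1$ (with the CK factor moved to $\rho$ on $\Sigma_2$), and the extra mileage $\langle s\rangle^{2}\to\langle\rho\rangle\langle s\rangle+\langle\rho\rangle^{1/4}\langle s\rangle^{7/4}$ is exactly what the mollification length $L_\kappa$ in \eqref{dor1} and the bound \eqref{dor21} buy: the derivative of $1/b_k$ in $\xi$ costs $1/L_\kappa\approx(\langle\xi\rangle^{1/2}+\kappa t)/(\kappa\langle\xi\rangle)$, which, after multiplying by $\xi$ and dividing through by the CK factors from \eqref{eq:A_kxi}, produces precisely this two-term weight. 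Your decomposition $(\eta-\xi)A_k^2(\xi)+\xi[A_k^2(\xi)-A_k^2(\eta)]$ is the right starting point, but without tracking the $L_\kappa$-dependent smoothing rate you obtain only the crude $\langle s\rangle^{2}$, which is not good enough. You have correctly identified the commutator bound as the ``main obstacle,'' but the bound you wrote down is not the one that closes; you would need to prove Lemma~\ref{TLXH2} (or an equivalent) with the $\langle\rho\rangle\langle s\rangle+\langle\rho\rangle^{1/4}\langle s\rangle^{7/4}$ weight, and that requires the quantitative smoothing of $b_k$ in $\xi$ that the paper engineered into the averaging \eqref{dor1}.
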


The rest of this subsection is concerned with the proof of this lemma. As before, we write
\begin{equation*}
\begin{split}
&\Big|2\Re\int_1^t\sum_{k\in \mathbb{Z}}\int_{\R}A_k^2(s,\xi)\widetilde{\mathcal{N}_3}(s,k,\xi)\overline{\widetilde{f}(s,k,\xi)}\,d\xi ds\Big|\\
&=C\Big|2\Re\Big\{\sum_{k\in \mathbb{Z}}\int_1^t\int_{\R^2}A_k^2(s,\xi)\widetilde{\dot{V}}(s,\xi-\eta)i\eta\widetilde{f}(s,k,\eta)\overline{\widetilde{f}(s,k,\xi)}\,d\xi d\eta ds\Big\}\Big|\\
&=C\Big|\int_1^t\sum_{k\in \mathbb{Z}}\int_{\R^2}\big[\eta A_k^2(s,\xi)-\xi A_k^2(s,\eta)\big]\widetilde{\dot{V}}(s,\xi-\eta)\widetilde{f}(s,k,\eta)\overline{\widetilde{f}(s,k,\xi)}\,d\xi d\eta ds\Big|.
\end{split}
\end{equation*}
For $i\in\{0,1,2,3\}$ we define the sets
\begin{equation}\label{nar19.1}
\Sigma_i:=\big\{((k,\xi),(l,\eta))\in R_i:\,k=\ell\big\},
\end{equation}
where $R_i$ are as in \eqref{nar18.1}--\eqref{nar18.4}, and the corresponding integrals
\begin{equation}\label{nar19.5}
\begin{split}
\mathcal{W}_n:=\int_1^t\sum_{k\in \mathbb{Z}}\int_{\R^2}&\mathbf{1}_{\Sigma_n}((k,\xi),(k,\eta))\big|\eta A_k^2(s,\xi)-\xi A_k^2(s,\eta)\big|\,\big|\widetilde{\dot{V}}(s,\xi-\eta)\big|\\
&\times\big|\widetilde{f}(s,k,\eta)\big|\,\big|\widetilde{f}(s,k,\xi)\big|\,d\xi d\eta ds.
\end{split}
\end{equation}

To estimate $\mathcal{W}_n$, $n\in\{0,1\}$, we use (i) of Lemma \ref{TLXH2}. Let $\rho=\xi-\eta$, and estimate
\begin{equation*}
\begin{split}
\mathcal{W}_n&\lesssim_{\delta}\int_1^t\sum_{k\in \mathbb{Z}}\int_{\R^2}\big[\langle\rho\rangle\langle s\rangle+\langle \rho\rangle^{1/4}\langle s\rangle^{7/4}\big]\,A_{NR}(s,\rho)\,e^{-(\delta_0/200)\langle\rho\rangle^{1/2}}\big|\widetilde{\dot{V}}(s,\rho)\big|\\
&\qquad\times\sqrt{|(A_k\dot{A}_k)(s,\eta)|}\,\,\big|\widetilde{f}(s,k,\eta)\big|\,\sqrt{|(A_k\dot{A}_k)(s,\xi)|}\,\,\big|\widetilde{f}(s,k,\xi)\big|\,d\xi d\eta ds\\
&\lesssim_{\delta}\left\|\big[\langle\rho\rangle\langle s\rangle+\langle \rho\rangle^{1/4}\langle s\rangle^{7/4}\big]\,A_{NR}(s,\rho)\,e^{-(\delta_0/300)\langle\rho\rangle^{1/2}}\,\widetilde{\dot{V}}(s,\rho)\right\|_{L^{\infty}_sL^2_{\rho}}\\
&\qquad\times \left\|\sqrt{|(A_k\dot{A}_k)(s,\eta)|}\,\,\widetilde{f}(s,k,\eta)\right\|_{L^2_sL^2_{k,\eta}}\cdot\left\|\sqrt{|(A_k\dot{A}_k)(s,\xi)|}\,\,\widetilde{f}(s,k,\xi)\right\|_{L^2_sL^2_{k,\xi}}\\
&\lesssim_{\delta}\epsilon_1^3,
\end{split}
\end{equation*}
using (\ref{nar7}) and the bootstrap bounds (\ref{boot2}). Moreover, for $n=2$, we use (ii) of Lemma \ref{TLXH2} and estimate
\begin{equation*}
\begin{split}
\mathcal{W}_2&\lesssim_{\delta}\int_1^t\sum_{k\in \mathbb{Z}}\int_{\R^2}\big[\langle\rho\rangle\langle s\rangle+\langle \rho\rangle^{1/4}\langle s\rangle^{7/4}\big]\,\sqrt{|(A_{NR}\dot{A}_{NR})(s,\rho)|}\,\big|\widetilde{\dot{V}}(s,\rho)\big|\\
&\qquad\times A_k(s,\eta)\,e^{-(\delta_0/200)\langle k,\eta\rangle^{1/2}}\,\,\big|\widetilde{f}(s,k,\eta)\big|\,\sqrt{|(A_k\dot{A}_k)(s,\xi)|}\,\,\big|\widetilde{f}(s,k,\xi)\big|\,d\xi d\eta ds.\\
&\lesssim_{\delta}\left\|\big[\langle\rho\rangle\langle s\rangle+\langle \rho\rangle^{1/4}\langle s\rangle^{7/4}\big]\,\sqrt{|(A_{NR}\dot{A}_{NR})(s,\rho)|}\,\widetilde{\dot{V}}(s,\rho)\right\|_{L^{2}_sL^2_{\rho}}\\
&\qquad\times \left\|{A}_k(s,\eta)\,e^{-(\delta_0/300)\langle k,\eta\rangle^{1/2}}\,\,\widetilde{f}(s,k,\eta)\right\|_{L^{\infty}_sL^2_{k,\eta}}\cdot\left\|\sqrt{|(A_k\dot{A}_k)(s,\xi)|}\,\,\widetilde{f}(s,k,\xi)\right\|_{L^2_sL^2_{k,\xi}}\\
&\lesssim_{\delta}\epsilon_1^3,
\end{split}
\end{equation*}
using (\ref{nar7}) again. The case $n=3$ is identical to the case $n=2$, by symmetry. Thus $\mathcal{W}_n\lesssim_\delta\epsilon_1^3$ for all $n\in\{0,1,2,3\}$, and the desired bounds \eqref{nar36} follow.

\section{Improved control of the normalized stream function $\Theta$}\label{ellip}

We prove now the main bounds \eqref{boot3} for the function $\Theta$. More precisely:

\begin{proposition}\label{BootImp2}
With the definitions and assumptions in Proposition \ref{MainBootstrap}, we have
\begin{equation}\label{har1}
\mathcal{E}_\Theta(t)+\mathcal{B}_\Theta(t)\lesssim_{\delta}\epsilon_1^3\leq\eps_1^2/20\qquad\text{ for any }t\in[1,T].
\end{equation}
\end{proposition}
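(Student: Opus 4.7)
The plan is to reduce the bound on $\mathcal{E}_\Theta+\mathcal{B}_\Theta$ to the already-established bound on $\mathcal{E}_f+\mathcal{B}_f$ from Proposition \ref{BootImp1}, up to perturbative correction terms. The starting point is a purely algebraic identity: apply the operator $\partial_z^2+(\partial_v-t\partial_z)^2$ to the localized stream function $\Psi\phi$, expand the $(v,z)$-derivatives in Leibniz form, and substitute the elliptic equation \eqref{rea26} for $\phi$. Because $\Psi$ depends only on $v$, this gives
\begin{equation*}
\Theta=\Psi f-\Psi\big[(V')^2-1\big](\partial_v-t\partial_z)^2\phi-\Psi V''(\partial_v-t\partial_z)\phi+\Psi''\phi+2\Psi'(\partial_v-t\partial_z)\phi.
\end{equation*}
Since $f$ is supported in $\T\times[c_0+\vartheta_0,c_0+1-\vartheta_0]$, where $\Psi\equiv 1$, the first term is simply $f$.

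First I would note that the contribution of $f$ to $\mathcal{E}_\Theta+\mathcal{B}_\Theta$ is dominated by $\mathcal{E}_f+\mathcal{B}_f$, since the $\Theta$-weight carries the extra factor $\frac{|k|^2\langle t\rangle^2}{|\xi|^2+|k|^2\langle t\rangle^2}\leq 1$; Proposition \ref{BootImp1} then delivers the $\lesssim_\delta\eps_1^3$ bound for this piece.

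Next I would handle the two quadratic corrections involving $(V')^2-1$ and $V''$. In Fourier variables, $\widetilde{\Psi\phi}(t,k,\xi)=-\widetilde{\Theta}(t,k,\xi)/(k^2+(\xi-kt)^2)$, so $(\partial_v-t\partial_z)^j(\Psi\phi)$ for $j\in\{1,2\}$ has Fourier symbol $(i(\xi-kt))^j/(k^2+(\xi-kt)^2)$ times $\widetilde\Theta$. This converts the $\Theta$-energy weight into the precise algebraic weight needed to bound $(\partial_v-t\partial_z)^j\phi$ on the support of $\Psi$. Combining this with the bootstrap bounds on $V'-1$, $(V'-1)^2$, $\langle\partial_v\rangle^{-1}V''$ from Lemma \ref{nar8} and the bilinear weighted multiplier inequalities of Section 8 (of the type used in Lemma \ref{nar13}), and symmetrizing so that the relevant time derivative $\dot A_k$ or $\dot A_R$ is shared between the two factors, one gains smallness $\eps_1$ from the coefficient factor and $\eps_1^{3/2}$ from the $\mathcal{E}_\Theta$-half of the pair, yielding a contribution $\lesssim_\delta\eps_1^3$ to both $\mathcal{E}_\Theta$ and $\mathcal{B}_\Theta$.

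The last and hardest step is the two boundary-type terms $\Psi''\phi$ and $2\Psi'(\partial_v-t\partial_z)\phi$, in which the cutoff derivatives $\Psi',\Psi''$ are supported \emph{outside} the support of $f$. Here $\phi$ must be reconstructed from its Green's function representation (Lemma \ref{ineq6}), which introduces the boundary values $\partial_v\phi|_{v=c_0,c_0+1}$. As emphasized in the introduction, these boundary derivatives decay faster than any polynomial in $t$ (cf. \eqref{dar21}), and this decay is exactly what compensates for the loss of Gevrey regularity that the coordinates $(z,v)$ induce on functions like $\cos(z+tv)$ near the boundary. The essential input is that $\Psi'(v)G_k(j,v)$ and $\Psi''(v)G_k(j,v)$ lie in a strictly better Gevrey space $\mathcal{G}^{\lambda,s}$ with $s>1/2$, since $\Psi',\Psi''$ are supported strictly away from $v=c_0,c_0+1$. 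Pairing this improved regularity against the $\mathcal{G}^{\lambda,1/2}$ weights $A_k(t,\xi)\tfrac{|k|\langle t\rangle}{(|\xi|^2+|k|^2\langle t\rangle^2)^{1/2}}$, and using again the bilinear weight estimates of Section 8 together with the $f$- and $\Theta$-bounds from Proposition \ref{BootImp1} and the bootstrap, one obtains the required $\lesssim_\delta\eps_1^3$ contribution.

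The main obstacle I expect is the third step: matching the rate of decay of $\partial_v\phi|_{\partial}$ with the precise rate of ``loss of regularity'' near the boundary in $(z,v)$-variables, without any slack in the Gevrey exponent $1/2$. This is where the technical setup (sharp weights, intermediate scale $\delta$, and the averaged weights of Section 7) does the fine-tuned work, and where the assumption that $\text{supp}\,\omega_0$ is strictly interior to $[0,1]$ becomes essential.
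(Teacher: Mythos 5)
Your decomposition of $\Theta$ and three-step plan (the $\Psi f$ term, the quadratic terms with $V'-1,V''$, and the boundary terms $\Psi'',\Psi'$ with the Green-function analysis) are essentially the paper's own proof (the decomposition \eqref{har4} followed by Lemmas \ref{har6} and \ref{dar30}); your observation that $\Psi f=f$ on the support of $f$ is a small simplification of the paper's Case 1 of Lemma \ref{har6}, which instead runs $\Psi f$ through the bilinear Lemma \ref{Multi0}. One small inaccuracy worth flagging: in the second step you attribute $\eps_1^{3/2}$ to the $\Theta$-factor, but the bootstrap hypothesis only gives $\mathcal{E}_\Theta^{1/2}\lesssim\eps_1$ (using $\mathcal{E}_\Theta\lesssim\eps_1^3$ there would be circular); the correct bookkeeping gives $\eps_1^4$ for $g_{11},g_{12}$ and for the $\Theta$-half of the boundary terms, with the $\eps_1^3$ in \eqref{har1} ultimately coming from the $\Psi f=f$ contribution and the $f$-part of the boundary terms via Proposition \ref{BootImp1}.
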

 
The rest of the section is concerned with the proof of this proposition. Recall the elliptic equation \eqref{rea26}
\begin{equation}\label{eq:Elliptic_t}
\partial_z^2\phi+|V'|^2(\partial_v-t\partial_z)^2\phi+V''(\partial_v-t\partial_z)\phi=f,
\end{equation}
for $(z,v)\in \mathbb{T}\times[c_0,c_0+1]$, with boundary conditions $\phi(z,c_0)=\phi(z,c_0+1)=0$.

It follows from \eqref{eq:Elliptic_t} that
\begin{equation}\label{EllipticR}
\partial_z^2\phi+(\partial_v-t\partial_z)^2\phi=f+(1-|V'|^2)\left(\partial_v-t\partial_z\right)^2\phi-V''(\partial_v-t\partial_z)\phi,
\end{equation}
for $(z,v)\in \mathbb{T}\times [c_0,c_0+1]$.  Using the support property of $1-V'$ and $V''$, we can replace $\phi$ by $\Psi\,\phi$ in the right hand side of (\ref{EllipticR}). Therefore, see definition \eqref{defgellip},
\begin{equation}\label{har4}
\begin{split}
\Theta&=\big[\partial_z^2+(\partial_v-t\partial_z)^2\big]\,\left(\Psi(v)\,\phi\right)\\
  &=\Psi\,\big[\partial_z^2+(\partial_v-t\partial_z)^2\big]\,\phi+2\partial_v\Psi\,(\partial_v-t\partial_z)\phi+\partial_v^2\Psi\,\phi\\
  &=\Psi\,f+(1-|V'|^2)(\partial_v-t\partial_z)^2(\Psi\phi)-V''(\partial_v-t\partial_z)(\Psi\phi)+2\,\partial_v\Psi\,(\partial_v-t\partial_z)\phi+\partial_v^2\Psi\,\phi\\
 &=\Psi\,f+g_{11}+g_{12}+g_2+g_3.
\end{split}
\end{equation}

We consider separately the contributions of the five terms in the right-hand side of \eqref{har4}. We remark that the terms $\Psi f,g_{11},g_{12}$ are easy to bound, using just the bootstrap assumptions and bilinear estimates (see Lemma \ref{har6}). The terms $g_2$ and $g_3$ are harder to bound (see Lemma \ref{dar30}), mainly because our bootstrap assumption gives information on the localized stream function $\Psi\phi$, but not on $\phi$ itself. 

\subsection{Bounds on the terms $\Psi f$, $g_{11}$, and $g_{12}$} In this subsection, we prove the following:

\begin{lemma}\label{har6} For any $t\in[0,T]$ and $G\in\{\Psi f,g_{11},g_{12}\}$ we have
\begin{equation}\label{har8}
\sum_{k\in \mathbb{Z}\setminus\{0\}}\int_{\R}A_k^2(t,\xi)\frac{|k|^2\langle t\rangle^2}{|\xi|^2+|k|^2\langle t\rangle^2}\big|\widetilde{G}(t,k,\xi)\big|^2\,d\xi\lesssim_\delta\eps_1^3
\end{equation}
and
\begin{equation}\label{har9}
\int_1^t\sum_{k\in \mathbb{Z}\setminus\{0\}}\int_{\R}|\dot{A}_k(s,\xi)|A_k(s,\xi)\frac{|k|^2\langle s\rangle^2}{|\xi|^2+|k|^2\langle s\rangle^2}\big|\widetilde{G}(s,k,\xi)\big|^2\,d\xi ds\lesssim_\delta\eps_1^3.
\end{equation}
\end{lemma}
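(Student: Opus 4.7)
\textbf{Proof plan for Lemma \ref{har6}.}

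The plan is to handle the three terms separately, reducing each of them to a bilinear multiplier estimate of the form in Lemma \ref{Multi0}. The proof for $\Psi f$ is essentially immediate. For $g_{11}$ and $g_{12}$ the idea is to use the elliptic identity $\widetilde{\Theta}(k,\xi)=-(k^2+(\xi-kt)^2)\widetilde{\Psi\phi}(k,\xi)$ to express the derivatives $(\partial_v-t\partial_z)^j(\Psi\phi)$ in terms of $\widetilde{\Theta}$ with a bounded multiplier, and then to decompose $1-|V'|^2$ and $V''$ into pieces whose weighted $L^2$-norms are controlled by Lemma \ref{nar8}(i). The main technical step will be to verify a weight inequality that allows us to transfer the factor $|k|\langle t\rangle/\sqrt{\xi^2+k^2\langle t\rangle^2}$ from the output frequency $(k,\xi)$ to the frequency $(k,\eta)$ carried by $\Theta$.

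For the first term, write $\widetilde{\Psi f}=\widetilde{\Psi}\ast_\xi\widetilde{f}$, bound $|k|\langle t\rangle/\sqrt{\xi^2+k^2\langle t\rangle^2}\le 1$, and use the high regularity $|\widetilde{\Psi}(\xi)|\lesssim e^{-\langle\xi\rangle^{3/4}}$ from \eqref{rec0} together with the multiplier bound $A_k(t,\xi)\lesssim e^{-\langle\xi-\eta\rangle^{3/4}/2}A_k(t,\eta)$ (which follows from Lemma \ref{TLX40} and the Gevrey regularity index $3/4>1/2$). Combined with the bootstrap bounds $\mathcal{E}_f(t)+\mathcal{B}_f(t)\lesssim_\delta\eps_1^3$ from Proposition \ref{BootImp1}, we obtain \eqref{har8} and \eqref{har9} for $\Psi f$.

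For $g_{11}$ and $g_{12}$, we use the decompositions $1-|V'|^2=-2(V'-1)-(V'-1)^2$ and $V''=\partial_v(V'-1)+(V'-1)\partial_v(V'-1)$, so that the low-frequency factor belongs to $\{V'-1,(V'-1)^2,\langle\partial_v\rangle^{-1}V''\}$, for which \eqref{nar4} applies. For the high-frequency factor, for $k\neq 0$ the elliptic identity gives
\begin{equation*}
\widetilde{(\partial_v-t\partial_z)^j(\Psi\phi)}(k,\eta)=-\frac{(i(\eta-kt))^j}{k^2+(\eta-kt)^2}\,\widetilde{\Theta}(k,\eta),\qquad j\in\{1,2\}.
\end{equation*}
Since $|\eta-kt|^j/(k^2+(\eta-kt)^2)\le 1/k^{2-j}\cdot(|k|\langle t\rangle/\sqrt{\eta^2+k^2\langle t\rangle^2})^{-1}\cdot(|k|\langle t\rangle/\sqrt{\eta^2+k^2\langle t\rangle^2})$ and similarly a bounded multiplier times the $\mathcal{E}_\Theta$-weight, the weighted $L^2$-norms from \eqref{rec3.5}--\eqref{rec6.5} directly control these factors. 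In particular, for $g_{12}$ we absorb the extra $\langle\xi-\eta\rangle$ coming from $\widetilde{V''}=\langle\xi-\eta\rangle\,\widetilde{\langle\partial_v\rangle^{-1}V''}$ into the Gevrey weight $A_R$.

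Finally, we reduce the claims to the multiplier inequalities
\begin{equation*}
A_k(t,\xi)\,\frac{|k|\langle t\rangle}{\sqrt{\xi^2+k^2\langle t\rangle^2}}\lesssim_\delta A_R(t,\xi-\eta)\,A_k(t,\eta)\,\frac{|k|\langle t\rangle}{\sqrt{\eta^2+k^2\langle t\rangle^2}}\,\bigl\{\langle\xi-\eta\rangle^{-2}+\langle k,\eta\rangle^{-2}\bigr\}
\end{equation*}
together with the analogous version with $\sqrt{|\dot{A}_k A_k|}(t,\xi)$ on the left and $\sqrt{|\dot{A}_R/A_R|}(t,\xi-\eta)+\sqrt{|\dot{A}_k/A_k|}(t,\eta)$ on the right, for $k\neq 0$. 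The weight bound is obtained from Lemma \ref{TLX40} exactly as in \eqref{nar15}--\eqref{nar16}; the ratio of the $|k|\langle t\rangle/\sqrt{\xi^2+k^2\langle t\rangle^2}$ factors is estimated by splitting into $|\xi-\eta|\le 10|k,\eta|$ and $|\xi-\eta|\ge 10|k,\eta|$, yielding a bound by $e^{\delta\min(\langle\xi-\eta\rangle,\langle k,\eta\rangle)^{1/2}}$ times the analogous quantity in $\eta$, as in \eqref{nar17}. The main (mild) obstacle is verifying this last ratio bound in the regime $|\xi-\eta|\ge 10|k,\eta|$, where we use that the quantity is crudely bounded by $1$ and absorbed by the Gevrey loss. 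Feeding all of this into Lemma \ref{Multi0}(ii), combined with \eqref{nar4}, the bootstrap bounds on $\mathcal{E}_\Theta+\mathcal{B}_\Theta$, and the improved bound $\mathcal{E}_f+\mathcal{B}_f\lesssim_\delta\eps_1^3$ from Proposition \ref{BootImp1}, yields \eqref{har8} and \eqref{har9} for $g_{11}$ and $g_{12}$, completing the proof.
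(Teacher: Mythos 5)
Your overall structure matches the paper's: split into the three contributions, use the Gevrey-$3/4$ regularity of $\Psi$ for $\Psi f$, decompose $1-|V'|^2$ and $V''$ into the quantities from Lemma~\ref{nar8}(i), invert the elliptic operator to trade $(\partial_v-t\partial_z)^j(\Psi\phi)$ for $\Theta$, and reduce to multiplier bounds that feed into Lemma~\ref{Multi0}(ii). For $\Psi f$ and $g_{11}$ this works (your displayed multiplier bound is precisely \eqref{har20} with the favorable factor $\frac{k^2+(\eta-tk)^2}{(\eta-tk)^2}\ge1$ dropped, which the paper also does). Two slips worth fixing: the claim $A_k(t,\xi)\lesssim e^{-\langle\xi-\eta\rangle^{3/4}/2}A_k(t,\eta)$ is written backwards --- the correct statement is that $A_k(t,\xi)e^{-\langle\xi-\eta\rangle^{3/4}/2}\lesssim A_k(t,\eta)\{\langle\xi-\eta\rangle^{-2}+\langle k,\eta\rangle^{-2}\}$, i.e.\ half of the $\widetilde\Psi$ decay absorbs the $A$-ratio; and the inline inequality $\frac{|\eta-kt|^j}{k^2+(\eta-kt)^2}\cdot\bigl(\frac{|k|\langle t\rangle}{\sqrt{\eta^2+k^2\langle t\rangle^2}}\bigr)^{-1}\le\frac{1}{k^{2-j}}$ is false: for $|\eta|\gg|k|\langle t\rangle$ the second factor is $\approx|\eta|/(|k|\langle t\rangle)$, which is unbounded. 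The latter claim is not needed, and its failure does not by itself break your reduction, but it signals a misconception that resurfaces below.

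The genuine gap is in $g_{12}$. You assert that the extra $\langle\xi-\eta\rangle$ from $\widetilde{V''}=\langle\xi-\eta\rangle\widetilde{\langle\partial_v\rangle^{-1}V''}$ is ``absorbed into the Gevrey weight $A_R$'' and then reduce to the same multiplier inequality (*) you use for $g_{11}$. This does not work: \eqref{nar4} only controls $\|A_R\widetilde{\langle\partial_v\rangle^{-1}V''}\|_{L^2}$, not $\|\langle\cdot\rangle A_R\widetilde{\langle\partial_v\rangle^{-1}V''}\|_{L^2}$, and the Gevrey gain from \eqref{TLX7} is $e^{-(\lambda(t)/20)\min(\langle\xi-\eta\rangle,\langle k,\eta\rangle)^{1/2}}$, which fails to control $\langle\xi-\eta\rangle$ precisely when $\langle\xi-\eta\rangle\gg\langle k,\eta\rangle$. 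What rescues the estimate in that regime is the elliptic factor you dropped: the multiplier taking $\Theta$ to $(\partial_v-t\partial_z)(\Psi\phi)$ is $\frac{|\eta-tk|}{k^2+(\eta-tk)^2}$, and its reciprocal $\frac{k^2+(\eta-tk)^2}{|\eta-tk|}\ge2|k|$ (and $\gtrsim\max(|k|,|\eta-tk|)\gtrsim|k|\langle t\rangle$ when $|\eta|\lesssim|k|\langle t\rangle$) supplies the needed gain. The paper therefore keeps this factor, reduces to \eqref{har30}, and verifies the scalar inequality \eqref{har32}
\begin{equation*}
e^{-(\lambda(t)/40)\min(\langle\xi-\eta\rangle^{1/2},\langle\eta,k\rangle^{1/2})}\,\frac{|\eta|+|k|\langle t\rangle}{|\xi|+|k|\langle t\rangle}\lesssim \frac{1}{\langle\xi-\eta\rangle}\,\frac{k^2+(\eta-tk)^2}{|\eta-tk|},
\end{equation*}
which interplays the ratio $\frac{|\eta|+|k|\langle t\rangle}{|\xi|+|k|\langle t\rangle}$ (small when $|\xi-\eta|$ is large), the Gevrey gain, and the elliptic factor in a genuinely case-dependent way. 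Your sketch for $g_{12}$ skips this step; you should retain the $\frac{k^2+(\eta-tk)^2}{|\eta-tk|}$ factor on the right of the $g_{12}$ multiplier bound and verify the analogue of \eqref{har32}.
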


\begin{proof} {\bf{Case 1.}} Assume first that $G=\Psi f$. We use Lemma \ref{Multi0} (ii). In view of \eqref{rec0} and Proposition \ref{BootImp1}, for \eqref{har8}--\eqref{har9} it suffices to show that
\begin{equation}\label{har10}
A_k(t,\xi)\frac{|k|\langle t\rangle}{|\xi|+|k|\langle t\rangle}\lesssim_\delta e^{\langle\xi-\eta\rangle^{3/4}/2}A_k(t,\eta)\{\langle\xi-\eta\rangle^{-2}+\langle k,\eta\rangle^{-2}\}
\end{equation}
and
\begin{equation}\label{har11}
\left|\dot{A}_k(t,\xi)A_k(t,\xi)\right|^{1/2}\frac{|k|\langle t\rangle}{|\xi|+|k|\langle t\rangle}\lesssim_\delta e^{\langle\xi-\eta\rangle^{3/4}/2}\left|\dot{A}_k(t,\eta)A_k(t,\eta)\right|^{1/2}\{\langle\xi-\eta\rangle^{-2}+\langle k,\eta\rangle^{-2}\}
\end{equation}
for any $t\in[1,T]$, $\xi,\eta\in\R$, and $k\in\mathbb{Z}\setminus\{0\}$. These bounds follow from \eqref{eq:CDW} and \eqref{TLX7}.

{\bf{Case 2.}} Assume now that $G=g_{11}=(1-|V'|^2)(\partial_v-t\partial_z)^2(\Psi\phi)$. We use again  Lemma \ref{Multi0} (ii). In view of \eqref{nar4} and the bootstrap assumptions, it suffices to prove that
\begin{equation}\label{har20}
A_k(t,\xi)\frac{|k|\langle t\rangle}{|\xi|+|k|\langle t\rangle}\lesssim_\delta A_R(t,\xi-\eta)\cdot A_k(t,\eta)\frac{|k|\langle t\rangle}{|\eta|+|k|\langle t\rangle}\frac{k^2+(\eta-tk)^2}{(\eta-tk)^2}\cdot \{\langle\xi-\eta\rangle^{-2}+\langle k,\eta\rangle^{-2}\}
\end{equation}
and
\begin{equation}\label{har21}
\begin{split}
\big|\dot{A}_k(t,\xi)&A_k(t,\xi)\big|^{1/2}\frac{|k|\langle t\rangle}{|\xi|+|k|\langle t\rangle}\lesssim_\delta \left[\big|(\dot{A}_R/A_R)(t,\xi-\eta)\big|^{1/2}+\big|(\dot{A}_k/A_k)(t,\eta)\big|^{1/2}\right]\\
&\times A_R(t,\xi-\eta)\cdot A_k(t,\eta)\frac{|k|\langle t\rangle}{|\eta|+|k|\langle t\rangle}\frac{k^2+(\eta-tk)^2}{(\eta-tk)^2}\cdot \{\langle\xi-\eta\rangle^{-2}+\langle k,\eta\rangle^{-2}\},
\end{split}
\end{equation}
for any $t\in[1,T]$, $\xi,\eta\in\R$, and $k\in\mathbb{Z}\setminus\{0\}$.

In view of \eqref{vfc30.7}, both bounds \eqref{har20} and \eqref{har21} follow from the estimates
\begin{equation}\label{har22}
e^{16\sqrt\delta\min(\langle\xi-\eta\rangle^{1/2},\langle\eta,k\rangle^{1/2})}A_k(t,\xi)\lesssim_\delta A_R(t,\xi-\eta)A_k(t,\eta)\cdot \{\langle\xi-\eta\rangle^{-2}+\langle k,\eta\rangle^{-2}\}
\end{equation}
for any $t\in[1,T]$, $\xi,\eta\in\R$, and $k\in\mathbb{Z}\setminus\{0\}$ (we disregard here the favorable factor $\frac{k^2+(\eta-tk)^2}{(\eta-tk)^2}$ in the right-hand side). The bounds \eqref{har22} follow from \eqref{TLX7}.

{\bf{Case 3.}} Finally, assume that $G=g_{12}=-V''(\partial_v-t\partial_z)(\Psi\phi)$. We use again  Lemma \ref{Multi0} (ii). In view of \eqref{nar4} and the bootstrap assumptions, it suffices to prove that
\begin{equation}\label{har30}
A_k(t,\xi)\frac{|k|\langle t\rangle}{|\xi|+|k|\langle t\rangle}\lesssim_\delta \frac{A_R(t,\xi-\eta)}{\langle\xi-\eta\rangle}\cdot A_k(t,\eta)\frac{|k|\langle t\rangle}{|\eta|+|k|\langle t\rangle}\frac{k^2+(\eta-tk)^2}{|\eta-tk|}\cdot \{\langle\xi-\eta\rangle^{-2}+\langle k,\eta\rangle^{-2}\}
\end{equation}
and
\begin{equation}\label{har31}
\begin{split}
\big|\dot{A}_k(t,\xi)&A_k(t,\xi)\big|^{1/2}\frac{|k|\langle t\rangle}{|\xi|+|k|\langle t\rangle}\lesssim_\delta \left[\big|(\dot{A}_R/A_R)(t,\xi-\eta)\big|^{1/2}+\big|(\dot{A}_k/A_k)(t,\eta)\big|^{1/2}\right]\\
&\frac{A_R(t,\xi-\eta)}{\langle\xi-\eta\rangle}\cdot A_k(t,\eta)\frac{|k|\langle t\rangle}{|\eta|+|k|\langle t\rangle}\frac{k^2+(\eta-tk)^2}{|\eta-tk|}\cdot \{\langle\xi-\eta\rangle^{-2}+\langle k,\eta\rangle^{-2}\},
\end{split}
\end{equation}
for any $t\in[1,T]$, $\xi,\eta\in\R$, and $k\in\mathbb{Z}\setminus\{0\}$.

We use the bounds 
\begin{equation}\label{TLX7.01}
A_k(t,\xi)\lesssim_\delta A_R(t,\xi-\eta)A_k(t,\eta)e^{-(\lambda(t)/20)\min(\langle\xi-\eta\rangle,\langle k,\eta\rangle)^{1/2}},
\end{equation}
\begin{equation}\label{vfc30.701}
\big|(\dot{A}_k/A_k)(t,\xi)\big|\lesssim_\delta \left\{\big|(\dot{A}_R/A_R)(t,\xi-\eta)\big|+\big|(\dot{A}_k/A_k)(t,\eta)\big|\right\}e^{12\sqrt\delta\min(\langle\xi-\eta\rangle,\langle k,\eta\rangle)^{1/2}},
\end{equation}
which are proved in Lemma \ref{TLX40} below. In view of \eqref{vfc30.701}, both bounds \eqref{har20} and \eqref{har21} follow from the estimates
\begin{equation}\label{har32}
\begin{split}
&A_k(t,\xi)\frac{|\eta|+|k|\langle t\rangle}{|\xi|+|k|\langle t\rangle}\lesssim_\delta \frac{A_R(t,\xi-\eta)}{\langle\xi-\eta\rangle}A_k(t,\eta)\frac{k^2+(\eta-tk)^2}{|\eta-tk|}\cdot e^{-20\sqrt\delta\min(\langle\xi-\eta\rangle^{1/2},\langle\eta,k\rangle^{1/2})}
\end{split}
\end{equation}
for any $t\in[1,T]$, $\xi,\eta\in\R$, and $k\in\mathbb{Z}\setminus\{0\}$. Using \eqref{TLX7}, for \eqref{har32} it suffices to prove that
\begin{equation*}
e^{-(\lambda(t)/40)\min(\langle\xi-\eta\rangle^{1/2},\langle\eta,k\rangle^{1/2})}\frac{|\eta|+|k|\langle t\rangle}{|\xi|+|k|\langle t\rangle}\lesssim \frac{1}{\langle\xi-\eta\rangle}\frac{k^2+(\eta-tk)^2}{|\eta-tk|}.
\end{equation*}
This bound is easy to see, by analyzing the two cases $|\xi-\eta|\leq|(k,\eta)|$ and $|\xi-\eta|\geq|(k,\eta)|$. This completes the proof of the lemma.
\end{proof}

\subsection{Bounds on the terms $g_2$, $g_3$} In this subsection, we consider the terms $g_2,g_3$ defined in \eqref{har4}. These terms are harder to bound because the bootstrap assumptions cannot be used directly. 

We have to understand explicitly the solution to the Dirichlet problem on $\mathbb{T}\times[c_0,c_0+1]$.  Let
\begin{equation}\label{har2}
\gamma(t,z,v):=\phi(t,z-tv,v).
\end{equation}
Since $\partial_v\gamma(t,z,v)=(\partial_v-t\partial_z)\phi(t,z-tv,v)$, it follows from \eqref{EllipticR} that the function $\gamma$ satisfies the equation
\begin{equation}\label{eq:elliptice}
\partial_z^2\gamma+\partial_v^2\gamma=f(t,z-tv,v)+(1-|V'|^2)\partial_v^2\gamma-V''\partial_v\gamma,
\end{equation}
for $(z,v)\in \mathbb{T}\times[c_0,c_0+1]$ with Dirichlet boundary conditions $\gamma(z,c_0)=\gamma(z,c_0+1)=0$. Let 
\begin{equation}\label{har3}
b^0(t,z)=\partial_v\gamma(t,z,c_0),\qquad b^1(t,z)=\partial_v\gamma(t,z,c_0+1),
\end{equation}
and let $a_k^0$ and $a_k^1$ denote the Fourier coefficients of the functions $b^0$ and $b^1$. Let $\gamma^\ast(t,k,v)$ denote the Fourier coefficients of the function $\gamma$, i.e.
\begin{equation*}
\gamma^\ast(t,k,v):=\frac{1}{2\pi}\int_{\mathbb{T}}\gamma(t,z,v)e^{-ikz}\,dz.
\end{equation*}

In view of the support assumptions on $f,1-V',V''$, we have $\partial_z^2\gamma+\partial_v^2\gamma=0$ in $\mathbb{T}\times([c_0,c_0+\vartheta_0]\cup[c_0+1-\vartheta_0,c_0+1])$.  We take the partial Fourier transform along $\mathbb{T}$, thus
\begin{equation*}
\partial_{v}^2\gamma^\ast(t,k,v)-k^2\gamma^{\ast}(t,k,v)=0\quad \text{ for }v\in [c_0,c_0+\vartheta_0]\cup[c_0+1-\vartheta_0,c_0+1],
\end{equation*}
for any $t\in[1,T],\,k\in\mathbb{Z}$. Therefore
\begin{equation}\label{dar4}
\begin{split}
&\gamma^\ast(t,k,v)=a_k^0(t)\frac{\sinh[k(v-c_0)]}{k}\qquad\quad\,\,\,\,\text{ if }v\in[c_0,c_0+\vartheta_0],\\
&\gamma^\ast(t,k,v)=a_k^1(t)\frac{\sinh[k(v-c_0-1)]}{k}\qquad\text{ if }v\in[c_0+1-\vartheta_0,c_0+1],
\end{split}
\end{equation}
for all $k\in\mathbb{Z}\setminus\{0\}$. Therefore
\begin{equation}\label{dar5}
\begin{split}
&(P_{\neq 0}\phi)(t,z,v)=\sum_{k\in\mathbb{Z}\setminus \{0\}}e^{ik(z+tv)}a_k^0(t)\frac{\sinh[k(v-c_0)]}{k}\qquad\quad\,\,\,\,\text{ if }v\in[c_0,c_0+\vartheta_0],\\
&(P_{\neq 0}\phi)(t,z,v)=\sum_{k\in\mathbb{Z}\setminus \{0\}}e^{ik(z+tv)}a_k^1(t)\frac{\sinh[k(v-c_0-1)]}{k}\qquad\text{ if }v\in[c_0+1-\vartheta_0,c_0+1].
\end{split}
\end{equation}

We observe now that the functions $\partial_v\Psi$ and $\partial^2_v\Psi$ are both supported in $[c_0+\vartheta_0/4,c_0+\vartheta_0/3]\cup[c_0+1-\vartheta_0/3,c_0+1-\vartheta_0/4]$, so the formulas in \eqref{dar5} are suitable to calculate $g_2$ and $g_3$. To estimate $g_2,g_3$ we prove first suitable bounds on the coefficients $a_k^0(t)$ and $a_k^1(t)$.

\subsubsection{The Fourier coefficients $a_k^0$ and $a_k^1$} \label{har10.6} In order to quantify the effect of the boundary, it is essential to obtain estimates on the Fourier coefficients $a^0_k(t)$ and $a^1_k(t)$. Let
\begin{equation}\label{dar5.5}
\Gamma(t,z,v):=f(t,z-tv,v)+(1-V'(v)^2)\partial_v^2\gamma(t,z,v)-V''(v)\partial_v\gamma(t,z,v)
\end{equation}
denote the function in the right-hand side of \eqref{eq:elliptice}. Taking Fourier transform in $z$ in \eqref{eq:elliptice} we have
\begin{equation}\label{equphik}
\partial_v^2\gamma^\ast(t,k,v)-k^2\gamma^\ast(t,k,v)=\Gamma^\ast(t,k,v):=\frac{1}{2\pi}\int_{\mathbb{T}}\Gamma(t,z,v)e^{-ik z}\,dz.
\end{equation}

For integers $k\in\mathbb{Z}\setminus\{0\}$, let $G_k(v,w)$ denote the Green function with frequency $k$, that is, 
\begin{equation}\label{eq:Helmoltz}
-\frac{d^2}{dv^2}G_k(v,w)+k^2G_k(v,w)=\delta_w(v),
\end{equation}
with Dirichlet boundary conditions $G_k(c_0,w)=G_k(c_0+1,w)=0$, $w\in [c_0,c_0+1]$. It is easy to see that we have the explicit formula 
\begin{equation}\label{eq:GreenFunction}
G_k(v,w)=\frac{1}{k\sinh k}
\begin{cases}
\sinh(k(1+c_0-w))\sinh (k(v-c_0))\qquad&\text{ if }v\leq w,\\
\sinh (k(w-c_0))\sinh(k(1+c_0-v))\qquad&\text{ if }v\geq w.
\end{cases}
\end{equation}
Therefore, using \eqref{equphik}, for any $k\in\mathbb{Z}\setminus \{0\}$,
\begin{equation}\label{dar8}
\gamma^\ast(t,k,v)=-\int_{[c_0,c_0+1]}\Gamma^\ast(t,k,w)G_k(v,w)\,dw.
\end{equation}

Notice that, for any $v,w\in[c_0,c_0+1]$,
\begin{equation}\label{eq:boundGreen1}
G_k(v,w)=G_k(w,v),\qquad 0\leq G_k(v,w)\leq\frac{1}{2|k|}e^{-|k||v-w|}.
\end{equation}
More importantly, for any $w\in (c_0,c_0+1)$, 
\begin{equation}\label{dar9}
\begin{split}
(\partial_vG_k)(c_0,w)&=\frac{\sinh(k(1+c_0-w))}{\sinh k}=:G_k^0(w),\\
(\partial_vG_k)(c_0+1,w)&=\frac{-\sinh(k(w-c_0))}{\sinh k}=:G_k^1(w).
\end{split}
\end{equation}
It follows from \eqref{dar8} and the definitions that
\begin{equation}\label{dar10}
a_k^\iota(t)=-\int_{[c_0,c_0+1]}\Gamma^\ast(t,k,w)G_k^\iota(w)\,dw,
\end{equation}
for any $t\in[1,T]$, $k\in\mathbb{Z}\setminus\{0\}$, and $\iota\in\{0,1\}$.

We examine now the function $\Gamma$ defined in \eqref{dar5.5}. Taking partial Fourier transforms along $\mathbb{T}$ and recalling the definition \eqref{har2} we have
\begin{equation*}
\Gamma^\ast(t,k,v)=e^{-itkv}f^\ast(t,k,v)+(1-V'(t,v)^2)\frac{d^2}{dv^2}[e^{-itkv}\phi^\ast(t,k,v)]-V''(t,v)\frac{d}{dv}[e^{-itkv}\phi^\ast(t,k,v)].
\end{equation*}
Due to the support properties of the functions $f$, $V'-1$, and $V''$ we can insert the cutoff functions in the right-hand side. Let $\Psi'$ denote a smooth function supported in $[c_0+0.8\vartheta_0,c_0+1-0.8\vartheta_0]$, equal to $1$ in $[c_0+0.9\vartheta_0,c_0+1-0.9\vartheta_0]$ and satisfying $\|\widetilde{\Psi'}(\xi)e^{\langle\xi\rangle^{3/4}}\|_{L^\infty}\lesssim 1$. Therefore, for $\iota\in\{0,1\}$,
\begin{equation}\label{dar12}
\begin{split}
\Gamma^\ast(t,k,v)G_k^\iota(v)&=\Psi'(v)G_k^\iota(v)\cdot e^{-itkv}f^\ast(t,k,v)\\
&+(1-V'(t,v)^2)\Psi'(v)G_k^\iota(v)\cdot \frac{d^2}{dv^2}[e^{-itkv}\phi^\ast(t,k,v)\Psi(v)]\\
&-V''(t,v)\Psi'(v)G_k^\iota(v)\cdot \frac{d}{dv}[e^{-itkv}\phi^\ast(t,k,v)\Psi(v)].
\end{split}
\end{equation}

We can now estimate the coefficients $a_k^\iota$ using the formula \eqref{dar10} and the general identity
\begin{equation}\label{dar13}
\int_\R a(v)b(v)\,dv=C\int_\R \mathcal{F} a(\xi)\mathcal{F}b(-\xi)\,d\xi,
\end{equation}
where $\mathcal{F}$ denotes the Fourier transform on $\R$. Let
\begin{equation}\label{dar14}
\begin{split}
X_k(\xi)&:=\sum_{\iota\in\{0,1\}}|\mathcal{F}(\Psi'\cdot G_k^\iota)(\xi)|,\\
Y_k(t,\xi)&:=\sum_{\iota\in\{0,1\}}\left\{\langle\xi\rangle^2|\mathcal{F}[(1-V'(t)^2)\cdot \Psi'G_k^\iota](\xi)|+\langle\xi\rangle|\mathcal{F}[V''(t)\cdot \Psi'G_k^\iota](\xi)|\right\}.
\end{split}
\end{equation}
Notice that all the functions are well-defined as functions on $\mathbb{R}$ that vanish outside the interval $[c_0,c_0+1]$, due to the cutoff factors. We prove now our main estimates on the coefficients $a_k^\iota(t)$.

\begin{lemma}\label{dar20}
With $X_k,Y_k$ defined as above, we have
\begin{equation}\label{dar21}
|a_k^\iota(t)|\lesssim \int_{\R}X_k(tk-\xi)|\widetilde{f}(t,k,\xi)|\,d\xi+ \int_{\R}\frac{Y_k(t,tk-\xi)}{k^2+|tk-\xi|^2}|\widetilde{\Theta}(t,k,\xi)|\,d\xi.
\end{equation}
for any $k\in\mathbb{Z}\setminus \{0\}$, $t\in[0,T]$, and $\iota\in\{0,1\}$. Moreover,
\begin{equation}\label{dar22}
|X_k(\xi)|\lesssim e^{-0.7\va_0|k|}e^{-|\xi|^{2/3}}
\end{equation}
and
\begin{equation}\label{dar22.5}
\begin{split}
\int_{\R}\langle\xi\rangle^{-4}A^2_R(t,\xi)|Y_k(t,\xi)|^2\,d\xi&\lesssim_\delta \eps_1^2e^{-1.4\va_0|k|},\\
\int_1^t\int_{\R}\langle\xi\rangle^{-4}\big|\dot{A}_R(s,\xi)\big|A_R(s,\xi)|Y_k(s,\xi)|^2\,d\xi ds&\lesssim_\delta \eps_1^2e^{-1.4\va_0|k|}.
\end{split}
\end{equation}
\end{lemma}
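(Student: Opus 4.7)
The plan is to derive the pointwise bound \eqref{dar21} by applying Parseval's identity \eqref{dar13} to the integral representation \eqref{dar10} and invoking the elliptic identity
\[
\widetilde{\Psi\phi}(t,k,\eta)=-\frac{\widetilde{\Theta}(t,k,\eta)}{k^2+(\eta-tk)^2}
\]
implied by \eqref{defgellip}; the weighted bounds \eqref{dar22}--\eqref{dar22.5} then follow from the explicit exponential decay of the Green functions $G_k^\iota$ on $\mathrm{supp}\,\Psi'$ combined with the Gevrey smoothness of $\Psi'$.

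First I would split the integrand of \eqref{dar10} according to the three terms in the decomposition \eqref{dar12}. In each factor of the form $e^{-itkv}h(v)$ the Fourier transform in $v$ equals $\hat{h}(\,\cdot+tk\,)$, so a change of variables $\xi\mapsto tk-\xi$ after applying Parseval produces the kernel $\mathcal{F}(\Psi'G_k^\iota)(tk-\xi)$ for the $f$-piece and $(tk-\xi)^j\,\mathcal{F}[\,\cdot\,\Psi'G_k^\iota](tk-\xi)$ for the other two pieces (with $j=2$ for the $(1-V'^2)$ contribution and $j=1$ for the $V''$ contribution, the power coming from the Fourier multiplier of $d^j/dv^j$). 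Substituting $\widetilde{\Psi\phi}$ in terms of $\widetilde{\Theta}$ introduces the denominator $k^2+(tk-\xi)^2$, and estimating $|tk-\xi|^j\leq\langle tk-\xi\rangle^j$ yields exactly the combination $Y_k(t,tk-\xi)/(k^2+|tk-\xi|^2)$ appearing in \eqref{dar21}.

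For \eqref{dar22} I would use the explicit formulas \eqref{dar9}: on $\mathrm{supp}\,\Psi'\subseteq[c_0+0.8\vartheta_0,c_0+1-0.8\vartheta_0]$ the elementary identity for $\sinh$ quotients yields the pointwise bound $|D^m G_k^\iota(w)|\lesssim |k|^m e^{-0.8\vartheta_0|k|}$ uniformly in $m$. Combined with $\Psi'\in\mathcal{G}^{c,3/4}$ via a Leibniz estimate, this shows $\Psi'G_k^\iota=e^{-0.7\vartheta_0|k|}\cdot h_k$ with $h_k$ uniformly bounded in $\mathcal{G}^{c',3/4}$, the small loss $0.8\to 0.7$ absorbing the $L^1$ norms that arise in the Leibniz sum. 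Taking Fourier transforms then yields \eqref{dar22}, with the gap $3/4>2/3$ providing the necessary margin.

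The bound \eqref{dar22.5} reduces, after noting that the $\langle\xi\rangle^{-4}$ prefactor cancels the $\langle\xi\rangle^2$ and $\langle\xi\rangle$ weights built into $Y_k$, to controlling the products $(1-V'^2)\cdot\Psi'G_k^\iota$ and $V''\cdot\Psi'G_k^\iota$ in the $A_R$-weighted norm (and their $(|\dot A_R|A_R)^{1/2}$-weighted analogues) by $\eps_1 e^{-0.7\vartheta_0|k|}$. I would accomplish this via the bilinear Gevrey multiplication estimates of Lemma \ref{Multi0} and Lemma \ref{lm:Multi} applied to the decompositions $1-V'^2=-2(V'-1)-(V'-1)^2$ and $V''=\partial_v(V'-1)+(V'-1)\partial_v(V'-1)$, with the $(V'-1)$-type factors controlled by Lemma \ref{nar8} and the cutoff-Green factor contributing an extra $e^{-0.7\vartheta_0|k|}$ per occurrence, so that the square yields $e^{-1.4\vartheta_0|k|}$. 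The main obstacle is the coupled management of exponential decay in $|k|$ and Gevrey smoothness in $\xi$ for $\Psi'G_k^\iota$: because $G_k^\iota$ mixes analytic growth in $|k|$ with non-Gevrey structure, the Leibniz argument must carefully distribute derivatives between $\Psi'$ (yielding $(m!)^{4/3}$-type growth) and $G_k^\iota$ (yielding $|k|^m$), and the slightly weaker Gevrey index $2/3$ in \eqref{dar22} is what provides the necessary slack.
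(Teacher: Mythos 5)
Your proposal is correct and follows essentially the same route as the paper's own proof: \eqref{dar21} from Parseval applied to \eqref{dar10} with the decomposition \eqref{dar12} and the elliptic relation $\widetilde{\Psi\phi}=-\widetilde{\Theta}/(k^2+(\eta-tk)^2)$; \eqref{dar22} from the pointwise derivative bounds $|D^m G_k^\iota|\lesssim |k|^m e^{-0.8\va_0|k|}$ on $\mathrm{supp}\,\Psi'$ combined with the Gevrey-$3/4$ regularity of $\Psi'$ (the paper packages this into bound \eqref{dar23} and then optimizes the order of integration by parts, which is what your Leibniz argument amounts to); and \eqref{dar22.5} from the bilinear multiplication estimates of Lemma \ref{Multi0} together with \eqref{nar4} and the kernel bound \eqref{dar22}. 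The only cosmetic difference is that you propose to re-expand $V''=\partial_v(V'-1)+(V'-1)\partial_v(V'-1)$, whereas the paper simply uses the $\langle\partial_v\rangle^{-1}V''$ entry of Lemma \ref{nar8} (i) directly (via the multiplier bound \eqref{dar25} with $a=1$), which is the same content.
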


\begin{proof} The bounds \eqref{dar21} follow directly from the identities \eqref{dar10}--\eqref{dar14}. 

To prove \eqref{dar22} we use Lemma \ref{lm:Gevrey} (ii) first, so there is $C_0\geq 1$ such that $|D^m\Psi'(v)|\leq C_0^m(m+1)^{4m/3}$ for any $m\in\mathbb{Z}_+$ and $v\in[c_0,c_0+1]$. Moreover, using just the definitions \eqref{dar9}, $|D^mG_k^\iota(v)|\lesssim |k|^me^{-0.8\va_0|k|}$ in the support of $\Psi'$, for any $m\in\mathbb{Z}_+$ Therefore
\begin{equation}\label{dar23}
|D^m(\Psi' G_k^\iota)(v)|\leq e^{-0.8\va_0|k|}C_1^m(k^m+(m+1)^{4m/3})\cdot\mathbf{1}_{[c_0+0.8\va_0,c_0+1-0.8\va_0]}(v),
\end{equation} 
for any $m\in\mathbb{Z}_+$, for some constant $C_1\geq 1$. Using integration by parts in $v$,  it follows that
\begin{equation*}
X_k(\xi)\leq e^{-0.8\va_0|k|}C_2^m(k^m+(m+1)^{4m/3})\langle\xi\rangle^{-m},
\end{equation*}
for any $m\in\mathbb{Z}_+,\,\xi\in\mathbb{R}$, for some constant $C_1\geq 1$. As in the proof of Lemma \ref{lm:Gevrey} (i), the desired bounds \eqref{dar22} follow by taking $m=0$ if $|\xi|\lesssim |k|$, and $(m+1)^{4/3}$ close to $\langle\xi\rangle/(10C_2)$ if $|\xi|\gg |k|$.

To prove \eqref{dar22.5} we would like to use Lemma \ref{Multi0} (i). The multiplier bounds we need are
\begin{equation}\label{dar25}
\begin{split}
\langle\xi\rangle^{-a}|A_R(t,\xi)|&\lesssim_\delta \langle\eta\rangle^{-a}|A_R(t,\eta)|\frac{e^{|\xi-\eta|^{2/3}}}{\langle\xi-\eta\rangle^6},\\
\langle\xi\rangle^{-a}\big|(A_R\dot{A}_R)(t,\xi)\big|^{1/2}&\lesssim_\delta \langle\eta\rangle^{-a}\big|(A_R\dot{A}_R)(t,\eta)\big|^{1/2}\frac{e^{|\xi-\eta|^{2/3}}}{\langle\xi-\eta\rangle^6},\\
\end{split}
\end{equation}
for any $a\in[0,2]$, $\xi,\eta\in\mathbb{R}$, and $t\in[1,T]$. These bounds follow from \eqref{TLX4} and \eqref{vfc30}. Let 
\begin{equation*}
\begin{split}
&Y_{k,1}(t,\xi):=\sum_{\iota\in\{0,1\}}\langle\xi\rangle^2|\mathcal{F}[(1-V'(t)^2)\cdot \Psi'G_k^\iota](\xi)|,\\
&Y_{k,2}(t,\xi):=\sum_{\iota\in\{0,1\}}\langle\xi\rangle|\mathcal{F}[V''(t)\cdot \Psi'G_k^\iota](\xi)|,
\end{split}
\end{equation*}
compare with the definition \eqref{dar14}. We use Lemma \ref{Multi0} (i), the bounds \eqref{nar4} and \eqref{dar22}, and the multiplier bounds in the first line of \eqref{dar25} (with $a=0$ and with $a=1$). It follows that
\begin{equation*}
\int_\R\langle\xi\rangle^{-4}A^2_R(t,\xi)|Y_{k,\mu}(t,\xi)|^2\,d\xi\lesssim _\delta \eps_1^2e^{-1.4\va_0|k|},
\end{equation*}
for $\mu\in\{1,2\}$. Similarly, using Lemma \ref{Multi0} (i), the bounds \eqref{nar4} and \eqref{dar22}, and the multiplier bounds in the second line of \eqref{dar25} (with $a=0$ and with $a=1$), we have
\begin{equation*}
\int_0^t\int_\R\langle\xi\rangle^{-4}\big|\dot{A}_R(s,\xi)\big|A_R(s,\xi)|Y_{k,\mu}(s,\xi)|^2\,d\xi ds\lesssim _\delta \eps_1^2e^{-1.4\va_0|k|},
\end{equation*}
for $\mu\in\{1,2\}$. The desired bounds \eqref{dar22.5} follow.
\end{proof}

\subsubsection{Estimates on $g_2$ and $g_3$} We are now ready to prove our main estimates on the functions $g_2=2\partial_v\Psi\,(\partial_v-t\partial_z)\phi$ and $g_3=\partial_v^2\Psi\,\phi$. 

\begin{lemma}\label{dar30} For any $t\in[0,T]$ and $a\in\{2,3\}$ we have
\begin{equation}\label{dar31}
\sum_{k\in \mathbb{Z}\setminus\{0\}}\int_{\R}A_k^2(t,\xi)\frac{|k|^2\langle t\rangle^2}{|\xi|^2+|k|^2\langle t\rangle^2}\big|\widetilde{g_a}(t,k,\xi)\big|^2\,d\xi\lesssim_\delta\eps_1^3
\end{equation}
and
\begin{equation}\label{dar32}
\int_1^t\sum_{k\in \mathbb{Z}\setminus\{0\}}\int_{\R}\big|\dot{A}_k(s,\xi)\big|A_k(t,\xi)\frac{|k|^2\langle s\rangle^2}{|\xi|^2+|k|^2\langle s\rangle^2}\big|\widetilde{g_a}(s,k,\xi)\big|^2\,d\xi ds\lesssim_\delta\eps_1^3.
\end{equation}
\end{lemma}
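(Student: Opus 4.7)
The plan is to exploit the explicit formulas \eqref{dar4}--\eqref{dar5} for $\gamma$ and $\phi$ in the harmonic region near the boundary. Since $\partial_v\Psi$ and $\partial_v^2\Psi$ are supported in $[c_0+\va_0/4,c_0+\va_0/3]\cup[c_0+1-\va_0/3,c_0+1-\va_0/4]$, where $f$, $V'-1$, and $V''$ all vanish, these boundary formulas apply pointwise on the support of $g_2,g_3$. First I will use them to write, for every $k\in\Z\setminus\{0\}$,
\begin{equation*}
\widetilde{g_3}(t,k,\xi)=a_k^0(t)\widetilde{K_k^0}(\xi-kt)+a_k^1(t)\widetilde{K_k^1}(\xi-kt),
\end{equation*}
with $K_k^\iota(v):=\partial_v^2\Psi(v)\sinh(k(v-c_0-\iota))/k$, and an entirely analogous identity for $\widetilde{g_2}$ involving the kernels $L_k^\iota(v):=2\partial_v\Psi(v)\cosh(k(v-c_0-\iota))$.

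Next, from the Gevrey-$3/4$ estimate \eqref{rec0} on $\Psi$, the Leibniz rule and repeated integration by parts yield the kernel bound
\begin{equation*}
|\widetilde{K_k^\iota}(\eta)|+|\widetilde{L_k^\iota}(\eta)|\lesssim e^{|k|\va_0/3}\,e^{-\alpha_0\langle\eta\rangle^{2/3}},
\end{equation*}
for some absolute $\alpha_0>0$. This $e^{-\alpha_0\langle\eta\rangle^{2/3}}$ decay (with $\eta=\xi-kt$) beats any Gevrey-$1/2$ weight ratio $A_k(t,\xi)/A_k(t,kt)\lesssim e^{C_\delta\langle\xi-kt\rangle^{1/2}}$ coming from the subadditivity of $\langle\cdot\rangle^{1/2}$. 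Substituting $\eta=kt-\xi$ and integrating in $\xi$ therefore reduces the left-hand side of \eqref{dar31} to
\begin{equation*}
\sum_{k\neq 0}e^{2|k|\va_0/3}\bigl(|a_k^0(t)|^2+|a_k^1(t)|^2\bigr)A_k^2(t,kt),
\end{equation*}
and analogously for the time-integrated bound \eqref{dar32}.

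To bound the $a_k^\iota$ I will split \eqref{dar21} as $|a_k^\iota|\le T_1+T_2$ and apply Cauchy--Schwarz with two different weights tailored to the two source terms. For $T_1$ I pair $X_k$ against $A_k|\widetilde f|$; using the pointwise Gevrey bound \eqref{dar22} together with the multiplier estimate $A_k(t,kt)\lesssim_\delta A_R(t,\eta)A_k(t,kt-\eta)e^{-(\lambda(t)/20)\langle\eta\rangle^{1/2}}$ (a specialization of Lemma \ref{TLX40}, already used in \eqref{TLX7.01}), I obtain
\begin{equation*}
A_k^2(t,kt)\,T_1^2\lesssim_\delta e^{-1.4\va_0|k|}\int_{\R}A_k^2(t,\xi)|\widetilde f(t,k,\xi)|^2\,d\xi.
\end{equation*}
For $T_2$ I pair with the $\Theta$-norm weight $\mu_k(\xi):=A_k(t,\xi)|k|\langle t\rangle(|\xi|^2+k^2\langle t\rangle^2)^{-1/2}$ and combine the algebraic inequality $((kt-\eta)^2+k^2\langle t\rangle^2)/\bigl((k^2+\eta^2)^2 k^2\langle t\rangle^2\bigr)\lesssim \langle\eta\rangle^{-4}+(k^2\langle t\rangle^2\langle\eta\rangle^2)^{-1}$ with the same multiplier bound and the $A_R$-weighted estimate \eqref{dar22.5} to obtain
\begin{equation*}
A_k^2(t,kt)\,T_2^2\lesssim_\delta \eps_1^2\,e^{-1.4\va_0|k|}\int_{\R}A_k^2(t,\xi)\frac{k^2\langle t\rangle^2}{|\xi|^2+k^2\langle t\rangle^2}|\widetilde\Theta(t,k,\xi)|^2\,d\xi.
\end{equation*}

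Summing over $k\neq 0$, the boundary gain $e^{2|k|\va_0/3-1.4\va_0|k|}=e^{-0.73\va_0|k|}$ renders the $k$-sum convergent. The $T_1$ contribution is then controlled by $\mathcal{E}_f(t)\lesssim_\delta\eps_1^3$ from Proposition \ref{BootImp1}, while the $T_2$ contribution is bounded by $\eps_1^2\cdot\mathcal{E}_\Theta(t)\le\eps_1^4$ via the bootstrap \eqref{boot2}; both are $\lesssim_\delta\eps_1^3$, yielding \eqref{dar31}. The time-integrated bound \eqref{dar32} follows by exactly the same argument, replacing $\mathcal{E}_f,\mathcal{E}_\Theta$ with $\mathcal{B}_f,\mathcal{B}_\Theta$ and the first line of \eqref{dar22.5} with its second line. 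The hard part is that $A_k(t,kt)$ is exponentially large at the resonant frequency; closing the estimate requires the sharp multiplier bound of Lemma \ref{TLX40}, which factorizes $A_k(t,kt)$ as $A_R(t,\eta)\cdot A_k(t,kt-\eta)$ with a spare Gevrey-$1/2$ gain, precisely so that the $A_R$-weighted smallness of $Y_k$ furnished by Lemma \ref{dar20} can absorb this weight.
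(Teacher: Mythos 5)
Your overall strategy coincides with the paper's: the same reduction via the boundary representation \eqref{dar5}, the same kernel bound $e^{0.4\va_0|k|}e^{-|\rho|^{2/3}}$ with the gain $e^{2\cdot 0.4\va_0|k|-1.4\va_0|k|}$ making the $k$-sum converge, the same split of $a_k^\iota$ into the $X_k$ and $Y_k$ contributions of \eqref{dar21}, and the same use of \eqref{TLX7} to move from $A_k(t,kt)$ to $A_k(t,\xi)$ modulo an $A_R$ factor. For the fixed-time bound \eqref{dar31} your estimate for both $T_1$ and $T_2$ is essentially the paper's \eqref{dar41} and \eqref{dar51}, and is correct.

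The gap is in the time-integrated bound \eqref{dar32} for the $T_2$ contribution; the final sentence, that \eqref{dar32} ``follows by exactly the same argument, replacing $\mathcal{E}_f,\mathcal{E}_\Theta$ with $\mathcal{B}_f,\mathcal{B}_\Theta$ and the first line of \eqref{dar22.5} with its second line,'' does not go through as stated. The left-hand side of the required estimate \eqref{dar37} carries the weight $|\dot A_k(s,sk)|A_k(s,sk)$. After passing to $\xi$ via \eqref{TLX7}, the integrand contains a product $A_R^2(s,sk-\xi)\,A_k^2(s,\xi)$, but only one factor of $\dot A/A$ can be extracted from $|\dot A_k(s,sk)|/A_k(s,sk)$; by \eqref{vfc30.7} this factor must land either on $A_k(s,\xi)$ or on $A_R(s,sk-\xi)$, not both. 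Consequently you cannot simultaneously invoke $\mathcal{B}_\Theta$ (which needs $\dot A_k A_k$ next to $\widetilde\Theta$) and the second line of \eqref{dar22.5} (which needs $\dot A_R A_R$ next to $Y_k$). The paper resolves this by the genuine two-term decomposition $I+II$ in \eqref{dar51.1}: in $I$ the $\dot A$ lands on $A_k$ and is paired with $\mathcal{B}_\Theta$, with the $Y_k$ factor controlled pointwise by \eqref{dar45}; in $II$ the $\dot A$ lands on $A_R$ and is paired with the second line of \eqref{dar22.5}, with the $\widetilde\Theta$ factor controlled pointwise by $\mathcal{E}_\Theta$ and the uncertainty principle. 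Both pointwise reductions rely on the compact $v$-support of the relevant functions, a point not visible in your write-up. You should spell out this case split, since the naive ``replace energy by space-time norm'' step fails; everything else in your argument is sound.
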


\begin{proof} We start from the formulas in \eqref{dar5}. As in the proof of \eqref{dar22} (see \eqref{dar23}) we estimate
\begin{equation}\label{dar33}
|\mathcal{F}\{\partial^b_v\Psi(v)\cdot\sinh[k(v-c_0-\iota)]\}|(\rho)\lesssim e^{0.4\va_0|k|}e^{-|\rho|^{2/3}}\langle\rho\rangle^{-4},
\end{equation}
for any $\rho\in\mathbb{R}$, $k\in\Z$, $b\in\{1,2,3\}$, and $\iota\in\{0,1\}$. The factor $e^{0.4\va_0|k|}$ is related to the support properties of the functions $\partial^b_v\Psi$, see \eqref{rec0}. Therefore, using the formulas \eqref{dar5},
\begin{equation}\label{dar34}
|\widetilde{g_a}(t,k,\xi)|\lesssim e^{0.4\va_0|k|}\big(|a_k^0(t)|+|a_k^1(t)|\big)e^{-|\xi-kt|^{2/3}}.
\end{equation}

Since $|A_k(t,\xi)|\lesssim_\delta |A_k(t,kt)|e^{4\delta_0|\xi-tk|^{1/2}}$ (see \eqref{TLX7}), we can use \eqref{dar34} to bound the $\xi$ integral in \eqref{dar31}. For \eqref{dar31} it remains to prove that
\begin{equation}\label{dar36}
\sum_{k\in \mathbb{Z}\setminus\{0\}}A_k^2(t,tk)e^{0.8\va_0|k|}|a_k^\iota(t)|^2\lesssim_\delta\eps_1^3,
\end{equation}
for any $t\in[0,T]$ and $\iota\in\{0,1\}$. Similarly, using also \eqref{eq:CDW}, for \eqref{dar32} it suffices to prove that
\begin{equation}\label{dar37}
\int_1^t\sum_{k\in \mathbb{Z}\setminus\{0\}}\big|\dot{A}_k(s,sk)\big|A_k(s,sk)e^{0.8\va_0|k|}|a_k^\iota(s)|^2\,ds\lesssim_\delta\eps_1^3
\end{equation}
for any $t\in[0,T]$ and $\iota\in\{0,1\}$.

We examine \eqref{dar21}, and define
\begin{equation}\label{dar40}
a_{k,1}(t):=\int_{\R}X_k(tk-\xi)|\widetilde{f}(t,k,\xi)|\,d\xi,\qquad a_{k,2}(t):=\int_{\R}\frac{Y_k(t,tk-\xi)}{k^2+|tk-\xi|^2}|\widetilde{\Theta}(t,k,\xi)|\,d\xi.
\end{equation} 
We bound the two contributions separately, in the next two steps. The desired bounds \eqref{dar36}--\eqref{dar37} follow from \eqref{dar41}, \eqref{dar42}, \eqref{dar51}, and \eqref{dar52}.

{\bf{Step 1.}} We bound first the contributions of $a_{k,1}(t)$. In view of \eqref{dar22},
\begin{equation}\label{dar40.5}
|a_{k,1}(t)|^2\lesssim \int_{\R}e^{-1.4\va_0|k|}e^{-|\xi-tk|^{0.6}}|\widetilde{f}(t,k,\xi)|^2\,d\xi.
\end{equation}
Therefore
\begin{equation}\label{dar41}
\begin{split}
\sum_{k\in \mathbb{Z}\setminus\{0\}}A_k^2(t,tk)e^{0.8\va_0|k|}|a_{k,1}(t)|^2&\lesssim_\delta \sum_{k\in \mathbb{Z}\setminus\{0\}}\int_\R A_k^2(t,tk)e^{-0.6\va_0|k|}e^{-|\xi-tk|^{0.6}}|\widetilde{f}(t,k,\xi)|^2\,d\xi\\
&\lesssim_\delta \eps_1^3.
\end{split}
\end{equation}
The last estimate follows from \eqref{nar1} and the bounds $A_k^2(t,tk)\lesssim_\delta A_k^2(t,\xi)e^{4\delta_0\sqrt{|\xi-tk|}}$, see \eqref{TLX7}. Similarly, using again \eqref{dar40.5}, \eqref{nar1}, \eqref{vfc30}, and \eqref{TLX7},
\begin{equation}\label{dar42}
\begin{split}
\int_1^t\sum_{k\in \mathbb{Z}\setminus\{0\}}&\big|\dot{A}_k(s,sk)\big|A_k(s,sk)e^{0.8\va_0|k|}|a_{k,1}(s)|^2\,ds\\
&\lesssim \int_1^t\sum_{k\in \mathbb{Z}\setminus\{0\}}\int_\R\big|\dot{A}_k(s,sk)\big|A_k(s,sk)e^{-0.6\va_0|k|}e^{-|\xi-sk|^{0.6}}|\widetilde{f}(s,k,\xi)|^2\, d\xi ds\\
&\lesssim_\delta \eps_1^3.
\end{split}
\end{equation}

{\bf{Step 2.}} We bound now the contributions of $a_{k,2}(t)$. Notice that the functions $(1-V'(t))^2\Psi'G_k^\iota$ and  $V''(t)\Psi'G_k^\iota$ that appear in the definition \eqref{dar14} of $Y_k(t,\xi)$ are compactly supported in the interval $[c_0,c_0+1]$. Therefore, in view of the uncertainty principle and \eqref{dar22.5}, one also has the pointwise bounds
\begin{equation}\label{dar45}
A_R^2(t,\xi)|Y_k(t,\xi)|^2\lesssim_\delta \eps_1^2e^{-1.4\va_0|k|}\langle\xi\rangle^4
\end{equation}
for any $k\in\Z\setminus\{0\}$, $\xi\in\mathbb{R}$, and $t\in[1,T]$.

Using \eqref{dar40} and \eqref{dar45}, we have
\begin{equation}\label{dar46}
\begin{split}
|a_{k,2}(t)|^2&\lesssim_\delta \int_{\R}\frac{|Y_k(t,tk-\xi)|^2}{\langle tk-\xi\rangle^4}|\widetilde{\Theta}(t,k,\xi)|^2\min(\langle tk-\xi\rangle^2,\langle\xi\rangle^2)\,d\xi\\
&\lesssim_\delta \eps_1^2e^{-1.4\va_0|k|}\int_{\R}\frac{1}{A^2_R(t,tk-\xi)}|\widetilde{\Theta}(t,k,\xi)|^2\min(\langle tk-\xi\rangle^2,\langle\xi\rangle^2)\,d\xi.
\end{split}
\end{equation}
Therefore, using \eqref{TLX7},
\begin{equation}\label{dar51}
\begin{split}
&\sum_{k\in \mathbb{Z}\setminus\{0\}}A_k^2(t,tk)e^{0.8\va_0|k|}|a_{k,2}(t)|^2\\
&\lesssim_\delta \eps_1^2\sum_{k\in \mathbb{Z}\setminus\{0\}}\int_\R A_k^2(t,tk)e^{-0.6\va_0|k|}\frac{1}{A^2_R(t,tk-\xi)}|\widetilde{\Theta}(t,k,\xi)|^2\min(\langle tk-\xi\rangle^2,\langle\xi\rangle^2)\,d\xi\\
&\lesssim_\delta \eps_1^2\sum_{k\in \mathbb{Z}\setminus\{0\}}\int_\R A_k^2(t,\xi)e^{-0.6\va_0|k|}|\widetilde{\Theta}(t,k,\xi)|^2\frac{\min(\langle tk-\xi\rangle^2,\langle\xi\rangle^2)}{e^{(\lambda(t)/20)\min(\langle tk-\xi\rangle,\langle\xi\rangle)^{1/2}}}\,d\xi\\
&\lesssim_\delta\eps_1^4,
\end{split}
\end{equation}
where we also used and the bootstrap assumption $\mathcal{E}_\Theta(t)\lesssim\eps_1^2$ for the last inequality.

Similarly, we estimate first
\begin{equation*}
\begin{split}
\int_1^t\sum_{k\in \mathbb{Z}\setminus\{0\}}&\big|\dot{A}_k(s,sk)\big|A_k(s,sk)e^{0.8\va_0|k|}|a_{k,2}(s)|^2\,ds\lesssim \int_1^t\sum_{k\in \mathbb{Z}\setminus\{0\}}\int_\R\big|\dot{A}_k(s,sk)\big|\\
&\times A_k(s,sk)e^{0.8\va_0|k|}\frac{|Y_k(s,sk-\xi)|^2}{\langle sk-\xi\rangle^4}|\widetilde{\Theta}(s,k,\xi)|^2\min(\langle sk-\xi\rangle^2,\langle\xi\rangle^2)\, d\xi ds.
\end{split}
\end{equation*}
Using \eqref{TLX7} and \eqref{vfc30.7}, we estimate
\begin{equation}\label{dar51.1}
\int_1^t\sum_{k\in \mathbb{Z}\setminus\{0\}}\big|\dot{A}_k(s,sk)\big|A_k(s,sk)e^{0.8\va_0|k|}|a_{k,2}(s)|^2\,ds\lesssim_\delta I+II,
\end{equation}
where
\begin{equation*}
\begin{split}
I:=\int_1^t\sum_{k\in \mathbb{Z}\setminus\{0\}}\int_\R&\big|\dot{A}_k(s,\xi)\big|A_k(s,\xi)A_R^2(s,sk-\xi)\frac{\min(\langle sk-\xi\rangle^2,\langle\xi\rangle^2)}{e^{(\lambda(s)/40)\min(\langle sk-\xi\rangle,\langle\xi\rangle)^{1/2}}}\\
&\times e^{0.8\va_0|k|}\frac{|Y_k(s,sk-\xi)|^2}{\langle sk-\xi\rangle^4}|\widetilde{\Theta}(s,k,\xi)|^2\, d\xi ds
\end{split}
\end{equation*}
and
\begin{equation*}
\begin{split}
II:=\int_1^t\sum_{k\in \mathbb{Z}\setminus\{0\}}\int_\R&\big|\dot{A}_R(s,sk-\xi)\big|A_R(s,sk-\xi)A_k^2(s,\xi)\frac{\min(\langle sk-\xi\rangle^2,\langle\xi\rangle^2)}{e^{(\lambda(s)/40)\min(\langle sk-\xi\rangle,\langle\xi\rangle)^{1/2}}}\\
&\times e^{0.8\va_0|k|}\frac{|Y_k(s,sk-\xi)|^2}{\langle sk-\xi\rangle^4}|\widetilde{\Theta}(s,k,\xi)|^2\, d\xi ds.
\end{split}
\end{equation*}

Using \eqref{dar45} and the bootstrap assumption $\mathcal{B}_{\Theta}(t)\lesssim\eps_1^2$ we can bound
\begin{equation*}
\begin{split}
I\lesssim_\delta\eps_1^2\int_1^t\sum_{k\in \mathbb{Z}\setminus\{0\}}\int_\R&\big|\dot{A}_k(s,\xi)\big|A_k(s,\xi)\frac{\min(\langle sk-\xi\rangle^2,\langle\xi\rangle^2)}{e^{(\lambda(s)/40)\min(\langle sk-\xi\rangle,\langle\xi\rangle)^{1/2}}}|\widetilde{\Theta}(s,k,\xi)|^2\, d\xi ds\lesssim_\delta\eps_1^4.
\end{split}
\end{equation*}
To estimate $II$ we notice that we also have the pointwise bounds 
\begin{equation*}
A_k^2(s,\xi) \frac{|k|^2\langle s\rangle^2}{|\xi|^2+|k|^2\langle s\rangle^2}|\widetilde{\Theta}(s,k,\xi)|^2\lesssim\eps_1^2,
\end{equation*}
as a consequence of the bootstrap assumption $\mathcal{E}_{\Theta}(t)\lesssim\eps_1^2$ and the uncertainty principle. Thus
\begin{equation*}
\begin{split}
II\lesssim_\delta\eps_1^2\int_1^t\sum_{k\in \mathbb{Z}\setminus\{0\}}\int_\R&\big|\dot{A}_R(s,sk-\xi)\big|A_R(s,sk-\xi)\frac{\min(\langle sk-\xi\rangle^2,\langle\xi\rangle^2)}{e^{(\lambda(s)/40)\min(\langle sk-\xi\rangle,\langle\xi\rangle)^{1/2}}}\\
&\times e^{0.8\va_0|k|}\frac{|Y_k(s,sk-\xi)|^2}{\langle sk-\xi\rangle^4}\big(1+|\xi/(|k|\langle s\rangle)|^2\big)\, d\xi ds.
\end{split}
\end{equation*}
After a change of variables, we estimate
\begin{equation*}
\begin{split}
II\lesssim_\delta\eps_1^2\int_1^t\sum_{k\in \mathbb{Z}\setminus\{0\}}\int_\R&\big|\dot{A}_R(s,\eta)\big|A_R(s,\eta)\frac{|Y_k(s,\eta)|^2}{\langle \eta\rangle^4}\frac{\min(\langle sk-\eta\rangle^2,\langle\eta\rangle^2)}{e^{(\lambda(s)/40)\min(\langle sk-\eta\rangle,\langle\eta\rangle)^{1/2}}}\\
&\times e^{0.8\va_0|k|}\big(1+|\eta/(|k|\langle s\rangle)|^2\big)\, d\eta ds\lesssim_\delta\eps_1^4,
\end{split}
\end{equation*}
where we used the bounds \eqref{dar22.5} in the last inequality. Therefore, in view of \eqref{dar51.1},
\begin{equation}\label{dar52}
\int_1^t\sum_{k\in \mathbb{Z}\setminus\{0\}}\big|\dot{A}_k(s,sk)\big|A_k(s,sk)e^{0.8\va_0|k|}|a_{k,2}(s)|^2\,ds\lesssim_\delta\eps_1^4,
\end{equation}
as desired. This completes the proof of the lemma.
\end{proof}

\section{Improved control of the coordinate functions $V'-1$ and $\mathcal{H}$}\label{coimprov2}

In this section we prove the main bounds \eqref{boot3} for the functions $V'-1$ and $\mathcal{H}$. More precisely:

\begin{proposition}\label{BootImp3}
With the definitions and assumptions in Proposition \ref{MainBootstrap}, we have
\begin{equation}\label{yar1}
\mathcal{E}_{V'-1}(t)+\mathcal{E}_{\mathcal{H}}(t)+\mathcal{B}_{V'-1}(t)+\mathcal{B}_{\mathcal{H}}(t)\leq\eps_1^2/20\qquad\text{ for any }t\in[1,T].
\end{equation}
\end{proposition}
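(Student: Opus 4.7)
The strategy is to close parallel energy estimates for $\mathcal{E}_{V'-1}+\mathcal{B}_{V'-1}$ and $\mathcal{E}_{\mathcal{H}}+\mathcal{B}_{\mathcal{H}}$ simultaneously, using the evolution equations \eqref{rea24}--\eqref{rea25}. For each $F\in\{V'-1,\mathcal{H}\}$ I would write
\begin{equation*}
\mathcal{E}_F(t)+\mathcal{B}_F(t)=\mathcal{E}_F(1)+\int_1^t 2\Re\!\int\!(\mathrm{weight})^2\,\partial_s\widetilde{F}\cdot\overline{\widetilde{F}}\,d\xi\,ds,
\end{equation*}
exploiting the Cauchy--Kowalewski factor $-2|\dot A_*|A_*|\widetilde F|^2$ produced by $\partial_t(\mathrm{weight})^2\leq 0$, and note that $\mathcal{E}_F(1)\leq\eps_1^3$ by \eqref{boot1}. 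The crucial structural point is that the $\pm\mathcal{H}/s$ terms linking the two equations are \emph{linear} in the subsystem unknowns; thus the two estimates must close jointly, and the role of the large constant $K_\delta$ in \eqref{rec3}, \eqref{rec6} is precisely to give the cross-contributions a small factor $1/K_\delta$.

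For the $V'-1$ equation the source $\mathcal{H}/s$ paired against $A_R^2\widetilde{V'-1}$ is bounded by rewriting the weight against the $K_\delta A_{NR}(\langle s\rangle/\langle\xi\rangle)^{3/4}$ structure of $\mathcal{E}_{\mathcal{H}},\mathcal{B}_{\mathcal{H}}$: the weight ratio $A_R/A_{NR}$ is manageable by Lemma \ref{TLX40}, and the leftover factor $s^{-1}(\langle\xi\rangle/\langle s\rangle)^{3/4}$ is favorable enough that, after Cauchy--Schwarz against $\sqrt{|\dot A_R|A_R}\,|\widetilde{V'-1}|$ and $K_\delta\sqrt{|\dot A_{NR}|A_{NR}}(\langle s\rangle/\langle\xi\rangle)^{3/4}|\widetilde{\mathcal{H}}|$, one obtains a bound of order $K_\delta^{-1}\sqrt{\mathcal{B}_{\mathcal{H}}\mathcal{B}_{V'-1}}$. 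The transport $-\dot V\partial_v(V'-1)$ is symmetrized in $\xi,\eta$ as in \eqref{exN1} and handled using the sharp bounds \eqref{nar7} on $\dot V$. For the $\mathcal{H}$-equation, differentiating the prefactor $(\langle t\rangle/\langle\xi\rangle)^{3/2}$ yields a positive contribution of size $\tfrac{3t}{2\langle t\rangle}K_\delta^2\!\int\!A_{NR}^2\langle t\rangle^{1/2}\langle\xi\rangle^{-3/2}|\widetilde{\mathcal{H}}|^2\,d\xi$; however the damping $-\mathcal{H}/t$ in \eqref{rea25} contributes $-2\mathcal{E}_{\mathcal{H}}/t$, and since $\tfrac{3t}{2\langle t\rangle}<\tfrac{2\langle t\rangle}{t}$ for $t\geq 1$, the net of these two terms is non-positive and requires no further estimation.

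The remaining nonlinearities in the $\mathcal{H}$-equation are the transport $-\dot V\partial_v\mathcal{H}$ (symmetrized as before) and the averaged bilinear terms $-V'\langle\partial_v P_{\neq 0}\phi\cdot\partial_z f\rangle$ and $V'\langle\partial_z\phi\cdot\partial_v f\rangle$. These averages live at $x$-frequency zero, so in Fourier the convolution is restricted to pairs $(k,-k)$; splitting $V'=1+(V'-1)$, the leading piece pairs $\widetilde\Theta$ and $\widetilde f$ at opposite frequencies, and invoking the $\Theta$-weight $|k|^2\langle s\rangle^2/(|\xi|^2+|k|^2\langle s\rangle^2)$ from \eqref{rec3.5} together with the improved bounds $\mathcal{E}_\Theta+\mathcal{B}_\Theta\lesssim_\delta\eps_1^3$ and $\mathcal{E}_f+\mathcal{B}_f\lesssim_\delta\eps_1^3$ from Propositions \ref{BootImp1} and \ref{BootImp2}, these integrals are $\lesssim_\delta\eps_1^3$; the $(V'-1)$-correction yields an extra convolution controlled by \eqref{nar4} and Lemma \ref{Multi0}. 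The principal obstacle is exactly the $V'-1$ source term: $\mathcal{H}/s$ is \emph{linear} in the subsystem (not cubic in $\eps_1$), so it cannot be bounded by $\lesssim_\delta\eps_1^3$ in isolation. Its control rests entirely on the simultaneous closure of $(V'-1,\mathcal{H})$ combined with the $K_\delta$-absorption mechanism; choosing $K_\delta=K_\delta(\delta)$ sufficiently large, all cross-contributions enter with coefficient $\lesssim 1/K_\delta$, and summing with the nonlinear pieces $\lesssim_\delta\eps_1^3$ yields $\mathcal{E}_{V'-1}+\mathcal{E}_\mathcal{H}+\mathcal{B}_{V'-1}+\mathcal{B}_\mathcal{H}\leq\eps_1^2/20$ once $\eps_1$ is chosen small enough in terms of $\delta$.
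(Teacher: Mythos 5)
Your overall strategy matches the paper's: extract the linear cross-terms coupling $V'-1$ and $\mathcal{H}$, observe that the $\mathcal{H}$-damping plus the time-decay of the prefactor $(\langle t\rangle/\langle\xi\rangle)^{3/2}$ produces a non-positive net contribution $\mathcal{L}_{2,2}\leq 0$ (your computation $\tfrac{3t}{2\langle t\rangle}<\tfrac{2\langle t\rangle}{t}$ for $t\geq1$ is exactly right), and handle the cubic pieces (transport and averaged bilinear) with symmetrization, Lemma \ref{nar7}, and the bootstrap. You also correctly identify the $\mathcal{H}/s$ source in the $V'-1$ equation as the principal obstacle, since it is not cubic in $\eps_1$.

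However, there is a genuine gap in your treatment of $\mathcal{L}_{1,2}$. The bound you claim, of order $K_\delta^{-1}\sqrt{\mathcal{B}_{\mathcal{H}}\mathcal{B}_{V'-1}}$, requires the pointwise weight inequality
\begin{equation*}
\frac{A_R^2(s,\xi)}{s}\lesssim_\delta \sqrt{|\dot{A}_R A_R|(s,\xi)}\,\sqrt{|\dot{A}_{NR}A_{NR}|(s,\xi)}\,\big(\langle s\rangle/\langle\xi\rangle\big)^{3/4},
\end{equation*}
uniformly in $(s,\xi)$. This fails in the high-frequency non-resonant regime. Indeed, when $w_{NR}(s,\xi)\approx w_R(s,\xi)$ (so $A_R\approx_\delta A_{NR}$) and $|\dot{A}_\ast/A_\ast|\approx_\delta \langle\xi\rangle^{1/2}\langle s\rangle^{-1-\sigma_0}$ (the generic case from \eqref{TLX3.5}), the inequality reduces to $1\lesssim_\delta \langle s\rangle^{3/4-\sigma_0}\langle\xi\rangle^{-1/4}$, which is false once $\langle\xi\rangle\gg\langle s\rangle^{3-4\sigma_0}$. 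The paper closes this gap by splitting the estimate after Cauchy--Schwarz and invoking, in addition to $\mathcal{B}_{\mathcal{H}}$, the auxiliary bound from Lemma \ref{nar8}(ii), namely $\int_1^t\int_\R|\dot{A}_{NR}|A_{NR}\,|\widetilde{\mathcal{H}}|^2\,d\xi\,ds\lesssim_\delta\eps_1^2$, which carries no $(\langle s\rangle/\langle\xi\rangle)^{3/2}$ suppression and is available because $\mathcal{H}=1-V'-\langle f\rangle$ inherits control from $V'-1$ and $f$. The resulting weight inequality has a bracket $K_\delta^2(\langle s\rangle/\langle\xi\rangle)^{3/2}+(100C_\delta)^{-1}$ on the right, and the second term is exactly what saves the regime $\langle\xi\rangle\gg\langle s\rangle^{3}$. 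Correspondingly, the role of $K_\delta$ is not to produce a small factor $1/K_\delta$ on the cross-term; rather, $K_\delta$ is fixed once (depending on $\delta$) so that the weight inequality holds and $\mathcal{L}_{1,2}$ is absorbed by $\tfrac12\mathcal{B}_{V'-1}+\tfrac12\mathcal{B}_{\mathcal{H}}+O(\eps_1^2/40)$. Without the auxiliary \eqref{nar6}-type bound, your argument for $\mathcal{L}_{1,2}$ does not close.
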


The rest of the section is concerned with the proof of this proposition. Using the equations \eqref{rea24}--\eqref{rea25} and the definitions \eqref{rec1}--\eqref{rec6.5} we calculate
\begin{equation*}
\begin{split}
\frac{d}{dt}[\mathcal{E}_{V'-1}+\mathcal{E}_{\mathcal{H}}](t)&=2\int_\R \dot{A}_R(t,\xi)A_R(t,\xi)\big|\widetilde{(V'-1)}(t,\xi)\big|^2\,d\xi\\
&+2K_\delta^{2}\int_{\R}\dot{A}_{NR}(t,\xi)A_{NR}(t,\xi)\big(\langle t\rangle/\langle\xi\rangle\big)^{3/2}\big|\widetilde{\mathcal{H}}(t,\xi)\big|^2\,d\xi\\
&+K_\delta^{2}\int_{\R}A^2_{NR}(t,\xi)\frac{3}{2}\big(t\langle t\rangle^{-1/2}\langle\xi\rangle^{-3/2}\big)\big|\widetilde{\mathcal{H}}(t,\xi)\big|^2\,d\xi\\
&+2\Re\int_\R A^2_R(t,\xi)\partial_t\widetilde{(V'-1)}(t,\xi)\overline{\widetilde{(V'-1)}(t,\xi)}\,d\xi\\
&+K_\delta^{2}2\Re\int_{\R}A^2_{NR}(t,\xi)\big(\langle t\rangle/\langle\xi\rangle\big)^{3/2}\partial_t\widetilde{\mathcal{H}}(t,\xi)\overline{\widetilde{\mathcal{H}}(t,\xi)}\,d\xi.
\end{split}
\end{equation*}
Therefore, since $\partial_tA_R\leq 0$ and $\partial_tA_{NR}\leq 0$, for any $t\in[1,T]$ we have
\begin{equation}\label{yar2}
\begin{split}
\mathcal{E}_{V'-1}(t)&+\mathcal{E}_{\mathcal{H}}(t)+\mathcal{B}_{V'-1}(t)+\mathcal{B}_{\mathcal{H}}(t)=\mathcal{E}_{V'-1}(1)+\mathcal{E}_{\mathcal{H}}(1)-\mathcal{B}_{V'-1}(t)-\mathcal{B}_{\mathcal{H}}(t)+\mathcal{L}_1(t)+\mathcal{L}_2(t),
\end{split}
\end{equation}
where
\begin{equation}\label{yar3}
\mathcal{L}_1(t):=2\Re\int_1^t\int_\R A^2_R(s,\xi)\partial_s\widetilde{(V'-1)}(s,\xi)\overline{\widetilde{(V'-1)}(s,\xi)}\,d\xi ds,
\end{equation}
\begin{equation}\label{yar4}
\begin{split}
\mathcal{L}_2(t):&=K_\delta^{2}2\Re\int_1^t\int_{\R}A^2_{NR}(s,\xi)\big(\langle s\rangle/\langle\xi\rangle\big)^{3/2}\partial_s\widetilde{\mathcal{H}}(s,\xi)\overline{\widetilde{\mathcal{H}}(s,\xi)}\,d\xi ds\\
&+K_\delta^{2}\int_1^t\int_{\R}A^2_{NR}(s,\xi)\frac{3}{2}\big(s\langle s\rangle^{-1/2}\langle\xi\rangle^{-3/2}\big)\big|\widetilde{\mathcal{H}}(s,\xi)\big|^2\,d\xi ds.
\end{split}
\end{equation}
Since $\mathcal{E}_{V'-1}(1)+\mathcal{E}_{\mathcal{H}}(1)\lesssim\eps_1^3$, for \eqref{yar1} it suffices to prove that, for any $t\in[1,T]$,
\begin{equation}\label{yar6}
-\mathcal{B}_{V'-1}(t)-\mathcal{B}_{\mathcal{H}}(t)+\mathcal{L}_1(t)+\mathcal{L}_2(t)\leq \eps_1^2/30.
\end{equation}

To prove \eqref{yar6} we use the equations \eqref{rea24}--\eqref{rea25}. We extract the quadratic components of $\mathcal{L}_1$ and $\mathcal{L}_2$ (corresponding to the linear terms in the right-hand sides of  \eqref{rea24}--\eqref{rea25}), so we define
\begin{equation}\label{yar7}
\mathcal{L}_{1,2}(t):=2\Re\int_1^t\int_\R \frac{A^2_R(s,\xi)}{s}\widetilde{\mathcal{H}}(s,\xi)\overline{\widetilde{(V'-1)}(s,\xi)}\,d\xi ds,
\end{equation}
and
\begin{equation}\label{yar8}
\begin{split}
\mathcal{L}_{2,2}(t)&:=K_\delta^{2}\int_1^t\int_{\R}\Big\{-A^2_{NR}(s,\xi)\frac{2\langle s\rangle^{3/2}}{s\langle\xi\rangle^{3/2}}|\widetilde{\mathcal{H}}(s,\xi)|^2+A^2_{NR}(s,\xi)\frac{3s/2}{\langle s\rangle^{1/2}\langle\xi\rangle^{3/2}}\big|\widetilde{\mathcal{H}}(s,\xi)\big|^2\Big\}\,d\xi ds\\
&=-K_\delta^{2}\int_1^t\int_{\R}A^2_{NR}(s,\xi)\frac{2+s^2/2}{s\langle\xi\rangle^{3/2}\langle s\rangle^{1/2}}|\widetilde{\mathcal{H}}(s,\xi)|^2\,d\xi ds.
\end{split}
\end{equation}
The desired bound \eqref{yar6} follows from Lemmas \ref{yar10} and \ref{yar20} below.

We prove first an estimate on the quadratic components.

\begin{lemma}\label{yar10}
For any $t\in [1,T]$ we have
\begin{equation}\label{yar11}
-\mathcal{B}_{V'-1}(t)-\mathcal{B}_{\mathcal{H}}(t)+\mathcal{L}_{1,2}(t)+\mathcal{L}_{2,2}(t)\leq \eps_1^2/40.
\end{equation}
\end{lemma}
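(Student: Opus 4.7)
The key observation is that $\mathcal L_{1,2}$ is the unique term coupling $\widetilde{(V'-1)}$ and $\widetilde{\mathcal H}$, while the other three contributions in \eqref{yar11} involve only one of them and two of them ($-\mathcal B_{V'-1}$ and $\mathcal L_{2,2}$) are nonpositive. The plan is therefore to absorb $\mathcal L_{1,2}$ pointwise into these two negative quantities, using the fact that $\mathcal L_{2,2}$ carries a large factor $K_\delta^2$ which is at our disposal. No bootstrap information on $V'-1$ or $\mathcal H$ should enter, so the whole argument will reduce to a pointwise inequality between the four weights $A_R$, $|\dot A_R|$, $A_{NR}$ and the resolvent factors appearing in \eqref{rec6} and \eqref{yar8}.

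Concretely, Young's inequality $2|ab|\le \frac{1}{2}|a|^2+2|b|^2$, applied with $a=(|\dot A_R|A_R)^{1/2}|\widetilde{(V'-1)}|$ and $b=A_R^{3/2}(s|\dot A_R|^{1/2})^{-1}|\widetilde{\mathcal H}|$, gives
\begin{equation*}
|\mathcal L_{1,2}(t)|\le \frac{1}{2}\mathcal B_{V'-1}(t)+\int_1^t\int_{\R}\frac{2\,A_R^3(s,\xi)}{s^2\,|\dot A_R(s,\xi)|}\,|\widetilde{\mathcal H}(s,\xi)|^2\,d\xi\,ds,
\end{equation*}
so the entire lemma will follow from the pointwise weight comparison
\begin{equation}\label{eq:wcIplan}
\frac{A_R^3(s,\xi)}{s^2\,|\dot A_R(s,\xi)|}\le \frac{K_\delta^2}{4}\,A_{NR}^2(s,\xi)\,\frac{2+s^2/2}{s\,\langle\xi\rangle^{3/2}\,\langle s\rangle^{1/2}},\qquad s\ge 1,\ \xi\in\R,
\end{equation}
because inserting \eqref{eq:wcIplan} and recalling the sign in \eqref{yar8} produces
\begin{equation*}
-\mathcal B_{V'-1}(t)-\mathcal B_{\mathcal H}(t)+\mathcal L_{1,2}(t)+\mathcal L_{2,2}(t)\le -\tfrac{1}{2}\mathcal B_{V'-1}(t)-\mathcal B_{\mathcal H}(t)+\tfrac{1}{2}\mathcal L_{2,2}(t)\le 0,
\end{equation*}
which is stronger than the claimed $\eps_1^2/40$.

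The main work, and the main obstacle, is therefore to prove \eqref{eq:wcIplan}. Rewriting it as $(b_{NR}/b_R)^{2}(A_R/|\dot A_R|)\lesssim K_\delta^2\,s(s^2+4)\langle s\rangle^{-1/2}\langle\xi\rangle^{-3/2}$, I would split into two regimes dictated by the definitions of Section~\ref{weightsdef}. In the subcritical range $s\le \langle\xi\rangle^{1/2}$ the weights $b_R$ and $b_{NR}$ are comparable by design, so $b_{NR}/b_R=O_\delta(1)$, and the new lower bound $|\dot A_R|/A_R\gtrsim \delta^3$ advertised in the introduction (which holds precisely for such small times, in contrast with the weights of \cite{BedMas}) covers the required estimate once one observes $\langle\xi\rangle^{3/2}/s^3\lesssim 1$. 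In the supercritical range $s\ge \langle\xi\rangle^{1/2}$ the ratio $b_{NR}/b_R$ may grow near resonant times, but the decay of $b_R$ across those resonances supplies a compensating enhancement of $|\dot A_R|/A_R$ of essentially the same size, so the quotient remains controlled by $s^3/\langle\xi\rangle^{3/2}$. The absolute constants produced by the weight lemmas of Section~\ref{weights} are then absorbed by fixing $K_\delta=K_\delta(\delta)$ large enough that the factor $1/4$ in \eqref{eq:wcIplan} is attained; this is precisely why $K_\delta$ was inserted a priori in the definitions \eqref{rec3}, \eqref{rec6} and hence in $\mathcal L_{2,2}$.
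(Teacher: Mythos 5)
The reduction via Young's inequality is sound and mirrors the opening of the paper's argument, but the pointwise weight inequality \eqref{eq:wcIplan} on which everything hinges is \emph{false}, and the failure occurs in exactly the ``subcritical'' regime the proposal singles out. Take $s=1$ and $|\xi|$ large. Then $w_R(1,\xi)=w_{NR}(1,\xi)$ by \eqref{reb9}, so $A_R(1,\xi)\approx_\delta A_{NR}(1,\xi)$ and, by \eqref{TLX3.5}, $|\dot A_R(1,\xi)|/A_R(1,\xi)\approx_\delta\langle\xi\rangle^{1/2}$. The left side of \eqref{eq:wcIplan} (after dividing by $A_{NR}^2$) is therefore $\approx_\delta\langle\xi\rangle^{-1/2}$, whereas the right side is $\approx_\delta K_\delta^2\langle\xi\rangle^{-3/2}$. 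So \eqref{eq:wcIplan} fails as $|\xi|\to\infty$; the cruder lower bound $|\dot A_R|/A_R\gtrsim_\delta\delta^{5/2}$ you invoke makes the left side $\gtrsim_\delta 1$ and the failure even more pronounced. The assertion ``once one observes $\langle\xi\rangle^{3/2}/s^3\lesssim 1$'' is inverted: in the range $s\le\langle\xi\rangle^{1/2}$ one has $\langle\xi\rangle^{3/2}/s^3\ge 1$, which is exactly the wrong direction.

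The structural reason the plan cannot work is that you deliberately refuse to bring in bootstrap information on $\mathcal H$, replacing $\mathcal B_{\mathcal H}$ with the sign of $\mathcal L_{2,2}$ as the absorber. But $\mathcal L_{2,2}$ carries the weight $A_{NR}^2\cdot(2+s^2/2)/(s\langle\xi\rangle^{3/2}\langle s\rangle^{1/2})$, which for fixed $s$ decays like $\langle\xi\rangle^{-3/2}$, while the error term inherited from Cauchy--Schwarz, $A_R^3/(s^2|\dot A_R|)\approx_\delta A_{NR}^2\cdot(|\dot A_R|/A_R)^{-1}s^{-2}$, decays only like $\langle\xi\rangle^{-1/2}$ at worst. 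No choice of $K_\delta$ closes this gap since it is $\xi$-dependent. The paper avoids this by absorbing into $\mathcal B_{\mathcal H}$ (which carries an extra $|\dot A_{NR}|/A_{NR}\gtrsim\langle\xi\rangle^{1/2}/\langle s\rangle^{1+\sigma_0}$ gain) \emph{and} by allowing an additional $(100C_\delta)^{-1}A_{NR}|\dot A_{NR}|$ term on the right-hand side of its pointwise bound, which after integration is controlled by the bootstrap estimate \eqref{nar6} and gives exactly the $\eps_1^2/40$ slack in \eqref{yar11}. That slack is not decoration: without it the pointwise weight inequality is genuinely unavailable, so a quantitative input on $\mathcal H$ (Lemma~\ref{nar8}) must enter the argument.
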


\begin{proof} Since $\mathcal{L}_{2,2}(t)\leq 0$ for any $t\in[1,T]$, it suffices to prove that
\begin{equation*}
\mathcal{L}_{1,2}(t)\leq \mathcal{B}_{V'-1}(t)+\mathcal{B}_{\mathcal{H}}(t)+\eps_1^2/40.
\end{equation*}
Using Cauchy-Schwartz and the definitions, we have
\begin{equation*}
\mathcal{L}_{1,2}(t)\leq \frac{1}{2}\mathcal{B}_{V'-1}(t)+8\int_1^t\int_{\R}\frac{A^3_R(s,\xi)}{s^2|\dot{A}_R(s,\xi)|}|\widetilde{\mathcal{H}}(s,\xi)|^2\,d\xi ds.
\end{equation*}

In view of \eqref{nar6}, it suffices to show that 
\begin{equation*}
\frac{20A^3_R(s,\xi)}{s^2|\dot{A}_R(s,\xi)|}\leq K_\delta^{2}A_{NR}(s,\xi)|\dot{A}_{NR}(s,\xi)|(\langle s\rangle/\langle \xi\rangle)^{3/2}+(100C_\delta)^{-1}A_{NR}(s,\xi)|\dot{A}_{NR}(s,\xi)|,
\end{equation*}
where $C_\delta$ is the implicit constant in \eqref{nar6}. This is equivalent to proving that
\begin{equation*}
\frac{20A^2_R(s,\xi)}{A^2_{NR}(s,\xi)}\leq s^2\frac{|\dot{A}_R(s,\xi)|}{A_R(s,\xi)}\frac{|\dot{A}_{NR}(s,\xi)|}{A_{NR}(s,\xi)}\big[K_\delta^{2}(\langle s\rangle/\langle \xi\rangle)^{3/2}+(100C_\delta)^{-1}\big].
\end{equation*}
Using \eqref{dor3}, \eqref{dor20}, and \eqref{TLX3.5}, and setting $K_\delta$ sufficiently large, it suffices to prove that
\begin{equation}\label{yar18}
\frac{w_{NR}^2(s,\xi)}{w_R^2(s,\xi)}\lesssim_\delta\langle s\rangle^2(\langle s\rangle/\langle \xi\rangle)\left[\frac{\langle\xi\rangle^{1/2}}{\langle s\rangle^{1+\sigma_0}}+\left|\frac{\partial_sw_{NR}(s,\xi)}{w_{NR}(s,\xi)}\right|\right]\left[\frac{\langle\xi\rangle^{1/2}}{\langle s\rangle^{1+\sigma_0}}+\left|\frac{\partial_sw_{R}(s,\xi)}{w_{R}(s,\xi)}\right|\right],
\end{equation}
for any $s\geq 1$ and $\xi\in\R$. 

We examine the definitions in subsection \ref{weightsdefin} and notice that \eqref{yar18} follows easily if $\frac{w_{NR}(s,\xi)}{w_R(s,\xi)}\lesssim 1$, since the right-hand side is bounded from below by  $(\langle s\rangle^3/\langle\xi\rangle)\frac{\langle\xi\rangle^{1/2}}{\langle s\rangle^{1+\sigma_0}}\frac{\langle\xi\rangle^{1/2}}{\langle s\rangle^{1+\sigma_0}}\gtrsim 1$. Therefore, see \eqref{reb5.5}, it only remains to prove \eqref{yar18} when $\xi>|\delta|^{-10}$ and $|s-\xi/k|\leq \xi/(8k^2)$ for some $k\in\Z$ with $|k|\in\{1,\ldots,k_0(\xi)\}$. In this case for \eqref{yar18} it suffices to prove that
\begin{equation*}
\left(\frac{1+\delta^2|\xi|/(8k^2)}{1+\delta^2|s-\xi/k|}\right)^2\lesssim_\delta\langle s\rangle^2\left|\frac{\partial_sw_{NR}(s,\xi)}{w_{NR}(s,\xi)}\right|\cdot \frac{\langle s\rangle}{\langle\xi\rangle}\left|\frac{\partial_sw_{R}(s,\xi)}{w_{R}(s,\xi)}\right|.
\end{equation*}
This follows easily using \eqref{reb8}, which completes the proof of the lemma.
\end{proof}

We prove now estimates on the cubic and higher order terms. We examine the identities \eqref{rea24} and \eqref{rea25} and define
\begin{equation}\label{yar18.5}
\begin{split}
&F_1:=-\dot{V}\partial_v(V'-1),\\
&G_1:=-\dot{V}\partial_v\mathcal{H},\qquad G_2:=V'[-\big<\partial_vP_{\neq0}\phi\,\partial_zf\big>+\big<\partial_z\phi\,\partial_vf\big>].
\end{split}
\end{equation}
Notice that
\begin{equation}\label{yar19}
\begin{split}
&\mathcal{L}_1(t)=\mathcal{L}_{1,2}(t)+2\Re\int_1^t\int_\R A^2_R(s,\xi)\widetilde{F_1}(s,\xi)\overline{\widetilde{(V'-1)}(s,\xi)}\,d\xi ds,\\
&\mathcal{L}_2(t)=\mathcal{L}_{2,2}(t)+\sum_{a\in\{1,2\}}K_\delta^{2}2\Re\int_1^t\int_{\R}A^2_{NR}(s,\xi)\big(\langle s\rangle/\langle\xi\rangle\big)^{3/2}\widetilde{G_a}(s,\xi)\overline{\widetilde{\mathcal{H}}(s,\xi)}\,d\xi ds.
\end{split}
\end{equation}

The following lemma is our main estimate on the cubic and higher order contributions.

\begin{lemma}\label{yar20} 
For any $t\in[1,T]$ and $a\in\{1,2\}$ we have
\begin{equation}\label{yar21}
\Big|2\Re\int_1^t\int_\R A^2_R(s,\xi)\widetilde{F_1}(s,\xi)\overline{\widetilde{(V'-1)}(s,\xi)}\,d\xi ds\Big|\lesssim_\delta\eps_1^3
\end{equation}
and
\begin{equation}\label{yar22}
\Big|2\Re\int_1^t\int_{\R}A^2_{NR}(s,\xi)\big(\langle s\rangle/\langle\xi\rangle\big)^{3/2}\widetilde{G_a}(s,\xi)\overline{\widetilde{\mathcal{H}}(s,\xi)}\,d\xi ds\Big|\lesssim_\delta\eps_1^3.
\end{equation}
\end{lemma}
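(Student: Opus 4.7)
The three estimates have the same structure as the nonlinearity estimates carried out for $\mathcal{N}_1,\mathcal{N}_2,\mathcal{N}_3$ in Section~\ref{fimprov}: expand the bilinear/trilinear expressions in Fourier, symmetrize in the frequency variables, partition the resulting integration region into the four zones $R_0,R_1,R_2,R_3$ of \eqref{nar18.1}--\eqref{nar18.4} (with $k=\ell=0$ for $F_1,G_1$, and sums over $k\neq 0$ for $G_2$), apply the symmetrized weight bounds from Lemmas~\ref{TLXH1}--\ref{TLXH3}, and close using Lemma~\ref{nar8} together with the bootstrap hypothesis \eqref{boot2}.

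\textbf{The terms $F_1$ and $G_1$.} Since $F_1=-\dot V\,\partial_v(V'-1)$ has exactly the transport form of the nonlinearity $\mathcal{N}_3$ treated in Lemma~\ref{nar35}, the argument there applies verbatim after replacing $A_k^2$ by $A_R^2$ and $f$ by $V'-1$: use the $A_R^2$ version of Lemma~\ref{TLXH2} to bound $|\eta A_R^2(s,\xi)-\xi A_R^2(s,\eta)|$ on each region, then combine the $V'-1$ bounds \eqref{nar4} with the $\dot V$ bounds \eqref{nar7}. For $G_1=-\dot V\,\partial_v\mathcal{H}$ the same template is used with the weight $K_\delta^2 A_{NR}^2(s,\xi)(\langle s\rangle/\langle\xi\rangle)^{3/2}$ in place of $A_R^2$; the extra factor $(\langle s\rangle/\langle\xi\rangle)^{3/2}$ is dispatched symmetrically, using that the ratio $\langle\xi\rangle/\langle\eta\rangle$ is Gevrey-harmless when $|\xi-\eta|\ll\min(\langle\xi\rangle,\langle\eta\rangle)$ (regions $R_0,R_1$), and pointwise-controlled by the rapid decay of $\widetilde{\dot V}$ encoded in \eqref{nar7} when $|\xi-\eta|$ dominates ($R_2,R_3$).

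\textbf{The term $G_2$.} Write $V'=1+(V'-1)$; by Lemma~\ref{Multi0}(ii) and \eqref{nar4} the $(V'-1)$ factor produces a subleading contribution and is handled at the end. For the leading part, observe that since $\langle\partial_z(\,\cdot\,)\rangle\equiv 0$ and $\langle\partial_v\langle\phi\rangle\cdot\partial_z f\rangle=\partial_v\langle\phi\rangle\,\langle\partial_z f\rangle=0$, the Poisson-bracket identity
\[
-\partial_v\phi\,\partial_z f+\partial_z\phi\,\partial_v f=\partial_v(\partial_z\phi\cdot f)-\partial_z(\partial_v\phi\cdot f)
\]
yields $\langle -\partial_v P_{\neq 0}\phi\,\partial_z f+\partial_z\phi\,\partial_v f\rangle=\partial_v\langle\partial_z\phi\cdot f\rangle$. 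Because $f$ is supported in $[c_0+\va_0,c_0+1-\va_0]$, we may replace $\phi$ by $\Psi\phi$ in the product, and then the elliptic identity $\widetilde{\Psi\phi}(s,k,\rho)=-\widetilde{\Theta}(s,k,\rho)/(k^2+(\rho-tk)^2)$ for $k\neq 0$ converts the leading part of $G_2$ into a trilinear convolution of $\widetilde{\Theta}$ and $\widetilde{f}$, paired against $\widetilde{\mathcal{H}}$ with the weight of $\mathcal{E}_\mathcal{H}$. Writing the output frequency as $\xi=(\xi-\mu)+\mu$ distributes the outer $\partial_v$ either onto $\Psi\phi$ (producing a quantity controlled by the $\Theta$ norm of Section~\ref{ellip}) or onto $f$, after which the pair $((\xi-\mu),\mu)$ is symmetrized and split into $R_0,\ldots,R_3$. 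In $R_0,R_1$ the two CK factors $\sqrt{|\dot A_\ast A_\ast|}$ are placed on $\mathcal{H}$ and $f$, while $\Theta$ is controlled pointwise in time via Lemma~\ref{nar13} and Section~\ref{ellip}; in $R_2$ (resp.\ $R_3$) the CK factor sits on $\Theta$ or $f$ and the remaining factor is controlled by its uniform-in-time energy.

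\textbf{Main obstacle.} The hardest step is to verify, for $G_2$, the multiplier inequalities that compare the output weight $A_{NR}(s,\xi)(\langle s\rangle/\langle\xi\rangle)^{3/4}$ with the weight on $\Theta$, namely $A_k(s,\xi-\mu)\,|k|\langle s\rangle/(|\xi-\mu|+|k|\langle s\rangle)$, and the weight $A_k(s,\mu)$ on $f$. One needs pointwise and CK-type estimates of the form used in \eqref{nar15}--\eqref{nar17} and \eqref{nar34.01}--\eqref{nar34.03}, now with an extra factor $(\langle s\rangle/\langle\xi\rangle)^{3/4}$ and with the output leg being a $k=0$ weight rather than an $A_k$; this forces one to exploit the monotonicity $A_{NR}\leq A_R\leq A_k$ from \eqref{reb13} together with the product bound \eqref{TLX7}, and to track carefully where the extra $\langle s\rangle/\langle\xi\rangle$ power can be absorbed without spoiling the summability in $|k|$. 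Once these multiplier estimates are in hand, the rest of the proof is a bookkeeping exercise of the same nature as Sections~\ref{fimprov}--\ref{ellip}, concluding with an application of \eqref{nar6} for $\mathcal{H}$, \eqref{nar1} for $f$, and the $\Theta$ bounds of Proposition~\ref{BootImp2} via Lemma~\ref{nar13}.
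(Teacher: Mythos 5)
Your proposal follows essentially the same route as the paper. In particular, for $F_1$ and $G_1$ the symmetrize-then-split-into-$R_n$ strategy matches Lemmas~\ref{tol1} and~\ref{tor1} (the paper proves a bespoke weight lemma, Lemma~\ref{TLY1}, with the extra $\langle\xi\rangle^{-3/2}$ parameter built in, rather than re-using a $A_R^2$-variant of Lemma~\ref{TLXH2}, but this is a cosmetic choice); and for $G_2$ you correctly identify both key ideas -- the Poisson-bracket cancellation producing $\partial_v\langle\partial_z\phi\cdot f\rangle$ (equivalent to the paper's \eqref{yar25} where the Fourier factors combine into $k\xi$) and the elliptic substitution converting $\Psi\phi$ into $\Theta$ -- as well as the genuine obstacle, namely the multiplier bound comparing $A_{NR}(s,\xi)(\langle s\rangle/\langle\xi\rangle)^{3/4}|\xi/k|$ against $A_k(s,\eta)$ and $A_{-k}(s,\xi-\eta)$ with the $\Theta$-decay factor, which is precisely Lemma~\ref{TLX40.1}.

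One small caveat on terminology: for $G_2$ there is no antisymmetric cancellation to ``symmetrize'' out. The two input legs $\widetilde{\Theta}(s,k,\eta)$ and $\widetilde{f}(s,-k,\xi-\eta)$ carry different weights, and the output leg $\widetilde{\mathcal H}(s,\xi)$ a third, so the $R_n$ partition you invoke is only an organizational frequency split; all of the derivative gain is already captured by the Poisson-bracket identity you state. The paper in fact avoids the $R_n$ split for this term altogether: it first applies Cauchy--Schwarz against $\mathcal{B}_{\mathcal H}$ to decouple $\mathcal H$, reduces to a weighted $L^2$ bound on $G_3$, and then closes with the single multiplier inequality \eqref{yar29}--\eqref{yar30}, whose $\{\langle\xi-\eta\rangle^{-2}+\langle\eta\rangle^{-2}\}$ tail already provides the convolution summability that the $R_n$ decomposition would otherwise supply. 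Both organizations are equivalent; the missing content in your sketch is exactly the proof of that multiplier inequality, which you correctly flag as the hard step.
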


\subsection{Proof of Lemma \ref{yar20}} In this subsection we prove the bounds \eqref{yar21} and \eqref{yar22}.

\begin{lemma}\label{yar23}
The bounds \eqref{yar22} hold for $a=2$.
\end{lemma}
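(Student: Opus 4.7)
\medskip

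\textbf{The plan.} I begin by separating $V'=1+(V'-1)$, which decomposes $G_2=M+(V'-1)M$ with
\[
M:=-\langle\partial_v(\Psi\phi)\,\partial_zf\rangle+\langle\partial_z(\Psi\phi)\,\partial_vf\rangle.
\]
The insertion of the cutoff $\Psi$ is justified because $f$ is supported in $[c_0+\va_0,c_0+1-\va_0]$ where $\Psi\equiv 1$, while $\partial_v\Psi\cdot\phi_k$ times $f_{-k}$ vanishes identically. Taking the partial Fourier transform at the output frequency $(0,\xi)$ and using the constraint $k+\ell=0$ imposed by the average, the contributions of $\eta$ and $\xi-\eta$ on the two terms of $M$ add rather than cancel, leaving the identity
\[
\widetilde{M}(s,\xi)=-\xi\sum_{k\in\Z\setminus\{0\}}\int_\R k\,\widetilde{(\Psi\phi)}(s,k,\eta)\,\widetilde{f}(s,-k,\xi-\eta)\,d\eta.
\]
This is the analogue here of the symmetrization steps used in Lemmas \ref{nar10}, \ref{nar30}, \ref{nar35}; the clean $\xi$-factor replaces what would otherwise look like a derivative loss.

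\medskip

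\textbf{The main bilinear piece.} After this identity the leading contribution to \eqref{yar22} is
\[
\mathcal{J}_0:=-2\Re\int_1^t\!\!\int_\R A_{NR}^2(s,\xi)\big(\langle s\rangle/\langle\xi\rangle\big)^{3/2}\,\xi\!\!\sum_{k\neq 0}\!\!\int_\R k\,\widetilde{(\Psi\phi)}(s,k,\eta)\widetilde{f}(s,-k,\xi-\eta)\,\overline{\widetilde{\mathcal{H}}(s,\xi)}\,d\eta\,d\xi\,ds.
\]
I would split $(k,\eta,\xi-\eta)$-space into four regions $R_0,\dots,R_3$ modelled on \eqref{nar18.1}--\eqref{nar18.4}, in which one of the frequencies $(0,\xi)$, $(k,\eta)$, $(-k,\xi-\eta)$ is respectively comparable to, or dominated by, the others. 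In $R_0\cup R_1$ (the output frequency comparable to or larger than the inputs) I place $CK$-factors on both $f$ and $\Psi\phi$ and keep $\mathcal{H}$ in $L^\infty_s$; in $R_2$ (resp.\ $R_3$) where $(k,\eta)$ (resp.\ $(-k,\xi-\eta)$) dominates I instead put $CK$ on $\mathcal{H}$ and on $f$ (resp.\ on $\mathcal{H}$ and on $\Psi\phi$). The bilinear multiplier estimates to be verified reduce, via Lemma \ref{TLX40} and the elementary inequality $|\xi|\lesssim|\eta|+|(-k,\xi-\eta)|$, to
\[
A_{NR}(s,\xi)(\langle s\rangle/\langle\xi\rangle)^{3/4}|k|^{1/2}|\xi|^{1/2}\lesssim_\delta A_k(s,\eta)\tfrac{|k|\langle s\rangle}{|\eta|+|k|\langle s\rangle}\cdot A_k(s,\xi-\eta)\cdot e^{-c\delta_0\min(\langle k,\eta\rangle,\langle k,\xi-\eta\rangle)^{1/2}},
\]
with the matching $CK$-variants from \eqref{vfc30.7}; the three $L^2$ norms are then controlled by $\mathcal{E}_\Theta$ (with the weight \eqref{rec3.5}), $\mathcal{E}_f$, and $\mathcal{E}_\mathcal{H}$ (with the weight \eqref{rec3}), giving $\mathcal{J}_0\lesssim_\delta\eps_1^3$ from \eqref{boot2}--\eqref{boot3'}.

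\medskip

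\textbf{The trilinear remainder and the main obstacle.} For the remainder $(V'-1)M$, I would first use Lemma \ref{Multi0}(i) to absorb $V'-1$ onto either $\Psi\phi$ or $f$ (using the bounds \eqref{nar4} for $V'-1$ and the multiplier gain from Lemma \ref{TLX40}), reducing the analysis to the previous bilinear estimate applied to the modified factors; the compact $v$-support of $V'-1$ takes care of the low $\xi$-frequencies via the uncertainty principle. The main technical obstacle is the non-resonance multiplier bound $A_{NR}(s,\xi)\lesssim_\delta A_k(s,\eta)A_k(s,\xi-\eta)\cdot(\text{decaying factor})$ used in the displayed inequality above. Because $b_{NR}$ is the smallest of the $b$-weights \eqref{reb13}, this bound is not trivial and requires that for the output frequency $\xi$ paired with the summation index $k$ we exploit the fact that $b_{NR}(s,\xi)/b_k(s,\xi)$ is non-negligible only when $s$ is resonant for some mode $\neq k$, in which case the distance $|s-\xi/k|$ is automatically large. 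This is precisely where the detailed construction of $b_{NR}$ versus $b_k$ from section \ref{weights} enters, combined with the coefficient $K_\delta^2$ in the definition of $\mathcal{E}_\mathcal{H}$, which provides the room to absorb the constants generated by the non-resonance argument.
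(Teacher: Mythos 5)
Your decomposition $G_2 = G_3 + (V'-1)G_3$ and the Fourier identity $\widetilde{G_3}(s,\xi)= C\sum_{k\neq0}\int_\R \xi k\,\widetilde{\Psi\phi}(s,k,\eta)\widetilde{f}(s,-k,\xi-\eta)\,d\eta$ match the paper's \eqref{yar18.5} and \eqref{yar25}, and your overall plan --- absorb the $(V'-1)$ factor via a paraproduct-type lemma and handle the trilinear core --- is the right one. However, the paper takes a structurally different route for the core trilinear estimate: rather than decomposing $(k,\eta,\xi-\eta)$-space into the four regions $R_0,\ldots,R_3$ as in Lemmas~\ref{nar10}--\ref{nar35}, it first applies Cauchy--Schwarz against $\mathcal{B}_{\mathcal{H}}$ (putting $\mathcal{H}$ into the space-time dissipation norm once and for all), which reduces \eqref{yar22} to the weighted $L^2$ bound \eqref{yar24} on $G_2$ alone with the weight $|\dot{A}_{NR}|^{-1}A_{NR}^3(\langle s\rangle/\langle\xi\rangle)^{3/2}$. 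That bound is then proved by dualizing against a test function $P$, so the trilinear expression $P\cdot\Theta\cdot f$ is estimated directly via the two multiplier bounds \eqref{yar29}--\eqref{yar30} from Lemma~\ref{TLX40.1}, with no explicit region decomposition. The payoff of the paper's organization is that all the case analysis on frequencies is isolated inside the single weight Lemma~\ref{TLX40.1}. A clarification on terminology: the collapse of the two Poisson-bracket terms to the clean $\xi k$ factor is an algebraic identity, not a symmetrization in the sense of those used for the transport terms $F_1$ and $G_1$; the paper does not symmetrize in this lemma.

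Two concrete issues with your details. First, your displayed multiplier inequality omits the $\langle s-\eta/k\rangle^{\pm 2}$ factors entirely; these arise because $|\widetilde{\Psi\phi}(s,k,\eta)|=|\widetilde{\Theta}(s,k,\eta)|/(k^2\langle s-\eta/k\rangle^2)$ and they are essential --- they appear explicitly in \eqref{yar29} as the factor $\langle t-\eta/k\rangle^2$ cancelling the $\Theta$-to-$\Psi\phi$ conversion, and the interplay between this factor and $|\xi/k|$ is precisely what makes the bound work across resonant and non-resonant times. As written your reduction silently discards them. Second, your discussion of the non-resonance multiplier bound has the hierarchy inverted: by \eqref{reb13}, $b_{NR}$ is the \emph{largest} of the $b$-weights, so $A_{NR}$ is the \emph{smallest} of the $A$-weights; the difficulty is therefore not that $A_{NR}(s,\xi)$ is large, but rather that when $\xi$ is small while $\eta$ and $\xi-\eta$ are both large (and resonant) the CK factor $|\dot{A}_{NR}/A_{NR}|(\xi)$ is small, so one must exploit the exponential gain from the $A_{-k}(s,\xi-\eta)$ weight (cf.~the Case 3 analysis of Lemma~\ref{TLX40.1}). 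Your invocation of $K_\delta^2$ is reasonable, but that constant is used in Lemma~\ref{yar10} to beat the $\mathcal{L}_{1,2}$--$\mathcal{B}$ comparison rather than here.
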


\begin{proof}
We estimate, using the Cauchy-Schwarz inequality,
\begin{equation*}
\begin{split}
\Big|\int_1^t\int_{\R}A^2_{NR}(s,\xi)&\big(\langle s\rangle/\langle\xi\rangle\big)^{3/2}\widetilde{G_2}(s,\xi)\overline{\widetilde{\mathcal{H}}(s,\xi)}\,d\xi ds\Big|^2\\
&\lesssim \mathcal{B}_{\mathcal{H}}(t)\int_1^t\int_{\R}|\dot{A}_{NR}(s,\xi)|^{-1}A^3_{NR}(s,\xi)\big(\langle s\rangle/\langle\xi\rangle\big)^{3/2}|\widetilde{G_2}(s,\xi)|^2\,d\xi ds.
\end{split}
\end{equation*}
In view of the bootstrap assumption on $\mathcal{B}_{\mathcal{H}}(t)$, it suffices to prove that
\begin{equation}\label{yar24}
\int_1^t\int_{\R}|\dot{A}_{NR}(s,\xi)|^{-1}A^3_{NR}(s,\xi)\big(\langle s\rangle/\langle\xi\rangle\big)^{3/2}|\widetilde{G_2}(s,\xi)|^2\,d\xi ds\lesssim_\delta \eps_1^4.
\end{equation}

Let $G_3:=-\big<\partial_vP_{\neq0}\phi\,\partial_zf\big>+\big<\partial_z\phi\,\partial_vf\big>$. Therefore, using also the support assumption on $f$,
\begin{equation*}
\begin{split}
G_3(t,v)&=\frac{1}{2\pi}\int_{\mathbb{T}}[-\partial_vP_{\neq0}\phi(t,z,v)\partial_zf(t,z,v)+\partial_z\phi(t,z,v)\partial_vf(t,z,v)\,dz\\
&=C\sum_{k\in\mathbb{Z}}\int_{\R^2}e^{iv(\rho+\eta)}[-\widetilde{\partial_v(\Psi\phi)}(t,k,\rho)\widetilde{\partial_zf}(t,-k,\eta)+\widetilde{\partial_z(\Psi\phi)}(t,k,\rho)\widetilde{\partial_vf}(t,-k,\eta)]\,d\rho d\eta.
\end{split}
\end{equation*}
Therefore
\begin{equation}\label{yar25}
\begin{split}
\widetilde{G_3}(t,\xi)&=C\sum_{k\in\mathbb{Z}}\int_{\R}k\xi\cdot\widetilde{\Psi\phi}(t,k,\xi-\eta)\widetilde{f}(t,-k,\eta)\,d\eta\\
&=C\sum_{k\in\mathbb{Z}\setminus\{0\}}\int_{\R}\frac{\xi}{k}\frac{1}{\langle t-\eta/k\rangle^2}\cdot\widetilde{\Theta}(t,k,\eta)\widetilde{f}(t,-k,\xi-\eta)\,d\eta.
\end{split}
\end{equation}

{\bf{Step 1.}} We prove first suitable bounds on $G_3$, more precisely
\begin{equation}\label{yar27}
\int_{\R}|\dot{A}_{NR}(t,\xi)|^{-2}A^4_{NR}(t,\xi)\big(\langle t\rangle/\langle\xi\rangle\big)^{3/2}|\widetilde{G_3}(t,\xi)|^2\,d\xi\lesssim_\delta \eps_1^4
\end{equation}
and
\begin{equation}\label{yar28}
\int_1^t\int_{\R}|\dot{A}_{NR}(s,\xi)|^{-1}A^3_{NR}(s,\xi)\big(\langle s\rangle/\langle\xi\rangle\big)^{3/2}|\widetilde{G_3}(s,\xi)|^2\,d\xi ds\lesssim_\delta \eps_1^4,
\end{equation}
for any $t\in[0,T]$. 

We use the multiplier bounds
\begin{equation}\label{yar29}
\frac{A^2_{NR}(t,\xi)}{|\dot{A}_{NR}(t,\xi)|}\frac{\langle t\rangle^{3/4}}{\langle\xi\rangle^{3/4}}|\xi/k|\lesssim_\delta A_k(t,\eta)\frac{\langle t\rangle\langle t-\eta/k\rangle^2}{\langle t\rangle+|\eta/k|}A_{-k}(t,\xi-\eta)\{\langle\xi-\eta\rangle^{-2}+\langle\eta\rangle^{-2}\}
\end{equation}
and
\begin{equation}\label{yar30}
\begin{split}
\frac{A^{3/2}_{NR}(t,\xi)}{|\dot{A}_{NR}(t,\xi)|^{1/2}}&\frac{\langle t\rangle^{3/4}}{\langle\xi\rangle^{3/4}}|\xi/k|\lesssim_\delta \big[|(\dot{A}_k/A_k)(t,\eta)|^{1/2}+|(\dot{A}_{-k}/A_{-k})(t,\xi-\eta)|^{1/2}\big]\\
&\times A_k(t,\eta)\frac{\langle t\rangle\langle t-\eta/k\rangle^2}{\langle t\rangle+|\eta/k|}A_{-k}(t,\xi-\eta)\{\langle\xi-\eta\rangle^{-2}+\langle\eta\rangle^{-2}\}
\end{split}
\end{equation}
for any $t\in[0,T]$, $k\in\Z\setminus\{0\}$, and $\xi,\eta\in\R$. The estimates \eqref{yar29} follow from \eqref{TLX41}, while the estimates \eqref{yar30} follow by combining \eqref{TLX41} and \eqref{TLX42}.

As in the proof of Lemma \ref{Multi0}, the estimates \eqref{yar27}--\eqref{yar28} follow from the multiplier bounds \eqref{yar29}--\eqref{yar30}. Indeed, to prove the harder bounds \eqref{yar28} we estimate first
\begin{equation*}
\begin{split}
\Big\{\int_1^t\int_{\R}&|\dot{A}_{NR}(s,\xi)|^{-1}A^3_{NR}(s,\xi)\big(\langle s\rangle/\langle\xi\rangle\big)^{3/2}|\widetilde{G_3}(s,\xi)|^2\,d\xi ds\Big\}^{1/2}\\
&\lesssim\sup_{\|P\|_{L^2([1,t]\times\R)}=1}\int_1^t\int_{\R}|P(s,\xi)||\dot{A}_{NR}(s,\xi)|^{-1/2}A^{3/2}_{NR}(s,\xi)\big(\langle s\rangle/\langle\xi\rangle\big)^{3/4}|\widetilde{G_3}(s,\xi)|\,d\xi ds.
\end{split}
\end{equation*}
Using now \eqref{yar25} and \eqref{yar30}, the right-hand side of the expression above is bounded by
\begin{equation}\label{yar31}
\begin{split}
&C_\delta\int_1^t\int_\R\int_\R \sum_{k\in\Z\setminus\{0\}}\big\{|P(s,\eta+\rho)|\big[|(\dot{A}_k/A_k)(s,\eta)|^{1/2}+|(\dot{A}_{-k}/A_{-k})(s,\rho)|^{1/2}\big]\\
&\times A_k(s,\eta)\frac{\langle s\rangle}{\langle s\rangle+|\eta/k|}A_{-k}(s,\rho)\{\langle\rho\rangle^{-2}+\langle\eta\rangle^{-2}\}\cdot|\widetilde{\Theta}(s,k,\eta)||\widetilde{f}(s,-k,\rho)|\big\}\,d\eta d\rho ds.
\end{split}
\end{equation}
We integrate first the variables $\eta$ and $\rho$. For any $k\in\Z$ and $t\in[1,T]$ let
\begin{equation*}
\begin{split}
\widetilde{f}^\ast(t,k)&:=\Big\{\int_{\R}A^2_k(t,\xi)|\widetilde{f}(t,k,\xi)|^2\,d\xi\Big\}^{1/2},\\
\widetilde{f}^{\ast\ast}(t,k)&:=\Big\{\int_{\R}|\dot{A}_k(t,\xi)|A_k(t,\xi)|\widetilde{f}(t,k,\xi)|^2\,d\xi\Big\}^{1/2}.
\end{split}
\end{equation*}
Similarly, for any $k\in\Z\setminus\{0\}$ and $t\in[1,T]$ let
\begin{equation*}
\begin{split}
\widetilde{\Theta}^\ast(t,k)&:=\Big\{\int_{\R}A^2_k(t,\xi)\frac{\langle t\rangle^2}{\langle t\rangle^2+|\xi/k|^2}|\widetilde{\Theta}(t,k,\xi)|^2\,d\xi\Big\}^{1/2},\\
\widetilde{\Theta}^{\ast\ast}(t,k)&:=\Big\{\int_{\R}|\dot{A}_k(t,\xi)|A_k(t,\xi)\frac{\langle t\rangle^2}{\langle t\rangle^2+|\xi/k|^2}|\widetilde{\Theta}(t,k,\xi)|^2\,d\xi\Big\}^{1/2}.
\end{split}
\end{equation*}
Letting also $P^\ast(s):=\|P(s,\xi)\|_{L^2_\xi}$, the expression in \eqref{yar31} is bounded by
\begin{equation*}
\begin{split}
C_\delta\int_1^t \sum_{k\in\Z\setminus\{0\}}&\big\{P^\ast(s)\widetilde{f}^\ast(s,-k)\widetilde{\Theta}^{\ast\ast}(s,k)+P^\ast(s)\widetilde{f}^{\ast\ast}(s,-k)\widetilde{\Theta}^{\ast}(s,k)\big\}ds\\
&\lesssim_\delta \|P^\ast\|_{L^2_s}\|\widetilde{f}^\ast\|_{L^\infty_sL^2_k}\|\widetilde{\Theta}^{\ast\ast}\|_{L^2_sL^2_k}+\|P^\ast\|_{L^2_s}\|\widetilde{f}^{\ast\ast}\|_{L^2_sL^2_k}\|\widetilde{\Theta}^{\ast}\|_{L^\infty_sL^2_k}.
\end{split}
\end{equation*}
The desired bounds \eqref{yar28} follow since $\|\widetilde{f}^\ast\|_{L^\infty_sL^2_k}+\|\widetilde{f}^{\ast\ast}\|_{L^2_sL^2_k}+\|\widetilde{\Theta}^{\ast}\|_{L^\infty_sL^2_k}+\|\widetilde{\Theta}^{\ast\ast}\|_{L^2_sL^2_k}\lesssim\eps_1$, as a consequence of the bootstrap assumptions on $f$ and $\Theta$. The estimates \eqref{yar27} follow in a similar (in fact slightly easier) way from the multiplier bounds \eqref{yar29}.

{\bf{Step 2.}} We prove now similar bounds on the function $G_2$
\begin{equation}\label{yar47}
\int_{\R}|\dot{A}_{NR}(t,\xi)|^{-2}A^4_{NR}(t,\xi)\big(\langle t\rangle/\langle\xi\rangle\big)^{3/2}|\widetilde{G_2}(t,\xi)|^2\,d\xi\lesssim_\delta \eps_1^4
\end{equation}
and
\begin{equation}\label{yar48}
\int_1^t\int_{\R}|\dot{A}_{NR}(s,\xi)|^{-1}A^3_{NR}(s,\xi)\big(\langle s\rangle/\langle\xi\rangle\big)^{3/2}|\widetilde{G_2}(s,\xi)|^2\,d\xi ds\lesssim_\delta \eps_1^4,
\end{equation}
for any $t\in[1,T]$. 

For this we notice that $G_2=V'\cdot G_3=(V'-1)G_3+G_3$. We would like to use the bounds \eqref{yar27}--\eqref{yar28} and the bootstrap assumptions $\mathcal{E}_{V'-1}(t)+\mathcal{B}_{V'-1}(t)\leq\eps_1^2$ in \eqref{boot2}. In view of Lemma \ref{Multi0} (i), it suffices to prove the multiplier estimates
\begin{equation}\label{yar49}
\frac{A^2_{NR}(t,\xi)}{|\dot{A}_{NR}(t,\xi)|}\frac{\langle t\rangle^{3/4}}{\langle\xi\rangle^{3/4}}\lesssim_\delta \frac{A^2_{NR}(t,\eta)}{|\dot{A}_{NR}(t,\eta)|}\frac{\langle t\rangle^{3/4}}{\langle\eta\rangle^{3/4}}A_R(t,\xi-\eta)\{\langle\xi-\eta\rangle^{-2}+\langle\eta\rangle^{-2}\}
\end{equation}
and
\begin{equation}\label{yar50}
\begin{split}
\frac{A^{3/2}_{NR}(t,\xi)}{|\dot{A}_{NR}(t,\xi)|^{1/2}}&\frac{\langle t\rangle^{3/4}}{\langle\xi\rangle^{3/4}}\lesssim_\delta \big[|(\dot{A}_{NR}/A_{NR})(t,\eta)|^{1/2}+|(\dot{A}_R/A_R)(t,\xi-\eta)|^{1/2}\big]\\
&\times \frac{A^2_{NR}(t,\eta)}{|\dot{A}_{NR}(t,\eta)|}\frac{\langle t\rangle^{3/4}}{\langle\eta\rangle^{3/4}}A_R(t,\xi-\eta)\{\langle\xi-\eta\rangle^{-2}+\langle\eta\rangle^{-2}\}.
\end{split}
\end{equation}
These bounds follow from \eqref{TLX71} and \eqref{DtVMulti} (using also \eqref{vfc30.5}). The desired estimates \eqref{yar24} follow from \eqref{yar48}. This completes the proof of the lemma.
\end{proof}

We estimate now the contributions of the transport terms $F_1$ and $G_1$.

\begin{lemma}\label{tol1}
The bounds \eqref{yar21} hold.
\end{lemma}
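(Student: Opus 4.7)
The plan is to exploit the transport structure of $F_1=-\dot V\,\partial_v(V'-1)$ via a standard symmetrization in $(\xi,\eta)$, in the same spirit as the treatment of the $\mathcal{W}_n$ integrals in the proof of Lemma \ref{nar35}. Since $\dot V$ and $V'-1$ are real-valued, swapping $\xi\leftrightarrow\eta$ in the convolution integrand converts the single-derivative loss from $\partial_v(V'-1)$ into the commutator-type multiplier $M(s,\xi,\eta):=\eta A_R^2(s,\xi)-\xi A_R^2(s,\eta)$, which vanishes on the diagonal $\xi=\eta$, yielding
\begin{equation*}
\Big|2\Re\int_1^t\!\!\int_\R\! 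A_R^2\widetilde{F_1}\,\overline{\widetilde{(V'-1)}}\,d\xi ds\Big|\lesssim\int_1^t\!\!\int_{\R^2}\!|M(s,\xi,\eta)|\,|\widetilde{\dot V}(s,\xi-\eta)|\,|\widetilde{(V'-1)}(s,\eta)|\,|\widetilde{(V'-1)}(s,\xi)|\,d\xi d\eta ds.
\end{equation*}

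Next, following the partition in \eqref{nar19.1}--\eqref{nar19.5}, I will split the $(\xi,\eta)$-plane into four regions $\Sigma_n$, $n=0,1,2,3$, according to the relative sizes of $\langle\xi\rangle$, $\langle\eta\rangle$, and $\langle\rho\rangle$ (with $\rho:=\xi-\eta$), and denote the corresponding contributions by $\mathcal{X}_n$. On the balanced regions $\Sigma_0\cup\Sigma_1$, I will apply the analog of Lemma \ref{TLXH2}(i) with $A_R$ in place of $A_k$ (whose proof transfers verbatim, since only monotonicity and Lipschitz-type properties of the weight in its frequency variable are used), namely
\begin{equation*}
|M(s,\xi,\eta)|\lesssim_\delta\big[\langle\rho\rangle\langle s\rangle+\langle\rho\rangle^{1/4}\langle s\rangle^{7/4}\big]A_{NR}(s,\rho)e^{-(\delta_0/200)\langle\rho\rangle^{1/2}}\sqrt{|(A_R\dot A_R)(s,\xi)|}\,\sqrt{|(A_R\dot A_R)(s,\eta)|}.
\end{equation*}
Cauchy--Schwarz will then bound $\mathcal{X}_0+\mathcal{X}_1$ by $\mathcal{B}_{V'-1}(t)$ times the $L^\infty_sL^2_\rho$ quantity controlled by the first line of \eqref{nar7}, each $\lesssim\eps_1^2$, giving an $\eps_1^3$ bound.

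On $\Sigma_2$, where $\langle\eta\rangle$ is small compared with $\langle\xi\rangle\sim\langle\rho\rangle$, I will instead use the analog of Lemma \ref{TLXH2}(ii), which moves the Cauchy--Kovalevskaya factor to the $\rho$-variable:
\begin{equation*}
|M(s,\xi,\eta)|\lesssim_\delta\big[\langle\rho\rangle\langle s\rangle+\langle\rho\rangle^{1/4}\langle s\rangle^{7/4}\big]\sqrt{|(A_{NR}\dot A_{NR})(s,\rho)|}\,A_R(s,\eta)e^{-(\delta_0/200)\langle\eta\rangle^{1/2}}\sqrt{|(A_R\dot A_R)(s,\xi)|}.
\end{equation*}
Cauchy--Schwarz then pairs $\widetilde{(V'-1)}(s,\xi)$ with $\sqrt{|(A_R\dot A_R)(s,\xi)|}$ (controlled in $L^2_sL^2_\xi$ by $\mathcal{B}_{V'-1}(t)^{1/2}\leq\eps_1$), pairs $\widetilde{(V'-1)}(s,\eta)$ with the $A_R(s,\eta)$ factor in $L^\infty_sL^2_\eta$ (the exponential $e^{-(\delta_0/200)\langle\eta\rangle^{1/2}}$ absorbing any logarithmic loss, with the resulting norm controlled by $\mathcal{E}_{V'-1}(t)^{1/2}\leq\eps_1$), and pairs $\widetilde{\dot V}(s,\rho)$ with the $\sqrt{|(A_{NR}\dot A_{NR})(s,\rho)|}$-weighted factor in $L^2_sL^2_\rho$, controlled by the second line of \eqref{nar7}. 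The region $\Sigma_3$ is handled identically by the $(\xi,\eta)$-symmetry of $M$ built into the symmetrization.

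The main obstacle is exactly the derivative loss from $\partial_v(V'-1)$: unlike the $f$-equation treated in Section \ref{fimprov}, one of the two low-regularity factors $V'-1$ is hit by $\partial_v$, so a crude estimate would leave an uncontrolled $|\eta|$ on the outside. The cancellation in $M$ recovers a factor of $\rho$ while keeping the remaining weight at the location $\xi$; expanding $A_R^2$ around the diagonal produces precisely the polynomial growth $\langle\rho\rangle\langle s\rangle$ (from the $\partial_\xi$ of the $t$-growing part of $A_R$) and $\langle\rho\rangle^{1/4}\langle s\rangle^{7/4}$ (from the Gevrey $1/2$ exponent), which match exactly the weights $\langle\xi\rangle^2\langle t\rangle^2+\langle\xi\rangle^{1/2}\langle t\rangle^{7/2}$ appearing in \eqref{nar7} under which $\dot V$ is controlled. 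Summing the contributions $\mathcal{X}_0,\dots,\mathcal{X}_3$ then yields \eqref{yar21}.
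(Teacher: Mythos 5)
Your overall strategy — symmetrize to produce the commutator $M(s,\xi,\eta)=\eta A_R^2(s,\xi)-\xi A_R^2(s,\eta)$, split the frequency plane into four regions according to the relative sizes of $\langle\xi\rangle$, $\langle\eta\rangle$, $\langle\rho\rangle$, bound the multiplier, and close with Cauchy--Schwarz against the $\dot V$ estimates in \eqref{nar7} and the $V'-1$ estimates in \eqref{nar4} — matches the paper's proof exactly, and the final numerology checks out: your multiplier bound $\langle\rho\rangle\langle s\rangle+\langle\rho\rangle^{1/4}\langle s\rangle^{7/4}$ is compatible with the weights $\langle\xi\rangle^2\langle t\rangle^2+\langle\xi\rangle^{1/2}\langle t\rangle^{7/2}$ under which $\dot V$ is controlled, and the $L^\infty_s$-vs-$L^2_s$ placement of the three factors is the right one in each region. (A minor slip: the relevant regions for $V'-1$ are $S_0,\ldots,S_3$ from \eqref{tol4}, not the $\Sigma_n$ of \eqref{nar19.1}, which carry a redundant $k=\ell$ restriction that plays no role here.)

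The one claim that needs to be repaired is that the proof of Lemma \ref{TLXH2} ``transfers verbatim'' with $A_R$ in place of $A_k$. It does not: that proof runs through the decomposition \eqref{Trterm1and2}--\eqref{Trterm3} and cases on whether $(k,\xi)\in I_t^{\ast\ast}$ or $t\in I_{k,\xi}$, which is a genuine use of the resonant/non-resonant jump built into $b_k$ via \eqref{eq:resonantweight}; the weight $b_R$ from \eqref{reb5.5} is not of the form $b_k$ for any fixed $k$ (nor is it $b_0=b_{NR}$), so those case splits are not meaningful for $A_R$. The paper instead proves a dedicated multiplier estimate, Lemma \ref{TLY1}, for the $A_R$/$A_{NR}$ pair, relying only on the Lipschitz-in-$\xi$ bound \eqref{TLY3} (which does follow from \eqref{dor21}), the comparison $A_{NR}/A_R\gtrsim_\delta\langle\xi\rangle^{-1}$, and \eqref{TLX3.5}. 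You should either invoke Lemma \ref{TLY1} directly — which gives the cosmetically different but equally valid exponents $s^{1.6}|\rho|$ on $S_0\cup S_1$ and $s^{1.1}\langle\xi\rangle^{0.6}$ on $S_2$ — or verify your $A_R$-inequalities from scratch along those lines. The inequalities you write down do in fact hold, but deriving them by citing Lemma \ref{TLXH2} is not a correct justification.
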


\begin{proof} Since $F_1=-\dot{V}\partial_v(V'-1)$, we write
\begin{equation*}
\begin{split}
&\Big|2\Re\int_1^t\int_\R A^2_R(s,\xi)\widetilde{F_1}(s,\xi)\overline{\widetilde{(V'-1)}(s,\xi)}\,d\xi ds\Big|\\
&=C\Big|2\Re\int_1^t\int_\R\int_\R A^2_R(s,\xi)\widetilde{\dot{V}}(s,\xi-\eta)(i\eta)\widetilde{(V'-1)}(s,\eta)\overline{\widetilde{(V'-1)}(s,\xi)}\,d\xi d\eta ds\Big|\\
&=C\Big|\int_1^t\int_\R\int_\R [\eta A^2_R(s,\xi)-\xi A_R^2(s,\eta)]\widetilde{\dot{V}}(s,\xi-\eta)\widetilde{(V'-1)}(s,\eta)\overline{\widetilde{(V'-1)}(s,\xi)}\,d\xi d\eta ds\Big|,
\end{split}
\end{equation*}
where the second identity is obtained by symmetrization,  using the fact that $\dot{V}$ is real-valued.

As in section \ref{fimprov} (see \eqref{nar18.1}--\eqref{nar18.4}), we define the sets
\begin{equation}\label{tol4}
\begin{split}
&S_0:=\Big\{(\xi,\eta)\in\R^2:\,\min(\langle\xi\rangle,\,\langle\eta\rangle,\,\langle\xi-\eta\rangle)\geq \frac{\langle\xi\rangle+\langle\eta\rangle+\langle\xi-\eta\rangle}{20}\Big\},\\
&S_1:=\Big\{(\xi,\eta)\in\R^2:\,\langle\xi-\eta\rangle\leq \frac{\langle\xi\rangle+\langle\eta\rangle+\langle\xi-\eta\rangle}{10}\Big\},\\
&S_2:=\Big\{(\xi,\eta)\in\R^2:\,\langle\eta\rangle\leq \frac{\langle\xi\rangle+\langle\eta\rangle+\langle\xi-\eta\rangle}{10}\Big\},\\
&S_3:=\Big\{(\xi,\eta)\in\R^2:\,\langle\xi\rangle\leq \frac{\langle\xi\rangle+\langle\eta\rangle+\langle\xi-\eta\rangle}{10}\Big\}.
\end{split}
\end{equation}
and the corresponding integrals
\begin{equation}\label{tol5}
\begin{split}
\mathcal{I}_n:=\int_1^t\int_\R\int_\R \mathbf{1}_{S_n}(\xi,\eta)&|\eta A^2_R(s,\xi)-\xi A_R^2(s,\eta)|\,|\widetilde{\dot{V}}(s,\xi-\eta)|\\
&\times|\widetilde{(V'-1)}(s,\eta)|\,|\widetilde{(V'-1)}(s,\xi)|\,d\xi d\eta ds.
\end{split}
\end{equation}
For \eqref{yar21} it suffices to prove that
\begin{equation}\label{tol6}
\mathcal{I}_n\lesssim_\delta \eps_1^3\qquad\text{ for }n\in\{0,1,2,3\}.
\end{equation}

We prove first the bounds \eqref{tol6} for $n=0$ and $n=1$. This is the main case, in which symmetrization is important. It follows from \eqref{TLY2.1} that
\begin{equation}\label{tol7}
\begin{split}
|\eta &A^2_R(s,\xi)-\xi A_R^2(s,\eta)|\\
&\lesssim_\delta s^{1.6}\sqrt{|(A_R\dot{A}_R)(s,\xi)|}\sqrt{|(A_R\dot{A}_R)(s,\eta)|}\cdot A_{NR}(s,\xi-\eta)|\xi-\eta|e^{-(\lambda(s)/40)\langle\xi-\eta\rangle^{1/2}},
\end{split}
\end{equation}
for any $(\xi,\eta)\in S_0\cup S_1$. Therefore
\begin{equation*}
\begin{split}
\mathcal{I}_n\lesssim_\delta \Big\|\sqrt{|(A_R\dot{A}_R)(s,\xi)|}&(\widetilde{V'-1})(s,\xi)\Big\|_{L^2_sL^2_\xi}\Big\|\sqrt{|(A_R\dot{A}_R)(s,\eta)|}(\widetilde{V'-1})(s,\eta)\Big\|_{L^2_sL^2_\eta}\\
&\times\Big\|s^{1.6}A_{NR}(s,\rho)|\rho|\langle\rho\rangle e^{-(\lambda(s)/40)\langle\rho\rangle^{1/2}}\cdot\widetilde{\dot{V}}(s,\rho)\Big\|_{L^\infty_sL^2_\rho},
\end{split}
\end{equation*}
and the desired estimates follow from \eqref{nar4} and \eqref{nar7}. 

We prove now the bounds \eqref{tol6} for $n=2$ and $n=3$. The two estimates are similar; for concreteness we will assume that $n=2$. The bounds \eqref{TLY2.2} show that
\begin{equation*}
\begin{split}
|\eta A^2_R(s,&\xi)-\xi A_R^2(s,\eta)|\lesssim_\delta \langle\eta\rangle A^2_R(s,\xi)\\
&\lesssim_\delta s^{1.1}\langle\xi-\eta\rangle^{0.6}\sqrt{|(A_R\dot{A}_R)(s,\xi)|}\sqrt{|(A_{NR}\dot{A}_{NR})(s,\xi-\eta)|}\cdot A_{R}(s,\eta)e^{-(\lambda(s)/40)\langle\eta\rangle^{1/2}}
\end{split}
\end{equation*}
for any $(\xi,\eta)\in S_2$. Therefore
\begin{equation*}
\begin{split}
\mathcal{I}_2\lesssim_\delta \Big\|\sqrt{|(A_R\dot{A}_R)(s,\xi)|}&(\widetilde{V'-1})(s,\xi)\Big\|_{L^2_sL^2_\xi}\Big\|s^{1.1}\langle\rho\rangle^{0.6}\sqrt{|(A_{NR}\dot{A}_{NR})(s,\rho)|} \cdot\widetilde{\dot{V}}(s,\rho)\Big\|_{L^2_sL^2_\rho}\\
&\times\Big\|A_R(s,\eta)\langle\eta\rangle e^{-(\lambda(s)/40)\langle\eta\rangle^{1/2}}(\widetilde{V'-1})(s,\eta)\Big\|_{L^\infty_sL^2_\eta},
\end{split}
\end{equation*}
and the desired bounds follow from \eqref{nar4} and \eqref{nar7}. This completes the proof of the lemma.
\end{proof}

\begin{lemma}\label{tor1}
The bounds \eqref{yar22} hold for $a=1$.
\end{lemma}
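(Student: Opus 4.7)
The argument runs parallel to the proof of Lemma \ref{tol1}, with the two new twists being (i) the asymmetric outer factor $(\langle s\rangle/\langle\xi\rangle)^{3/2}$ in the energy for $\mathcal H$, and (ii) the fact that the good Cauchy--Kowalevski quantities for $\mathcal H$ carry both the $K_\delta$ factor and the weight $(\langle s\rangle/\langle\xi\rangle)^{3/2}$, while those for $\dot V$ come from \eqref{nar7}. The first step is to use reality of $\dot V$ to symmetrize the expression. Expanding $\widetilde{G_1}(s,\xi)=-C\int_\R i\eta\,\widetilde{\dot V}(s,\xi-\eta)\widetilde{\mathcal H}(s,\eta)\,d\eta$ and averaging the $(\xi,\eta)$ integral with its transpose yields
\begin{equation*}
\Big|2\Re\int_1^t\!\!\int_\R A^2_{NR}(s,\xi)(\langle s\rangle/\langle\xi\rangle)^{3/2}\widetilde{G_1}\overline{\widetilde{\mathcal H}}\,d\xi ds\Big|=C\Big|\int_1^t\!\!\int_{\R^2} M(s,\xi,\eta)\widetilde{\dot V}(s,\xi-\eta)\widetilde{\mathcal H}(s,\eta)\overline{\widetilde{\mathcal H}(s,\xi)}\,d\xi d\eta ds\Big|,
\end{equation*}
where the commutator symbol is
\begin{equation*}
M(s,\xi,\eta):=\eta A^2_{NR}(s,\xi)(\langle s\rangle/\langle\xi\rangle)^{3/2}-\xi A^2_{NR}(s,\eta)(\langle s\rangle/\langle\eta\rangle)^{3/2}.
\end{equation*}
I would then split the $(\xi,\eta)$ integral along the four regions $S_0,S_1,S_2,S_3$ of \eqref{tol4} and define $\mathcal J_n$ as the corresponding pieces.

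In the nonresonant regions $S_0\cup S_1$ (where $\langle\xi\rangle\sim\langle\eta\rangle$ and $\langle\xi-\eta\rangle$ is small), I plan to use the \eqref{TLY2.1}-style commutator identity for $A_{NR}$ together with the elementary bound
\begin{equation*}
\big|(\langle s\rangle/\langle\xi\rangle)^{3/2}-(\langle s\rangle/\langle\eta\rangle)^{3/2}\big|\lesssim (\langle s\rangle/\langle\xi\rangle)^{3/4}(\langle s\rangle/\langle\eta\rangle)^{3/4}\langle\xi-\eta\rangle\,e^{-\frac{\lambda(s)}{80}\langle\xi-\eta\rangle^{1/2}}
\end{equation*}
(valid in the nonresonant regime since $\langle\xi\rangle\sim\langle\eta\rangle$) to arrive at the pointwise estimate
\begin{equation*}
|M(s,\xi,\eta)|\lesssim_\delta \langle s\rangle^{1.6}\sqrt{|A_{NR}\dot A_{NR}|(s,\xi)}\,(\langle s\rangle/\langle\xi\rangle)^{3/4}\sqrt{|A_{NR}\dot A_{NR}|(s,\eta)}\,(\langle s\rangle/\langle\eta\rangle)^{3/4}A_{NR}(s,\xi-\eta)|\xi-\eta|e^{-\frac{\lambda(s)}{40}\langle\xi-\eta\rangle^{1/2}}.
\end{equation*}
Cauchy--Schwarz in $(s,\xi,\eta)$ then bounds $\mathcal J_0+\mathcal J_1$ by
\begin{equation*}
C_\delta K_\delta^{-2}\mathcal B_{\mathcal H}(t)\cdot\Big\|\langle s\rangle^{1.6}A_{NR}(s,\rho)|\rho|\langle\rho\rangle e^{-\frac{\lambda(s)}{40}\langle\rho\rangle^{1/2}}\widetilde{\dot V}(s,\rho)\Big\|_{L^\infty_s L^2_\rho}^2\lesssim_\delta\eps_1^3,
\end{equation*}
where the $\dot V$ norm is controlled by \eqref{nar7} and $\mathcal B_{\mathcal H}(t)\lesssim\eps_1^2$ is the bootstrap.

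For $\mathcal J_2$ (and symmetrically $\mathcal J_3$), in $S_2$ we have $\langle\eta\rangle\ll \langle\xi\rangle\sim\langle\xi-\eta\rangle$, so the factor $(\langle s\rangle/\langle\eta\rangle)^{3/2}$ is much bigger but is simply absorbed into the pointwise $L^\infty$ bound on $\mathcal H(s,\eta)$ coming from $\mathcal E_{\mathcal H}(t)\lesssim\eps_1^2$ and the uncertainty principle (the support of $\mathcal H$ in $v$ is compact). I would then bound
\begin{equation*}
|M(s,\xi,\eta)|\lesssim_\delta \langle s\rangle^{1.1}\langle\xi-\eta\rangle^{0.6}\sqrt{|A_{NR}\dot A_{NR}|(s,\xi)}(\langle s\rangle/\langle\xi\rangle)^{3/4}\sqrt{|A_{NR}\dot A_{NR}|(s,\xi-\eta)}\,A_{NR}(s,\eta)(\langle s\rangle/\langle\eta\rangle)^{3/4}e^{-\frac{\lambda(s)}{40}\langle\eta\rangle^{1/2}},
\end{equation*}
an estimate of the \eqref{TLY2.2} type adapted to $A_{NR}$ (the asymmetric factor $(\langle s\rangle/\langle\xi\rangle)^{3/2}\approx (\langle s\rangle/\langle\xi-\eta\rangle)^{3/2}$ is harmless here). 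Cauchy--Schwarz then places $\sqrt{|A_{NR}\dot A_{NR}|(\langle s\rangle/\langle\xi\rangle)^{3/2}}\widetilde{\mathcal H}$ in $L^2_sL^2_\xi$ (bounded by $K_\delta^{-1}\mathcal B_{\mathcal H}^{1/2}\lesssim\eps_1$), $\langle s\rangle^{1.1}\langle\rho\rangle^{0.6}\sqrt{|A_{NR}\dot A_{NR}|}\widetilde{\dot V}$ in $L^2_sL^2_\rho$ (bounded via \eqref{nar7}), and $A_{NR}(\langle s\rangle/\langle\eta\rangle)^{3/4}e^{-\frac{\lambda(s)}{40}\langle\eta\rangle^{1/2}}\widetilde{\mathcal H}$ in $L^\infty_s L^2_\eta$ (controlled by $K_\delta^{-1}\mathcal E_{\mathcal H}^{1/2}\lesssim\eps_1$), giving $\mathcal J_2\lesssim_\delta\eps_1^3$. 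The case $n=3$ is analogous by the symmetry $(\xi,\eta)\mapsto(\eta,\xi)$.

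The main obstacle is the commutator bound for $M$ in $S_0\cup S_1$: one must show that the asymmetric weight $(\langle s\rangle/\langle\xi\rangle)^{3/2}$, which is not a balanced weight in $\xi$, still produces a favorable commutator with the Fourier multiplication by $\eta$. This is exactly what the nonresonant hypothesis $\langle\xi\rangle\sim\langle\eta\rangle$ provides, via the elementary difference bound above; once it is in place, the rest of the argument mirrors the treatment of $F_1$ in Lemma \ref{tol1}.
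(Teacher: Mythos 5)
Your proposal follows the same overall line as the paper's proof: symmetrize using reality of $\dot V$, split along the regions $S_0,\ldots,S_3$ of \eqref{tol4}, use a commutator bound for the imbalanced weight in $S_0\cup S_1$, and a one-sided crude bound with the CK factor on $\dot V$ and the $L^\infty_s$ bound on $\mathcal H$ for $S_2$ and $S_3$, closing with \eqref{nar6} and \eqref{nar7}. This is exactly what the paper does, with the only distinction being presentational: you plan to derive the commutator estimate for the combined symbol $A_{NR}^2(\langle s\rangle/\langle\xi\rangle)^{3/2}$ by adding an elementary difference bound for the power factor to a ``TLY2.1-style'' bound, whereas the paper's \eqref{TLY2.1} is already stated with a free parameter $\alpha\in[0,4]$ (so $\alpha=3/2$ applies directly), and \eqref{TLY2.3} is exactly the $A_{NR}$ version you want in $S_2$; no adaptation is needed. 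One small slip: in your Cauchy--Schwarz bound for $\mathcal J_0+\mathcal J_1$ the $\dot V$ norm should appear to the first power, not squared (the trilinear form has two $\mathcal H$ factors and one $\dot V$ factor), so the right-hand side is $\eps_1^2\cdot\eps_1=\eps_1^3$ rather than $\eps_1^4$.
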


\begin{proof} Since $G_1=-\dot{V}\partial_v\mathcal{H}$, as in the proof of Lemma \ref{tol1} we have
\begin{equation*}
\begin{split}
&\Big|2\Re\int_1^t\int_\R A^2_{NR}(s,\xi)\big(\langle s\rangle/\langle\xi\rangle\big)^{3/2}\widetilde{G_1}(s,\xi)\overline{\widetilde{\mathcal{H}}(s,\xi)}\,d\xi ds\Big|=C\Big|\int_1^t\int_\R\int_\R\langle s\rangle^{3/2}\\
&\times\big[\eta A^2_{NR}(s,\xi)\langle\xi\rangle^{-3/2}-\xi A^2_{NR}(s,\eta)\langle\eta\rangle^{-3/2}\big]\widetilde{\dot{V}}(s,\xi-\eta)\widetilde{\mathcal{H}}(s,\eta)\overline{\widetilde{\mathcal{H}}(s,\xi)}\,d\xi d\eta ds\Big|.
\end{split}
\end{equation*}

With $S_n$ defined as in \eqref{tol4}, we define the corresponding integrals
\begin{equation}\label{tor5}
\begin{split}
\mathcal{J}_n:=\int_1^t\int_\R\int_\R\mathbf{1}_{S_n}(\xi,\eta)\langle s\rangle^{3/2}\big|\eta A^2_{NR}(s,\xi)\langle\xi\rangle^{-3/2}-\xi A^2_{NR}(s,\eta)\langle\eta\rangle^{-3/2}\big|\\
\times|\widetilde{\dot{V}}(s,\xi-\eta)|\,|\widetilde{\mathcal{H}}(s,\eta)|\,|\widetilde{\mathcal{H}}(s,\xi)|\,d\xi d\eta ds.
\end{split}
\end{equation}
For \eqref{yar22} it suffices to prove that
\begin{equation}\label{tor6}
\mathcal{J}_n\lesssim_\delta \eps_1^3\qquad\text{ for }n\in\{0,1,2,3\}.
\end{equation}

We prove first the bounds \eqref{tor6} for $n=0$ and $n=1$. It follows from \eqref{TLY2.1} that
\begin{equation*}
\begin{split}
|\eta &A^2_{NR}(s,\xi)\langle\xi\rangle^{-3/2}-\xi A_{NR}^2(s,\eta)\langle\eta\rangle^{-3/2}|\\
&\lesssim_\delta s^{1.6}\frac{\sqrt{|(A_{NR}\dot{A}_{NR})(s,\xi)|}}{\langle\xi\rangle^{3/4}}\frac{\sqrt{|(A_{NR}\dot{A}_{NR})(s,\eta)|}}{\langle\eta\rangle^{3/4}}\cdot A_{NR}(s,\xi-\eta)|\xi-\eta|e^{-(\lambda(s)/40)\langle\xi-\eta\rangle^{1/2}},
\end{split}
\end{equation*}
for any $(\xi,\eta)\in S_0\cup S_1$. Therefore
\begin{equation*}
\begin{split}
\mathcal{J}_n\lesssim_\delta \bigg\| s^{3/4}&\frac{\sqrt{|(A_{NR}\dot{A}_{NR})(s,\xi)|}}{\langle\xi\rangle^{3/4}}\cdot \widetilde{\mathcal{H}}(s,\xi)\bigg\|_{L^2_sL^2_\xi}\bigg\|s^{3/4}\frac{\sqrt{|(A_{NR}\dot{A}_{NR})(s,\eta)|}}{\langle\eta\rangle^{3/4}}\cdot \widetilde{\mathcal{H}}(s,\eta)\bigg\|_{L^2_sL^2_\eta}\\
&\times\Big\|s^{1.6}A_{NR}(s,\rho)|\rho|\langle\rho\rangle e^{-(\lambda(s)/40)\langle\rho\rangle^{1/2}}\cdot\widetilde{\dot{V}}(s,\rho)\Big\|_{L^\infty_sL^2_\rho},
\end{split}
\end{equation*}
and the desired conclusion follows from \eqref{nar6} and \eqref{nar7}. 

We prove now the bounds \eqref{tor6} for $n=2$ (the case $n=3$ is similar). Using \eqref{TLY2.3} we estimate
\begin{equation*}
\begin{split}
|\eta &A^2_{NR}(s,\xi)\langle\xi\rangle^{-3/2}-\xi A_{NR}^2(s,\eta)\langle\eta\rangle^{-3/2}|\lesssim_\delta\langle\eta \rangle A^2_{NR}(s,\xi)\langle\xi\rangle^{-3/2}\\
&\lesssim_\delta s^{1.1}\langle\xi\rangle^{-1.9}\sqrt{|(A_{NR}\dot{A}_{NR})(s,\xi)|}\sqrt{|(A_{NR}\dot{A}_{NR})(s,\xi-\eta)|}\cdot A_{NR}(s,\eta)e^{-(\lambda(s)/40)\langle\eta\rangle^{1/2}},
\end{split}
\end{equation*}
for any $(\xi,\eta)\in S_2$. Therefore
\begin{equation*}
\begin{split}
\mathcal{J}_2\lesssim_\delta \bigg\|s^{3/4}&\frac{\sqrt{|(A_{NR}\dot{A}_{NR})(s,\xi)|}}{\langle\xi\rangle^{3/4}}\cdot \widetilde{\mathcal{H}}(s,\xi)\bigg\|_{L^2_sL^2_\xi}\Big\| s^{1.1}\langle\rho\rangle^{-0.2}\sqrt{|(A_{NR}\dot{A}_{NR})(s,\rho)|}\cdot \widetilde{\dot{V}}(s,\rho)\Big\|_{L^2_sL^2_\eta}\\
&\times\Big\|s^{3/4}A_{NR}(s,\eta)\langle\eta\rangle e^{-(\lambda(s)/40)\langle\eta\rangle^{1/2}}\cdot\widetilde{\mathcal{H}}(s,\eta)\Big\|_{L^\infty_sL^2_\eta},
\end{split}
\end{equation*}
and the desired conclusion follows from \eqref{nar6} and \eqref{nar7}. 
\end{proof}

\section{The main weights: definitions and basic properties}\label{weights}

\subsection{Definitions}\label{weightsdefin}

In this subsection we give the precise definitions of the weights $w_\ast, b_\ast, A_\ast$, $\ast\in\{NR,R,k\}$, $k\in\mathbb{Z}$. 

\subsubsection{The functions $w_{NR}$, $w_R$, and $w_k$} We define first the functions $w_{NR},w_R:[0,\infty)\times\mathbb{R}\to [0,1]$ which model the non-resonant and resonant growth respectively. Take small $\delta>0$ with $\delta\ll \delta_0$, which is still much larger than $ \overline{\epsilon}$. For $|\eta|\leq\delta^{-10}$ we define simply
\begin{equation}\label{reb1}
w_{NR}(t,\eta):=1,\qquad w_R(t,\eta):=1.
\end{equation}
For $\eta>\delta^{-10}$ we define $k_0(\eta):=\lfloor\sqrt{\delta^3\eta}\rfloor$. For $l\in\{1,\ldots,k_0(\eta)\}$ we define
\begin{equation}\label{reb2}
t_{l,\eta}:=\frac{1}{2}\big(\frac{\eta}{l+1}+\frac{\eta}{l}\big),\qquad t_{0,\eta}:=2\eta,\qquad I_{l,\eta}:=[t_{l,\eta},\,t_{l-1,\eta}].
\end{equation}
Notice that $|I_{l,\eta}|\approx \frac{\eta}{l^2}$ and
\begin{equation*}
\delta^{-3/2}\sqrt{\eta}/2\leq t_{k_0(\eta),\eta}\leq\ldots\leq t_{l,\eta}\leq\eta/l\leq t_{l-1,\eta}\leq\ldots\leq t_{0,\eta}=2\eta.
\end{equation*}

We define
\begin{equation}\label{reb3}
w_{NR}(t,\eta):=1,\,w_{R}(t,\eta):=1\qquad\text{ if }\,\,t\geq t_{0,\eta}=2\eta.
\end{equation}
Then we define, for $k\in\{1,\ldots,k_0(\eta)\}$,
\begin{equation}\label{reb5}
\begin{split}
w_{NR}(t,\eta)&:=\Big(\frac{1+\delta^2|t-\eta/k|}{1+\delta^2|t_{k-1,\eta}-\eta/k|}\Big)^{\delta_0}w_{NR}(t_{k-1,\eta},\eta)\qquad\text{ if }t\in[\eta/k,t_{k-1,\eta}],\\
w_{NR}(t,\eta)&:=\Big(\frac{1}{1+\delta^2|t-\eta/k|}\Big)^{1+\delta_0}w_{NR}(\eta/k,\eta)\qquad\text{ if }t\in[t_{k,\eta},\eta/k].
\end{split}
\end{equation}
We define also the weight $w_R$ by the formula
\begin{equation}\label{reb5.5}
w_R(t,\eta):=
\begin{cases}
w_{NR}(t,\eta)\frac{1+\delta^2|t-\eta/k|}{1+\delta^2\eta/(8k^2)}\qquad&\text{ if }|t-\eta/k|\leq\eta/(8k^2)\\
w_{NR}(t,\eta)\qquad&\text{ if }t\in I_{k,\eta},\,|t-\eta/k|\geq\eta/(8k^2),
\end{cases}
\end{equation}
for any $k\in\{1,\ldots,k_0(\eta)\}$. Notice that
\begin{equation}\label{reb4}
\frac{w_{NR}(t_{k,\eta},\eta)}{w_{NR}(t_{k-1,\eta},\eta)}\approx \Big(\frac{k^2}{\delta^2\eta}\Big)^{1+2\delta_0},\qquad w_{R}(t_{k,\eta},\eta)=w_{NR}(t_{k,\eta},\eta).
\end{equation}
Moreover, notice that for $t\in I_{k,\eta}$,
\begin{equation}\label{reb7}
w_{R}(t,\eta)\approx w_{NR}(t,\eta)\left[\frac{k^2}{\delta^2\eta}\left(1+\delta^2|t-\eta/k|\right)\right],
\end{equation}
and
\begin{equation}\label{reb8}
\frac{\partial_tw_{NR}(t,\eta)}{w_{NR}(t,\eta)}\approx\frac{\partial_tw_R(t,\eta)}{w_R(t,\eta)}\approx \frac{\delta^2}{1+\delta^2\left|t-\eta/k\right|}.
\end{equation}

We observe that 
\begin{equation}\label{reb8.5}
e^{(J_2-J_1)\ln(A/J_2^2)}\leq\prod_{j=J_1+1}^{J_2}\frac{A}{j^2}\leq e^{(J_2-J_1)\ln(A/J_2^2)+4(J_2-J_1)}
\end{equation}
provided that $1\leq J_1+1\leq J_2$. In particular, for $\eta>\delta^{-10}$,
\begin{equation}\label{reb8.6}
\begin{split}
&w_{NR}(t_{k_0(\eta),\eta},\eta)=w_R(t_{k_0(\eta),\eta},\eta)\in[X_\delta(\eta)^4,X_\delta(\eta)^{1/4}],\\
&X_\delta(\eta):=e^{-\delta^{3/2}\ln(\delta^{-1})\sqrt\eta}.
\end{split}
\end{equation}

For small values of $t\leq t_{k_0(\eta),\eta}$ we define the weights $w_{NR}$ and $w_R$ by the formulas 
\begin{equation}\label{reb9}
w_{NR}(t,\eta)=w_R(t,\eta):=(e^{-\delta\sqrt\eta})^\beta w_{NR}(t_{k_0(\eta),\eta},\eta)^{1-\beta}
\end{equation}
if $t=(1-\beta)t_{k_0(\eta),\eta}$, $\beta\in[0,1]$. We notice that
\begin{equation}\label{reb9.5}
\frac{w_{NR}(t_1,\eta)}{w_{NR}(t_2,\eta)}\lesssim e^{4\delta^{5/2}|t_1-t_2|}\qquad\text{ for any }t_1\in[0,t_{k_0(\eta),\eta}],\,t_2\in[0,\infty).
\end{equation}

If $\eta<-\delta^{-10}$, then we define $w_R(t,\eta):=w_R(t,|\eta|)$, $w_{NR}(t,\eta):=w_{NR}(t,|\eta|)$ and the resonant intervals $I_{k,\eta}:=I_{-k,-\eta}$. To summarize, the resonant intervals $I_{k,\eta}$ are defined for $(k,\eta)\in\mathbb{Z}\times\mathbb{R}$ satisfying $|\eta|>\delta^{-10}$, $1\leq |k|\leq  \sqrt{\delta^3|\eta|}$, and $\eta/k>0$.

We define now the weights $w_k(t,\eta)$, which crucially distinguish the way resonant and non-resonant modes grow around the critical times $\eta/k$, by the formula
\begin{equation}\label{eq:resonantweight}
w_k(t,\eta):=\left\{\begin{array}{lll}
w_{NR}(t,\eta)&{\rm \,if\,}&t\not\in I_{k,\eta},\\
w_R(t,\eta)&{\rm \,if\,}&t\in I_{k,\eta}.
\end{array}\right.
\end{equation}
If particular $w_k(t,\eta)=w_{NR}(t,\eta)$ unless $|\eta|>\delta^{-10}$, $1\leq |k|\leq  \sqrt{\delta^3|\eta|}$, $\eta/k>0$, and $t\in I_{k,\eta}$.

The functions $w_{NR}$, $w_{R}$ and $w_k$ have the right size but lack optimal smoothness in the frequency parameter $\eta$, mainly due to the jump discontinuities of the function $k_0(\eta)$. The smoothness of the weights is important in the analysis of the transport terms, as it leads to smaller loss of derivatives after symmetrization in the energy functionals.

To correct this problem we mollify the weights $w_\ast$. We fix $\varphi: \R \to [0,1]$ an even smooth function supported in $[-8/5,8/5]$ and equal to $1$ in $[-5/4,5/4]$ and let $d_0:=\int_\mathbb{R}\varphi(x)\,dx$. For $k\in\mathbb{Z}$ and $\ast\in \{NR,R,k\}$ let
\begin{equation}\label{dor1}
\begin{split}
b_\ast(t,\xi)&:=\int_\R w_\ast(t,\rho)\varphi\Big(\frac{\xi-\rho}{L_\kappa(t,\xi)}\Big)\frac{1}{d_0L_\kappa(t,\xi)}\,d\rho,\\
L_\kappa(t,\xi)&:=1+\frac{\kappa\langle\xi\rangle}{\langle\xi\rangle^{1/2}+\kappa t},\qquad\kappa\in[0,1].
\end{split}
\end{equation}
In other words, the functions $b_\ast(t,\xi)$ are obtained by averaging $w_\ast(t,\rho)$ over intervals of length $L_\kappa(t,\xi)$ around the point $\xi$. The length $L_\kappa(t,\xi)$ in \eqref{dor1} is chosen to optimize the smoothness in $\xi$ of the functions $b_\ast(t,.)$, while not changing significantly the size of the weights. The parameter $\kappa$ is to be taken sufficiently small, depending only on $\delta$; it will be fixed in the proof of Lemma \ref{lm:CDW}, in such a way that the bounds \eqref{dor51} hold.

We can now finally define our main weights $A_{NR}$, $A_R$, and $A_k$. We define first the decreasing function $\lambda:[0,\infty)\to[\delta_0,3\delta_0/2]$ by
\begin{equation}\label{dor2}
\lambda(0)=\frac{3}{2}\delta_0,\,\,\,\,\lambda'(t)=-\frac{\delta_0\sigma_0^2}{\langle t\rangle^{1+\sigma_0}},
\end{equation}
for small positive constant $\sigma_0$ (say $\sigma_0=0.01$). Then we define
\begin{equation}\label{dor3}
A_R(t,\xi):=\frac{e^{\lambda(t)\langle\xi\rangle^{1/2}}}{b_R(t,\xi)}e^{\sqrt{\delta}\langle\xi\rangle^{1/2}},\qquad A_{NR}(t,\xi):=\frac{e^{\lambda(t)\langle\xi\rangle^{1/2}}}{b_{NR}(t,\xi)}e^{\sqrt{\delta}\langle\xi\rangle^{1/2}},
\end{equation}
and, for any $k\in\mathbb{Z}$,
\begin{equation}\label{dor4}
A_k(t,\xi):=e^{\lambda(t)\langle k,\xi\rangle^{1/2}}\Big(\frac{e^{\sqrt{\delta}\langle\xi\rangle^{1/2}}}{b_k(t,\xi)}+e^{\sqrt{\delta}|k|^{1/2}}\Big).
\end{equation}

\subsection{Properties of the weights} In this subsection we prove several bounds on the weights $w_\ast$, $b_\ast$, and $A_\ast$. We start with a lemma:

\begin{lemma}\label{comparisonweights}
For all $t\ge0$, $\xi,\,\eta\in \R$, and $k\in\mathbb{Z}$ we have
\begin{equation}\label{eq:comparisonweights1}
\frac{w_{NR}(t,\xi)}{w_{NR}(t,\eta)}+\frac{w_{R}(t,\xi)}{w_{R}(t,\eta)}+\frac{w_{k}(t,\xi)}{w_{k}(t,\eta)}\lesssim_\delta e^{\sqrt{\delta} |\eta-\xi|^{1/2}}.
\end{equation}
Moreover, if $L_1(t,\eta)$ is as in \eqref{dor1} and $|\xi-\eta|\leq 10L_1(t,\eta)$ then we have the stronger bounds
\begin{equation}\label{dor6}
\frac{w_{NR}(t,\xi)}{w_{NR}(t,\eta)}+\frac{w_{R}(t,\xi)}{w_{R}(t,\eta)}+\frac{w_{k}(t,\xi)}{w_{k}(t,\eta)}\lesssim_\delta 1.
\end{equation}
Finally, if $\min(|\xi|,|\eta|)\geq 2\delta^{-10}$, $|\xi-\eta|\leq \min(|\xi|,|\eta|)/3$, and $t\geq \max(t_{k_0(\xi)-4,\xi},t_{k_0(\eta)-4,\eta})$ then we also have the stronger bounds
\begin{equation}\label{ReSm2}
\max\Big\{\frac{w_{NR}(t,\xi)}{w_{NR}(t,\eta)},\,\frac{w_{R}(t,\xi)}{w_{R}(t,\eta)},\,\frac{w_{k}(t,\xi)}{w_{k}(t,\eta)}\Big\}\leq e^{\sqrt{\delta} |\eta-\xi|^{1/2}}.
\end{equation}
\end{lemma}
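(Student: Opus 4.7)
The plan is to exploit the explicit product structure of $w_\ast$ across the resonant intervals, together with the extremal size bounds $w_\ast(t,\rho)\in[e^{-\delta\sqrt{|\rho|}},1]$ implicit in \eqref{reb8.6} and \eqref{reb9}. By the symmetry $\rho\mapsto -\rho$ built into the definitions, I assume $\xi,\eta>0$. The estimates for $w_k$ follow from those for $w_{NR},w_R$ via \eqref{eq:resonantweight} together with the two-sided comparison $w_R\leq w_{NR}\lesssim_\delta w_R$ coming from \eqref{reb5.5}, so it suffices to treat $w_{NR}$ and $w_R$. The trivial regimes $\max(\xi,\eta)\leq \delta^{-10}$ and $t\geq 2\max(\xi,\eta)$ reduce to $w_\ast\equiv 1$, and will be disregarded below.

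For \eqref{eq:comparisonweights1} I split into two subcases. When $|\xi-\eta|\geq \delta\min(\xi,\eta)$, the crude bound $w_\ast(t,\eta)\geq e^{-\delta\sqrt{\eta}}$ together with $w_\ast(t,\xi)\leq 1$ gives $w_\ast(t,\xi)/w_\ast(t,\eta)\leq e^{\delta\sqrt{\eta}}\leq e^{\sqrt{\delta}|\xi-\eta|^{1/2}}$, where the last inequality is equivalent to $\delta\eta\leq |\xi-\eta|$ and hence holds. In the near-diagonal subcase $|\xi-\eta|\leq \delta\min(\xi,\eta)$, I work directly with the product formula \eqref{reb5}: for fixed $t$, both $\xi$ and $\eta$ assign an integer $k_\ast(\rho)$ by the rule $t\in I_{k_\ast(\rho),\rho}$, and since $\xi\approx\eta$ these assignments coincide at all but $O(1)$ values of $k$. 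On matching slots, the ratio of the corresponding multiplicative factors is controlled by $\exp(O(\delta^2|\xi-\eta|/k))$; summing over $k\in\{1,\ldots,k_0(\eta)\}$ with $k_0(\eta)\lesssim \sqrt{\delta^3\eta}$ yields an exponent $\lesssim \sqrt{\delta}|\xi-\eta|^{1/2}$, while the $O(1)$ boundary mismatches each contribute a factor $\lesssim_\delta 1$ by \eqref{reb4}.

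For the improved estimate \eqref{dor6} under $|\xi-\eta|\leq 10L_1(t,\eta)$, the key point is to verify that $L_1(t,\eta)=1+\langle\eta\rangle/(\langle\eta\rangle^{1/2}+t)$ is small enough that $\xi$ and $\eta$ share the same $k$-assignment at time $t$. Transitions of $\rho\mapsto k_\ast(\rho)$ occur at $\rho\approx kt$ and so are separated by $\sim t$, while the scale on which the formula \eqref{reb5} varies in $\rho$ within a single $I_{k,\rho}$ is at least $\sim t^2/\eta$; a direct case analysis in the regimes $t\lesssim \sqrt{\eta}$ and $t\gtrsim \sqrt{\eta}$, together with $k\leq k_0(\eta)$, shows $L_1(t,\eta)$ lies below both scales. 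Once the $k$-assignments agree, \eqref{reb8} together with the sub-geometric series above gives $\lesssim_\delta 1$.

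The hardest part is \eqref{ReSm2}, because the constant must be exactly $1$, not $\lesssim_\delta 1$. The hypothesis $t\geq \max(t_{k_0(\xi)-4,\xi},\,t_{k_0(\eta)-4,\eta})$ confines the time variable to the small-index $k$-slots, so at most a bounded number of resonant intervals have contributed to the cumulative product defining $w_\ast(t,\cdot)$, and the many-resonance late-time regime is avoided. Combined with $|\xi-\eta|\leq \min(\xi,\eta)/3$ (so that $\sqrt{\xi}+\sqrt{\eta}\approx 2\sqrt{\eta}$), a careful slot-by-slot comparison as in the near-diagonal case of \eqref{eq:comparisonweights1} yields a total exponent strictly bounded by $\sqrt{\delta}|\xi-\eta|^{1/2}$ without additional constants, provided $\delta$ is small enough. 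The main technical obstacle throughout the argument is ensuring that the apparent branch discontinuities in $\rho\mapsto w_\ast(t,\rho)$ at the transition points $\rho$ with $t=t_{k,\rho}$ actually cancel, i.e., $w_\ast$ is continuous in $\rho$ at each fixed $t$, so that the slot-wise telescoping is well-defined before any mollification via \eqref{dor1} is invoked.
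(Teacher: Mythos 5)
Your proposal follows the same general strategy as the paper — trivial reductions when the weights are near $1$ or the frequency separation is large, then a slot-by-slot comparison of the cumulative product defining $w_{NR}$ — but the crux of the argument is incorrect as stated.

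The fatal claim is that ``since $\xi\approx\eta$ these assignments coincide at all but $O(1)$ values of $k$'' and ``the $O(1)$ boundary mismatches each contribute a factor $\lesssim_\delta 1$ by \eqref{reb4}.'' Neither part is true. If $t\in I_{a,\eta}\cap I_{a',\xi}$ with $a'\leq a$, the number of mismatched slots is $a-a'\lesssim a|\xi-\eta|/\eta$, which in the near-diagonal regime $|\xi-\eta|\approx\delta\eta$ with $a\approx k_0(\eta)\approx\sqrt{\delta^3\eta}$ is $\approx\delta^{5/2}\sqrt{\eta}\gg 1$ for large $\eta$. Moreover, by \eqref{reb4} each mismatched slot at index $b$ contributes a factor $\approx(\delta^2\eta/b^2)^{1+2\delta_0}$, which is always $\geq\delta^{-(1+2\delta_0)}$ and grows without bound for small $b$; this is not $\lesssim_\delta 1$. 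The product of all mismatch factors is in fact controlled, but only because the mismatch count and the size of each factor are tied together: this is exactly the content of the paper's claim \eqref{vfc1.1} under the quantitative condition \eqref{vfc1.2}, $a'\geq a-\frac{C\delta\sqrt{|\xi-\eta|}}{\ln(\delta^2\eta/a^2)}$, which guarantees the total mismatch product is $\leq e^{\delta^{3/4}|\xi-\eta|^{1/2}}$. Your argument omits this quantitative tracking entirely, and cannot be repaired by treating the mismatches as $O(1)$ bounded factors.

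Two further issues. First, the matched-slot ratio should be $\exp(O(\delta^2|\xi-\eta|/b^2))$, not $/b$; a harmonic sum over $b\leq k_0(\eta)$ with $/b$ would give $\delta^2|\xi-\eta|\ln k_0(\eta)$, which is not $\lesssim\sqrt\delta|\xi-\eta|^{1/2}$. Even with the correct $/b^2$, applying $\ln(1+x)\leq x$ and summing the convergent series gives $O(\delta^2|\xi-\eta|)$, still too large; one must split the sum at $b\approx\delta|\xi-\eta|^{1/2}$ and use the logarithmic bound on small slots to land at $O(\delta|\xi-\eta|^{1/2})$. Second, for \eqref{ReSm2}, the condition $t\geq\max(t_{k_0(\xi)-4,\xi},t_{k_0(\eta)-4,\eta})$ does not confine $t$ to ``small-index slots'' with ``at most a bounded number of resonant intervals contributing'' — the cumulative product may still contain $a\approx k_0(\eta)$ factors. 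What the hypothesis does is exclude the small-time ramp regime \eqref{reb9}, where the interpolation formula introduces the $\lesssim_\delta$-type constants of the paper's Cases 3 and 4; in the remaining Cases 1 and 2 the paper proves the sharp bound $\leq e^{\sqrt\delta|\xi-\eta|^{1/2}}$ directly.
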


\begin{proof} The desired bounds follow easily if $|\eta|\leq \delta^{-10}$ since $w_{NR}(t,\eta)=w_k(t,\eta)=w_R(t,\eta)=1$ in these cases. Assume that $|\eta|>\delta^{-10}$. In view of the definitions we have $w_{\ast}(t,\eta)\geq e^{-\delta\sqrt{|\eta|}}$ for $\ast\in\{NR,R,k\}$, so the desired bounds follow if $|\xi-\eta|\geq 2\delta|\eta|$ (\eqref{dor6} is trivial in this case). They also follow if $|\xi-\eta|\leq 2\delta|\eta|$ and either $|\eta|<2\delta^{-10}$ or $t\geq 3|\eta|/2$. After these reductions, it remains to show that 
\begin{equation}\label{comp1}
\frac{w_{\ast}(t,\xi)}{w_{\ast}(t,\eta)}\lesssim_\delta e^{\sqrt{\delta} |\eta-\xi|^{1/2}},
\end{equation}
\begin{equation}\label{comp1ext}
\frac{w_{\ast}(t,\xi)}{w_{\ast}(t,\eta)}\leq e^{\sqrt{\delta} |\eta-\xi|^{1/2}}\qquad\text{ if }\,\,t\geq t_{k_0(\eta)-4,\eta},
\end{equation}
and
\begin{equation}\label{comp1.5}
\frac{w_{\ast}(t,\xi)}{w_{\ast}(t,\eta)}\lesssim_\delta 1\qquad\text{ if }\,\,|\xi-\eta|\leq 10 L_1(t,\eta)
\end{equation}
for $\ast\in\{NR,R,k\}$, provided that
\begin{equation}\label{vfc1}
\eta\geq 2\delta^{-10},\qquad t\leq 3\eta/2,\qquad |\xi-\eta|\leq 2\delta\eta.
\end{equation}

{\bf{Step 1: proof of \eqref{comp1}--\eqref{comp1ext}.}} Assume that $\eta\geq 2\delta^{-10}$ and $|\xi-\eta|\leq\eta/10$. We claim that
\begin{equation}\label{vfc1.1}
\frac{w_{NR}(t_{a',\xi},\xi)}{w_{NR}(t_{a,\eta},\eta)}\leq e^{\delta^{3/4}|\xi-\eta|^{1/2}}
\end{equation}
for all integers $a\in[1,k_0(\eta)]$ and $a'\in [1,k_0(\xi)]$ satisfying
\begin{equation}\label{vfc1.2}
a'\geq a-\frac{C\delta\sqrt{|\xi-\eta|}}{\ln(\delta^2\eta/a^2)}.
\end{equation}
Indeed, using \eqref{reb5}, we have
\begin{equation}\label{vfc1.3}
w_{NR}(t_{a,\rho},\rho)=\prod_{b=1}^a\Big(\frac{1}{1+\delta^2(t_{b-1,\rho}-\rho/b)}\Big)^{\delta_0}\Big(\frac{1}{1+\delta^2(\rho/b-t_{b,\rho})}\Big)^{1+\delta_0}
\end{equation}
for any $\rho>\delta^{-10}$ and $a\in[1,k_0(\rho)]$. We may assume $a'\leq a$ and estimate
\begin{equation*}
\begin{split}
\frac{w_{NR}(t_{a',\xi},\xi)}{w_{NR}(t_{a,\eta},\eta)}&=\prod_{b=1}^{a'}\Big(\frac{1+\delta^2(t_{b-1,\eta}-\eta/b)}{1+\delta^2(t_{b-1,\xi}-\xi/b)}\Big)^{\delta_0}\Big(\frac{1+\delta^2(\eta/b-t_{b,\eta})}{1+\delta^2(\xi/b-t_{b,\xi})}\Big)^{1+\delta_0}\\
&\times\prod_{b=a'+1}^a[1+\delta^2(t_{b-1,\eta}-\eta/b)]^{\delta_0}[1+\delta^2(\eta/b-t_{b,\eta})]^{1+\delta_0}\\
&\leq\prod_{b=1}^{a}(1+8\delta^2|\xi-\eta|/b^2)^2\times\prod_{b=a'+1}^a(1+8\delta^2\eta/b^2)^2,
\end{split}
\end{equation*}
and the desired bounds \eqref{vfc1.1} follow using also \eqref{reb8.5} and the assumption \eqref{vfc1.2}.

We divide the rest of the proof into several cases.

{\bf{Case 1.}} Assume that \eqref{vfc1} holds and, in addition, 
\begin{equation}\label{vfc2}
t\in[t_{a,\eta},t_{a-1,\eta}],\qquad a\in[1,k_0(\eta)-4],\qquad |\xi-\eta|\geq \eta/(100 a).
\end{equation}
Then $w_{\ast}(t,\eta)\geq w_{NR}(t_{a,\eta},\eta)$. In this case we will prove the stronger bounds
\begin{equation}\label{vfc4}
w_{NR}(t,\xi)\leq e^{\sqrt{\delta} |\eta-\xi|^{1/2}}w_{NR}(t_{a,\eta},\eta).
\end{equation}

Assume that $t=\xi/b$ for some $b\in[a/2,2a]$. Notice that 
\begin{equation}\label{vfc4.5}
 \left|\frac{\eta}{a}-\frac{\xi}{b}\right|\leq\frac{\eta}{a^2}\qquad\text{ and }\qquad \left|\frac{\eta}{a}-\frac{\xi}{b}\right|=\left|\frac{\eta-\xi}{a}+\frac{\xi(b-a)}{ab}\right|.
\end{equation}
Since $|\eta-\xi|/a\gtrsim \eta/a^2$ (see \eqref{vfc2}), we have
$$\left|\frac{\xi(b-a)}{ba}\right|\lesssim \frac{|\xi-\eta|}{a}.$$
Hence $|b-a|\lesssim a|\xi-\eta|/\eta$, so there is $b_0\in[a-Ca|\xi-\eta|/\eta,a]$ such that $t\leq t_{b_0,\xi}$ and $b_0\leq k_0(\xi)$. Using \eqref{vfc1.1} we have
\begin{equation*}
w_{NR}(t_{b_0,\xi},\xi)\leq e^{\delta^{3/4} |\eta-\xi|^{1/2}}w_{NR}(t_{a,\eta},\eta).
\end{equation*}
The desired bounds \eqref{vfc4} follow since $w_{NR}(.,\xi)$ is increasing.

{\bf{Case 2.}} Assume that \eqref{vfc1} holds and, in addition, 
\begin{equation}\label{vfc8}
t\in[t_{a,\eta},t_{a-1,\eta}],\qquad a\in[1,k_0(\eta)-4],\qquad |\xi-\eta|\leq \eta/(100 a).
\end{equation}
Let $t'$ be such that $t'-\xi/a=t-\eta/a$, so $t'-t=(\xi-\eta)/a$. 
For $\ast\in\{NR,R,k\}$ we write
\begin{equation}\label{vfc8.1}
\frac{w_*(t,\xi)}{w_*(t,\eta)}=\frac{w_*(t,\xi)}{w_*(t',\xi)}\cdot\frac{w_*(t',\xi)}{w_*(t,\eta)}.
\end{equation}
In view of \eqref{reb8} and recalling the assumptions \eqref{vfc8}, we have
\begin{equation}\label{vfc8.2}
\frac{w_*(t,\xi)}{w_*(t',\xi)}\leq e^{C\ln(1+\delta^2|t-t'|)}\leq e^{C\ln(1+\delta^2|\xi-\eta|/a)}.
\end{equation}

In view of \eqref{vfc1.1}, for \eqref{comp1ext} it suffices to prove that
\begin{equation}\label{vfc8.3}
\frac{w_*(t',\xi)}{w_*(t,\eta)}\leq e^{\sqrt{\delta}|\xi-\eta|^{1/2}/2}.
\end{equation}
This follows directly from the definition \eqref{reb5} and the bounds \eqref{vfc1.1} if $\ast=NR$. The bounds follow also if $\ast=R$ or if $\ast =k$, using \eqref{reb5.5}, \eqref{eq:resonantweight}, and considering two cases, $|t-\eta/a|\geq \eta/(6a^2)$ and $|t-\eta/a|\leq \eta/(6a^2)$ (in this last case we necessarily have $|t'-\xi/a|\leq \xi/(5a^2)$).

{\bf{Case 3.}}  Assume that \eqref{vfc1} holds and, in addition, 
\begin{equation}\label{vfc10}
0\leq t< t_{k_0(\eta)-4,\eta},\qquad 0\leq t\leq t_{k_0(\xi),\xi}.
\end{equation}
It suffices to prove the bounds \eqref{comp1}, in the slightly stronger form
\begin{equation}\label{vfc10.1}
\frac{w_{NR}(t,\xi)}{w_{NR}(t,\eta)}\lesssim_\delta e^{\sqrt{\delta} |\eta-\xi|^{1/2}/2}.
\end{equation}

Indeed, it follows from the definition \eqref{reb9} and \eqref{vfc1.1} that
\begin{equation}\label{vfc10.2}
\frac{w_{NR}(0,\xi)}{w_{NR}(0,\eta)}+\frac{w_{NR}(t_{k_0(\xi),\xi},\xi)}{w_{NR}(t_{k_0(\eta),\eta},\eta)}\lesssim e^{\delta^{3/4}|\xi-\eta|^{1/2}}.
\end{equation}
We define $t''\in[0,t_{k_0(\eta),\eta}]$ by the formulas
\begin{equation*}
t'':=(1-\beta)t_{k_0(\eta),\eta}\qquad\text{ if }\,t=(1-\beta)t_{k_0(\xi),\xi},\,\beta\in[0,1].
\end{equation*}
Using the definitions \eqref{reb9} and the bounds \eqref{vfc10.2}, we have
\begin{equation}\label{vfc10.3}
\frac{w_{NR}(t,\xi)}{w_{NR}(t'',\eta)}\lesssim e^{\delta^{3/4}|\xi-\eta|^{1/2}},
\end{equation}
for any $t\in[0,t_{k_0(\xi),\xi}]$. Moreover, $t''\in[0,t_{k_0(\eta),\eta}]$ and $|t''-t|\lesssim \delta^{-3/2}|\xi-\eta|/\sqrt\eta+\delta^{-3}$. The bounds \eqref{vfc10.1} follow using also the bounds \eqref{reb9.5}.

{\bf{Case 4.}}  Finally, assume that \eqref{vfc1} holds and, in addition, 
\begin{equation}\label{vfc14}
0\leq t< t_{k_0(\eta)-4,\eta},\qquad t\geq t_{k_0(\xi),\xi}.
\end{equation}
Notice that
\begin{equation*}
t_{k_0(\xi),\xi}=\delta^{-3/2}\sqrt{\eta+(\xi-\eta)}+O(\delta^{-3})\geq \delta^{-3/2}\sqrt\eta-\delta^{-3/2}|\xi-\eta|/\sqrt{\eta}-C\delta^{-3}.
\end{equation*}
Therefore, using \eqref{reb4}, we have
\begin{equation*}
w_\ast(t,\xi)\lesssim_\delta w_{NR}(t_{k_0(\xi),\xi},\xi)\cdot e^{C\ln\delta^{-1}\cdot\delta^{3/2}|\xi-\eta|/\sqrt\eta}\lesssim_\delta w_{NR}(t_{k_0(\xi),\xi},\xi)\cdot e^{\delta\sqrt{|\xi-\eta|}}.
\end{equation*}
The desired conclusion follows from the bounds \eqref{vfc10.1} when $t=t_{k_0(\xi),\xi}$ proved in {\bf{Case 3}}.

{\bf{Step 2: proof of \eqref{comp1.5}.}} We can proceed along the same line, but the proof is easier. We claim that if $\eta\geq 2\delta^{-10}$ and $|\xi-\eta|\leq 10\sqrt\eta$ then 
\begin{equation}\label{dor8}
\frac{w_{NR}(t_{a',\xi},\xi)}{w_{NR}(t_{a,\eta},\eta)}\lesssim (\delta^2\eta/a^2)^{\max(0,a-a')}
\end{equation}
for all integers $a\in[1,k_0(\eta)]$ and $a'\in [1,k_0(\xi)]$ satisfying $|a-a'|\leq 10$. This is similar to the proof of \eqref{vfc1.1}, using again \eqref{vfc1.3}. As before, we consider several cases. 

{\bf{Case 1.}} Assume that 
\begin{equation}\label{dor9}
t\in[t_{a,\eta},t_{a-1,\eta}],\qquad a\in[1,k_0(\eta)-4],\qquad |\xi-\eta|\leq 10L_1(t,\eta).
\end{equation}
This is similar to {\bf{Case 2}} in the proof of \eqref{comp1}. Let $t'$ be such that $t'-\xi/a=t-\eta/a$, so $t'-t=(\xi-\eta)/a$. 
For $\ast\in\{NR,R,k\}$ we write
\begin{equation}\label{dor10}
\frac{w_*(t,\xi)}{w_*(t,\eta)}=\frac{w_*(t,\xi)}{w_*(t',\xi)}\cdot\frac{w_*(t',\xi)}{w_*(t,\eta)}.
\end{equation}
In view of \eqref{reb8} and the assumptions \eqref{dor9} (which imply that $|\xi-\eta|\lesssim 1+a$), we have
\begin{equation}\label{dor11}
\frac{w_*(t,\xi)}{w_*(t',\xi)}\lesssim e^{C\ln(1+\delta^2|t-t'|)}\lesssim e^{C\ln(1+\delta^2|\xi-\eta|/a)}\lesssim 1.
\end{equation}
Moreover, as in the proof of \eqref{vfc8.3},
\begin{equation}\label{dor12}
\frac{w_*(t',\xi)}{w_*(t,\eta)}\lesssim \frac{w_{NR}(t_{a,\xi},\xi)}{w_{NR}(t_{a,\eta},\eta)}+\frac{w_{NR}(t_{a-1,\xi},\xi)}{w_{NR}(t_{a-1,\eta},\eta)}\lesssim 1,
\end{equation}
where the last inequality follows from \eqref{dor8}. The desired bounds \eqref{comp1.5} follow in this case.

{\bf{Case 2.}}  Assume now that
\begin{equation}\label{dor14}
0\leq t\leq t_{k_0(\eta)-4,\eta},\qquad |\xi-\eta|\leq 10L_1(t,\eta).
\end{equation}
It follows from the definition \eqref{reb9} and \eqref{dor8} that
\begin{equation}\label{dor15}
\frac{w_{NR}(0,\xi)}{w_{NR}(0,\eta)}+\frac{w_{NR}(t_{k_0(\xi),\xi},\xi)}{w_{NR}(t_{k_0(\eta),\eta},\eta)}\lesssim_\delta 1.
\end{equation}

If $t\leq t_{k_0(\xi),\xi}$ then we define $t''\in[0,t_{k_0(\eta),\eta}]$ by the formulas
\begin{equation*}
t'':=(1-\beta)t_{k_0(\eta),\eta}\qquad\text{ if }\,t=(1-\beta)t_{k_0(\xi),\xi},\,\beta\in[0,1].
\end{equation*}
Using the definitions \eqref{reb9} and the bounds \eqref{dor15}, we have $\frac{w_{NR}(t,\xi)}{w_{NR}(t'',\eta)}\lesssim_\delta 1$ 
for any $t\in[0,t_{k_0(\xi),\xi}]$. Moreover, $t''\in[0,t_{k_0(\eta),\eta}]$ and $|t''-t|\lesssim\delta^{-3}$. Using also \eqref{reb9.5} it follows that
\begin{equation}\label{dor16}
\frac{w_{NR}(t,\xi)}{w_{NR}(t,\eta)}\lesssim_\delta 1,\qquad\text{ if }\,\,t\leq t_{k_0(\xi),\xi}.
\end{equation}

On the other hand, if  $t\geq t_{k_0(\xi),\xi}$ then we use \eqref{reb4} to see that $w_\ast(t,\xi)\lesssim_\delta w_{NR}(t_{k_0(\xi),\xi},\xi)$. The desired bounds \eqref{comp1.5} follow for all $t\leq  t_{k_0(\eta)-4,\eta}$, using also \eqref{dor16} when $t=t_{k_0(\xi),\xi}$. This completes the proof of the lemma.
\end{proof}

We prove now estimates on the functions $b_\ast$ defined in \eqref{dor1}.

\begin{lemma}\label{bweights}
(i) For $t\geq 0$, $\xi\in\R$, $k\in\Z$, and $\ast\in\{NR,R,k\}$ we have
\begin{equation}\label{dor20}
b_\ast(t,\xi)\approx_\delta w_\ast(t,\xi),
\end{equation}
\begin{equation}\label{dor21}
|\partial_\xi b_\ast(t,\xi)|\lesssim_\delta b_\ast(t,\xi)\frac{1}{L_\kappa(t,\xi)},
\end{equation}
\begin{equation}\label{dor22}
\frac{b_\ast(t,\xi)}{b_{\ast}(t,\eta)}\lesssim_\delta e^{\sqrt{\delta}|\eta-\xi|^{1/2}}.
\end{equation}

(ii) For $t\geq 0$ let
\begin{equation}\label{dor23.1}
\begin{split}
I^\ast_t&:=\{(k,\xi)\in\mathbb{Z}\times\mathbb{R}:\,1\leq |k|\leq \delta^2t\,\text{ and }\,|\xi-tk|\leq t/6\},\\
I^{\ast\ast}_t&:=\{(k,\xi)\in\mathbb{Z}\times\mathbb{R}:\,1\leq |k|\leq \delta^4t\,\text{ and }\,|\xi-tk|\leq t/12\}.
\end{split}
\end{equation}
Then
\begin{equation}\label{dor23.2}
w_k(t,\xi)=w_{NR}(t,\xi)\quad\text{ and }\quad b_k(t,\xi)=b_{NR}(t,\xi)\qquad\text{ if }(k,\xi)\notin I^\ast_t,
\end{equation}
\begin{equation}\label{dor23.3}
b_k(t,\xi)\approx_\delta w_k(t,\xi)\approx_\delta w_R(t,\xi)\approx_\delta b_{R}(t,\xi)\qquad\text{ if }(k,\xi)\in I^\ast_t,
\end{equation}
\begin{equation}\label{dor23.4}
b_k(t,\xi)\approx_\delta w_k(t,\xi)\approx_\delta w_{NR}(t,\xi)\approx_\delta b_{NR}(t,\xi)\qquad\text{ if }(k,\xi)\notin I^{\ast\ast}_t.
\end{equation}
\end{lemma}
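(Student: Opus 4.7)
The proof of Lemma \ref{bweights} splits naturally into (i) soft bounds on the mollified weights $b_\ast$ that follow from Lemma \ref{comparisonweights} together with direct computation, and (ii) structural identifications of the sets on which the various weights coincide, via geometric analysis of the resonant intervals $I_{k,\xi}$.

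For part (i), the starting point is that $L_\kappa(t,\xi)\le L_1(t,\xi)$ for $\kappa\le 1$, so the support of $\varphi((\xi-\rho)/L_\kappa(t,\xi))$ lies in $\{\rho:|\xi-\rho|\le 2L_1(t,\xi)\}$. The strong local bound \eqref{dor6} then gives $w_\ast(t,\rho)\approx_\delta w_\ast(t,\xi)$ uniformly on this region, and integrating against the normalized kernel yields \eqref{dor20}. The estimate \eqref{dor22} then follows immediately from \eqref{dor20} combined with the global comparison \eqref{eq:comparisonweights1}. For \eqref{dor21}, I would differentiate the formula \eqref{dor1} in $\xi$, collecting one contribution from the chain rule applied to $\varphi$ (both from the $\xi-\rho$ numerator and from the $L_\kappa$ denominator) and one from differentiating the prefactor $1/L_\kappa$. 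A short direct calculation shows $|\partial_\xi L_\kappa(t,\xi)|\lesssim 1$ uniformly in $(t,\xi,\kappa)$, so each contribution is of the form $L_\kappa(t,\xi)^{-1}$ times a mollified copy of $w_\ast$, which \eqref{dor20} then controls by $L_\kappa(t,\xi)^{-1}b_\ast(t,\xi)$.

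For part (ii), the key observation is that $w_k(t,\rho)\ne w_{NR}(t,\rho)$ exactly on the set $J_k(t):=\{\rho:t\in I_{k,\rho}\}$, which by the formulas \eqref{reb2} together with the admissibility constraint $1\le|k|\le k_0(\rho)=\lfloor\sqrt{\delta^3|\rho|}\rfloor$ is an explicit interval of $\rho$ clustered near $\rho\approx tk$. Using these formulas one first checks that $J_k(t)$ is contained in (a slight enlargement of) $\{\xi:(k,\xi)\in I^{\ast\ast}_t\}$, which forces $|k|\le\delta^3 t\le\delta^2 t$ and constrains $|\xi-tk|$ correspondingly. The buffer $I^\ast_t\setminus I^{\ast\ast}_t$ is designed so that the mollification scale $L_\kappa(t,\xi)$ fits strictly inside it (this is where $\kappa$, to be fixed sufficiently small in the proof of Lemma \ref{lm:CDW} depending only on $\delta$, is used). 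Therefore: outside $I^\ast_t$ the support of the mollifier misses $J_k(t)$ entirely, so $w_k(t,\rho)=w_{NR}(t,\rho)$ on the whole support and hence $b_k(t,\xi)=b_{NR}(t,\xi)$, proving \eqref{dor23.2}; inside $I^\ast_t$ we have $t\in I_{k,\xi}$, so $w_k(t,\xi)=w_R(t,\xi)$, and the local bound \eqref{dor6} applied to $w_R$ gives $b_k(t,\xi)\approx_\delta w_R(t,\xi)\approx_\delta b_R(t,\xi)$, which is \eqref{dor23.3}; and outside $I^{\ast\ast}_t$ the analogous mollifier-vs-buffer argument yields \eqref{dor23.4}.

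The main obstacle is not any single estimate but the bookkeeping in part (ii): a careful geometric comparison of the mollification scale $L_\kappa(t,\xi)=1+\kappa\langle\xi\rangle/(\langle\xi\rangle^{1/2}+\kappa t)$ against the widths of the intervals $J_k(t)$ and the size of the buffer between $I^\ast_t$ and $I^{\ast\ast}_t$. This forces $\kappa$ to be chosen sufficiently small (in a $\delta$-dependent way), and couples the present argument to the quantitative bounds needed downstream in Lemma \ref{lm:CDW}. Everything else reduces to iterated applications of Lemma \ref{comparisonweights}.
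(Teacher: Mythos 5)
Your argument for part (i) is correct and matches the paper's: mollify at scale $L_\kappa\le L_1$, apply the local bound \eqref{dor6}, differentiate the definition \eqref{dor1}, and combine with \eqref{eq:comparisonweights1} for \eqref{dor22}. Your geometric ``mollifier-vs-buffer'' argument for \eqref{dor23.2} is also essentially right: the set where $w_k(t,\cdot)\ne w_{NR}(t,\cdot)$ is nonempty only when $|k|\lesssim\delta^3 t$ and is then contained in $\{\rho:|\rho-tk|\le t/7\}$ roughly, so for $(k,\xi)\notin I^\ast_t$ the window $|\rho-\xi|\lesssim L_\kappa(t,\xi)$ misses it entirely once $\kappa$ (and $\delta$) are small enough, giving the exact identity $b_k=b_{NR}$.

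However, there is a genuine gap in your treatment of \eqref{dor23.3} and \eqref{dor23.4}. Your containment claim is wrong: the set $\{k:J_k(t)\ne\emptyset\}$ is $|k|\lesssim\delta^3 t$, which sits strictly between the $k$-ranges $|k|\le\delta^4 t$ of $I^{\ast\ast}_t$ and $|k|\le\delta^2 t$ of $I^\ast_t$ -- it is \emph{not} contained in any enlargement of $I^{\ast\ast}_t$. Consequently, your assertion ``inside $I^\ast_t$ we have $t\in I_{k,\xi}$'' fails throughout the range $\delta^3 t\lesssim|k|\le\delta^2 t$ where $|k|>k_0(\xi)\approx\sqrt{\delta^3|\xi|}$ and the resonant interval $I_{k,\xi}$ simply does not exist, so $w_k=w_{NR}$ there, not $w_R$. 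Worse, the ``analogous mollifier-vs-buffer argument'' for \eqref{dor23.4} would, if it worked, prove the \emph{identity} $b_k=b_{NR}$ outside $I^{\ast\ast}_t$ -- but this is false, since $w_k\ne w_{NR}$ genuinely occurs inside the annulus $I^\ast_t\setminus I^{\ast\ast}_t$ (take $|k|\approx\delta^{3.5}t$ and $\xi\approx tk$), which is precisely why \eqref{dor23.4} is stated as $\approx_\delta$ and not $=$. The correct route (which the paper takes) is to first invoke \eqref{dor20} to reduce both \eqref{dor23.3} and \eqref{dor23.4} to pointwise comparisons of the unmollified weights, and then use \eqref{reb5.5}--\eqref{reb7}: on $I^\ast_t$ either $t\in I_{k,\xi}$ (so $w_k=w_R$) or $|k|>k_0(\xi)$ forces $t\lesssim_\delta|\xi|^{1/2}$ where $w_{NR}\approx_\delta w_R$; outside $I^{\ast\ast}_t$ either $w_k=w_{NR}$ exactly, or one of $|k|>\delta^4 t$, $|\xi-tk|>t/12$ holds and the explicit ratio $w_R/w_{NR}\approx\frac{k^2}{\delta^2\xi}(1+\delta^2|t-\xi/k|)$ is $\gtrsim_\delta 1$.
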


\begin{proof}  (i) The bounds \eqref{dor20} follow from the definition \eqref{dor1} and the bounds \eqref{dor6}. To prove \eqref{dor21} we start again from the definition \eqref{dor1}, take $\partial_\xi$ derivatives,  notice that $|\partial_\xi L_\kappa(\xi,t)|\lesssim 1$ and use \eqref{dor6} again. The bounds \eqref{dor22} follow using also \eqref{eq:comparisonweights1}.

(ii) The identities \eqref{dor23.2} follow easily from definitions \eqref{reb5.5}, \eqref{eq:resonantweight}, and \eqref{dor1}. In view of \eqref{dor20}, for \eqref{dor23.3} it suffices to prove that $w_k(t,\xi)\approx_\delta w_R(t,\xi)$ if $(k,\xi)\in I^\ast_t$, which follows from definitions again (notice that $w_{NR}(t,\rho)\approx_\delta w_R(t,\rho)$ if $t\lesssim_\delta |\rho|^{1/2}$).  Finally, for \eqref{dor23.4} it suffices to show that $w_k(t,\xi)\approx_\delta w_{NR}(t,\xi)$ if $(k,\xi)\notin I^{\ast\ast}_t$, which follows from definitions again.
\end{proof}

We prove now several bounds on the main weights $A_{NR}, A_R, A_k$ defined in \eqref{reb11}--\eqref{reb12}.

\begin{lemma}\label{A_kA_ell}
(i) Assume $t\in[0,\infty)$, $k\in\mathbb{Z}$, and $\ast\in\{NR,R,k\}$. Then, for any $\xi,\eta\in\mathbb{R}$ satisfying $|\eta|\geq |\xi|/8$ (or $|(k,\eta)|\geq|(k,\xi)|/8$ if $\ast=k$), we have
\begin{equation}\label{vfc25}
\frac{A_\ast(t,\xi)}{A_\ast(t,\eta)}\lesssim_\delta e^{0.9\lambda(t)|\xi-\eta|^{1/2}}.
\end{equation}

(ii) Assume $t\in[0,\infty)$, $k,\ell\in\mathbb{Z}$ and $\xi,\eta\in\mathbb{R}$ satisfy $|(\ell,\eta)|\geq |(k,\xi)|/8$. If $t\not\in I_{k,\xi}$ or if $t\in I_{k,\xi}\cap I_{\ell,\eta}$, then
\begin{equation}\label{vfc26}
\frac{A_k(t,\xi)}{A_\ell(t,\eta)}\lesssim_\delta e^{0.9\lambda(t)|(k-\ell,\xi-\eta)|^{1/2}}.
\end{equation}
If $t\in I_{k,\xi}$ and $t\not\in I_{\ell,\eta}$, then
\begin{equation}\label{vfc27}
\frac{A_k(t,\xi)}{A_\ell(t,\eta)}\lesssim_\delta \frac{|\xi|}{k^2}\frac{1}{1+\big|t-\xi/k\big|} e^{0.9\lambda(t)|(k-\ell,\xi-\eta)|^{1/2}}.
\end{equation}
\end{lemma}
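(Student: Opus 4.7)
The plan is to prove both parts directly from the definitions \eqref{dor3}--\eqref{dor4}, combining three ingredients: (a) Lemma \ref{veryelem}\eqref{b>a} to handle the exponential factor $e^{\lambda(t)\langle\cdot\rangle^{1/2}}$, (b) Lemma \ref{bweights}\eqref{dor22} to compare the smoothed weights $b_\ast$ at nearby frequencies, and (c) the pointwise comparison \eqref{reb7} between $w_R$ and $w_{NR}$ inside a resonant interval, which is what produces the extra resonant factor in \eqref{vfc27}.

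For part (i), I would write $A_\ast(t,\xi)/A_\ast(t,\eta)$ as the product of $e^{\lambda(t)(\langle\xi\rangle^{1/2}-\langle\eta\rangle^{1/2})}$, an $e^{\sqrt\delta}$ correction, and $b_\ast(t,\eta)/b_\ast(t,\xi)$. The hypothesis $|\eta|\geq|\xi|/8$ yields $|\xi-\eta|\leq 9|\eta|$, hence $\langle\eta\rangle\geq\beta\langle\xi-\eta\rangle$ with $\beta=1/9$; then \eqref{b>a} gives $\langle\xi\rangle^{1/2}-\langle\eta\rangle^{1/2}\leq (5/6)\langle\xi-\eta\rangle^{1/2}$. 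Since $\sqrt\delta\ll\delta_0\leq\lambda(t)$ and \eqref{dor22} controls the $b_\ast$ ratio by $e^{\sqrt\delta|\xi-\eta|^{1/2}}$, all the $\sqrt\delta$ losses combine with $(5/6)\lambda(t)$ into at most $0.9\lambda(t)$ on the exponent, provided $\delta$ is taken small enough relative to $\delta_0$. For $\ast=k$ the same argument applies using $\langle k,\xi\rangle^{1/2}-\langle k,\eta\rangle^{1/2}$; the two-term structure of $A_k$ is handled by bounding each summand in the numerator against a matching one in the denominator, using $b_k\leq 1$ and the elementary inequalities $|k|^{1/2},\langle\xi\rangle^{1/2}\leq\langle k,\xi\rangle^{1/2}$.

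For part (ii), the exponential factor is controlled identically: under $|(\ell,\eta)|\geq|(k,\xi)|/8$ we obtain $\langle k,\xi\rangle^{1/2}-\langle\ell,\eta\rangle^{1/2}\leq (5/6)|(k-\ell,\xi-\eta)|^{1/2}$. The real work lies in the ratio $b_\ell(t,\eta)/b_k(t,\xi)$, which I would split according to the location of $t$ relative to the resonant intervals. If $t\notin I_{k,\xi}$, then $b_k(t,\xi)\approx_\delta b_{NR}(t,\xi)$, and $b_\ell\leq b_{NR}$ (from \eqref{reb13}) reduces the estimate to $b_{NR}(t,\eta)/b_{NR}(t,\xi)\lesssim_\delta e^{\sqrt\delta|\xi-\eta|^{1/2}}$ from Lemma \ref{bweights}. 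If $t\in I_{k,\xi}\cap I_{\ell,\eta}$, then both $b_k$ and $b_\ell$ are $\approx_\delta b_R$ and the same estimate applies. Combined with the exponential factor this gives \eqref{vfc26}.

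The main obstacle is the remaining case $t\in I_{k,\xi}$, $t\notin I_{\ell,\eta}$, which must generate the resonant gain in \eqref{vfc27}. Here $b_k(t,\xi)\approx_\delta b_R(t,\xi)$ while $b_\ell(t,\eta)\approx_\delta b_{NR}(t,\eta)$, so
\[
\frac{b_\ell(t,\eta)}{b_k(t,\xi)}\lesssim_\delta \frac{b_{NR}(t,\xi)}{b_R(t,\xi)}\cdot\frac{b_{NR}(t,\eta)}{b_{NR}(t,\xi)}.
\]
The first factor is exactly where the resonant gain comes from: \eqref{reb7} gives $w_R(t,\xi)\approx w_{NR}(t,\xi)\cdot\frac{k^2}{\delta^2|\xi|}(1+\delta^2|t-\xi/k|)$ for $t\in I_{k,\xi}$, and a short case check on whether $\delta^2|t-\xi/k|$ is $\leq 1$ or $\geq 1$ yields $b_{NR}(t,\xi)/b_R(t,\xi)\lesssim_\delta \frac{|\xi|/k^2}{1+|t-\xi/k|}$; the second factor is $\lesssim_\delta e^{\sqrt\delta|\xi-\eta|^{1/2}}$. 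Multiplying by the exponential factor and absorbing all $\sqrt\delta$ losses into $0.9\lambda(t)$ as before yields \eqref{vfc27}. The two-term structure of $A_k$ in this "bad" case requires one additional observation: since $t\in I_{k,\xi}$ forces $|k|\leq\sqrt{\delta^3|\xi|}\ll\langle\xi\rangle^{1/2}$, the floor term $e^{\sqrt\delta|k|^{1/2}}$ in the numerator is dominated by the first summand $e^{\sqrt\delta\langle\xi\rangle^{1/2}}/b_R(t,\xi)$, so the ratio reduces to the one already estimated.
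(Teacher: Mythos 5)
Your proposal is correct and follows essentially the same route as the paper's own proof: reduce to the exponential factor via Lemma \ref{veryelem}\eqref{b>a} and to the ratio of the smoothed weights $b_\ast$ via Lemma \ref{bweights}, and in the resonant-mismatch case invoke \eqref{reb7} to extract the $\frac{|\xi|/k^2}{1+|t-\xi/k|}$ gain. The one minor cosmetic difference is in handling the two-term structure of $A_k$: the paper splits via the mediant inequality (bounding $\frac{a+b}{c+d}$ by the two summand ratios, see \eqref{vfc28.1}) uniformly, whereas you use the mediant device in parts (i) and for \eqref{vfc26} but switch to a dominance observation ($e^{\sqrt\delta|k|^{1/2}}$ is absorbed by $e^{\sqrt\delta\langle\xi\rangle^{1/2}}/b_k(t,\xi)$ since $t\in I_{k,\xi}$ forces $|k|\leq\sqrt{\delta^3|\xi|}$) in the resonant case; this is harmless because, as you note, the resonant factor in \eqref{vfc27} is $\gtrsim 1$ on $I_{k,\xi}$, so the "second summand" contribution, which already satisfies \eqref{vfc26}, automatically satisfies \eqref{vfc27} as well. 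The remaining ingredients you use ($b_\ast\approx_\delta w_\ast$ from \eqref{dor20}, the $w_R/w_{NR}$ comparison from \eqref{reb7}, and the ratio bound \eqref{dor22}) are exactly those the paper deploys via \eqref{q2w1}--\eqref{q2w2}.
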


\begin{proof} If $\ast\in\{NR,R\}$ then the bounds \eqref{vfc25} follow directly from the definitions \eqref{dor2}--\eqref{dor3}, the elementary bounds \eqref{b>a} and the bounds \eqref{dor22}. 

To prove the remaining bounds we start from the definition \eqref{dor4} and estimate
\begin{equation}\label{vfc28.1}
\frac{A_k(t,\xi)}{A_\ell(t,\eta)}\leq \frac{e^{\lambda(t)\langle k,\xi\rangle^{1/2}+\sqrt\delta\langle\xi\rangle^{1/2}}}{e^{\lambda(t)\langle \ell,\eta\rangle^{1/2}+\sqrt\delta\langle\eta\rangle^{1/2}}}\frac{b_\ell(t,\eta)}{b_k(t,\xi)}+\frac{e^{\lambda(t)\langle k,\xi\rangle^{1/2}+\sqrt\delta|k|^{1/2}}}{e^{\lambda(t)\langle \ell,\eta\rangle^{1/2}+\sqrt\delta|\ell|^{1/2}}}.
\end{equation}
The bounds \eqref{vfc25} follow when $\ast=k$, using again \eqref{b>a} and \eqref{dor22}. Since $w_R(t,\rho)\leq w_{NR}(t,\rho)$ and using also \eqref{dor20} we have 
\begin{equation}\label{q2w1}
\frac{b_\ell(t,\eta)}{b_k(t,\xi)}\lesssim_\delta\frac{w_\ell(t,\eta)}{w_k(t,\xi)}\lesssim_\delta \Big[\frac{w_{NR}(t,\eta)}{w_{NR}(t,\xi)}+\frac{w_R(t,\eta)}{w_R(t,\xi)}\Big]
\end{equation}
if $t\not\in I_{k,\xi}$ or if $t\in I_{k,\xi}\cap I_{\ell,\eta}$. The desired bounds \eqref{vfc26} follow using \eqref{b>a} and Lemma \ref{comparisonweights}. Moreover, if $t\in I_{k,\xi}$ and $t\not\in I_{\ell,\eta}$ then
\begin{equation}\label{q2w2}
\frac{b_\ell(t,\eta)}{b_k(t,\xi)}\lesssim_\delta\frac{w_\ell(t,\eta)}{w_k(t,\xi)}\lesssim_\delta\frac{w_{NR}(t,\eta)}{w_R(t,\xi)}\lesssim_\delta\frac{w_{NR}(t,\eta)}{w_{NR}(t,\xi)}\cdot\frac{|\xi|}{k^2}\frac{1}{1+\big|t-\xi/k\big|},
\end{equation}
using \eqref{reb7}. The bounds \eqref{vfc27} follow using again \eqref{b>a} and \eqref{eq:comparisonweights1}.
\end{proof}

We also need a lemma estimating time derivatives of the weights $A_\ast$. 

\begin{lemma}\label{lm:CDW}
(i) For all $t\ge 0,$ $\rho\in\mathbb{R}$, and $\ast\in\{NR,R\}$ we have
\begin{equation}\label{TLX3.5}
-\frac{\partial_tA_\ast(t,\rho)}{A_\ast(t,\rho)}\approx_\delta\left[\frac{\langle\rho\rangle^{1/2}}{\langle t\rangle^{1+\sigma_0}}+\frac{\partial_tw_\ast(t,\rho)}{w_\ast(t,\rho)}\right],
\end{equation}
and, for any $k\in\Z$,
\begin{equation}\label{eq:A_kxi}
-\frac{\partial_tA_k(t,\rho)}{A_k(t,\rho)}\approx_\delta\left[\frac{\langle k,\rho\rangle^{1/2}}{\langle t\rangle^{1+\sigma_0}}+\frac{\partial_tw_k(t,\rho)}{w_k(t,\rho)}\frac{1}{1+e^{\sqrt\delta(|k|^{1/2}-\langle\rho\rangle^{1/2})}w_k(t,\rho)}\right].
\end{equation}

(ii) For all $t\ge 0,$ $\xi,\,\eta\in\mathbb{R}$, and $\ast\in\{NR,R\}$ we have
\begin{equation}\label{vfc30}
\big|(\dot{A}_\ast/A_{\ast})(t,\xi)\big|\lesssim_\delta \big|(\dot{A}_\ast/A_{\ast})(t,\eta)\big|e^{4\sqrt{\delta}|\xi-\eta|^{1/2}}.
\end{equation}
Moreover, if $k,\ell\in\mathbb{Z}$ then
\begin{equation}\label{eq:CDW}
\big|(\dot{A}_k/A_k)(t,\xi)\big|\lesssim_\delta \big|(\dot{A}_\ell/A_{\ell})(t,\eta)\big|e^{4\sqrt{\delta}|k-\ell,\xi-\eta|^{1/2}}.
\end{equation}
Finally, if $\rho\in\mathbb{R}$ and $k\in\mathbb{Z}$ satisfy $|k|\leq\langle\rho\rangle+10$ then
\begin{equation}\label{vfc30.5}
\big|(\dot{A}_k/A_k)(t,\rho)\big|\approx_\delta\big|(\dot{A}_{NR}/A_{NR})(t,\rho)\big|\approx_\delta\big|(\dot{A}_R/A_R)(t,\rho)\big|.
\end{equation}
\end{lemma}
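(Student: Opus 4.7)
\textbf{Proof plan for Lemma \ref{lm:CDW}.} My strategy is to differentiate the defining formulas \eqref{dor3}--\eqref{dor4} directly, reducing everything to statements about $\partial_t b_\ast/b_\ast$, and then use the averaging representation \eqref{dor1} to transfer these to statements about $\partial_t w_\ast/w_\ast$ (which are governed by the explicit relations \eqref{reb8}, \eqref{reb9.5}). For part (i), the cleanest case is $\ast\in\{NR,R\}$: taking logs gives
\[
-\frac{\partial_t A_\ast(t,\rho)}{A_\ast(t,\rho)} = -\lambda'(t)\langle\rho\rangle^{1/2} + \frac{-\partial_t b_\ast(t,\rho)}{b_\ast(t,\rho)},
\]
and the first term is exactly $\delta_0\sigma_0^2\langle\rho\rangle^{1/2}/\langle t\rangle^{1+\sigma_0}$ by \eqref{dor2}. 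For the $A_k$ case, writing $A_k=e^{\lambda(t)\langle k,\rho\rangle^{1/2}}(P+Q)$ with $P:=e^{\sqrt\delta\langle\rho\rangle^{1/2}}/b_k$ and $Q:=e^{\sqrt\delta|k|^{1/2}}$ gives
\[
-\frac{\partial_t A_k}{A_k} = -\lambda'(t)\langle k,\rho\rangle^{1/2} + \frac{-\partial_t b_k/b_k}{1+(Q/P)} = -\lambda'(t)\langle k,\rho\rangle^{1/2} + \frac{-\partial_t b_k/b_k}{1+e^{\sqrt\delta(|k|^{1/2}-\langle\rho\rangle^{1/2})}b_k},
\]
which already produces the damping factor in \eqref{eq:A_kxi}. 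Thus it remains to show $-\partial_t b_\ast/b_\ast\approx_\delta -\partial_t w_\ast/w_\ast$ (using $b_\ast\approx_\delta w_\ast$ via \eqref{dor20} to pass between the two).

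The core of the argument is therefore the bound $-\partial_t b_\ast(t,\xi)/b_\ast(t,\xi)\approx_\delta -\partial_t w_\ast(t,\xi)/w_\ast(t,\xi)$. Differentiating \eqref{dor1} produces two contributions: a main term in which $\partial_t$ lands on $w_\ast(t,\rho)$, and an error term in which $\partial_t$ lands on the $L_\kappa$-dependent kernel $\varphi((\xi-\rho)/L_\kappa)/(d_0L_\kappa)$. For the main term, I will use the fact that on the averaging window $|\xi-\rho|\lesssim L_\kappa(t,\xi)$ the ratio $(-\partial_t w_\ast/w_\ast)(t,\rho)\approx\delta^2/(1+\delta^2|t-\rho/k|)$ from \eqref{reb8} varies by at most a multiplicative constant, because $L_\kappa(t,\xi)\lesssim\kappa\langle\xi\rangle^{1/2}$ which is much smaller than $\xi/k$ in any resonant regime (and $w_\ast$ is essentially constant on the window in any case by \eqref{dor6}). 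For the error term, $|\partial_tL_\kappa/L_\kappa|\lesssim \kappa^2\langle\xi\rangle/(\langle\xi\rangle^{1/2}+\kappa t)^2$, which is bounded by $C\kappa\langle\xi\rangle^{1/2}/\langle t\rangle^{1+\sigma_0}$ plus $C\kappa$ times the main term; choosing $\kappa=\kappa(\delta)$ small enough makes this an absorbable perturbation once it is multiplied by $b_\ast$. This $\kappa$-tuning is what the parenthetical remark after \eqref{dor1} refers to, and it is the main obstacle in the proof.

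For part (ii), I first reduce to the case $\ast\in\{NR,R,k\}$ with a single point, then transfer between points. Using part (i), both sides of \eqref{vfc30} and \eqref{eq:CDW} are, up to constants, sums of the linear-in-frequency term $\langle\cdot\rangle^{1/2}/\langle t\rangle^{1+\sigma_0}$ and the resonance term $\partial_t w_\ast/w_\ast$. The linear term is handled by the elementary inequality $\langle\xi\rangle^{1/2}\le\langle\eta\rangle^{1/2}+\langle\xi-\eta\rangle^{1/2}$ from Lemma \ref{veryelem} (and its $k$-analog), producing at worst the factor $e^{4\sqrt\delta|\xi-\eta|^{1/2}}$. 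For the resonance term I split into cases depending on the relationship of $\xi$ and $\eta$ to the resonant intervals $I_{k,\cdot}$: if $|\xi-\eta|\le 10 L_1(t,\eta)$ then \eqref{dor6} forces both points into the same resonant regime and the ratio $(\partial_tw_\ast/w_\ast)(t,\xi)/(\partial_tw_\ast/w_\ast)(t,\eta)$ is $O_\delta(1)$ directly from \eqref{reb8}; if $|\xi-\eta|$ is larger, the pointwise upper bound $|\partial_t w_\ast/w_\ast|\lesssim\delta^2$ and the lower bound $|\partial_t w_\ast/w_\ast|\gtrsim \delta^2/\langle t\rangle$ combine to give the desired ratio once multiplied by the growing exponential $e^{4\sqrt\delta|\xi-\eta|^{1/2}}$ (in fact the borderline case $|\partial_t w_\ast/w_\ast|$ very small is precisely when $\langle\xi\rangle^{1/2}/\langle t\rangle^{1+\sigma_0}$ dominates on the right side).

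Finally, \eqref{vfc30.5} follows from the explicit formulas in (i): when $|k|\le\langle\rho\rangle+10$ the suppression factor $1/(1+e^{\sqrt\delta(|k|^{1/2}-\langle\rho\rangle^{1/2})}w_k)$ in \eqref{eq:A_kxi} lies in $[c_\delta,1]$ because the exponential is $O(1)$ and $w_k\le 1$, making $|\dot A_k/A_k|$ comparable to $|\dot A_{NR}/A_{NR}|$ via the common expression $\langle\rho\rangle^{1/2}/\langle t\rangle^{1+\sigma_0}+|\partial_tw_\ast/w_\ast|$ (the resonant versus non-resonant distinction is harmless since $\partial_t w_R/w_R\approx\partial_t w_{NR}/w_{NR}$ by \eqref{reb8}, and $\partial_t w_k/w_k$ is one of these two).
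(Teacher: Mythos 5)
Your plan follows the paper's approach — differentiate the defining formulas, reduce to a comparison of $\partial_t b_\ast/b_\ast$ against $\partial_t w_\ast/w_\ast$, then use the averaging representation \eqref{dor1} and $\kappa$-tuning to absorb the kernel's time derivative — but there is a genuine gap in the way you formulate the reduction, and it matters.

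You claim the core is to show $\partial_t b_\ast(t,\rho)/b_\ast(t,\rho)\approx_\delta \partial_t w_\ast(t,\rho)/w_\ast(t,\rho)$. This statement is false. The right side can vanish identically (for $t\ge 2|\rho|$ or $|\rho|\le\delta^{-10}$, where $w_\ast(t,\rho)=1$ in a $t$-neighbourhood, so $\partial_tw_\ast(t,\rho)=0$), while the left side need not: the averaging window around $\rho$ can capture points $\mu$ with $\partial_tw_\ast(t,\mu)>0$, and in addition the kernel $\varphi((\rho-\mu)/L_\kappa)/(d_0L_\kappa)$ has a nonvanishing time derivative because $L_\kappa$ depends on $t$. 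Indeed $\partial_tb_\ast$ need not even have a fixed sign — this is precisely what the paper's footnote at \eqref{dor50} warns about. The correct intermediate statement, which the paper proves as \eqref{dor50}, is the two-sided comparison
\begin{equation*}
 \left[\frac{\partial_tb_{\ast}(t,\rho)}{b_{\ast}(t,\rho)}+\frac{\alpha\langle\rho\rangle^{1/2}}{\langle t\rangle^{1+\sigma_0}}\right]\approx_\delta\left[\frac{\partial_tw_{\ast}(t,\rho)}{w_{\ast}(t,\rho)}+\frac{\langle\rho\rangle^{1/2}}{\langle t\rangle^{1+\sigma_0}}\right]\qquad\text{for }\alpha\in[\delta,1],
\end{equation*}
where the Fourier-frequency term $\alpha\langle\rho\rangle^{1/2}/\langle t\rangle^{1+\sigma_0}$ on the left is not decorative: it provides the positive bulk into which the kernel-derivative error $\lesssim\kappa^{1/4}\langle\rho\rangle^{1/2}/\langle t\rangle^{1+\sigma_0}$ is absorbed (fixing $\kappa=\kappa(\delta)$ small). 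If you strip off this term as you propose, the absorption step no longer makes sense, because you would be trying to absorb a term of size $\sim\kappa^{1/4}\langle\rho\rangle^{1/2}/\langle t\rangle^{1+\sigma_0}$ into something that can be $0$.

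The same problem reappears in your argument for \eqref{vfc30}. You invoke a lower bound $|\partial_tw_\ast/w_\ast|\gtrsim\delta^2/\langle t\rangle$, but there is no such bound: $\partial_tw_\ast(t,\xi)=0$ on a large portion of phase space. Your parenthetical is a step in the right direction, but as stated the argument does not close for all $(\xi,\eta)$ with $|\xi-\eta|>10L_1(t,\eta)$ (you would need $|\xi-\eta|\gtrsim(\log\langle t\rangle)^2/\delta$ for the exponential to dominate $\langle t\rangle$). The cleaner route — and what the paper does in proving \eqref{TLX3} — is never to separate the resonance term from the Fourier-frequency term: compare the \emph{sum} $\partial_tw_{NR}(t,\xi)/w_{NR}(t,\xi)+\langle\xi\rangle^{1/2}/\langle t\rangle^{1+\sigma_0}$ against the same sum at $\eta$, reduce to the regime $\xi>\delta^{-10}$, $t\le 2\xi$, $|\xi-\eta|\le\delta\xi$ where \eqref{reb8} makes the resonance ratio explicit, and split on whether $\xi$ and $\eta$ sit in the same resonant interval.

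Two smaller notes. Your two differentiation formulas have sign slips: the correct relations (compare \eqref{dor30}--\eqref{dor31}) read $-\partial_tA_\ast/A_\ast=-\lambda'(t)\langle\rho\rangle^{1/2}+\partial_tb_\ast/b_\ast$ and $-\partial_tA_k/A_k=-\lambda'(t)\langle k,\rho\rangle^{1/2}+(\partial_tb_k/b_k)/(1+Q/P)$, with a plus sign on $\partial_tb_\ast/b_\ast$, not a minus; this is purely notational but worth fixing since the statement itself involves the sign convention. Finally, your observation that $Q/P=e^{\sqrt\delta(|k|^{1/2}-\langle\rho\rangle^{1/2})}b_k$ and your treatment of \eqref{vfc30.5} are correct and coincide with the paper's.
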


\begin{proof} Using the definitions we calculate, for $\rho\in\R$ and $\ast\in\{NR,R\}$,
\begin{equation}\label{dor30}
-\frac{\partial_tA_\ast(t,\rho)}{A_\ast(t,\rho)}=\frac{\delta_0\sigma_0^2\langle\rho\rangle^{1/2}}{\langle t\rangle^{1+\sigma_0}}+\frac{\partial_tb_\ast(t,\rho)}{b_\ast(t,\rho)}
\end{equation}
and, for any $k\in\Z$,
\begin{equation}\label{dor31}
-\frac{\partial_tA_k(t,\rho)}{A_k(t,\rho)}=\frac{\delta_0\sigma_0^2\langle k,\rho\rangle^{1/2}}{\langle t\rangle^{1+\sigma_0}}+\frac{\partial_tb_k(t,\rho)}{b_k(t,\rho)}\frac{e^{\sqrt\delta\langle\rho\rangle^{1/2}}}{e^{\sqrt\delta\langle\rho\rangle^{1/2}}+e^{\sqrt\delta|k|^{1/2}}b_k(t,\rho)}.
\end{equation}

It follows from \eqref{reb8} that, for any $k\in\Z$, $t\geq 0$, and $\rho\in\R$ we have
\begin{equation}\label{TLX2.9}
\left|\frac{\partial_tw_k(t,\rho)}{w_k(t,\rho)}\right|\approx \left|\frac{\partial_tw_{NR}(t,\rho)}{w_{NR}(t,\rho)}\right|\approx \left|\frac{\partial_tw_R(t,\rho)}{w_R(t,\rho)}\right|.
\end{equation}
We divide the proof of the lemma in several steps.

{\bf{Step 1.}} We show first that for any $t\geq 0$ and $\xi,\eta\in\R$ we have
\begin{equation}\label{TLX3}
\frac{\partial_tw_{NR}(t,\xi)}{w_{NR}(t,\xi)}+\frac{\langle\xi\rangle^{1/2}}{\langle t\rangle^{1+\sigma_0}}\lesssim_\delta \left[\frac{\partial_tw_{NR}(t,\eta)}{w_{NR}(t,\eta)}+\frac{\langle\eta\rangle^{1/2}}{\langle t\rangle^{1+\sigma_0}}\right]e^{\sqrt{\delta}|\xi-\eta|^{1/2}}.
\end{equation}

Indeed, the term $\langle\xi\rangle^{1/2}\langle t\rangle^{-1-\sigma_0}$ is clearly controlled as claimed (see \eqref{eq1-1} below). The first term in the left-hand side vanishes if $t\geq 2|\xi|$ or $|\xi|\leq\delta^{-10}$. On the other hand, if $t\leq 2|\xi|$ and $|\xi|>\delta^{-10}$ then this term is $\lesssim 1$ (see \eqref{reb8}), and the inequality is clear if $|\xi-\eta|$ is large. After these reductions, we have to prove that
\begin{equation}\label{dor33.5}
\frac{\partial_tw_{NR}(t,\xi)}{w_{NR}(t,\xi)}\lesssim_\delta \left[\frac{\partial_tw_{NR}(t,\eta)}{w_{NR}(t,\eta)}+\frac{\langle\eta\rangle^{1/2}}{\langle t\rangle^{1+\sigma_0}}\right]e^{\sqrt{\delta}|\xi-\eta|^{1/2}},
\end{equation}
provided that
\begin{equation}\label{dor33}
\xi>\delta^{-10}, \qquad |\xi-\eta|\leq \delta \xi,\qquad t\leq 2\xi.
\end{equation}

In view of \eqref{reb8} the left-hand side of \eqref{dor33.5} is $\lesssim_\delta\langle t\rangle^{-1}$ if $t\geq 3\xi/2$, and the bound follows easily. Also, using again \eqref{reb8},
\begin{equation}\label{dor34}
\frac{\partial_tw_{NR}(t,\rho)}{w_{NR}(t,\rho)}\approx_\delta 1\qquad\text{ if }|\rho|>\delta^{-10}\text{ and }t\leq \delta^{-6}|\rho|^{1/2},
\end{equation}
and \eqref{dor33.5} follows if $t\leq \delta^{-5}|\xi|^{1/2}$. After these further reductions, it remains to prove \eqref{dor33.5} under the stronger assumptions
\begin{equation}\label{dor35}
\xi>\delta^{-10}, \qquad |\xi-\eta|\leq \delta^2 \sqrt\xi,\qquad t\in I_{a,\eta}\cap I_{b,\xi},\,a,b\leq \delta^2\sqrt\xi.
\end{equation}

It follows from \eqref{reb8} that 
\begin{equation}\label{vfc21.1}
\frac{\partial_tw_{NR}(t,\xi)}{w_{NR}(t,\xi)}\left[\frac{\partial_tw_{NR}(t,\eta)}{w_{NR}(t,\eta)}\right]^{-1}\lesssim \frac{1+\delta^2\left|t-\eta/a\right|}{1+\delta^2\left|t-\xi/b\right|}.
\end{equation}
As in the proof of Lemma \ref{comparisonweights}, if $a\neq b$ then simple arguments show that $\left|t-\eta/a\right|\approx\eta/a^2$ and $\left|t-\xi/b\right|\approx \eta/a^2$, and \eqref{dor33.5} follows from \eqref{vfc21.1}. On the other hand, if $a=b$ then
\begin{eqnarray}\label{dor35.1} 
\frac{1+\delta^2\left|t-\eta/a\right|}{1+\delta^2\left|t-\xi/b\right|}\lesssim 1+\delta^2\left|\frac{\eta-\xi}{a}\right|\lesssim e^{\sqrt\delta|\eta-\xi|^{1/2}}.
\end{eqnarray}
This completes the proof of \eqref{TLX3}.

{\bf{Step 2.}} We show now that if $t\geq 0$ and $\xi,\eta\in\R$ satisfy $|\xi-\eta|\leq 10L_1(t,\eta)$ then we have the stronger bounds
\begin{equation}\label{dor40}
\frac{\partial_tw_{NR}(t,\xi)}{w_{NR}(t,\xi)}+\frac{\langle\xi\rangle^{1/2}}{\langle t\rangle^{1+\sigma_0}}\lesssim_\delta \left[\frac{\partial_tw_{NR}(t,\eta)}{w_{NR}(t,\eta)}+\frac{\langle\eta\rangle^{1/2}}{\langle t\rangle^{1+\sigma_0}}\right].
\end{equation}

This is similar to the proof of \eqref{TLX3}. The term $\langle\xi\rangle^{1/2}\langle t\rangle^{-1-\sigma_0}$ is clearly controlled as claimed. The first term in the left-hand side vanishes if $t\geq 2|\xi|$ or $|\xi|\leq\delta^{-10}$. It remains to prove that
\begin{equation}\label{dor41}
\frac{\partial_tw_{NR}(t,\xi)}{w_{NR}(t,\xi)}\lesssim_\delta \left[\frac{\partial_tw_{NR}(t,\eta)}{w_{NR}(t,\eta)}+\frac{\langle\eta\rangle^{1/2}}{\langle t\rangle^{1+\sigma_0}}\right],
\end{equation}
provided that
\begin{equation}\label{dor42}
\xi>\delta^{-10}, \qquad |\xi-\eta|\leq 10 L_1(t,\eta),\qquad t\leq 2\xi.
\end{equation}

Using \eqref{reb8} (see also \eqref{dor34}), the bounds \eqref{dor41} follow if $t\geq 3\xi/2$ or if $t\leq\delta^{-4}\langle\xi\rangle^{1/2}$. In the remaining range $t\in[\delta^{-4}\langle\xi\rangle^{1/2},3\xi/2]$, we may assume that $t\in I_{a,\eta}\cap I_{b,\xi}$ for some $a,b\in[1,\delta^2\sqrt\xi]$. If $a=b$ then the bounds \eqref{dor35.1} still apply, and the desired conclusion follows once we notice that $|\xi-\eta|\lesssim L_1(t,\eta)\lesssim_\delta a$.  On the other hand, if $a\neq b$ then $\left|t-\eta/a\right|\approx\eta/a^2\approx \left|t-\xi/b\right|\approx \eta/a^2$, and the desired conclusion \eqref{dor41} follows as before.

{\bf{Step 3.}} We show now that if $\alpha\in[\delta,1]$, $t\geq 0$, $k\in\Z$, $\ast\in\{NR,R,k\}$, and $\rho\in\R$ then{\footnote{One needs to be slightly careful here, since $\partial_tb_\ast$ is not necessarily positive, and the expression in the left-hand side of \eqref{dor50} is only positive after adding the second term.}}
\begin{equation}\label{dor50}
 \left[\frac{\partial_tb_{\ast}(t,\rho)}{b_{\ast}(t,\rho)}+\frac{\alpha\langle\rho\rangle^{1/2}}{\langle t\rangle^{1+\sigma_0}}\right]\approx_\delta\left[\frac{\partial_tw_{\ast}(t,\rho)}{w_{\ast}(t,\rho)}+\frac{\langle\rho\rangle^{1/2}}{\langle t\rangle^{1+\sigma_0}}\right].
\end{equation}
Indeed, starting from the definitions \eqref{dor1} we write
\begin{equation*}
\frac{\partial_tb_{\ast}(t,\rho)}{b_{\ast}(t,\rho)}+\frac{\alpha\langle\rho\rangle^{1/2}}{\langle t\rangle^{1+\sigma_0}}=I+II,
\end{equation*}
where
\begin{equation*}
I:=\frac{1}{b_{\ast}(t,\rho)}\int_\R (\partial_tw_\ast)(t,\mu)\varphi\Big(\frac{\rho-\mu}{L_\kappa(t,\rho)}\Big)\frac{1}{d_0L_\kappa(t,\rho)}\,d\mu+\frac{\alpha\langle\rho\rangle^{1/2}}{2\langle t\rangle^{1+\sigma_0}}
\end{equation*}
and
\begin{equation*}
II:=\frac{1}{b_{\ast}(t,\rho)}\int_\R w_\ast(t,\mu)\frac{d}{dt}\Big\{\varphi\Big(\frac{\rho-\mu}{L_\kappa(t,\rho)}\Big)\frac{1}{d_0L_\kappa(t,\rho)}\Big\}\,d\mu+\frac{\alpha\langle\rho\rangle^{1/2}}{2\langle t\rangle^{1+\sigma_0}}
\end{equation*}

It follows from \eqref{dor20}, \eqref{dor40}, and \eqref{TLX2.9} that
\begin{equation*}
I\approx_\delta\Big[\frac{\partial_tw_{\ast}(t,\rho)}{w_{\ast}(t,\rho)}+\frac{\langle\rho\rangle^{1/2}}{\langle t\rangle^{1+\sigma_0}}\Big].
\end{equation*}
Therefore, for \eqref{dor50} it suffices to show that
\begin{equation}\label{dor51}
0\leq \frac{1}{b_{\ast}(t,\rho)}\int_\R w_\ast(t,\mu)\frac{d}{dt}\Big\{\varphi\Big(\frac{\rho-\mu}{L_\kappa(t,\rho)}\Big)\frac{1}{d_0L_\kappa(t,\rho)}\Big\}\,d\mu+\frac{\alpha\langle\rho\rangle^{1/2}}{2\langle t\rangle^{1+\sigma_0}}\leq\frac{\alpha\langle\rho\rangle^{1/2}}{\langle t\rangle^{1+\sigma_0}}.
\end{equation}

It follows from \eqref{dor1} that, for $\kappa>0$ sufficiently small,
\begin{equation*}
-\frac{\partial_tL_\kappa(t,\rho)}{L_\kappa(t,\rho)}=\frac{\kappa^2\langle\rho\rangle}{(\langle\rho\rangle^{1/2}+\kappa t)(\langle\rho\rangle^{1/2}+\kappa t+\kappa\langle\rho\rangle)}\leq\frac{\kappa\langle\rho\rangle}{\langle\rho\rangle^{3/2}+\kappa t^2}\leq\frac{\kappa^{1/4}\langle\rho\rangle^{1/2}}{\langle t\rangle^{1+\sigma_0}}.
\end{equation*}
Therefore, using also \eqref{dor20} and \eqref{dor6},
\begin{equation*}
\Big|\frac{1}{b_{\ast}(t,\rho)}\int_\R w_\ast(t,\mu)\frac{d}{dt}\Big\{\varphi\Big(\frac{\rho-\mu}{L_\kappa(t,\rho)}\Big)\frac{1}{d_0L_\kappa(t,\rho)}\Big\}\,d\mu\Big|\lesssim_\delta \Big|\frac{\partial_tL_\kappa(t,\rho)}{L_\kappa(t,\rho)}\Big|\lesssim_\delta\frac{\kappa^{1/4}\langle\rho\rangle^{1/2}}{\langle t\rangle^{1+\sigma_0}}.
\end{equation*}
Since $\alpha\in[\delta,1]$, the desired bounds \eqref{dor51} follow if $\kappa=\kappa(\delta)$ is fixed sufficiently small.

{\bf{Step 4.}} We can now prove the bounds in the lemma. Indeed, the bounds \eqref{TLX3.5} follow directly from the identities \eqref{dor30} and the bounds \eqref{dor50} with $\alpha=\delta_0\sigma_0^2$. The bounds \eqref{vfc30} follow from the bounds \eqref{TLX3.5}, \eqref{TLX2.9}, and \eqref{TLX3}.

We prove now the bounds \eqref{eq:A_kxi}. Let $X_k(t,\rho):=e^{\sqrt\delta(|k|^{1/2}-\langle\rho\rangle^{1/2})}b_k(t,\rho)$ and write
\begin{equation*}
\begin{split}
-\frac{\partial_tA_k(t,\rho)}{A_k(t,\rho)}&=\frac{\delta_0\sigma_0^2\langle k,\rho\rangle^{1/2}}{\langle t\rangle^{1+\sigma_0}}+\frac{\partial_tb_k(t,\rho)}{b_k(t,\rho)}\frac{1}{1+X_k(t,\rho)}\\
&=\frac{1}{1+X_k(t,\rho)}\left[\frac{\partial_tb_k(t,\rho)}{b_k(t,\rho)}+\frac{\delta_0\sigma_0^2\langle \rho\rangle^{1/2}}{2\langle t\rangle^{1+\sigma_0}}\right]+\frac{\delta_0\sigma_0^2}{\langle t\rangle^{1+\sigma_0}}\left[\langle k,\rho\rangle^{1/2}-\frac{\langle \rho\rangle^{1/2}}{2(1+X_k(t,\rho))}\right],
\end{split}
\end{equation*}
using \eqref{dor31}. Both terms in the expression above are positive, and the desired bounds \eqref{eq:A_kxi} follow using \eqref{dor50} and the fact that $X_k(t,\rho)\approx_\delta e^{\sqrt\delta(|k|^{1/2}-\langle\rho\rangle^{1/2})}w_k(t,\rho)$ (see \eqref{dor20}).

The bounds \eqref{vfc30.5} follow from \eqref{TLX3.5}--\eqref{eq:A_kxi} and the bounds \eqref{TLX2.9}. The condition $|k|\leq\langle\rho\rangle$ guarantees that $e^{\sqrt\delta(|k|^{1/2}-\langle\rho\rangle^{1/2})}w_k(t,\rho)\lesssim 1$. 

Finally, we prove \eqref{eq:CDW}. Notice that
\begin{equation}\label{eq1-1}
\frac{\langle k,\xi\rangle^{1/2}}{\langle t\rangle^{1+\sigma_0}}\lesssim_\delta \frac{\langle \ell,\eta\rangle^{1/2}}{\langle t\rangle^{1+\sigma_0}}e^{\sqrt{\delta}|k-\ell,\xi-\eta|^{1/2}}
\end{equation}
for any $\xi,\eta\in\mathbb{R}$, $k,\ell\in\Z$, and $t\in[0,\infty)$. This suffices to control the first term in the left-hand side \eqref{eq:A_kxi}. To control the second term we prove first that
\begin{equation}\label{TLX1}
\frac{e^{\sqrt\delta\langle\xi\rangle^{1/2}}}{e^{\sqrt\delta\langle\xi\rangle^{1/2}}+e^{\sqrt\delta|k|^{1/2}}w_k(t,\xi)}\lesssim_\delta \frac{e^{\sqrt\delta\langle\eta\rangle^{1/2}}}{e^{\sqrt\delta\langle\eta\rangle^{1/2}}+e^{\sqrt\delta|\ell|^{1/2}}w_\ell(t,\eta)}e^{3\sqrt{\delta}|k-\ell,\xi-\eta|^{1/2}}.
\end{equation}
Indeed, in proving \eqref{TLX1} we may assume that $e^{\sqrt\delta\langle\eta\rangle^{1/2}}\leq e^{\sqrt\delta|\ell|^{1/2}}w_\ell(t,\eta)$. After simplifications it suffices to show that
\begin{equation}\label{TLX2}
\frac{e^{\sqrt\delta|\ell|^{1/2}}w_\ell(t,\eta)}{e^{\sqrt\delta\langle\xi\rangle^{1/2}}+e^{\sqrt\delta|k|^{1/2}}w_k(t,\xi)}\lesssim _\delta e^{2\sqrt{\delta}|k-\ell,\xi-\eta|^{1/2}}.
\end{equation}
Notice that, as a consequence of the definitions,
\begin{equation*}
e^{\sqrt\delta\langle\xi\rangle^{1/2}}+e^{\sqrt\delta|k|^{1/2}}w_k(t,\xi)\gtrsim e^{\sqrt\delta\langle\xi\rangle^{1/2}}+e^{\sqrt\delta|k|^{1/2}}w_{NR}(t,\xi).
\end{equation*}
The bounds \eqref{TLX2} follow using also Lemma \ref{comparisonweights}. The estimates \eqref{eq:CDW} now follow by combining \eqref{TLX3}, \eqref{TLX1}, and \eqref{eq:A_kxi}.
\end{proof}

\section{Weighted bilinear estimates}\label{BilinWeights} We often use the following general lemma to estimate products and paraproducts of functions.

\begin{lemma}\label{Multi0}
(i) Assume that $m,m_1,m_2:\R\to\C$ are symbols satisfying
\begin{equation}\label{TLX5}
|m(\xi)|\leq |m_1(\xi-\eta)|\,|m_2(\eta)|\{\langle\xi-\eta\rangle^{-2}+\langle\eta\rangle^{-2}\}
\end{equation}
for any $\xi,\eta\in\R$. If $M, M_1, M_2$ are the operators defined by these symbols then
\begin{equation}\label{TLX6}
\|M(gh)\|_{L^2(\R)}\lesssim \|M_1g\|_{L^2(\R)}\|M_2h\|_{L^2(\R)}.
\end{equation}

(ii) Similarly, if $m,m_2:\Z\times\R\to\C$ and $m_1:\R\to\C$ are symbols satisfying
\begin{equation}\label{TLX5.1}
|m(k,\xi)|\leq |m_1(\xi-\eta)|\,|m_2(k,\eta)|\{\langle\xi-\eta\rangle^{-2}+\langle k,\eta\rangle^{-2}\}
\end{equation}
for any $\xi,\eta\in\R$, $k\in\Z$, and $M, M_1, M_2$ are the operators defined by these symbols, then
\begin{equation}\label{TLX6.1}
\|M(gh)\|_{L^2(\mathbb{T}\times\R)}\lesssim \|M_1g\|_{L^2(\R)}\|M_2h\|_{L^2(\mathbb{T}\times\R)}.
\end{equation}

(iii) Finally, assume that $m,m_1,m_2:\Z\times\R\to\C$ are symbols satisfying
\begin{equation}\label{TLX5.2}
|m(k,\xi)|\leq |m_1(k-\ell,\xi-\eta)|\,|m_2(\ell,\eta)|\{\langle k-\ell,\xi-\eta\rangle^{-2}+\langle \ell,\eta\rangle^{-2}\}
\end{equation}
for any $\xi,\eta\in\R$, $k,\ell\in\Z$. If $M, M_1, M_2$ are the operators defined by these symbols, then
\begin{equation}\label{TLX6.2}
\|M(gh)\|_{L^2(\mathbb{T}\times\R)}\lesssim \|M_1g\|_{L^2(\mathbb{T}\times\R)}\|M_2h\|_{L^2(\mathbb{T}\times\R)}.
\end{equation}
\end{lemma}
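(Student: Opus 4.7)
\medskip

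\noindent\textbf{Proof plan for Lemma \ref{Multi0}.} The three parts are variations on the same paraproduct-style bilinear bound, differing only in whether the frequencies in each factor are continuous ($\xi\in\R$) or mixed continuous/discrete ($(k,\xi)\in\Z\times\R$). The plan is to prove (iii) in detail, since (i) and (ii) then follow by the same argument with sums replaced by integrals (or with one factor being $x$-independent).

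The starting point is Plancherel and the convolution identity
\begin{equation*}
\widetilde{M(gh)}(k,\xi)=C\,m(k,\xi)\sum_{\ell\in\Z}\int_{\R}\widetilde{g}(k-\ell,\xi-\eta)\,\widetilde{h}(\ell,\eta)\,d\eta.
\end{equation*}
Combining this with the pointwise hypothesis \eqref{TLX5.2} yields
\begin{equation*}
\bigl|\widetilde{M(gh)}(k,\xi)\bigr|\leq C\sum_{\ell\in\Z}\int_{\R}F(k-\ell,\xi-\eta)\,G(\ell,\eta)\,\bigl\{\langle k-\ell,\xi-\eta\rangle^{-2}+\langle\ell,\eta\rangle^{-2}\bigr\}\,d\eta,
\end{equation*}
where $F(k,\xi):=|m_1(k,\xi)\widetilde g(k,\xi)|$ and $G(k,\xi):=|m_2(k,\xi)\widetilde h(k,\xi)|$ satisfy $\|F\|_{L^2(\Z\times\R)}=\|M_1g\|_{L^2}$ and $\|G\|_{L^2(\Z\times\R)}=\|M_2h\|_{L^2}$ by Plancherel.

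The key observation is that the weight $\langle\cdot\rangle^{-2}$ is in $L^1(\Z\times\R)$, so each of the two terms becomes a convolution with one factor controlled in $L^1$. More precisely, writing $w(k,\xi):=\langle k,\xi\rangle^{-2}$, the first piece equals $(Fw)\ast G$ and the second equals $F\ast(Gw)$ on $\Z\times\R$. Young's inequality $L^1\ast L^2\subset L^2$ then gives
\begin{equation*}
\|(Fw)\ast G\|_{L^2}\leq \|Fw\|_{L^1}\|G\|_{L^2},\qquad \|F\ast(Gw)\|_{L^2}\leq \|F\|_{L^2}\|Gw\|_{L^1}.
\end{equation*}
A final application of Cauchy--Schwarz on the $L^1$ factors, using $\|w\|_{L^2(\Z\times\R)}\lesssim 1$, gives $\|Fw\|_{L^1}\lesssim\|F\|_{L^2}$ and $\|Gw\|_{L^1}\lesssim\|G\|_{L^2}$. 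Summing the two contributions yields \eqref{TLX6.2}.

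There is no genuine obstacle: the hypothesis has been designed precisely to reduce the multiplier bound to a product of $L^2$ norms via Young/Cauchy--Schwarz, and the two summands in $\{\langle k-\ell,\xi-\eta\rangle^{-2}+\langle\ell,\eta\rangle^{-2}\}$ correspond to the ``high-frequency in $g$'' and ``high-frequency in $h$'' paraproduct pieces, each absorbed into an $L^1$-integrable factor. Part (i) is identical with $\Z$ replaced by the trivial group (so only the $\R$-integral remains), and part (ii) is the hybrid case in which $g=g(y)$ has its $k$-variable suppressed to $k=0$; in both cases the same splitting and Young/Cauchy--Schwarz chain applies verbatim.
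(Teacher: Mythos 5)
Your proposal is correct and takes essentially the same route as the paper: both split the kernel $\{\langle\cdot\rangle^{-2}+\langle\cdot\rangle^{-2}\}$ into a ``high'' and ``low'' piece and absorb the $L^1$-integrable weight into one of the two factors before applying Cauchy--Schwarz. The only cosmetic difference is that the paper pairs $\widehat{M(gh)}$ against a test function $f\in L^2$ and integrates, while you invoke Young's convolution inequality $L^1\ast L^2\subset L^2$ directly; the underlying computation is identical.
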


\begin{proof} The proofs of the three claims are similar. For example, to prove (ii) we estimate
\begin{equation*}
\begin{split}
|\widehat{M(gh)}(k,\xi)|&\lesssim |m(k,\xi)|\int_{\R}|\widehat{g}(\xi-\eta)||\widehat{h}(k,\eta)|\,d\eta\\
&\lesssim \int_{\R}|m_1(\xi-\eta)|\,|m_2(k,\eta)|\{\langle\xi-\eta\rangle^{-2}+\langle k,\eta\rangle^{-2}\}|\widehat{g}(\xi-\eta)||\widehat{h}(k,\eta)|\,d\eta\\
&\lesssim \int_{\R}\frac{|\widehat{M_1g}(\xi-\eta)|}{\langle\xi-\eta\rangle^{2}}|\widehat{M_2h}(k,\eta)|\,d\eta+\int_\R|\widehat{M_1g}(\xi-\eta)|\frac{|\widehat{M_2h}(k,\eta)|}{\langle k,\eta\rangle^2}\,d\eta.
\end{split}
\end{equation*}
Therefore, for any function $f$ with $\|f\|_{L^2(\Z\times\R)}\lesssim 1$ we estimate
\begin{equation*}
\begin{split}
\Big|\sum_{k\in\Z}\int_\R f(k,\xi)&\widehat{M(gh)}(k,\xi)\,d\xi\Big|\lesssim \sum_{k\in\Z}\int_{\R\times\R} |f(k,\xi)|\frac{|\widehat{M_1g}(\eta)|}{\langle\eta\rangle^{2}}|\widehat{M_2h}(k,\xi-\eta)|\,d\eta d\xi\\
&+ \sum_{k\in\Z}\int_{\R\times\R} |f(k,\xi)||\widehat{M_1g}(\xi-\eta)|\frac{|\widehat{M_2h}(k,\eta)|}{\langle k,\eta\rangle^2}\,d\eta d\xi\\
&\lesssim \|\widehat{M_2h}\|_{L^2(\Z\times\R)} \int_{\R}\frac{|\widehat{M_1g}(\eta)|}{\langle\eta\rangle^{2}}\,d\eta+\|\widehat{M_1g}\|_{L^2(\R)} \sum_{k\in\Z}\int_{\R} \frac{|\widehat{M_2h}(k,\eta)|}{\langle k,\eta\rangle^2}\,d\eta\\
&\lesssim \|\widehat{M_1g}\|_{L^2(\R)} \|\widehat{M_2h}\|_{L^2(\Z\times\R)}.
\end{split}
\end{equation*}
The bounds \eqref{TLX6.1} follow. 
\end{proof}

To apply Lemma \ref{Multi0} we need good bounds on products of weights. In the next lemmas we collect several such bounds which are used to prove many of the bilinear estimates in the paper.

\begin{lemma}\label{lm:Multi}
For any $t\ge 1$, $\alpha\in[0,4]$, $\xi,\eta\in\R$, and $\ast\in\{NR,R\}$ we have
\begin{equation}\label{TLX4}
\langle\xi\rangle^{-\alpha}A_\ast(t,\xi)\lesssim_\delta \langle\xi-\eta\rangle^{-\alpha}A_\ast(t,\xi-\eta)\langle\eta\rangle^{-\alpha}A_\ast(t,\eta)e^{-(\lambda(t)/20)\min(\langle\xi-\eta\rangle,\langle \eta\rangle)^{1/2}}
\end{equation}
and
\begin{equation}\label{DtVMulti}
\big|(\dot{A}_\ast/A_\ast)(t,\xi)\big|\lesssim_\delta \left\{\big|(\dot{A}_\ast/A_\ast)(t,\xi-\eta)\big|+\big|(\dot{A}_\ast/A_\ast)(t,\eta)\big|\right\}e^{4\sqrt\delta\min(\langle\xi-\eta\rangle,\langle \eta\rangle)^{1/2}}.
\end{equation}
\end{lemma}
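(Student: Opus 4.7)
Both inequalities are symmetric under the swap $\eta\leftrightarrow\xi-\eta$, so I may assume without loss of generality that $\langle\eta\rangle\leq\langle\xi-\eta\rangle$, in which case $\min(\langle\xi-\eta\rangle,\langle\eta\rangle)=\langle\eta\rangle$. I plan to handle the two estimates by reducing each one to a single pointwise bound that is already established in Section~\ref{weights}.

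\textbf{For \eqref{TLX4}:} The key observation is that by Lemma~\ref{veryelem}\,\eqref{b>a}, applied with $b=\xi-\eta$, $a=\xi$, and $\beta=1$ (using $\langle\xi-\eta\rangle\ge\langle\eta\rangle$), we obtain
\[
\langle\xi\rangle^{1/2}\leq\langle\xi-\eta\rangle^{1/2}+\tfrac{1}{2}\langle\eta\rangle^{1/2}.
\]
Multiplying by $\lambda(t)+\sqrt\delta$ and exponentiating controls the Gevrey weight $e^{(\lambda(t)+\sqrt\delta)\langle\xi\rangle^{1/2}}$ in terms of $e^{(\lambda(t)+\sqrt\delta)\langle\xi-\eta\rangle^{1/2}}e^{(\lambda(t)+\sqrt\delta)\langle\eta\rangle^{1/2}/2}$. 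For the denominators, I invoke \eqref{dor22} in the form $b_\ast(t,\xi-\eta)/b_\ast(t,\xi)\lesssim_\delta e^{\sqrt\delta\langle\eta\rangle^{1/2}}$, and use $b_\ast(t,\eta)\leq 1$. Combining,
\[
A_\ast(t,\xi)\lesssim_\delta A_\ast(t,\xi-\eta)\cdot e^{(\lambda(t)+\sqrt\delta)\langle\eta\rangle^{1/2}/2}\cdot e^{\sqrt\delta\langle\eta\rangle^{1/2}}.
\]
Since $b_\ast\leq 1$ gives the crucial lower bound $A_\ast(t,\eta)\geq e^{(\lambda(t)+\sqrt\delta)\langle\eta\rangle^{1/2}}$, the factor $e^{(\lambda(t)+\sqrt\delta)\langle\eta\rangle^{1/2}/2}\cdot e^{\sqrt\delta\langle\eta\rangle^{1/2}}$ is bounded by $A_\ast(t,\eta)\cdot e^{-(\lambda(t)/2)\langle\eta\rangle^{1/2}+(\sqrt\delta/2)\langle\eta\rangle^{1/2}}$, and since $\sqrt\delta\ll\lambda(t)$ (as $\lambda(t)\geq 5\delta_0/4$ and $\delta\ll\delta_0$) this yields a net factor of at most $e^{-c\lambda(t)\langle\eta\rangle^{1/2}}$ for some $c>1/20$. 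Finally, the polynomial prefactors are handled by noting $\langle\xi-\eta\rangle\leq\langle\xi\rangle+\langle\eta\rangle\leq\langle\xi\rangle\langle\eta\rangle$, so $\langle\xi-\eta\rangle^\alpha\langle\eta\rangle^\alpha/\langle\xi\rangle^\alpha\leq\langle\eta\rangle^{2\alpha}\leq\langle\eta\rangle^{8}$, which is absorbed into any strictly positive remainder of the form $e^{-c'\lambda(t)\langle\eta\rangle^{1/2}}$ with $c'=c-1/20>0$ at the cost of a $\delta$-dependent constant.

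\textbf{For \eqref{DtVMulti}:} This is immediate from Lemma~\ref{lm:CDW}\,\eqref{vfc30}. Writing $\mu:=\min(\langle\xi-\eta\rangle,\langle\eta\rangle)$, if $\mu=\langle\xi-\eta\rangle$ then \eqref{vfc30} with the roles $\xi,\eta\to\xi,\eta$ gives
\[
\big|(\dot{A}_\ast/A_\ast)(t,\xi)\big|\lesssim_\delta \big|(\dot{A}_\ast/A_\ast)(t,\eta)\big|e^{4\sqrt\delta\langle\xi-\eta\rangle^{1/2}},
\]
while if $\mu=\langle\eta\rangle$ the same lemma applied with $\eta\to\xi-\eta$ yields the analogous bound with $\xi-\eta$ in place of $\eta$. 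In either case, the right-hand side is dominated by the sum $|(\dot{A}_\ast/A_\ast)(t,\xi-\eta)|+|(\dot{A}_\ast/A_\ast)(t,\eta)|$ multiplied by $e^{4\sqrt\delta\mu^{1/2}}$, giving \eqref{DtVMulti}.

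\textbf{Main obstacle.} The proof is essentially a bookkeeping exercise, with all the deep work already done in Lemmas \ref{comparisonweights}--\ref{lm:CDW}. The only point that requires a bit of care is verifying that the combination of the $\sqrt\delta$-loss from \eqref{dor22} with the $(1/2)\langle\eta\rangle^{1/2}$-loss from the elementary inequality \eqref{b>a} still leaves an exponential decay with coefficient strictly exceeding $\lambda(t)/20$, so that the polynomial factor $\langle\eta\rangle^{2\alpha}$ can be absorbed; this is where the precise gap between $\delta$ and $\delta_0$ is used.
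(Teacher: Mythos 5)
Your proof is correct and follows essentially the same route as the paper. The paper's (one-line) proof simply cites the lower bound $A_\ast(t,\rho)\gtrsim e^{\lambda(t)\langle\rho\rangle^{1/2}}$ together with \eqref{vfc25} for \eqref{TLX4} and \eqref{vfc30} for \eqref{DtVMulti}; your argument for \eqref{TLX4} is equivalent but inlines the proof of \eqref{vfc25} (i.e., it uses the elementary inequality \eqref{b>a} and the weight comparison \eqref{dor22} directly), while for \eqref{DtVMulti} you apply \eqref{vfc30} exactly as the paper intends.
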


\begin{proof} Recall that $A_{R}(\rho,t)\geq A_{NR}(\rho,t)\gtrsim e^{\lambda(t)\langle\rho\rangle^{1/2}}$ for any $\rho\in\mathbb{R}$. The bounds \eqref{TLX4}--\eqref{DtVMulti} follow from \eqref{vfc25} and \eqref{vfc30}.
\end{proof}

\begin{lemma}\label{TLX40}
For any $t\in[1,\infty)$, $\xi,\eta\in\R$, and $k\in\mathbb{Z}$ we have
\begin{equation}\label{TLX7}
A_k(t,\xi)\lesssim_\delta A_R(t,\xi-\eta)A_k(t,\eta)e^{-(\lambda(t)/20)\min(\langle\xi-\eta\rangle,\langle k,\eta\rangle)^{1/2}}
\end{equation}
and
\begin{equation}\label{vfc30.7}
\big|(\dot{A}_k/A_k)(t,\xi)\big|\lesssim_\delta \left\{\big|(\dot{A}_R/A_R)(t,\xi-\eta)\big|+\big|(\dot{A}_k/A_k)(t,\eta)\big|\right\}e^{12\sqrt\delta\min(\langle\xi-\eta\rangle,\langle k,\eta\rangle)^{1/2}}.
\end{equation}
\end{lemma}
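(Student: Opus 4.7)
The plan is to decompose $A_k(t,\xi)$ into its two structural pieces and estimate each piece against the product $A_R(t,\xi-\eta)A_k(t,\eta)$ using three inputs: the concavity bound \eqref{b>a} applied to $(k,\xi)=(k,\eta)+(0,\xi-\eta)$, the near-translation invariance of the $b$-weights \eqref{dor22}, and the ordering $b_R\le b_k\le b_{NR}$ from \eqref{reb13}.

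For \eqref{TLX7}, write $A_k(t,\xi)=I+II$ where
\[
I:=e^{\lambda(t)\langle k,\xi\rangle^{1/2}}\frac{e^{\sqrt\delta\langle\xi\rangle^{1/2}}}{b_k(t,\xi)},\qquad II:=e^{\lambda(t)\langle k,\xi\rangle^{1/2}}e^{\sqrt\delta|k|^{1/2}},
\]
and estimate $I$ and $II$ separately. For $II$, the natural lower bounds $A_R(t,\xi-\eta)\ge e^{(\lambda(t)+\sqrt\delta)\langle\xi-\eta\rangle^{1/2}}$ (using $b_R\le 1$) and $A_k(t,\eta)\ge e^{\lambda(t)\langle k,\eta\rangle^{1/2}}e^{\sqrt\delta|k|^{1/2}}$ give a quotient that is only $e^{\lambda(t)[\langle k,\xi\rangle^{1/2}-\langle\xi-\eta\rangle^{1/2}-\langle k,\eta\rangle^{1/2}]-\sqrt\delta\langle\xi-\eta\rangle^{1/2}}$. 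Apply \eqref{b>a} with $\beta=1$, picking whichever of $(0,\xi-\eta)$ and $(k,\eta)$ has the larger Japanese bracket, to save $(\lambda(t)/2)\min(\langle\xi-\eta\rangle^{1/2},\langle k,\eta\rangle^{1/2})$; together with the bonus $-\sqrt\delta\langle\xi-\eta\rangle^{1/2}$ this is comfortably better than the requested factor.

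The term $I$ is the delicate one, and the quotient against $A_R(t,\xi-\eta)\cdot e^{\lambda(t)\langle k,\eta\rangle^{1/2}}e^{\sqrt\delta\langle\eta\rangle^{1/2}}/b_k(t,\eta)$ factors as
\[
e^{\lambda(t)[\langle k,\xi\rangle^{1/2}-\langle\xi-\eta\rangle^{1/2}-\langle k,\eta\rangle^{1/2}]}\cdot e^{\sqrt\delta[\langle\xi\rangle^{1/2}-\langle\xi-\eta\rangle^{1/2}-\langle\eta\rangle^{1/2}]}\cdot \frac{b_R(t,\xi-\eta)\,b_k(t,\eta)}{b_k(t,\xi)}.
\]
Use \eqref{b>a} in the first exponent to save $(\lambda(t)/2)\min(\langle\xi-\eta\rangle^{1/2},\langle k,\eta\rangle^{1/2})$, and in the scalar version in the second exponent to save $(\sqrt\delta/2)\min(\langle\xi-\eta\rangle^{1/2},\langle\eta\rangle^{1/2})$. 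The key point in controlling the $b$-ratio is to optimize between the two obvious bounds: on one hand \eqref{dor22} applied to $b_k$ gives $b_k(t,\eta)/b_k(t,\xi)\lesssim_\delta e^{\sqrt\delta\langle\xi-\eta\rangle^{1/2}}$ (then absorb $b_R\le 1$); on the other hand, using $b_R\le b_k$ to write $b_R(t,\xi-\eta)/b_k(t,\xi)\le b_R(t,\xi-\eta)/b_R(t,\xi)\lesssim_\delta e^{\sqrt\delta|\eta|^{1/2}}$ gives $\lesssim_\delta e^{\sqrt\delta\langle\eta\rangle^{1/2}}$. Taking the minimum yields the sharp bound $b$-ratio $\lesssim_\delta e^{\sqrt\delta\min(\langle\xi-\eta\rangle^{1/2},\langle\eta\rangle^{1/2})}$, and this exactly matches (up to a factor of two in the exponent) the $\sqrt\delta$-saving from \eqref{b>a}. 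Consequently the residual $\sqrt\delta$ contribution is at most $(\sqrt\delta/2)\langle k,\eta\rangle^{1/2}$, which is dominated by the $\lambda$-saving provided $\sqrt\delta\le 9\lambda(t)/10$, true because $\delta\ll\delta_0^2\le\lambda(t)^2$.

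The bound \eqref{vfc30.7} follows along the same lines from the identity \eqref{eq:A_kxi}: the first term $\langle k,\xi\rangle^{1/2}/\langle t\rangle^{1+\sigma_0}$ in $|\dot A_k/A_k|$ splits via the $\sqrt{\,}$-triangle inequality into $\langle\xi-\eta\rangle^{1/2}/\langle t\rangle^{1+\sigma_0}+\langle k,\eta\rangle^{1/2}/\langle t\rangle^{1+\sigma_0}$, each being absorbed by the respective logarithms of $A_R(t,\xi-\eta)$ and $A_k(t,\eta)$ by \eqref{TLX3.5}; the weight-derivative term is transported from $\xi$ to $\eta$ using \eqref{TLX3} from the proof of Lemma \ref{lm:CDW}, which is the precise analogue of \eqref{dor22} for $\partial_tw_\ast/w_\ast$ and produces exactly the $e^{c\sqrt\delta\min(\langle\xi-\eta\rangle,\langle k,\eta\rangle)^{1/2}}$ overhead allowed by the statement; finally the cut-off $1/(1+e^{\sqrt\delta(|k|^{1/2}-\langle\rho\rangle^{1/2})}w_k)$ is controlled by the analogous cut-off at $\eta$ using the proof of \eqref{TLX1} in Lemma \ref{lm:CDW}.

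The principal technical obstacle is tracking two different $\min$'s simultaneously: the $\lambda$-exponent carries $(k,\eta)$ (through $\langle k,\eta\rangle$), whereas the $\sqrt\delta$-exponent and the $b$-ratio carry only the scalar $\eta$. The proof must ensure that when these mins differ (precisely when $|k|$ is large but $|\eta|$ is not), the inequality $\langle\eta\rangle\le\langle k,\eta\rangle$ still allows the $\lambda$-saving to dominate—this is the reason the harmless factor $9/10$ appears in the smallness condition on $\sqrt\delta$.
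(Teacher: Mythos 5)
Your treatment of \eqref{TLX7} is correct and essentially matches the paper's two-line argument (split $A_k(t,\xi)$ into the two pieces of the definition \eqref{dor4}, apply \eqref{b>a} to each bracket, control the $b$-ratio via \eqref{dor22}/\eqref{dor20}); your explicit optimization between the two transport directions for the $b$-ratio is a detail the paper leaves implicit, and your discussion of the two different $\min$'s and the $9/10$ slack is a fair gloss on why the $1/20$ constant suffices when $\delta\ll\delta_0^2$.

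However, your proof of \eqref{vfc30.7} has a genuine gap. You propose to transport the $w$-derivative term (and the cut-off) always from $\xi$ to $\eta$ via \eqref{TLX3} (and \eqref{TLX1}), and you assert this produces the overhead $e^{c\sqrt\delta\min(\langle\xi-\eta\rangle,\langle k,\eta\rangle)^{1/2}}$. That is not what those estimates give: \eqref{TLX3} and \eqref{TLX1} each produce a factor $e^{c\sqrt\delta|\xi-\eta|^{1/2}}$, depending only on the shift $\xi-\eta$, not on $\min(\langle\xi-\eta\rangle,\langle k,\eta\rangle)$. When $\langle\xi-\eta\rangle\gg\langle k,\eta\rangle$ (e.g.\ $k=1$, $\eta=0$, $\xi$ very large), this factor is enormously larger than the allowed $e^{12\sqrt\delta\langle k,\eta\rangle^{1/2}}$, and your chain breaks. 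The fix, which is what the paper does, is a case split. If $|\xi-\eta|\le 4|(k,\eta)|$, your transport $\xi\to\eta$ works (indeed the paper simply invokes \eqref{eq:CDW} with $\ell=k$ here, since $|\xi-\eta|^{1/2}\lesssim\min(\cdots)^{1/2}$). If instead $|(k,\eta)|\le|\xi-\eta|/4$, one first observes $|k|\le\langle\xi\rangle$ (since $|\xi|\ge\frac34|\xi-\eta|\ge 3|k|$) so that \eqref{vfc30.5} gives $|(\dot A_k/A_k)(t,\xi)|\approx_\delta|(\dot A_R/A_R)(t,\xi)|$, and then transports $A_R$ from $\xi$ to $\xi-\eta$ (shift $\eta$) using \eqref{vfc30}, producing overhead $e^{4\sqrt\delta|\eta|^{1/2}}\le e^{12\sqrt\delta\langle k,\eta\rangle^{1/2}}=e^{12\sqrt\delta\min(\cdots)^{1/2}}$, landing on the $|(\dot A_R/A_R)(t,\xi-\eta)|$ term of the right-hand side. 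In other words, the choice of which term of the right-hand side to land on — $A_R(t,\xi-\eta)$ versus $A_k(t,\eta)$ — must itself be case-dependent, exactly as you did for the $b$-ratio in \eqref{TLX7}; your \eqref{vfc30.7} argument omits the $A_R(t,\xi-\eta)$ branch entirely.
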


\begin{proof} To prove \eqref{TLX7} we examine the definition \eqref{dor4} and estimate, using \eqref{b>a} and \eqref{dor20},
\begin{equation*}
e^{\lambda(t)\langle k,\xi\rangle^{1/2}+\sqrt\delta|k|^{1/2}}\lesssim e^{\lambda(t)\langle\xi-\eta\rangle^{1/2}} e^{\lambda(t)\langle k,\eta\rangle^{1/2}+\sqrt\delta|k|^{1/2}}e^{-(\lambda(t)/20)\min(\langle\xi-\eta\rangle,\langle k,\eta\rangle)^{1/2}},
\end{equation*}
and
\begin{equation*}
\frac{e^{\lambda(t)\langle k,\xi\rangle^{1/2}}e^{\sqrt\delta\langle\xi\rangle^{1/2}}}{b_k(t,\xi)}\lesssim_\delta \frac{e^{\lambda(t)\langle\xi-\eta\rangle^{1/2}}e^{\sqrt\delta\langle\xi-\eta\rangle^{1/2}}}{b_R(t,\xi-\eta)} \frac{e^{\lambda(t)\langle k,\eta\rangle^{1/2}}e^{\sqrt\delta\langle\eta\rangle^{1/2}}}{b_k(t,\eta)}e^{-(\lambda(t)/20)\min(\langle\xi-\eta\rangle,\langle k,\eta\rangle)^{1/2}}.
\end{equation*}

The bounds \eqref{vfc30.7} follow from \eqref{eq:CDW} if $|\xi-\eta|\leq 4|(k,\eta)|$. On the other hand, if $|(k,\eta)|\leq|\xi-\eta|/4$ then, as a consequence of \eqref{vfc30.5},
\begin{equation*}
\big|(\dot{A}_k/A_k)(t,\xi)\big|\lesssim_\delta \big|(\dot{A}_R/A_R)(t,\xi)\big|.
\end{equation*}
The desired conclusion follows using also \eqref{vfc30}.
\end{proof}

\subsection{Weighted bilinear estimates for section \ref{fimprov}}\label{bilin6}

In this subsection we prove several estimates on products of weights, which are used only in the analysis of the normalized vorticity function in section \ref{fimprov}.

We begin with estimates on the weights that are used in the analysis of $\mathcal{N}_1$.
\begin{lemma}\label{TLXH1}
Assume that $t\ge1$ and recall the definitions of the sets $R_0,R_1,R_2,R_3$ in (\ref{nar18.1})-(\ref{nar18.4}). Denote $(\sigma,\rho):=(k-\ell,\xi-\eta)$. Suppose that $\sigma\neq0$.

(i) If $((k,\xi),(\ell,\eta))\in R_0\cup R_1$, then
\begin{equation}\label{TLXH1.1}
\begin{split}
\frac{|\rho/\sigma|+\langle t \rangle}{\langle t\rangle}\frac{\langle \rho \rangle /\sigma^2}{\langle t-\rho/\sigma \rangle^2}&\big|\ell A_k^2(t,\xi)-kA_{\ell}^2(t,\eta)\big|\\
&\lesssim_{\delta}\sqrt{|(A_k\dot{A}_k)(t,\xi)|}\,\sqrt{|(A_{\ell}\dot{A}_{\ell})(t,\eta)|}\,A_{\sigma}(t,\rho) \,e^{-(\delta_0/200)\langle \sigma,\rho \rangle^{1/2}}.
\end{split}
\end{equation}

(ii) If $((k,\xi),(\ell,\eta))\in R_2$, then
\begin{equation}\label{TLXH1.2}
\begin{split}
\frac{|\rho/\sigma|+\langle t \rangle}{\langle t\rangle}\frac{\langle \rho \rangle /\sigma^2}{\langle t-\rho/\sigma \rangle^2}&\big|\ell A_k^2(t,\xi)-kA_{\ell}^2(t,\eta)\big|\\
&\lesssim_{\delta}\sqrt{|(A_k\dot{A}_k)(t,\xi)|}\,\sqrt{|(A_{\sigma}\dot{A}_{\sigma})(t,\rho)|}\,A_{\ell}(t,\eta) \,e^{-(\delta_0/200)\langle \ell,\eta \rangle^{1/2}}.
\end{split}
\end{equation}
\end{lemma}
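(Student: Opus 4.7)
The plan is to reduce the inequality to ratio estimates on the weights $A_k, A_\ell, A_\sigma$ supplied by Lemmas~\ref{A_kA_ell} and \ref{lm:CDW}, after an algebraic rearrangement of $\ell A_k^2(t,\xi) - k A_\ell^2(t,\eta)$. The key identity I would use is
\[
\ell A_k^2(t,\xi) - k A_\ell^2(t,\eta) = -\sigma\, A_\ell^2(t,\eta) + \ell\bigl[A_k(t,\xi)-A_\ell(t,\eta)\bigr]\bigl[A_k(t,\xi)+A_\ell(t,\eta)\bigr].
\]
The ``symmetric'' contribution $-\sigma A_\ell^2$ introduces only a harmless factor $|\sigma|$, easily absorbed by $A_\sigma(t,\rho)\gtrsim e^{\delta_0\langle\sigma,\rho\rangle^{1/2}}$ and the exponential margin $e^{-(\delta_0/200)\langle\sigma,\rho\rangle^{1/2}}$. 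The ``gradient'' contribution $\ell(A_k-A_\ell)(A_k+A_\ell)$ requires case analysis depending on whether $t$ lies in $I_{k,\xi}$, $I_{\ell,\eta}$, both, or neither.

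For part~(i), the balanced regime $R_0\cup R_1$ forces $\langle\sigma,\rho\rangle\lesssim\langle k,\xi\rangle\approx\langle\ell,\eta\rangle$. When $t\notin I_{k,\xi}\cup I_{\ell,\eta}$ or $t\in I_{k,\xi}\cap I_{\ell,\eta}$, the clean ratio \eqref{vfc26} together with the smoothness bound \eqref{dor21} on $b_\ast$ gives $|A_k(t,\xi)-A_\ell(t,\eta)|\lesssim_\delta A_\ell(t,\eta)\langle\sigma,\rho\rangle^{1/2}e^{0.9\lambda(t)\langle\sigma,\rho\rangle^{1/2}}$; the potentially large factor $\langle\rho\rangle/(\sigma^2\langle t-\rho/\sigma\rangle^2)$ is only polynomial in $\langle\sigma,\rho\rangle$ in this range, so it is absorbed by the exponential margin. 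The delicate subcase is when exactly one of the two modes is resonant, e.g.\ $t\in I_{k,\xi}$ but $t\notin I_{\ell,\eta}$: here \eqref{vfc27} supplies the ratio $A_k(t,\xi)/A_\ell(t,\eta)\lesssim_\delta \frac{|\xi|}{k^2(1+|t-\xi/k|)}\,e^{0.9\lambda(t)\langle\sigma,\rho\rangle^{1/2}}$, and this extra factor is designed precisely to cancel $\langle\rho\rangle/(\sigma^2\langle t-\rho/\sigma\rangle^2)$ via the asymmetry described after \eqref{imbaweight}: if $\sigma\neq0$ and $t$ is close to the critical time $\xi/k$, then $t$ is far from $\eta/\ell$, in fact $|t-\rho/\sigma|\gtrsim|\rho|/\sigma^2$ up to constants, turning the two large factors into a bounded quantity. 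The dissipation factors $\sqrt{|\dot A_k A_k|}\,\sqrt{|\dot A_\ell A_\ell|}$ in the RHS are then generated from $A_k A_\ell$ using \eqref{eq:A_kxi} and \eqref{eq:CDW}; in this regime \eqref{eq:CDW} guarantees $|\dot A_k/A_k|(t,\xi)\approx_\delta|\dot A_\ell/A_\ell|(t,\eta)$, so no further loss occurs.

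For part~(ii), the regime $R_2$ corresponds to $\langle\ell,\eta\rangle\ll\langle k,\xi\rangle\approx\langle\sigma,\rho\rangle$, and the RHS shifts the dissipation onto the difference mode through $\sqrt{|A_\sigma\dot A_\sigma|(t,\rho)}$ while leaving only the cheap weight $A_\ell(t,\eta)e^{-(\delta_0/200)\langle\ell,\eta\rangle^{1/2}}$ on the low frequency. I would bound $|\ell A_k^2-kA_\ell^2|\lesssim|\ell|A_k^2+|k|A_\ell^2$. For the second term, $A_\ell^2$ is already small and the factor $|k|\approx|\sigma|$ is absorbed by $A_\sigma\gtrsim e^{\delta_0\langle\sigma,\rho\rangle^{1/2}}$. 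For the first term, the product inequality \eqref{TLX7} yields $A_k(t,\xi)\lesssim_\delta A_R(t,\rho)A_\ell(t,\eta)\,e^{-(\lambda(t)/20)\langle\ell,\eta\rangle^{1/2}}$, with the bound $A_R(t,\rho)\leq A_\sigma(t,\rho)$; one copy of $A_\sigma(t,\rho)$ is then converted to $\sqrt{|A_\sigma\dot A_\sigma|(t,\rho)}$ using \eqref{TLX3.5}, and the resulting polynomial loss $\langle\rho\rangle^{1/2}\langle t\rangle^{(1+\sigma_0)/2}$, combined with the factor $|\ell|\cdot\langle\rho\rangle/(\sigma^2\langle t-\rho/\sigma\rangle^2)$, is absorbed by the exponential gain on the low mode. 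Case $R_3$ is obtained from $R_2$ by the symmetry $(k,\xi)\leftrightarrow(\ell,\eta)$, as already noted in the application.

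The main obstacle is the quantitative matching in the imbalanced subcase of part~(i): one must verify precisely that \eqref{vfc27} combined with the asymmetry $|t-\rho/\sigma|\gtrsim|\rho|/\sigma^2$ cancels the large factor $\langle\rho\rangle/(\sigma^2\langle t-\rho/\sigma\rangle^2)$, while enough dissipation remains in the reservoir $|\dot A_k/A_k|\gtrsim\delta^2/(1+\delta^2|t-\xi/k|)$ from \eqref{reb8} to produce $\sqrt{|\dot A_k A_k|}$ at the right size. The estimate is tight, with essentially no slack in the exponential budget, which is consistent with working at the critical Gevrey exponent $s=1/2$.
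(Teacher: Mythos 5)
Your proposed decomposition $\ell A_k^2 - k A_\ell^2 = -\sigma A_\ell^2 + \ell(A_k - A_\ell)(A_k+A_\ell)$ is algebraically correct, but the treatment of the gradient piece has a gap: in $R_1$ the prefactor $\ell$ is of size $\langle k,\xi\rangle$ and must be compensated by a gain of $\langle k,\xi\rangle^{-1/2}$ (or $1/L_\kappa$) coming from the difference $A_k(t,\xi)-A_\ell(t,\eta)$. The bound you state, $|A_k-A_\ell|\lesssim A_\ell\langle\sigma,\rho\rangle^{1/2}e^{0.9\lambda(t)\langle\sigma,\rho\rangle^{1/2}}$, has no such gain (it is merely the trivial comparison), so $\ell(A_k-A_\ell)(A_k+A_\ell)$ is left with an unabsorbed factor $|\ell|$. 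The paper instead telescopes $\ell A_k^2-kA_\ell^2$ along the three structural factors in the definition \eqref{dor4} of $A_k$ — the pieces $e^{2\lambda\langle k,\xi\rangle^{1/2}}$, $e^{\sqrt\delta|k|^{1/2}}$, and $e^{\sqrt\delta\langle\xi\rangle^{1/2}}/b_k$ — obtaining the terms $\mathcal{T}_1,\mathcal{T}_2,\mathcal{T}_3$ in \eqref{Transportterms1}--\eqref{Transportterms3}. Each factor has its own derivative-gain rate ($\langle k,\xi\rangle^{-1/2}$, $\langle k\rangle^{-1/2}$, and $1/L_\kappa$ respectively), and the separate case analysis (e.g.\ \eqref{Tlb1}--\eqref{Tlb2} for the $b_k$ piece when $|k|\leq 10\langle\xi\rangle$) is what supplies the extra $\langle k,\xi\rangle^{1/2}/\langle t\rangle^{1/2}$ needed to convert $A_kA_\ell$ into $\sqrt{|A_k\dot A_k|}\sqrt{|A_\ell\dot A_\ell|}$ via \eqref{eq:A_kxi}. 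Treating $A_k-A_\ell$ as a single opaque difference does not expose these gains.

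Two further points. First, for part (ii), the claim $A_R(t,\rho)\leq A_\sigma(t,\rho)$ is reversed: since $b_R\leq b_\sigma$ one has $A_\sigma\lesssim_\delta A_R$, and the gap between them near a resonant time for $(\sigma,\rho)$ is exactly the large factor $\frac{|\rho|/\sigma^2}{1+|t-\rho/\sigma|}$ that the imbalanced weight mechanism \eqref{nonRtR} is designed to handle; your route $A_k\lesssim A_R(\rho)A_\ell\leq A_\sigma(\rho)A_\ell$ does not close. Moreover, the polynomial losses in Case 3 are not absorbed by the exponential $e^{-(\delta_0/200)\langle\ell,\eta\rangle^{1/2}}$ on the low mode — they carry the high frequency $(\sigma,\rho)$, not $(\ell,\eta)$ — but by the lower bound $|\dot A_\sigma/A_\sigma|\gtrsim_\delta 1/\langle t-\rho/\sigma\rangle$ from \eqref{reb8}. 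Second, the imbalanced-weight asymmetry you invoke in part (i) is not used there: in $R_1$ the prefactor $\frac{|\rho/\sigma|+\langle t\rangle}{\langle t\rangle}\frac{\langle\rho\rangle/\sigma^2}{\langle t-\rho/\sigma\rangle^2}$ is controlled by the elementary bound \eqref{TLXH1.3}, i.e.\ by $\langle t\rangle^{-2}e^{(\delta_0/300)\langle\sigma,\rho\rangle^{1/2}}$, and the residual $\langle t\rangle^{-2}$ is what produces the two CK factors. The asymmetry between $\eta/\ell$ and $\xi/k$ enters only in part (ii) Case 3, where $\ell\neq 0$, the stream function mode $(\sigma,\rho)$ is high and resonant, and the output $(k,\xi)$ is non-resonant by the argument after \eqref{RC3.1}.
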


\begin{proof} {\bf Step 1.} Assume first that $((k,\xi),(\ell,\eta))\in R_0$ and we need to prove (\ref{TLXH1.1}). This is an easy case, as all frequencies involved are high and we have a gain in derivatives coming from (\ref{vfc26}), (\ref{vfc27}) and the lower bound
 \begin{equation}\label{lowerboundweight}
A_{\sigma}(t,\rho)\ge e^{\lambda(t)\langle \sigma,\rho\rangle^{1/2}}.
\end{equation}
We do not need to use the symmetrization.
Using the elementary inequalities 
\begin{equation}\label{TLXH1.3}
\frac{|\rho/\sigma|+\langle t \rangle}{\langle t\rangle}\frac{\langle \rho \rangle /\sigma^2}{\langle t-\rho/\sigma \rangle^2}\lesssim \langle t \rangle^{-2}e^{(\delta_0/300)\langle \sigma,\rho \rangle^{1/2}},
\end{equation}
for any $(\sigma,\rho)\in \mathbb{Z}\times\mathbb{R}$ with $\sigma\neq 0$, together with $\langle k,\xi \rangle^9+\langle \ell,\eta \rangle^9+\langle \sigma,\rho \rangle^9\lesssim e^{(\delta_0/300)\langle \sigma,\rho \rangle^{1/2}}$, the bounds (\ref{TLXH1.1}) follow from the combination of (\ref{vfc26}), (\ref{vfc27}), and (\ref{eq:A_kxi}).

{\bf Step 2.} Assume that $((k,\xi),(\ell,\eta))\in R_1$ and we need to prove (\ref{TLXH1.1}). In this case we need to use the symmetrization to reduce the loss of derivatives in $z$. 
We write
\begin{equation}\label{Transportterms1and2}
\ell A_k^2(t,\xi)-kA_{\ell}^2(t,\eta):=\mathcal{T}_1+\mathcal{T}_2+\mathcal{T}_3,
\end{equation}
with
\begin{equation}\label{Transportterms1}
\mathcal{T}_1:=\left(\ell e^{2\lambda(t)\langle k,\xi\rangle^{1/2}}-k e^{2\lambda(t)\langle\ell,\eta\rangle^{1/2}}\right)\bigg[\,\frac{e^{\sqrt{\delta}\langle\xi\rangle^{1/2}}}{b_k(t,\xi)}+e^{\sqrt{\delta}|k|^{1/2}}\,\bigg]^2,
\end{equation}
\begin{equation}\label{Transportterms2}
\mathcal{T}_2:=k e^{2\lambda(t)\langle\ell,\eta\rangle^{1/2}}\bigg[\,e^{\sqrt{\delta}|k|^{\frac{1}{2}}}-e^{\sqrt{\delta}|\ell|^{\frac{1}{2}}}\,\bigg]\bigg[\,\frac{e^{\sqrt{\delta}\langle\xi\rangle^{1/2}}}{b_k(t,\xi)}+e^{\sqrt{\delta}|k|^{1/2}}+\frac{e^{\sqrt{\delta}\langle \eta\rangle^{1/2}}}{b_{\ell}(t,\eta)}+e^{\sqrt{\delta}| \ell|^{1/2}}\,\bigg],
\end{equation}
\begin{equation}\label{Transportterms3}
\mathcal{T}_3:=k e^{2\lambda(t)\langle\ell,\eta\rangle^{1/2}}\bigg[\,\frac{e^{\sqrt{\delta}\langle\xi\rangle^{\frac{1}{2}}}}{b_k(t,\xi)}-\frac{e^{\sqrt{\delta}\langle\eta\rangle^{\frac{1}{2}}}}{b_{\ell}(t,\eta)}\,\bigg]\bigg[\,\frac{e^{\sqrt{\delta}\langle\xi\rangle^{1/2}}}{b_k(t,\xi)}+e^{\sqrt{\delta}|k|^{1/2}}+\frac{e^{\sqrt{\delta}\langle \eta\rangle^{1/2}}}{b_{\ell}(t,\eta)}+e^{\sqrt{\delta}| \ell|^{1/2}}\,\bigg].
\end{equation}
Using (\ref{TLXH1.3}), it suffices to prove for each $i\in\{1,2,3\}$
\begin{equation}\label{Tterms}
\frac{1}{\langle t\rangle^2}\big|\mathcal{T}_i\big|\lesssim_{\delta}\sqrt{|(A_k\dot{A}_k)(t,\xi)|}\,\sqrt{|(A_{\ell}\dot{A}_{\ell})(t,\eta)|}\,A_{\sigma}(t,\rho) \,e^{-(\delta_0/100)\langle \sigma,\rho \rangle^{1/2}}.
\end{equation}

In the proofs in this section we will often use the following bounds, which follow from \eqref{dor23.1}--\eqref{dor23.4} and \eqref{eq:comparisonweights1}: if $t\geq 0$ and $(k,\xi),(\ell,\eta)\in\mathbb{Z}\times\mathbb{R}$ then
\begin{equation}\label{iq2w1}
\frac{b_{\ell}(t,\eta)}{b_k(t,\xi)}\lesssim_\delta e^{\sqrt{\delta}|\xi-\eta|^{1/2}}\qquad\text{ if }(k,\xi)\notin I_t^{\ast\ast}\,\,\text{ or }\,\,(k,\xi),(\ell,\eta)\in I_t^\ast,
\end{equation}
and
\begin{equation}\label{iq2w2}
\frac{b_{\ell}(t,\eta)}{b_k(t,\xi)}\lesssim_\delta \frac{|\xi|/k^2}{\langle t-\xi/k\rangle}e^{\sqrt{\delta}|\xi-\eta|^{1/2}}\qquad\text{ if }(k,\xi)\in I_t^{\ast\ast}\text{ and }(\ell,\eta)\notin I_t^{\ast\ast}.
\end{equation}

{\bf Substep 2.1.} We first prove (\ref{Tterms}) for $i=1$. If $(k,\xi)\notin I_t^{\ast\ast}$ then we estimate, using \eqref{iq2w1},
\begin{equation*}
\begin{split}
\frac{1}{\langle t\rangle^2}\big|\mathcal{T}_1\big| \lesssim_{\delta} \langle\sigma,\rho\rangle&\bigg[1+\frac{|k|}{\langle k,\xi\rangle^{1/2}} \bigg]\cdot \frac{1}{\langle t\rangle^2}\left[ e^{2\lambda(t)\langle k,\xi\rangle^{1/2}} + e^{2\lambda(t)\langle\ell,\eta\rangle^{1/2}} \right]\\
 &\times \bigg[\,\frac{e^{\sqrt{\delta}\langle\xi\rangle^{1/2}}}{b_k(t,\xi)}+e^{\sqrt{\delta}|k|^{1/2}}\,\bigg]\bigg[\,\frac{e^{\sqrt{\delta}\langle\eta\rangle^{1/2}}}{b_{\ell}(t,\eta)}+e^{\sqrt{\delta}|\ell|^{1/2}}\,\bigg]e^{8\sqrt{\delta}\langle\sigma,\rho\rangle^{1/2}}.
\end{split}
\end{equation*}
Using (\ref{b>a}) with $\beta=1/2$, we can estimate the expression above as
\begin{equation}\label{TtermsT2intermediate}
\frac{1}{\langle t\rangle^2}\big|\mathcal{T}_1\big| \lesssim_{\delta}\frac{\langle k\rangle^{1/2}}{\langle t\rangle^{3/2}} A_k(t,\xi)A_{\ell}(t,\eta)A_{\sigma}(t,\rho) \,e^{-(\lambda(t)/20)\langle \sigma,\rho \rangle^{1/2}}.
\end{equation}
The bounds (\ref{Tterms}) then follow from (\ref{TtermsT2intermediate}), (\ref{eq:A_kxi}) and (\ref{eq:CDW}).

Assume now that $(k,\xi)\in I_t^{\ast\ast}$. In this case $b_{\ell}(t,\eta)$ could be much bigger than $b_k(t,\xi)$ when $t$ is not resonant with respect to $(\ell,\eta)$, but this loss can be compensated as $t$ needs to be relatively large and we have a decay factor in $t$. Indeed, using \eqref{iq2w2} we can estimate
\begin{equation*}
\begin{split}
\frac{1}{\langle t\rangle^2}\big|\mathcal{T}_1\big| \lesssim_{\delta} |\sigma,\rho|&\bigg[1+\frac{|k|}{\langle k,\xi\rangle^{1/2}} \bigg]\cdot \frac{1}{\langle t\rangle^2}\left[ e^{2\lambda(t)\langle k,\xi\rangle^{1/2}} + e^{2\lambda(t)\langle\ell,\eta\rangle^{1/2}} \right]\cdot \frac{|\xi|/k^2}{\langle t-\xi/k\rangle}\\
 &\times \bigg[\,\frac{e^{\sqrt{\delta}\langle\xi\rangle^{1/2}}}{b_k(t,\xi)}+e^{\sqrt{\delta}|k|^{1/2}}\,\bigg]\bigg[\,\frac{e^{\sqrt{\delta}\langle\eta\rangle^{1/2}}}{b_{\ell}(t,\eta)}+e^{\sqrt{\delta}|\ell|^{1/2}}\,\bigg]e^{5\sqrt{\delta}\langle\sigma,\rho\rangle^{1/2}}.
\end{split}
\end{equation*}
Since $t\approx |\xi/k|$, and using (\ref{b>a}) with $\beta=1/2$, we can estimate the expression above as
\begin{equation}\label{TtermsT1intermediate}
\frac{1}{\langle t\rangle^2}\big|\mathcal{T}_1\big| \lesssim_{\delta}\frac{\langle\xi\rangle^{1/2}}{\langle t\rangle^{3/2}} A_k(t,\xi)A_{\ell}(t,\eta)A_{\sigma}(t,\rho) \,e^{-(\lambda(t)/20)\langle \sigma,\rho \rangle^{1/2}},
\end{equation}
and (\ref{Tterms}) follows from (\ref{TtermsT1intermediate}), (\ref{eq:A_kxi}) and (\ref{eq:CDW}) as before.

{\bf Substep 2.2.} We now prove (\ref{Tterms}) for $i=2$. Notice that 
\begin{equation*}
|k|\big|e^{\sqrt{\delta}|k|^{1/2}}-e^{\sqrt{\delta}|\ell|^{1/2}}\big|\lesssim_{\delta}\langle k\rangle^{1/2}\big[e^{\sqrt{\delta}|k|^{1/2}}+e^{\sqrt{\delta}|\ell|^{1/2}}\big]e^{8\sqrt{\delta}\langle \sigma,\rho\rangle^{1/2}}.
\end{equation*}
As in {\bf{Substep 2.1}}, it is then easy to see that $\mathcal{T}_2$ satisfies the same bounds \eqref{TtermsT2intermediate} as $\mathcal{T}_1$, in all cases, which gives the desired estimates.

{\bf Substep 2.3.} Finally we prove (\ref{Tterms}) in the case $i=3$. Assume first that $(k,\xi)\in I_t^{\ast}$, thus
\begin{equation}\label{tresonant2}
t\approx |\xi/k|,\qquad 1\leq k^2\leq 4\delta^2|\xi|.
\end{equation}
In this case, we do not need the cancellation, and the loss of derivative is compensated by the fact that $t$ is fairly large. By \eqref{iq2w1}--\eqref{iq2w2} and \eqref{eq:comparisonweights1}, we can estimate
\begin{equation*}
\begin{split}
\frac{1}{\langle t\rangle^2}\big|\mathcal{T}_3\big| \lesssim_{\delta}&\,  \frac{|k|}{\langle t\rangle^2}\cdot\frac{|\xi|}{k^2}\frac{e^{2\lambda(t)\langle\ell,\eta\rangle^{1/2}}}{\langle t-\xi/k\rangle}\bigg[\,\frac{e^{\sqrt{\delta}\langle\xi\rangle^{1/2}}}{b_k(t,\xi)}+e^{\sqrt{\delta}|k|^{1/2}}\,\bigg]\bigg[\,\frac{e^{\sqrt{\delta}\langle\eta\rangle^{1/2}}}{b_{\ell}(t,\eta)}+e^{\sqrt{\delta}|\ell|^{1/2}}\,\bigg]e^{8\sqrt{\delta}\langle\sigma,\rho\rangle^{1/2}}\\
\lesssim_{\delta}&\,\frac{1}{\langle t\rangle\langle t-\xi/k\rangle}e^{2\lambda(t)\langle\ell,\eta\rangle^{1/2}}\bigg[\,\frac{e^{\sqrt{\delta}\langle\xi\rangle^{1/2}}}{b_k(t,\xi)}+e^{\sqrt{\delta}|k|^{1/2}}\,\bigg]\bigg[\,\frac{e^{\sqrt{\delta}\langle\eta\rangle^{1/2}}}{b_{\ell}(t,\eta)}+e^{\sqrt{\delta}|\ell|^{1/2}}\,\bigg]e^{8\sqrt{\delta}\langle\sigma,\rho\rangle^{1/2}}.
\end{split}
\end{equation*}
Using (\ref{b>a}) with $\beta=1/2$, we can estimate the expression above as in \eqref{TtermsT1intermediate}, and (\ref{Tterms}) follows from (\ref{eq:A_kxi}) and (\ref{eq:CDW})

The proof is similar if $(\ell,\eta)\in I_t^\ast$. Finally, assume that $(k,\xi)\notin I_t^\ast$ and $(\ell,\eta)\notin I_t^\ast$. If $|k|>10\langle \xi\rangle$ then we use  (\ref{dor20}), (\ref{eq:comparisonweights1}), and the definitions of $w_{NR}$ to see that
\begin{equation}\label{Transportkdominates}
|k| \bigg|\,\frac{e^{\sqrt{\delta}\langle\xi\rangle^{1/2}}}{b_k(t,\xi)}-\frac{e^{\sqrt{\delta}\langle\eta\rangle^{1/2}}}{b_{\ell}(t,\eta)}\,\bigg|\lesssim_{\delta}e^{\sqrt{\delta}|k|^{1/2}/2}e^{5\sqrt{\delta}|\rho|^{1/2}}.
\end{equation}
By (\ref{Transportkdominates}), using (\ref{eq:comparisonweights1}) and (\ref{b>a}) with $\beta=1/2$, we can then bound
\begin{equation}\label{TtermsT3intermediate'}
\frac{1}{\langle t\rangle^2}\big|\mathcal{T}_3\big| \lesssim_{\delta}\,\frac{1}{\langle t\rangle^2 } A_k(t,\xi)A_{\ell}(t,\eta)A_{\sigma}(t,\rho) \,e^{-(\lambda(t)/20)\langle \sigma,\rho \rangle^{1/2}},
\end{equation}
and (\ref{Tterms}) follows from (\ref{eq:A_kxi}) and (\ref{eq:CDW}).
 
On the other hand, if $(k,\xi)\notin I_t^\ast$, $(\ell,\eta)\notin I_t^\ast$, and $|k|\leq 10\langle\xi\rangle$ then $b_k(t,\xi)=b_{NR}(t,\xi)$, $b_{\ell}(t,\eta)=b_{NR}(t,\eta)$. Therefore
 \begin{equation}\label{Tlb1}
|k| \bigg|\,\frac{e^{\sqrt{\delta}\langle\xi\rangle^{1/2}}}{b_k(t,\xi)}-\frac{e^{\sqrt{\delta}\langle\eta\rangle^{1/2}}}{b_{\ell}(t,\eta)}\,\bigg|\lesssim_{\delta}\frac{|k|}{L_{\kappa}(t,\xi)}\cdot \frac{e^{\sqrt{\delta}\langle\xi\rangle^{1/2}}}{b_k(t,\xi)}e^{5\sqrt{\delta}\langle\sigma,\rho\rangle^{1/2}},
\end{equation}
using (\ref{dor21}), (\ref{dor20}), and (\ref{eq:comparisonweights1}). From (\ref{dor1}), it is easy to verify the bound
\begin{equation}\label{Tlb2}
\frac{1}{\langle t\rangle^{2}}\frac{\langle \xi\rangle}{L_{\kappa}(t,\xi)}\lesssim_\delta \frac{\langle \xi\rangle^{1/2}}{\langle t\rangle^{3/2}}.
\end{equation}
Using (\ref{Tlb1})--(\ref{Tlb2}) and the assumption $|k|\leq 10\langle\xi\rangle$, it follows that $\langle t\rangle^{-2}\mathcal{T}_3$ satisfies similar bounds as in \eqref{TtermsT1intermediate}, and (\ref{Tterms}) follows from (\ref{eq:A_kxi}) and (\ref{eq:CDW}).

{\bf Step 3.} Assume that $((k,\xi),(\ell,\eta))\in R_2$ and we need to prove (\ref{TLXH1.2}). In this case we do not use the symmetrization. However, we need to deal with the loss of derivative when
$$\langle t-\rho/\sigma\rangle\ll\langle\rho\rangle/\sigma^2.$$
This loss of derivative is the main reason for the design of the imbalanced weights $A_k(t,\xi)$.

We first prove the following easy bound for $\sigma\in\mathbb{Z}\backslash\{0\}$ and $((k,\xi),(\ell,\eta))\in R_2$,
\begin{equation*}
\begin{split}
\frac{|\rho/\sigma|+\langle t \rangle}{\langle t\rangle}&\frac{\langle \rho \rangle /\sigma^2}{\langle t-\rho/\sigma \rangle^2}|k|A_{\ell}^2(t,\eta)\lesssim_{\delta}\sqrt{|(A_k\dot{A}_k)(t,\xi)|}\,\sqrt{|(A_{\sigma}\dot{A}_{\sigma})(t,\rho)|}\,A_{\ell}(t,\eta) \,e^{-(\delta_0/200)\langle \ell,\eta \rangle^{1/2}}.
\end{split}
\end{equation*}
This follows follows from similar arguments as in {\bf Step 1}, due to the favorable gain in high derivatives. We omit the repetitive details.
 
Therefore, for (\ref{TLXH1.2}) it remains to prove that
 \begin{equation}\label{TLXH1.2.2}
\begin{split}
\frac{|\rho/\sigma|+\langle t \rangle}{\langle t\rangle}&\frac{\langle \rho \rangle /\sigma^2}{\langle t-\rho/\sigma \rangle^2}|\ell |A_k^2(t,\xi)\\
&\lesssim_{\delta}\sqrt{|(A_k\dot{A}_k)(t,\xi)|}\,\sqrt{|(A_{\sigma}\dot{A}_{\sigma})(t,\rho)|}\,A_{\ell}(t,\eta) \,e^{-(\delta_0/200)\langle \ell,\eta \rangle^{1/2}},
\end{split}
\end{equation}
for $\sigma\in\mathbb{Z}\backslash\{0\}$ and $((k,\xi),(\ell,\eta))\in R_2$.  We divide the proof into several cases.

{\bf Case 1.} We assume that 
\begin{equation}\label{Rt1}
|t-\rho/\sigma|\ge \frac{|\rho|}{10|\sigma|}.
\end{equation}
Then
\begin{equation}\label{Rt2}
\frac{|\rho/\sigma|+\langle t \rangle}{\langle t\rangle}\frac{\langle \rho \rangle /\sigma^2}{\langle t-\rho/\sigma \rangle^2}\lesssim \frac{\langle \rho \rangle /\sigma^2}{\langle t \rangle (|\rho/\sigma|+\langle t\rangle)}\lesssim_{\delta}\frac{\langle \rho\rangle^{1/2}}{\langle t\rangle^{3/2}}.
\end{equation}

If $t\not\in I_{k,\xi}$ or if $t\in I_{k,\xi}\cap I_{\sigma,\rho}$ then (\ref{TLXH1.2.2}) follows from (\ref{vfc26}), (\ref{eq:A_kxi}) and (\ref{eq:CDW}). On the other hand, if $t\in I_{k,\xi}$ and $t\not\in I_{\sigma,\rho}$ then
\begin{equation}\label{tinr}
t\approx |\xi/k|,\qquad |\xi|\ge \delta^{-10},\qquad1\leq k^2\leq \delta^3|\xi|,\qquad |t-\xi/k|\lesssim |\xi|/k^2.
\end{equation}
Using (\ref{vfc27}), (\ref{eq:A_kxi}), \eqref{Rt2}, and the lower bound $A_{\ell}(t,\eta)\ge e^{\lambda(t)\langle \ell,\eta\rangle^{1/2}}$, we estimate
\begin{equation}\label{Rt3}
\begin{split}
\frac{|\rho/\sigma|+\langle t \rangle}{\langle t\rangle}&\frac{\langle \rho \rangle /\sigma^2}{\langle t-\rho/\sigma \rangle^2}|\ell |A_k^2(t,\xi)\\
&\lesssim_{\delta}\, \frac{\langle \rho \rangle /\sigma^2}{\langle t \rangle (|\rho/\sigma|+\langle t\rangle)}\frac{|\xi|/k^2}{\langle t-\xi/k\rangle}A_k(t,\xi)A_{\ell}(t,\eta)A_{\sigma}(t,\rho)e^{-(\lambda(t)/20)\langle\ell,\eta\rangle^{1/2}}\\
&\lesssim_{\delta}\, \frac{1}{\langle t-\xi/k\rangle}A_k(t,\xi)A_{\ell}(t,\eta)A_{\sigma}(t,\rho)e^{-(\lambda(t)/20)\langle\ell,\eta\rangle^{1/2}}.
\end{split}
\end{equation}
The bounds (\ref{TLXH1.2.2}) then follow from (\ref{Rt3}), (\ref{eq:A_kxi}), (\ref{eq:CDW}), and \eqref{reb8}.

{\bf Case 2.} We assume now that 
\begin{equation}\label{Rt2.0}
|t-\rho/\sigma|\leq \frac{|\rho|}{10|\sigma|}\qquad {\rm and}\qquad |t-\rho/\sigma|\geq \frac{|\rho|}{10\sigma^2}\,.
\end{equation}
If $t\not\in I_{k,\xi}$ then we estimate
 \begin{equation*}
\begin{split}
\frac{|\rho/\sigma|+\langle t \rangle}{\langle t\rangle}&\frac{\langle \rho \rangle /\sigma^2}{\langle t-\rho/\sigma \rangle^2}|\ell |A_k^2(t,\xi)\lesssim_{\delta}\frac{1}{\langle t-\rho/\sigma\rangle}\,A_k(t,\xi)A_{\sigma}(t,\rho)A_{\ell}(t,\eta) \,e^{-(\lambda(t)/20)\langle \ell,\eta \rangle^{1/2}},
\end{split}
\end{equation*}
using (\ref{vfc26}). The bounds (\ref{TLXH1.2.2}) then follow from (\ref{eq:CDW}) and the bounds
\begin{equation}\label{RtL2.1}
\left|\frac{\partial_tA_{\sigma}(t,\rho)}{A_{\sigma}(t,\rho)}\right|\gtrsim_\delta\min\{1,\sigma^2/|\rho|\}\gtrsim_\delta \frac{1}{\langle t-\rho/\sigma\rangle}
\end{equation}
for $t\geq 1$ as in \eqref{Rt2.0}, which follow from (\ref{eq:A_kxi}), \eqref{reb8}, and \eqref{reb9}

On the other hand, if $t\in I_{k,\xi}$ then \eqref{tinr} holds and we estimate, using (\ref{vfc27}) and (\ref{Rt2.0}), 
\begin{equation*}
\begin{split}
\frac{|\rho/\sigma|+\langle t \rangle}{\langle t\rangle}\frac{\langle \rho \rangle /\sigma^2}{\langle t-\rho/\sigma \rangle^2}|\ell |A_k^2(t,\xi)&\lesssim_{\delta}\frac{\sigma^2}{\langle\rho\rangle}\cdot\frac{|\xi|}{k^2}\frac{1}{\langle t-\xi/k\rangle}\,A_k(t,\xi)A_{\sigma}(t,\rho)A_{\ell}(t,\eta) \,e^{-(\lambda(t)/20)\langle \ell,\eta \rangle^{1/2}}\\
&\lesssim_{\delta}\frac{1}{\langle t-\xi/k\rangle}\,A_k(t,\xi)A_{\sigma}(t,\rho)A_{\ell}(t,\eta) \,e^{-(\lambda(t)/30)\langle \ell,\eta \rangle^{1/2}}.
\end{split}
\end{equation*}
The bounds (\ref{TLXH1.2.2}) then follow from (\ref{eq:A_kxi}), (\ref{eq:CDW}), and \eqref{reb8}.

{\bf Case 3.} Finally, we assume that
\begin{equation}\label{Rt3.0}
 |t-\rho/\sigma|\leq \frac{|\rho|}{10\sigma^2}\,.
\end{equation}
If, in addition, 
\begin{equation*}
|\rho|\leq\delta^{-10}\qquad{\rm or}\qquad |\sigma|\ge k_0(\rho) \qquad {\rm or}\qquad |\ell,\eta|\ge |\rho|/(100\sigma^2)
\end{equation*}
then the bounds  (\ref{TLXH1.2.2}) still follow easily as in {\bf{Case 2}} above, as there is no real loss of derivatives. On the other hand, assume that
\begin{equation}\label{RC3.1}
|\rho|\geq\delta^{-10}\qquad{\rm and}\qquad 1\leq |\sigma|\leq k_0(\rho) \qquad {\rm and}\qquad |\ell,\eta|\leq |\rho|/(100\sigma^2).
\end{equation}
We can assume that $\ell\neq 0$, as otherwise the left hand side of (\ref{TLXH1.2.2}) vanishes. This is the main case, where the imbalance of the weights plays an essential role. The assumptions (\ref{Rt3.0})--(\ref{RC3.1}) and $k\neq\sigma$ imply that $t$ is not resonant with respect to $(k,\xi)$, and $|k|\leq |\xi|$.
We can estimate, using  (\ref{dor3}), \eqref{eq:comparisonweights1}, and (\ref{reb7}),
\begin{equation}\label{nonRtR}
\begin{split}
&A_{\sigma}(t,\rho)\approx A_{R}(t,\rho)\approx_\delta \frac{|\rho|}{\sigma^2}\frac{1}{1+|t-\rho/\sigma|}A_{NR}(t,\rho)\\
&\gtrsim_{\delta} \frac{|\rho|}{\sigma^2}\frac{1}{\langle t-\rho/\sigma\rangle}\frac{e^{\lambda(t)\langle\sigma,\rho\rangle^{1/2}}}{e^{\lambda(t)\langle k,\xi\rangle^{1/2}}}A_k(t,\xi)e^{-5\sqrt{\delta}\langle \ell,\eta\rangle^{1/2}}\gtrsim_{\delta}\frac{|\rho|}{\sigma^2}\frac{1}{\langle t-\rho/\sigma\rangle}\frac{A_{k}(t,\xi)}{A_\ell(t,\eta)}e^{(\lambda(t)/20)\langle \ell,\eta\rangle^{1/2}}.
\end{split}
\end{equation}
Hence
 \begin{equation*}
\begin{split}
\frac{|\rho/\sigma|+\langle t \rangle}{\langle t\rangle}&\frac{\langle \rho \rangle /\sigma^2}{\langle t-\rho/\sigma \rangle^2}|\ell |A_k^2(t,\xi)\lesssim_{\delta}\frac{1}{\langle t-\rho/\sigma\rangle}\,A_k(t,\xi)A_{\sigma}(t,\rho)A_{\ell}(t,\eta) \,e^{-(\lambda(t)/30)\langle \ell,\eta \rangle^{1/2}}.
\end{split}
\end{equation*}
The bounds (\ref{TLXH1.2.2}) then follow as before. This completes the proof of the lemma.
\end{proof}

We also need the following lemma used in the analysis of $\mathcal{N}_2$.
\begin{lemma}\label{TLXH3}
Assume that $t\ge1$ and recall the definitions of the sets $R_0,R_1,R_2,R_3$ in (\ref{nar18.1})-(\ref{nar18.4}). Denote $(\sigma,\rho):=(k-\ell,\xi-\eta)$. Suppose that $\sigma\neq0$.

(i) If $((k,\xi),(\ell,\eta))\in R_0\cup R_1$, then
\begin{equation}\label{TLXH3.1}
\begin{split}
\frac{|\rho/\sigma|^2+\langle t \rangle^2}{\sigma\langle t\rangle^2}\frac{1}{\langle t-\rho/\sigma \rangle^2}&\big|\eta A_k^2(t,\xi)-\xi A_{\ell}^2(t,\eta)\big|\\
&\lesssim_{\delta}\sqrt{|(A_k\dot{A}_k)(t,\xi)|}\,\sqrt{|(A_{\ell}\dot{A}_{\ell})(t,\eta)|}\,A_{\sigma}(t,\rho) \,e^{-(\delta_0/200)\langle \sigma,\rho \rangle^{1/2}}.
\end{split}
\end{equation}

(ii) If $((k,\xi),(\ell,\eta))\in R_2$, then
\begin{equation}\label{TLXH3.2}
\begin{split}
\frac{|\rho/\sigma|^2+\langle t \rangle^2}{\sigma\langle t\rangle^2}\frac{1}{\langle t-\rho/\sigma \rangle^2}&\big|\eta A_k^2(t,\xi)-\xi A_{\ell}^2(t,\eta)\big|\\
&\lesssim_{\delta}\sqrt{|(A_k\dot{A}_k)(t,\xi)|}\,\sqrt{|(A_{\sigma}\dot{A}_{\sigma})(t,\rho)|}\,A_{\ell}(t,\eta) \,e^{-(\delta_0/200)\langle \ell,\eta \rangle^{1/2}}.
\end{split}
\end{equation}
\end{lemma}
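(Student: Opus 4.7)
The proof will mirror the structure of Lemma \ref{TLXH1}, with the polynomial symbols $\ell, k$ replaced by $\eta, \xi$ and the multiplier factor adjusted accordingly. The first observation is that the multiplier in Lemma \ref{TLXH3} admits the elementary bound
\begin{equation*}
\frac{|\rho/\sigma|^2+\langle t\rangle^2}{\sigma\langle t\rangle^2\langle t-\rho/\sigma\rangle^2}\lesssim \frac{1}{|\sigma|\langle t-\rho/\sigma\rangle^2}+\frac{1}{|\sigma|\langle t\rangle^2},
\end{equation*}
obtained by splitting into the two regimes $|\rho/\sigma|\lesssim\langle t\rangle$ and $|\rho/\sigma|\gtrsim\langle t\rangle$ (in the latter $\langle t-\rho/\sigma\rangle\gtrsim|\rho/\sigma|$). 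The overall strategy is to handle the $R_0$ case by brute-force derivative gain, the $R_1$ case by symmetrization-based cancellation, and the $R_2$ case by splitting $\eta A_k^2 - \xi A_\ell^2$ into the ``large'' piece $|\xi|A_\ell^2$ (easy since $|\ell,\eta|$ is smallest) plus the ``main'' piece $|\eta|A_k^2$ which requires a case analysis parallel to that of (\ref{TLXH1.2.2}).

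For part (i), I would decompose $\eta A_k^2(t,\xi)-\xi A_\ell^2(t,\eta)=\mathcal{S}_1+\mathcal{S}_2+\mathcal{S}_3$ exactly as in (\ref{Transportterms1})--(\ref{Transportterms3}), isolating respectively the exponential factor $e^{2\lambda\langle k,\xi\rangle^{1/2}}$, the $e^{\sqrt\delta|k|^{1/2}}$ factor, and the $e^{\sqrt\delta\langle\xi\rangle^{1/2}}/b_k$ factor. For the exponential piece, Lagrange's mean value theorem combined with the identity $\eta E_1-\xi E_2=\eta(E_1-E_2)-\rho E_2$ yields
\begin{equation*}
|\mathcal{S}_1|\lesssim\bigl(|\eta|\langle\sigma,\rho\rangle^{1/2}+|\rho|\bigr)\bigl(e^{2\lambda\langle k,\xi\rangle^{1/2}}+e^{2\lambda\langle\ell,\eta\rangle^{1/2}}\bigr)e^{2\lambda\langle\sigma,\rho\rangle^{1/2}}B_\ast^2,
\end{equation*}
where $B_\ast^2$ denotes the corresponding squared $b$-factor. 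In the $R_0$ regime the standard polynomial-gain argument using (\ref{vfc26}), (\ref{vfc27}), and $\langle k,\xi\rangle^{10}\lesssim e^{(\delta_0/300)\langle\sigma,\rho\rangle^{1/2}}$ closes the estimate without further work. In the $R_1$ regime, the two subcases depending on whether $(k,\xi)\in I_t^{\ast\ast}$ or not are handled using the weight-ratio bounds (\ref{iq2w1})--(\ref{iq2w2}) exactly as in Substeps 2.1--2.3, arriving at intermediate bounds analogous to (\ref{TtermsT2intermediate}) and (\ref{TtermsT1intermediate}), with the overall factor $\langle\xi\rangle^{1/2}/\langle t\rangle^{3/2}$ in place of $\langle k\rangle^{1/2}/\langle t\rangle^{3/2}$. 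The analysis of $\mathcal{S}_2$ and $\mathcal{S}_3$ is a verbatim adaptation of Substeps 2.2 and 2.3, again using the identity $\eta X-\xi Y=\eta(X-Y)-\rho Y$ to expose the cancellation, and (\ref{Tlb1})--(\ref{Tlb2}) for the $b$-quotient derivative estimate in $\mathcal{S}_3$.

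For part (ii), I would first establish the ``easy'' bound
\begin{equation*}
\frac{|\rho/\sigma|^2+\langle t\rangle^2}{\sigma\langle t\rangle^2\langle t-\rho/\sigma\rangle^2}\,|\xi|\,A_\ell^2(t,\eta)\lesssim_\delta\sqrt{|(A_k\dot A_k)(t,\xi)|}\sqrt{|(A_\sigma\dot A_\sigma)(t,\rho)|}A_\ell(t,\eta)e^{-(\delta_0/200)\langle\ell,\eta\rangle^{1/2}},
\end{equation*}
which follows from the fact that in $R_2$ one has $|\xi|\sim|\sigma,\rho|$ while the factor $A_\ell$ is the smallest, so combining (\ref{vfc26}) applied to $A_\ell^2=A_\ell\cdot A_\ell$ with the polynomial-beats-exponential trick absorbs the $|\xi|$ loss. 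The main piece is the estimate
\begin{equation*}
\frac{|\rho/\sigma|^2+\langle t\rangle^2}{\sigma\langle t\rangle^2\langle t-\rho/\sigma\rangle^2}\,|\eta|\,A_k^2(t,\xi)\lesssim_\delta\sqrt{|(A_k\dot A_k)(t,\xi)|}\sqrt{|(A_\sigma\dot A_\sigma)(t,\rho)|}A_\ell(t,\eta)e^{-(\delta_0/200)\langle\ell,\eta\rangle^{1/2}},
\end{equation*}
which is handled by the same three cases as (\ref{TLXH1.2.2}): the non-resonant regime $|t-\rho/\sigma|\geq|\rho|/(10|\sigma|)$ uses the crude bound on the multiplier; the intermediate regime $|\rho|/(10\sigma^2)\leq|t-\rho/\sigma|\leq|\rho|/(10|\sigma|)$ uses the lower bound (\ref{RtL2.1}) on $|\dot A_\sigma/A_\sigma|$; and the critical regime $|t-\rho/\sigma|\leq|\rho|/(10\sigma^2)$ together with (\ref{RC3.1}) requires the crucial imbalanced-weight exchange (\ref{nonRtR}) to trade a factor of $(|\rho|/\sigma^2)/\langle t-\rho/\sigma\rangle$ between $A_\sigma(t,\rho)$ and $A_k(t,\xi)/A_\ell(t,\eta)$.

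The main obstacle will be this last subcase: here $t$ is forced to be resonant for $(k,\xi)$ (as in (\ref{tinr})), so $|\eta|\sim|\xi|\sim|k|t$, and the extra factor of $|\xi|/|k|=t$ compared to (\ref{TLXH1.2.2}) must be absorbed. I expect the extra $t$ factor to be compensated by the improved multiplier estimate $\frac{|\rho/\sigma|^2+\langle t\rangle^2}{\sigma\langle t\rangle^2\langle t-\rho/\sigma\rangle^2}\lesssim \frac{\langle\rho\rangle/\sigma^2}{\langle t-\rho/\sigma\rangle^2}\cdot\frac{1}{t}$ which holds precisely in the regime $|\rho/\sigma|\lesssim t\sim|\xi/k|$, so that after combining with (\ref{nonRtR}) we obtain
\begin{equation*}
\frac{|\rho/\sigma|^2+\langle t\rangle^2}{\sigma\langle t\rangle^2\langle t-\rho/\sigma\rangle^2}|\eta|A_k^2(t,\xi)\lesssim_\delta \frac{1}{\langle t-\xi/k\rangle}A_k(t,\xi)A_\sigma(t,\rho)A_\ell(t,\eta)e^{-(\lambda(t)/30)\langle\ell,\eta\rangle^{1/2}},
\end{equation*}
and the conclusion follows from (\ref{eq:A_kxi}), (\ref{eq:CDW}), and (\ref{reb8}) exactly as in the proof of (\ref{TLXH1.2.2}).
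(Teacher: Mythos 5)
Your overall architecture — $R_0$ by brute force, $R_1$ by symmetrization into three pieces, $R_2$ by splitting off the easy $|\xi|A_\ell^2$ part and then a case analysis in $|t-\rho/\sigma|$ — is the same as the paper's, and those parts are sound modulo the usual bookkeeping (your bound for the first symmetrization piece, $|\eta|\langle\sigma,\rho\rangle^{1/2}(e_1+e_2)$, is one half-power of $\langle k,\xi\rangle$ worse than the sharp mean-value bound $|\eta|\cdot\frac{\langle\sigma,\rho\rangle}{\langle k,\xi\rangle^{1/2}}(e_1+e_2)$ that the paper's \eqref{Tros1} uses; in $R_1$ that extra $\langle k,\xi\rangle^{1/2}$ cannot be absorbed by the Gevrey gain in $\langle\sigma,\rho\rangle$, so the sharper estimate is actually needed). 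The substantive gap is in your treatment of the critical resonant subcase of part (ii), where the reasoning is incorrect on two counts.

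First, in $R_2$ one has $\langle\ell,\eta\rangle\lesssim\frac{1}{10}(\langle k,\xi\rangle+\langle\ell,\eta\rangle+\langle\sigma,\rho\rangle)$, so $|\eta|$ is the \emph{small} frequency; your claim that $|\eta|\sim|\xi|\sim|k|t$ contradicts the $R_2$ restriction. Consequently there is no ``extra factor of $t$'' to absorb compared to \eqref{TLXH1.2.2}: the product $\frac{|\rho/\sigma|^2+\langle t\rangle^2}{\sigma\langle t\rangle^2\langle t-\rho/\sigma\rangle^2}\,|\eta|$ is small at resonance precisely because $|\eta|$ is small, and your ``improved multiplier estimate'' is an identity that absorbs nothing.

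Second, and more importantly, \eqref{nonRtR} cannot be used here. That estimate was derived under the assumptions of (\ref{RC3.1}) \emph{together with $\ell\neq0$}, and those conditions force $t\notin I_{k,\xi}$ (the step $A_{NR}(t,\rho)\gtrsim e^{\lambda\langle\sigma,\rho\rangle^{1/2}-\lambda\langle k,\xi\rangle^{1/2}}A_k(t,\xi)e^{-\dots}$ relies on $A_k(t,\xi)\approx A_{NR}(t,\xi)$, i.e.\ on $t$ being \emph{non-resonant} for $(k,\xi)$). In $\mathcal{N}_1$ one may assume $\ell\neq0$ because the symbol is $\ell$ and the term vanishes otherwise; in $\mathcal{N}_2$ the symbol is $\eta$, so $\ell=0$ is a genuine and nontrivial case, and when $\ell=0$ one can have $t\in I_{k,\xi}\cap I_{\sigma,\rho}$. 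You simultaneously claim $t$ is resonant for $(k,\xi)$ (as in \eqref{tinr}) and propose to apply \eqref{nonRtR} — these two are incompatible, so the proposed derivation of your final display is circular. The correct observation, which you have missed, is that in the subcase $t\in I_{k,\xi}$, $|t-\rho/\sigma|\leq|\rho|/(10\sigma^2)$, the imbalanced-weight exchange is \emph{unnecessary}: since $t\in I_{k,\xi}\cap I_{\sigma,\rho}$, \eqref{vfc26} already gives $A_k(t,\xi)/A_\sigma(t,\rho)\lesssim_\delta e^{0.9\lambda(t)\langle\ell,\eta\rangle^{1/2}}$ with no $|\rho|/\sigma^2$-type loss, and since $|\eta|\lesssim\langle\ell,\eta\rangle$ (which is absorbed by $e^{-(\lambda(t)/20)\langle\ell,\eta\rangle^{1/2}}$), a direct estimate
\begin{equation*}
\frac{|\rho/\sigma|^2+\langle t \rangle^2}{\sigma\langle t\rangle^2}\frac{|\eta|}{\langle t-\rho/\sigma \rangle^2}A_k^2(t,\xi)\lesssim_{\delta}\frac{1}{\langle t-\rho/\sigma \rangle^2}A_k(t,\xi)A_{\sigma}(t,\rho)A_{\ell}(t,\eta) \,e^{-(\lambda(t)/20)\langle \ell,\eta \rangle^{1/2}}
\end{equation*}
closes the argument via \eqref{eq:A_kxi}, \eqref{eq:CDW}, and \eqref{reb8}. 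You should remove the appeal to \eqref{nonRtR} in this subcase and replace it with this no-loss argument.
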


\begin{proof} {\bf Step 1.} Assume first that $((k,\xi),(\ell,\eta))\in R_0$ and we prove (\ref{TLXH3.1}). In this case we do not need to use the symmetrization, due to the favorable gain of derivatives exactly as in {\bf Step 1} of the proof of (\ref{TLXH1.1}). We omit the repetitive details.

{\bf Step 2.}  Assume now that $((k,\xi),(\ell,\eta))\in R_1$ and we prove (\ref{TLXH3.1}). In this case we need to use the symmetrization to reduce the loss of derivatives in $v$. 
We write
\begin{equation}\label{Transportrms1and2}
\eta A_k^2(t,\xi)-\xi A_{\ell}^2(t,\eta):=\mathcal{T}'_1+\mathcal{T}'_2+\mathcal{T}'_3,
\end{equation}
with
\begin{equation}\label{Transportrms1}
\mathcal{T}'_1:=\left(\eta e^{2\lambda(t)\langle k,\xi\rangle^{1/2}}-\xi e^{2\lambda(t)\langle\ell,\eta\rangle^{1/2}}\right)\bigg[\,\frac{e^{\sqrt{\delta}\langle\xi\rangle^{1/2}}}{b_k(t,\xi)}+e^{\sqrt{\delta}|k|^{1/2}}\,\bigg]^2,
\end{equation}
\begin{equation}\label{Transportrms2}
\mathcal{T}'_2:=\xi e^{2\lambda(t)\langle\ell,\eta\rangle^{1/2}}\bigg[\,e^{\sqrt{\delta}|k|^{\frac{1}{2}}}-e^{\sqrt{\delta}|\ell|^{\frac{1}{2}}}\,\bigg]\bigg[\,\frac{e^{\sqrt{\delta}\langle\xi\rangle^{1/2}}}{b_k(t,\xi)}+e^{\sqrt{\delta}|k|^{1/2}}+\frac{e^{\sqrt{\delta}\langle \eta\rangle^{1/2}}}{b_{\ell}(t,\eta)}+e^{\sqrt{\delta}| \ell|^{1/2}}\,\bigg],
\end{equation}
\begin{equation}\label{Transportrms3}
\begin{split}
\mathcal{T}'_3:=\xi e^{2\lambda(t)\langle\ell,\eta\rangle^{1/2}}\bigg[\,\frac{e^{\sqrt{\delta}\langle\xi\rangle^{\frac{1}{2}}}}{b_k(t,\xi)}-\frac{e^{\sqrt{\delta}\langle\eta\rangle^{\frac{1}{2}}}}{b_{\ell}(t,\eta)}\,\bigg]\bigg[\,\frac{e^{\sqrt{\delta}\langle\xi\rangle^{1/2}}}{b_k(t,\xi)}+e^{\sqrt{\delta}|k|^{1/2}}+\frac{e^{\sqrt{\delta}\langle \eta\rangle^{1/2}}}{b_{\ell}(t,\eta)}+e^{\sqrt{\delta}| \ell|^{1/2}}\,\bigg].
\end{split}
\end{equation}
It suffices to prove that for each $i\in\{1,2,3\}$
\begin{equation}\label{T'terms}
\frac{1}{\langle t\rangle^2}\big|\mathcal{T}'_i\big|\lesssim_{\delta}\sqrt{|(A_k\dot{A}_k)(t,\xi)|}\,\sqrt{|(A_{\ell}\dot{A}_{\ell})(t,\eta)|}\,A_{\sigma}(t,\rho) \,e^{-(\delta_0/100)\langle \sigma,\rho \rangle^{1/2}}.
\end{equation}

{\bf Substep 2.1.} We first prove (\ref{T'terms}) for $i=1$. The argument is similar to the argument in {\bf Substep 2.1} in the proof of (\ref{TLXH1.1}). For later use we prove slightly stronger bounds. More precisely, we estimate
\begin{equation}\label{Tros1}
\big|\mathcal{T}'_1\big| \lesssim_{\delta} \langle\sigma,\rho\rangle\langle k,\xi\rangle^{1/2}\left[ e^{2\lambda(t)\langle k,\xi\rangle^{1/2}} + e^{2\lambda(t)\langle\ell,\eta\rangle^{1/2}} \right]\bigg[\,\frac{e^{\sqrt{\delta}\langle\xi\rangle^{1/2}}}{b_k(t,\xi)}+e^{\sqrt{\delta}|k|^{1/2}}\bigg]^2.
\end{equation}
We use \eqref{iq2w1} and \eqref{eq:comparisonweights1} if $(k,\xi)\notin I_t^{\ast\ast}$. Recalling also (\ref{b>a}) we have
\begin{equation}\label{Tros2}
\frac{1}{\langle t\rangle^{7/4}}\big|\mathcal{T}'_1\big| \lesssim_{\delta}\frac{\langle k,\xi\rangle^{1/2}}{\langle t\rangle^{7/4}} A_k(t,\xi)A_{\ell}(t,\eta)A_{\sigma}(t,\rho) \,e^{-(\lambda(t)/20)\langle \sigma,\rho \rangle^{1/2}}.
\end{equation}
The bounds (\ref{T'terms}) then follow from (\ref{eq:A_kxi}) and (\ref{eq:CDW}).

On the other hand, if $(k,\xi)\in I_t^{\ast\ast}$ then we use \eqref{iq2w2} and \eqref{Tros1} to estimate
\begin{equation}\label{Tros3}
\frac{1}{\langle t\rangle^{7/4}}\big|\mathcal{T}'_1\big| \lesssim_{\delta}\frac{\langle \xi\rangle^{3/2}}{\langle t\rangle^{7/4}k^2}\frac{1}{\langle t-\xi/k\rangle}A_k(t,\xi)A_{\ell}(t,\eta)A_{\sigma}(t,\rho) \,e^{-(\lambda(t)/20)\langle \sigma,\rho \rangle^{1/2}},
\end{equation}
and (\ref{T'terms}) follows from (\ref{eq:A_kxi}), (\ref{eq:CDW}), and \eqref{reb8} as before.

{\bf Substep 2.2.} We now prove (\ref{T'terms}) for $i=2$. If $|k|>\langle\xi\rangle/10$ then 
\begin{equation}\label{T'klarge}
|\xi| \big|\,e^{\sqrt{\delta}|k|^{1/2}}-e^{\sqrt{\delta}|\ell|^{1/2}}\,\big|\lesssim_{\delta} \langle\xi\rangle^{1/2}\big[e^{\sqrt{\delta}|k|^{1/2}}+e^{\sqrt{\delta}|\ell|^{1/2}}\big]e^{8\sqrt{\delta}\langle\sigma,\rho\rangle^{1/2}},
\end{equation}
and therefore the stronger bounds \eqref{Tros2} hold for $\langle t\rangle^{-7/4}\big|\mathcal{T}'_2|$ as well. On the other hand, if $|k|\leq \langle\xi\rangle/10$ then
\begin{equation}\label{T'ksmall}
|\xi| \big|\,e^{\sqrt{\delta}|k|^{1/2}}-e^{\sqrt{\delta}|\ell|^{1/2}}\,\big|\lesssim_{\delta} \big[e^{0.5\sqrt{\delta}\langle\xi\rangle^{1/2}}+e^{0.5\sqrt{\delta}\langle\eta\rangle^{1/2}}\big]e^{5\sqrt{\delta}\langle\sigma,\rho\rangle^{1/2}},
\end{equation}
so the stronger bounds \eqref{Tros2} hold for $\langle t\rangle^{-7/4}\big|\mathcal{T}'_2|$ in this case as well.

{\bf Substep 2.3.} We now prove (\ref{T'terms}) for $i=3$. As in the proof of \eqref{Tterms}, assume first  that $(k,\xi)\in I_t^\ast$. Using \eqref{iq2w1}--\eqref{iq2w2}, (\ref{dor20}), and (\ref{eq:comparisonweights1}) we estimate, without using the cancellation,
\begin{equation*}
\begin{split}
\frac{1}{\langle t\rangle^2}\big|\mathcal{T}'_3\big|& \lesssim_{\delta}\,  \frac{|\xi|}{\langle t\rangle^2}\cdot\frac{|\xi|}{k^2}\frac{e^{2\lambda(t)\langle\ell,\eta\rangle^{1/2}}}{\langle t-\xi/k\rangle}\,\bigg[\,\frac{e^{\sqrt{\delta}\langle\xi\rangle^{1/2}}}{b_k(t,\xi)}+e^{\sqrt{\delta}|k|^{1/2}}\,\bigg]\bigg[\,\frac{e^{\sqrt{\delta}\langle\eta\rangle^{1/2}}}{b_{\ell}(t,\eta)}+e^{\sqrt{\delta}|\ell|^{1/2}}\,\bigg]e^{8\sqrt{\delta}\langle\sigma,\rho\rangle^{1/2}}\\
&\lesssim_{\delta}\,\frac{e^{2\lambda(t)\langle\ell,\eta\rangle^{1/2}}}{\langle t-\xi/k\rangle}\bigg[\,\frac{e^{\sqrt{\delta}\langle\xi\rangle^{1/2}}}{b_k(t,\xi)}+e^{\sqrt{\delta}|k|^{1/2}}\,\bigg]\bigg[\,\frac{e^{\sqrt{\delta}\langle\eta\rangle^{1/2}}}{b_{\ell}(t,\eta)}+e^{\sqrt{\delta}|\ell|^{1/2}}\,\bigg]e^{8\sqrt{\delta}\langle\sigma,\rho\rangle^{1/2}}.
\end{split}
\end{equation*}
Using (\ref{b>a}) with $\beta=1/2$, we can estimate the expression above as
\begin{equation}\label{Tros5}
\frac{1}{\langle t\rangle^2}\big|\mathcal{T}'_3\big| \lesssim_{\delta}\,\frac{1}{\langle t-\xi/k\rangle}A_k(t,\xi)A_{\ell}(t,\eta)A_{\sigma}(t,\rho) \,e^{-(\lambda(t)/20)\langle \sigma,\rho \rangle^{1/2}},
\end{equation}
and (\ref{T'terms}) for $i=3$ then follows from (\ref{eq:A_kxi}), (\ref{eq:CDW}), and \eqref{reb8}.

The proof is similar if $(\ell,\eta)\in I_t^\ast$. On the other hand, if $(k,\xi)\notin I_t^\ast$ and $(\ell,\eta)\notin I_t^\ast$ then $b_k(t,\xi)=b_{NR}(t,\xi)$ and $b_{\ell}(t,\eta)=b_{NR}(t,\eta)$ (see \eqref{dor23.2}). By (\ref{dor21}), (\ref{dor20}) and (\ref{eq:comparisonweights1}), we have
 \begin{equation}\label{T'lb1}
|\xi| \bigg|\,\frac{e^{\sqrt{\delta}\langle\xi\rangle^{1/2}}}{b_k(t,\xi)}-\frac{e^{\sqrt{\delta}\langle\eta\rangle^{1/2}}}{b_{\ell}(t,\eta)}\,\bigg|\lesssim_{\delta}\frac{\langle \xi\rangle}{L_{\kappa}(t,\xi)}\cdot \frac{e^{\sqrt{\delta}\langle\xi\rangle^{1/2}}}{b_k(t,\xi)}e^{8\sqrt{\delta}\langle\sigma,\rho\rangle^{1/2}}.
\end{equation}
Using (\ref{Tlb2}) and (\ref{T'lb1}), we then estimate
\begin{equation*}
\frac{1}{\langle t\rangle^{7/4}}\big|\mathcal{T}'_3\big| \lesssim_{\delta}\frac{\langle\xi\rangle^{1/2}}{\langle t\rangle^{5/4}}A_k(t,\xi)A_{\ell}(t,\eta)A_{\sigma}(t,\rho)e^{-(\lambda(t)/20)\langle \sigma,\rho\rangle^{1/2}},
\end{equation*}
and (\ref{T'terms}) follows from (\ref{eq:A_kxi}) and (\ref{eq:CDW}).

{\bf Step 3.} Assume that $((k,\xi),(\ell,\eta))\in R_2$ and we prove (\ref{TLXH3.2}). In this case we do not need to use the symmetrization. Assuming $\sigma\in\mathbb{Z}\backslash\{0\}$ and $((k,\xi),(\ell,\eta))\in R_2$, the bounds
\begin{equation}\label{TLXH3.2.1}
\begin{split}
\frac{|\rho/\sigma|^2+\langle t \rangle^2}{\sigma\langle t\rangle^2}&\frac{1}{\langle t-\rho/\sigma \rangle^2}|\xi |A_{\ell}^2(t,\eta)\\
&\lesssim_{\delta}\sqrt{|(A_k\dot{A}_k)(t,\xi)|}\,\sqrt{|(A_{\sigma}\dot{A}_{\sigma})(t,\rho)|}\,A_{\ell}(t,\eta) \,e^{-(\delta_0/200)\langle \ell,\eta \rangle^{1/2}}.
\end{split}
\end{equation}
follow easily from the derivative gain (\ref{vfc26})--(\ref{vfc27}) and the lower bounds $A_{\sigma}(t,\rho)\ge e^{\lambda(t)\langle \sigma,\rho\rangle^{1/2}}$. 

For (\ref{TLXH3.2}) it remains to prove that, for $\sigma\in\mathbb{Z}\backslash\{0\}$ and $((k,\xi),(\ell,\eta))\in R_2$,
\begin{equation}\label{TLXH3.2.2}
\begin{split}
\frac{|\rho/\sigma|^2+\langle t \rangle^2}{\sigma\langle t\rangle^2}&\frac{1}{\langle t-\rho/\sigma \rangle^2}|\eta |A_{k}^2(t,\xi)\\
&\lesssim_{\delta}\sqrt{|(A_k\dot{A}_k)(t,\xi)|}\,\sqrt{|(A_{\sigma}\dot{A}_{\sigma})(t,\rho)|}\,A_{\ell}(t,\eta) \,e^{-(\delta_0/200)\langle \ell,\eta \rangle^{1/2}}.
\end{split}
\end{equation}
This is similar to the proof of (\ref{TLXH1.2.2}). We consider two cases.

{\bf Case 1.} We first assume that 
\begin{equation}\label{Rt1'}
|t-\rho/\sigma|\ge |\rho|/(10|\sigma|).
\end{equation}
Then
\begin{equation}\label{Rt2'}
\frac{|\rho/\sigma|^2+\langle t \rangle^2}{\sigma\langle t\rangle^2}\frac{1}{\langle t-\rho/\sigma \rangle^2}\lesssim \frac{1}{\sigma \langle t\rangle^2},
\end{equation}
and (\ref{TLXH3.2.2}) follows in this case, using similar argument as in {\bf Case 1} of the proof of (\ref{TLXH1.2.2}).

{\bf Case 2.} Finally, we assume that 
\begin{equation}\label{Rt2.0'}
|t-\rho/\sigma|\leq|\rho|/(10|\sigma|).
\end{equation}
If $t\not\in I_{k,\xi}$ then we estimate, using (\ref{vfc26}),
 \begin{equation*}
\frac{|\rho/\sigma|^2+\langle t \rangle^2}{\sigma\langle t\rangle^2}\frac{1}{\langle t-\rho/\sigma \rangle^2}|\eta |A_k^2(t,\xi)\lesssim_{\delta}\frac{1}{\langle t-\rho/\sigma\rangle}\,A_k(t,\xi)A_{\sigma}(t,\rho)A_{\ell}(t,\eta) \,e^{-(\lambda(t)/20)\langle \ell,\eta \rangle^{1/2}}.
\end{equation*}
The bounds (\ref{TLXH3.2.2}) then follow from (\ref{eq:CDW}), \eqref{eq:A_kxi}, and \eqref{reb8}. On the other hand, if $t\in I_{k,\xi}$ and $|t-\rho/\sigma|\geq\frac{|\rho|}{10\sigma^2}$ then we use \eqref{vfc26}--(\ref{vfc27}) and (\ref{Rt2.0'}) to estimate
\begin{equation*}
\begin{split}
\frac{|\rho/\sigma|^2+\langle t \rangle^2}{\sigma\langle t\rangle^2}\frac{1}{\langle t-\rho/\sigma \rangle^2}|\eta |A_k^2(t,\xi)&\lesssim_{\delta}\frac{\sigma^2}{\langle\rho\rangle}\cdot\frac{|\xi|/k^2}{\langle t-\xi/k\rangle}\,A_k(t,\xi)A_{\sigma}(t,\rho)A_{\ell}(t,\eta) \,e^{-(\lambda(t)/20)\langle \ell,\eta \rangle^{1/2}}\\
&\lesssim_{\delta}\frac{1}{\langle t-\xi/k\rangle}A_k(t,\xi)A_{\sigma}(t,\rho)A_{\ell}(t,\eta) \,e^{-(\lambda(t)/30)\langle \ell,\eta \rangle^{1/2}}.
\end{split}
\end{equation*}
The bounds (\ref{TLXH3.2.2}) follow again from (\ref{eq:CDW}), \eqref{eq:A_kxi}, and \eqref{reb8}.

Finally, assume that $t\in I_{k,\xi}$ and $|t-\rho/\sigma|\leq\frac{|\rho|}{10\sigma^2}$. This is different from {\bf Case 3} in the proof of (\ref{TLXH1.2.2}). The imbalance of the weights is no longer useful, as we do not have the condition $\ell\neq0$. On the other hand, there is no loss of derivative either, and we estimate using \eqref{vfc26},
\begin{equation*}
\begin{split}
\frac{|\rho/\sigma|^2+\langle t \rangle^2}{\sigma\langle t\rangle^2}&\frac{1}{\langle t-\rho/\sigma \rangle^2}|\eta |A_k^2(t,\xi)\lesssim_{\delta}\frac{1}{\langle t-\rho/\sigma \rangle^2}A_k(t,\xi)A_{\sigma}(t,\rho)A_{\ell}(t,\eta) \,e^{-(\lambda(t)/20)\langle \ell,\eta \rangle^{1/2}}.
\end{split}
\end{equation*}
The bounds (\ref{TLXH3.2.2}) then follow as before. This completes the proof of the lemma.
\end{proof}

We also need the following lemma used in the analysis of $\mathcal{N}_3$.
\begin{lemma}\label{TLXH2}
Assume that $t\ge1$ and recall the definitions of the sets $\Sigma_0,\Sigma_1,\Sigma_2,\Sigma_3$ in (\ref{nar19.1}). Denote $\rho:=\xi-\eta$.

(i) If $((k,\xi),(k,\eta))\in \Sigma_0\cup \Sigma_1$, then
\begin{equation}\label{TLXH2.1}
\begin{split}
\frac{1}{\langle\rho\rangle\langle t\rangle+\langle \rho\rangle^{1/4}\langle t\rangle^{7/4}}&\big|\eta A_k^2(t,\xi)-\xi A_{k}^2(s,\eta)\big|\\
&\lesssim_{\delta}\sqrt{|(A_k\dot{A}_k)(t,\xi)|}\,\sqrt{|(A_{k}\dot{A}_{k})(t,\eta)|}\,A_{NR}(t,\rho) \,e^{-(\delta_0/200)\langle \rho \rangle^{1/2}}.
\end{split}
\end{equation}

(ii) If $((k,\xi),(k,\eta))\in \Sigma_2$, then
\begin{equation}\label{TLXH2.2}
\begin{split}
\frac{1}{\langle\rho\rangle\langle t\rangle+\langle \rho\rangle^{1/4}\langle t\rangle^{7/4}}&\big|\eta A_k^2(t,\xi)-\xi A_{k}^2(t,\eta)\big|\\
&\lesssim_{\delta}\sqrt{|(A_k\dot{A}_k)(t,\xi)|}\,\sqrt{|(A_{NR}\dot{A}_{NR})(t,\rho)|}\,A_{k}(t,\eta) \,e^{-(\delta_0/200)\langle k,\eta \rangle^{1/2}}.
\end{split}
\end{equation}
\end{lemma}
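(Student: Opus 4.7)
\medskip

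\noindent\textbf{Proof proposal for Lemma \ref{TLXH2}.} The plan is to follow the same three-step template used in Lemmas \ref{TLXH1} and \ref{TLXH3}, exploiting that here both indices coincide ($k=\ell$), so the transport is purely in $v$ and no term analogous to $\mathcal{T}_2$ from \eqref{Transportterms2} appears. The Fourier variable of $\dot V$ is $\rho=\xi-\eta$, and we control it by $A_{NR}(t,\rho)$.

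\emph{Step 1 (the region $\Sigma_0$).} Here $\langle k,\xi\rangle$, $\langle k,\eta\rangle$ and $\langle\rho\rangle$ are all comparable to $\langle k,\xi\rangle+\langle k,\eta\rangle+\langle\rho\rangle$, so we do not symmetrize. I would bound
\[
|\eta A_k^2(t,\xi)-\xi A_k^2(t,\eta)|\lesssim \langle k,\xi\rangle^{2}\bigl[A_k^2(t,\xi)+A_k^2(t,\eta)\bigr]
\]
and absorb all polynomial factors into $A_{NR}(t,\rho)\geq e^{\lambda(t)\langle\rho\rangle^{1/2}}$, using Lemma \ref{A_kA_ell} and the high-frequency gain $\langle k,\xi\rangle^{10}+\langle k,\eta\rangle^{10}+\langle\rho\rangle^{10}\lesssim e^{(\delta_0/300)\langle\rho\rangle^{1/2}}$. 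The factor $\frac{1}{\langle\rho\rangle\langle t\rangle+\langle\rho\rangle^{1/4}\langle t\rangle^{7/4}}\lesssim 1$ is harmless.

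\emph{Step 2 (the region $\Sigma_1$, the main case).} When $\langle\rho\rangle$ is small relative to $\langle k,\xi\rangle\sim\langle k,\eta\rangle$, symmetrization is essential to avoid the spurious derivative loss coming from the factor $\eta$ (or $\xi$). Since the indices are both $k$, I decompose
\[
\eta A_k^2(t,\xi)-\xi A_k^2(t,\eta)=\mathcal{T}_1''+\mathcal{T}_3'',
\]
where
\begin{equation*}
\mathcal{T}_1'':=\bigl(\eta e^{2\lambda(t)\langle k,\xi\rangle^{1/2}}-\xi e^{2\lambda(t)\langle k,\eta\rangle^{1/2}}\bigr)\Bigl[\tfrac{e^{\sqrt\delta\langle\xi\rangle^{1/2}}}{b_k(t,\xi)}+e^{\sqrt\delta|k|^{1/2}}\Bigr]^2,
\end{equation*}
\begin{equation*}
\mathcal{T}_3'':=\xi e^{2\lambda(t)\langle k,\eta\rangle^{1/2}}\Bigl[\tfrac{e^{\sqrt\delta\langle\xi\rangle^{1/2}}}{b_k(t,\xi)}-\tfrac{e^{\sqrt\delta\langle\eta\rangle^{1/2}}}{b_k(t,\eta)}\Bigr]\Bigl[\tfrac{e^{\sqrt\delta\langle\xi\rangle^{1/2}}}{b_k(t,\xi)}+e^{\sqrt\delta|k|^{1/2}}+\tfrac{e^{\sqrt\delta\langle\eta\rangle^{1/2}}}{b_k(t,\eta)}+e^{\sqrt\delta|k|^{1/2}}\Bigr].
\end{equation*}
For $\mathcal{T}_1''$, the algebraic identity $\eta\,e^{2\lambda\langle k,\xi\rangle^{1/2}}-\xi\,e^{2\lambda\langle k,\eta\rangle^{1/2}}=\eta(e^{2\lambda\langle k,\xi\rangle^{1/2}}-e^{2\lambda\langle k,\eta\rangle^{1/2}})-\rho\, e^{2\lambda\langle k,\eta\rangle^{1/2}}$ together with the mean-value estimate $|e^{2\lambda\langle k,\xi\rangle^{1/2}}-e^{2\lambda\langle k,\eta\rangle^{1/2}}|\lesssim \langle\rho\rangle\langle k,\xi\rangle^{-1/2}[e^{2\lambda\langle k,\xi\rangle^{1/2}}+e^{2\lambda\langle k,\eta\rangle^{1/2}}]$ produces a gain of $\langle\rho\rangle\langle k,\xi\rangle^{1/2}$; this gain is precisely what the weight $\langle\rho\rangle\langle t\rangle+\langle\rho\rangle^{1/4}\langle t\rangle^{7/4}$ in the denominator is designed to absorb, after interpolating between the two lower bounds $\sqrt{|A_k\dot A_k|(t,\cdot)}\gtrsim_\delta A_k(t,\cdot)\,\langle k,\cdot\rangle^{1/4}\langle t\rangle^{-(1+\sigma_0)/2}$ from \eqref{eq:A_kxi}. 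The term $\mathcal{T}_3''$ will be split as in \textbf{Substep 2.3} of the proof of \eqref{TLXH1.1}, into the two regimes $(k,\xi)\in I_t^{\ast}$ (or $(k,\eta)\in I_t^{\ast}$) and $(k,\xi),(k,\eta)\notin I_t^{\ast\ast}$; in the first regime $t\approx |\xi/k|$ and one uses \eqref{iq2w1}--\eqref{iq2w2} together with the $\langle t-\xi/k\rangle^{-1}$ decay coming from \eqref{reb8}, while in the second regime $b_k=b_{NR}$ and Lemma \ref{bweights} (ii) together with the length $L_\kappa(t,\xi)$ bound \eqref{Tlb2} yields the required $\langle\xi\rangle^{1/2}\langle t\rangle^{-3/2}$ gain. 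The main obstacle I expect is correctly tracking the extra $\langle t\rangle^{7/4}$ budget in the weight: one must avoid spending it all on $\mathcal{T}_1''$, reserving enough to handle $\mathcal{T}_3''$ in the resonant sub-case where the imbalance of the weights produces the $|\xi|/k^2$ blow-up.

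\emph{Step 3 (the region $\Sigma_2$).} When $\langle k,\eta\rangle$ is much smaller than $\langle k,\xi\rangle+\langle\rho\rangle$, no symmetrization is needed. I would use $|\eta A_k^2(t,\xi)-\xi A_k^2(t,\eta)|\lesssim \langle\xi\rangle A_k^2(t,\xi)$ and Lemma \ref{A_kA_ell} to transfer one copy of $A_k(t,\xi)$ onto $A_{NR}(t,\rho)$, leaving the other copy as $A_k(t,\xi)\lesssim_\delta A_{NR}(t,\rho)A_k(t,\eta)e^{-(\lambda/20)\langle k,\eta\rangle^{1/2}}$. The bound $\sqrt{|A_{NR}\dot A_{NR}|(t,\rho)}$ needed on the right-hand side is produced via \eqref{TLX3.5}, giving a factor $\gtrsim A_{NR}(t,\rho)\langle\rho\rangle^{1/4}\langle t\rangle^{-(1+\sigma_0)/2}$; combined with the $\langle k,\xi\rangle^{1/4}\langle t\rangle^{-(1+\sigma_0)/2}$ from $\sqrt{|A_k\dot A_k|(t,\xi)}$, this matches the budget $\langle\rho\rangle\langle t\rangle+\langle\rho\rangle^{1/4}\langle t\rangle^{7/4}$ since in $\Sigma_2$ we have $\langle\xi\rangle\sim\langle\rho\rangle$. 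The symmetric subcase (where $\xi A_k^2(t,\eta)$ dominates) reduces to a derivative gain that is handled as in Step 1.

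Throughout, the only genuinely new technical point compared with Lemmas \ref{TLXH1} and \ref{TLXH3} is the asymmetric time-weight $\langle\rho\rangle\langle t\rangle+\langle\rho\rangle^{1/4}\langle t\rangle^{7/4}$; aligning this weight with the pair $(\sqrt{|A_k\dot A_k|(t,\xi)},\sqrt{|A_k\dot A_k|(t,\eta)})$ in Step 2 (or with $(\sqrt{|A_k\dot A_k|},\sqrt{|A_{NR}\dot A_{NR}|})$ in Step 3) via \eqref{eq:A_kxi} and \eqref{TLX3.5} is the step that requires the most care.
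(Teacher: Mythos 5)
Your plan captures the right overall architecture: the decomposition into $\Sigma_0,\Sigma_1,\Sigma_2$, the symmetrization $\mathcal{T}''_1+\mathcal{T}''_3$ in $\Sigma_1$ (the paper labels the second piece $\mathcal{T}''_2$; since $k=\ell$ there is indeed no analogue of $\mathcal{T}_2$ from \eqref{Transportterms2}), and the idea that $\Sigma_0$ and $\Sigma_2$ are handled without symmetrization. But there is a genuine gap in your Step~3, and a superfluous complication in your Step~2.

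In Step~3 ($\Sigma_2$) the transfer bound $A_k(t,\xi)\lesssim_\delta A_{NR}(t,\rho)A_k(t,\eta)e^{-(\lambda/20)\langle k,\eta\rangle^{1/2}}$ only holds when $t\notin I_{k,\xi}$. When $t\in I_{k,\xi}$ the $\dot V$-slot carries frequency $(0,\rho)$, which is never resonant, so moving $A_k(t,\xi)$ to $A_{NR}(t,\rho)$ necessarily picks up the imbalance factor $\frac{|\xi|/k^2}{\langle t-\xi/k\rangle}$ (compare \eqref{vfc27}, or directly \eqref{reb7} via \eqref{AkANRn3}). Your budget accounting, based only on the universal lower bounds $\sqrt{|A_*\dot A_*|}\gtrsim A_*\langle\cdot\rangle^{1/4}\langle t\rangle^{-(1+\sigma_0)/2}$, cannot absorb this blow-up. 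What is needed is the observation that on $\Sigma_2$ one has $\langle\xi\rangle\lesssim\langle\rho\rangle$ and $k^2\geq 1$, so the factor $|\xi|/k^2$ is absorbed by the $\langle\rho\rangle\langle t\rangle$ part of the denominator, leaving $\frac{1}{\langle t\rangle\langle t-\xi/k\rangle}$, which is then absorbed by combining \eqref{vfc30.5} with the resonant part of $|\dot A_k/A_k|(t,\xi)$ in \eqref{eq:A_kxi} (which by \eqref{reb8} is $\gtrsim_\delta\langle t-\xi/k\rangle^{-1}$ there). This is the only place in the lemma where the imbalance of the weights does real work, and it is missing from your proposal.

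In Step~2 ($\Sigma_1$) the case split from Substep~2.3 of the proof of \eqref{TLXH1.1} is unnecessary here. That split existed to tame the mismatch $b_\ell(t,\eta)/b_k(t,\xi)\sim\frac{|\xi|/k^2}{\langle t-\xi/k\rangle}$ arising via \eqref{iq2w2} when $k\neq\ell$; with $k=\ell$ the ratio $b_k(t,\eta)/b_k(t,\xi)$ is controlled uniformly by \eqref{dor6}, and the paper's treatment of $\mathcal{T}''_2$ is a one-liner via the smoothness bound \eqref{dor21}: $|\partial_\xi b_k(t,\xi)|\lesssim_\delta b_k(t,\xi)/L_\kappa(t,\xi)$ gives directly
\begin{equation*}
|\mathcal{T}''_2|\lesssim_\delta\langle\xi\rangle\,\frac{\langle\xi\rangle^{1/2}+t}{\langle\xi\rangle+t}\,A_k(t,\xi)A_k(t,\eta)A_{NR}(t,\rho)e^{-(\lambda(t)/20)\langle\rho\rangle^{1/2}}\lesssim_\delta\langle\xi\rangle^{1/2}\langle t\rangle^{1/2}A_k(t,\xi)A_k(t,\eta)A_{NR}(t,\rho)e^{-(\lambda(t)/20)\langle\rho\rangle^{1/2}},
\end{equation*}
after which \eqref{eq:A_kxi} and \eqref{eq:CDW} close the estimate with the $\langle\rho\rangle^{1/4}\langle t\rangle^{7/4}$ part of the denominator. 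Your case-split would still close but chases a resonance that cannot occur once the two $v$-frequencies share the same $k$.
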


\begin{proof} (i) If $((k,\xi),(\ell,\eta))\in \Sigma_0$ then there is no derivative loss and the proof of \eqref{TLXH2.1} is similar to the proof of \eqref{TLXH3.1}. If $((k,\xi),(k,\eta))\in \Sigma_1$ then we write, as in \eqref{Transportrms1and2}--\eqref{Transportrms3},
\begin{equation}\label{Trterm1and2}
\eta A_k^2(t,\xi)-\xi A_{k}^2(t,\eta):=\mathcal{T}''_1+\mathcal{T}''_2,
\end{equation}
with
\begin{equation}\label{Trterm1}
\mathcal{T}''_1:=\left(\eta e^{2\lambda(t)\langle k,\xi\rangle^{1/2}}-\xi e^{2\lambda(t)\langle k,\eta\rangle^{1/2}}\right)\bigg[\,\frac{e^{\sqrt{\delta}\langle\xi\rangle^{1/2}}}{b_k(t,\xi)}+e^{\sqrt{\delta}|k|^{1/2}}\,\bigg]^2,
\end{equation}
\begin{equation}\label{Trterm3}
\begin{split}
\mathcal{T}''_2:=\xi e^{2\lambda(t)\langle k,\eta\rangle^{1/2}}\bigg[\,\frac{e^{\sqrt{\delta}\langle\xi\rangle^{1/2}}}{b_k(t,\xi)}-\frac{e^{\sqrt{\delta}\langle\eta\rangle^{1/2}}}{b_{k}(t,\eta)}\,\bigg]\cdot\bigg[\,\frac{e^{\sqrt{\delta}\langle\xi\rangle^{1/2}}}{b_k(t,\xi)}+\frac{e^{\sqrt{\delta}\langle \eta\rangle^{1/2}}}{b_{k}(t,\eta)}+2e^{\sqrt{\delta}|k|^{1/2}}\,\bigg].
\end{split}
\end{equation}
For \eqref{TLXH2.1} it suffices to prove that, for $i\in\{1,2\}$,
\begin{equation}\label{TLXH2.11}
\begin{split}
\frac{1}{\langle t\rangle^{7/4}}\big|\mathcal{T}''_i\big|\lesssim_{\delta}\sqrt{|(A_k\dot{A}_k)(t,\xi)|}\,\sqrt{|(A_{k}\dot{A}_{k})(t,\eta)|}\,A_{NR}(t,\rho) \,e^{-(\delta_0/100)\langle \rho \rangle^{1/2}}.
\end{split}
\end{equation}

In the case $i=1$, the bounds \eqref{TLXH2.11} follow similarly to \eqref{Tros2}--\eqref{Tros3}, with $k=\ell$. If $i=2$ then we estimate, using \eqref{eq:comparisonweights1}, (\ref{dor21}), and (\ref{dor1}),
\begin{equation*}
\begin{split}
\big|\mathcal{T}''_2\big|&\lesssim_{\delta}\langle\xi\rangle\frac{\langle\xi\rangle^{1/2}+t}{\langle\xi\rangle+t}  e^{2\lambda(t)\langle k,\eta\rangle^{1/2}}\bigg[\,\frac{e^{\sqrt{\delta}\langle\xi\rangle^{1/2}}}{b_k(t,\xi)}+e^{\sqrt{\delta}|k|^{1/2}}\,\bigg]\bigg[\,\frac{e^{\sqrt{\delta}\langle\eta\rangle^{1/2}}}{b_k(t,\eta)}+e^{\sqrt{\delta}|k|^{1/2}}\,\bigg]\,e^{8\sqrt{\delta}\langle\rho\rangle^{1/2}}\\
&\lesssim_{\delta}\langle\xi\rangle^{1/2}\langle t\rangle^{1/2}A_k(t,\xi)A_k(t,\eta)A_{NR}(t,\rho)e^{-(\lambda(t)/20)\langle \rho\rangle^{1/2}}.
\end{split}
\end{equation*}
The bounds (\ref{TLXH2.1}) then follow from (\ref{eq:A_kxi}) and (\ref{eq:CDW}). 

(ii) Assume that $((k,\xi),(k,\eta))\in \Sigma_2$ and we need to prove (\ref{TLXH2.2}). We notice first that 
\begin{equation*}
\begin{split}
\frac{|\xi| A_{k}^2(t,\eta)}{\langle\rho\rangle\langle t\rangle+\langle \rho\rangle^{1/4}\langle t\rangle^{7/4}}\lesssim_{\delta}\sqrt{|(A_k\dot{A}_k)(t,\xi)|}\,\sqrt{|(A_{NR}\dot{A}_{NR})(t,\rho)|}\,A_{k}(t,\eta) \,e^{-(\delta_0/200)\langle k,\eta \rangle^{1/2}},
\end{split}
\end{equation*}
which is similar to the simple bounds \eqref{TLXH3.2.1}. It remains to prove the harder inequality
\begin{equation}\label{TLXH2.22}
\begin{split}
\frac{|\eta| A_{k}^2(t,\xi)}{\langle\rho\rangle\langle t\rangle+\langle \rho\rangle^{1/4}\langle t\rangle^{7/4}}\lesssim_{\delta}\sqrt{|(A_k\dot{A}_k)(t,\xi)|}\,\sqrt{|(A_{NR}\dot{A}_{NR})(t,\rho)|}\,A_{k}(t,\eta) \,e^{-(\delta_0/200)\langle k,\eta \rangle^{1/2}}.
\end{split}
\end{equation}
 
For $((k,\xi),(k,\eta))\in \Sigma_2$ we have
\begin{equation}\label{AkANRn3}
A_k(t,\xi)\approx e^{\lambda(t)\langle k,\xi\rangle^{1/2}}\frac{e^{\sqrt{\delta}\langle\xi\rangle^{1/2}}}{b_k(t,\xi)}.
\end{equation}
If $t\notin I_{k,\xi}$ then we estimate, using (\ref{vfc25})  and (\ref{AkANRn3}),
\begin{equation}\label{TLXH2.221}
\begin{split}
\frac{|\eta| A_{k}^2(t,\xi)}{\langle\rho\rangle\langle t\rangle+\langle \rho\rangle^{1/4}\langle t\rangle^{7/4}}\lesssim_{\delta}\frac{1}{\langle t\rangle^{3/2}}A_k(t,\xi)
A_{NR}(t,\rho) A_k(t,\eta) \,e^{-(\lambda(t)/20)\langle k,\eta \rangle^{1/2}},
\end{split}
\end{equation}
and (\ref{TLXH2.22}) follows from (\ref{TLX3.5}) and (\ref{vfc30}). On the other hand, if $t\in I_{k,\xi}$ then we use \eqref{reb7}, (\ref{eq:comparisonweights1}), and (\ref{AkANRn3}) to estimate
\begin{equation}\label{TLXH2.222}
\begin{split}
\frac{|\eta| A_{k}^2(t,\xi)}{\langle\rho\rangle\langle t\rangle+\langle \rho\rangle^{1/4}\langle t\rangle^{7/4}}\lesssim_{\delta}\,&\frac{1}{\langle\rho\rangle\langle t\rangle}\cdot\frac{|\xi|}{k^2}\frac{1}{\langle t-\xi/k\rangle}A_k(t,\xi)A_{NR}(t,\rho) A_k(t,\eta) \,e^{-(\lambda(t)/20)\langle k,\eta \rangle^{1/2}}\\
\lesssim_{\delta}\,&\frac{1}{\langle t\rangle\langle t-\xi/k\rangle} A_k(t,\xi)A_{NR}(t,\rho) A_k(t,\eta) \,e^{-(\lambda(t)/20)\langle k,\eta \rangle^{1/2}}.
\end{split}
\end{equation}
The bounds (\ref{TLXH2.22}) then follow from (\ref{eq:A_kxi}) and (\ref{vfc30.5}). The proof of Lemma \ref{TLXH2} is complete.
\end{proof}

\subsection{Weighted bilinear estimates for section \ref{coimprov2}}\label{bilin7}

In this subsection we prove several estimates on products of weights, which are used only in the analysis of the coordinate functions in section \ref{coimprov2}.

\begin{lemma}\label{TLX40.1}
For any $t\geq 1$, $k\in\Z\setminus\{0\}$, and $\xi,\eta\in\R$ we have, with $\rho=\xi-\eta$,
\begin{equation}\label{TLX41}
\frac{A^2_{NR}(t,\xi)}{|\dot{A}_{NR}(t,\xi)|}\langle t\rangle^{3/4}\langle\xi\rangle^{1/4}\lesssim _\delta A_k(t,\eta)\frac{\langle t\rangle\langle t-\eta/k\rangle^2}{\langle t\rangle+|\eta/k|}A_{-k}(t,\rho)e^{-(\lambda(t)/20)[\min(\langle\rho\rangle,\langle \eta\rangle)+|k|]^{1/2}}
\end{equation}
and
\begin{equation}\label{TLX42}
\begin{split}
|(\dot{A}_{NR}/A_{NR})(t,\xi)|&\lesssim_\delta \big\{|(\dot{A}_k/A_k)(t,\eta)|+|(\dot{A}_{-k}/A_{-k})(t,\rho)|\big\}e^{12\sqrt\delta[\min(\langle\rho\rangle,\langle \eta\rangle)+|k|]^{1/2}}.
\end{split}
\end{equation}
\end{lemma}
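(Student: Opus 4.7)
The two bounds in Lemma~\ref{TLX40.1} are of different flavors, so I will handle them in turn. The estimate \eqref{TLX42} is really just a transfer estimate between the derivative ratios of $A_{NR}$ and $A_{\pm k}$. First I would use \eqref{vfc30} to move $|(\dot A_{NR}/A_{NR})(t,\xi)|$ to either frequency $\eta$ or $\rho$ at the cost of a factor $e^{4\sqrt{\delta}\min(\langle\eta\rangle,\langle\rho\rangle)^{1/2}}$, and then compare with $|(\dot A_k/A_k)(t,\eta)|$ or $|(\dot A_{-k}/A_{-k})(t,\rho)|$ using \eqref{vfc30.5}. The latter applies whenever $|k|\leq\langle\eta\rangle+10$ or $|k|\leq\langle\rho\rangle+10$. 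In the remaining regime $|k|>\max(\langle\eta\rangle,\langle\rho\rangle)+10$, which forces $|k|\gtrsim\langle\xi\rangle$, I would go back to the explicit formulas \eqref{TLX3.5} and \eqref{eq:A_kxi}: the $\langle k,\eta\rangle^{1/2}/\langle t\rangle^{1+\sigma_0}$ piece of $|(\dot A_k/A_k)(t,\eta)|$ alone already controls $\langle\xi\rangle^{1/2}/\langle t\rangle^{1+\sigma_0}$, and the (bounded, $O_\delta(1)$) contribution of $|\partial_tw_{NR}(t,\xi)/w_{NR}(t,\xi)|$ to $|(\dot A_{NR}/A_{NR})(t,\xi)|$ is absorbed by the growth factor $e^{12\sqrt{\delta}|k|^{1/2}}$.

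For \eqref{TLX41} I would first reduce the left-hand side using \eqref{TLX3.5}, which gives
\[
\frac{A^2_{NR}(t,\xi)}{|\dot A_{NR}(t,\xi)|}\lesssim_\delta A_{NR}(t,\xi)\cdot\frac{\langle t\rangle^{1+\sigma_0}}{\langle\xi\rangle^{1/2}},
\]
so it suffices to prove
\[
A_{NR}(t,\xi)\,\langle t\rangle^{7/4+\sigma_0}\langle\xi\rangle^{-1/4}\lesssim_\delta A_k(t,\eta)\,A_{-k}(t,\rho)\,\frac{\langle t\rangle\langle t-\eta/k\rangle^2}{\langle t\rangle+|\eta/k|}\,e^{-(\lambda(t)/20)[\min(\langle\rho\rangle,\langle\eta\rangle)+|k|]^{1/2}}.
\]
The Gevrey exponential part is handled by applying \eqref{b>a} to the pairs $(k,\eta)$ and $(k,\rho)$: the bound $\langle k,\eta\rangle^{1/2}+\langle k,\rho\rangle^{1/2}\geq\langle\xi\rangle^{1/2}+c[\min(\langle\eta\rangle,\langle\rho\rangle)^{1/2}+|k|^{1/2}]$ supplies the exponential loss $e^{-(\lambda(t)/20)[\min(\langle\rho\rangle,\langle\eta\rangle)+|k|]^{1/2}}$ with room to spare. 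The $1/b_{NR}(t,\xi)$ factor in $A_{NR}(t,\xi)$ is controlled, according as we are in a resonant or non-resonant case, by the analogues \eqref{iq2w1}--\eqref{iq2w2} (used here with one frequency equal to zero mode $\xi$ and the two ``children'' at $(k,\eta)$ and $(-k,\rho)$), via Lemma~\ref{comparisonweights}.

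The main obstacle is the resonant regime $t\in I_{k,\eta}$ with $|\eta/k|\gg\langle t\rangle$: there $b_k(t,\eta)\approx_\delta b_{NR}(t,\eta)\cdot k^2/\langle\eta\rangle$, so $A_k(t,\eta)$ is smaller than $A_{NR}(t,\eta)$ by exactly this resonant factor. This loss must be paid for by the polynomial factor $\frac{\langle t\rangle\langle t-\eta/k\rangle^2}{\langle t\rangle+|\eta/k|}$ on the right. A simple case split $|\eta/k|\leq 2\langle t\rangle$ versus $|\eta/k|\geq 2\langle t\rangle$ shows
\[
\frac{\langle t\rangle\langle t-\eta/k\rangle^2}{\langle t\rangle+|\eta/k|}\cdot\frac{k^2}{\langle\eta\rangle}\gtrsim \frac{\langle t\rangle^{7/4+\sigma_0}}{\langle\xi\rangle^{1/4}}
\]
uniformly once one invokes the resonant window constraint $|t-\eta/k|\lesssim \langle\eta\rangle/k^2$ (which is automatic outside this window since then $\langle t-\eta/k\rangle^2\gtrsim(\langle\eta\rangle/k^2)^2$) and the trivial bound $\langle\xi\rangle\lesssim\max(\langle\eta\rangle,\langle\rho\rangle)$; the exponential $e^{-(\lambda(t)/40)[\min(\langle\rho\rangle,\langle\eta\rangle)+|k|]^{1/2}}$ swallows any remaining $\langle\rho\rangle$ or $\langle\eta\rangle$-polynomial discrepancy. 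Non-resonant cases $t\notin I_{k,\eta}$ and $t\notin I_{-k,\rho}$ are easier since $b_k(t,\eta)\approx_\delta b_{NR}(t,\eta)$, and the whole polynomial factor on the right is then $\gtrsim\langle t\rangle$, more than enough to dominate $\langle t\rangle^{7/4+\sigma_0}\langle\xi\rangle^{-1/4}$ after a standard $\xi\mapsto\eta$ transfer via Lemma~\ref{comparisonweights}.
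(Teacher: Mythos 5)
Your argument for \eqref{TLX42} is essentially the paper's: reduce to the smaller of $\rho,\eta$ via the Gevrey transfer \eqref{vfc30}, convert to $|\dot A_k/A_k|$ or $|\dot A_{-k}/A_{-k}|$ via \eqref{vfc30.5} when $|k|$ is not too large, and fall back on \eqref{TLX3.5}--\eqref{eq:A_kxi} when $|k|\gtrsim\langle\xi\rangle$, where the lower bound $\frac{\langle k,\eta\rangle^{1/2}}{\langle t\rangle^{1+\sigma_0}}e^{12\sqrt\delta|k|^{1/2}}\gtrsim_\delta 1$ absorbs the $O_\delta(1)$ contribution of $\partial_tw_{NR}/w_{NR}$.

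Your treatment of \eqref{TLX41}, however, has a genuine gap. The opening step replaces $\frac{A_{NR}(t,\xi)}{|\dot A_{NR}(t,\xi)|}$ by the worst-case $\frac{\langle t\rangle^{1+\sigma_0}}{\langle\xi\rangle^{1/2}}$ coming from the first term in \eqref{TLX3.5}; but in the deep resonant regime $|t-\eta/k|=O(1)$ (with $|\rho|\leq|\eta|$), the second term of \eqref{TLX3.5} is $\approx_\delta\frac{1}{1+\delta^2|t-\xi/k|}$ and dominates, so in fact $\frac{A_{NR}(t,\xi)}{|\dot A_{NR}(t,\xi)|}\lesssim_\delta\langle t-\eta/k\rangle$, far smaller than $\frac{\langle t\rangle^{1+\sigma_0}}{\langle\xi\rangle^{1/2}}$ when $t\approx\eta/k\gg\langle\eta\rangle^{1/2}$. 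Concretely: take $k=1$, $|\rho|=O(1)$, $t=\eta$ large. Then your reduced statement asks for $A_{NR}(t,\xi)\,\eta^{3/2+\sigma_0}\lesssim_\delta A_k(t,\eta)A_{-k}(t,\rho)$, but the right-hand side is $\approx_\delta\eta\,A_{NR}(t,\eta)$ (using the resonant gain in $A_k$ and $A_{-k}(t,\rho)\lesssim_\delta 1$), which fails for large $\eta$. The paper's proof avoids this by \emph{not} making this reduction: it keeps $A_{NR}/|\dot A_{NR}|$ intact and proves the sharper estimate \eqref{TLX49}, $\frac{A_{NR}(t,\eta)}{|\dot A_{NR}(t,\eta)|}\lesssim_\delta\langle t-\eta/k\rangle|k|^4$, whose $\langle t-\eta/k\rangle$ factor combines with the $\langle t-\eta/k\rangle^2$ on the right and the resonant gain \eqref{TLX48} to close the inequality; this is the key step your proof is missing. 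Relatedly, your account of the resonant case has the direction reversed: since $A\propto 1/b$ and $b_k\leq b_{NR}$, resonance makes $A_k(t,\eta)$ \emph{larger} than $A_{NR}(t,\eta)$, so there is a gain on the right (offsetting the collapse of $\langle t-\eta/k\rangle^2$), not a loss; the regime ``$t\in I_{k,\eta}$ with $|\eta/k|\gg\langle t\rangle$'' is empty since $t\in I_{k,\eta}$ forces $t\approx\eta/k$; and the displayed estimate $\frac{\langle t\rangle\langle t-\eta/k\rangle^2}{\langle t\rangle+|\eta/k|}\cdot\frac{k^2}{\langle\eta\rangle}\gtrsim\frac{\langle t\rangle^{7/4+\sigma_0}}{\langle\xi\rangle^{1/4}}$ is false in the same example (left side $\approx\eta^{-1}$, right side $\approx\eta^{3/2+\sigma_0}$).
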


\begin{proof} We start with the easier bounds \eqref{TLX42}.  We may assume that $|\rho|\leq|\eta|$ and it suffices to prove that
\begin{equation*}
|(\dot{A}_{NR}/A_{NR})(t,\xi)|\lesssim_\delta |(\dot{A}_k/A_k)(t,\eta)|e^{12\sqrt\delta[\langle\rho\rangle+|k|]^{1/2}}.
\end{equation*}
We use \eqref{TLX3.5}--\eqref{eq:A_kxi}, so it suffices to prove that
\begin{equation}\label{TLX43}
\begin{split}
&\frac{\langle\xi\rangle^{1/2}}{\langle t\rangle^{1+\sigma_0}}+\left|\frac{\partial_tw_{NR}(t,\xi)}{w_{NR}(t,\xi)}\right|\\
&\lesssim_\delta e^{12\sqrt\delta[\langle\rho\rangle+|k|]^{1/2}}\left\{\frac{\langle k,\eta\rangle^{1/2}}{\langle t\rangle^{1+\sigma_0}}+\left|\frac{\partial_tw_k(t,\eta)}{w_k(t,\eta)}\right|\frac{e^{\sqrt\delta\langle\eta\rangle^{1/2}}}{e^{\sqrt\delta\langle\eta\rangle^{1/2}}+e^{\sqrt\delta|k|^{1/2}}w_k(t,\eta)}\right\}.
\end{split}
\end{equation}
The first term in the left-hand side of \eqref{TLX43} is easily bounded as claimed. Also, the second term is suitably bounded if $|k|\leq|\eta|$, as a consequence of \eqref{TLX2.9} and \eqref{TLX3}, or if $|\xi|\leq\delta^{-10}$, or if $t\geq 2|\xi|$. In the remaining case $(|k|\geq|\eta|,\,|\xi|>\delta^{-10},\,t\leq 2|\xi|)$, the right-hand side of \eqref{TLX43} is bounded from below by $e^{12\sqrt\delta|\eta|^{1/2}}\frac{\langle k,\eta\rangle^{1/2}}{\langle t\rangle^{1+\sigma_0}}$, which easily suffices to prove \eqref{TLX43}. The desired bounds \eqref{TLX42} follow in all cases.

We prove now the harder bounds \eqref{TLX41}. We consider several cases:

{\bf{Case 1.}} We start with the harder case
\begin{equation}\label{TLX44}
|\eta|\geq|\rho|\qquad\text{ and }\qquad |t-\eta/k|\leq |t|/10.
\end{equation}
It suffices to prove that
\begin{equation}\label{TLX46}
\frac{A^2_{NR}(t,\eta+\rho)}{|\dot{A}_{NR}(t,\eta+\rho)|}\langle t\rangle^{3/4}\langle\eta\rangle^{1/4}\lesssim _\delta A_k(t,\eta)\langle t-\eta/k\rangle^2A_{-k}(t,\rho)e^{-(\lambda(t)/20)[\langle \rho\rangle+|k|]^{1/2}}
\end{equation}
for any $t\geq 1$, $k\in\Z\setminus\{0\}$, and $\rho,\eta\in\R$ with $|\eta|\ge|\rho|$. The definitions \eqref{dor3}--\eqref{dor4} show that
\begin{equation}\label{TLX46.5}
\frac{A_{NR}(t,\eta+\rho)}{A_k(t,\eta)A_{-k}(t,\rho)}\lesssim \frac{b_k(t,\eta)b_{-k}(t,\rho)}{b_{NR}(t,\eta+\rho)}e^{-(\lambda(t)/8)\langle \rho,k\rangle^{1/2}}\lesssim_\delta \frac{b_k(t,\eta)}{b_{NR}(t,\eta)}e^{-(\lambda(t)/9)\langle \rho,k\rangle^{1/2}},
\end{equation}
where we used \eqref{b>a} in the first inequality and \eqref{dor22} in the second inequality. In view of \eqref{vfc30} and \eqref{dor20}, for \eqref{TLX46} it suffices to prove that
\begin{equation}\label{TLX47}
\frac{A_{NR}(t,\eta)}{|\dot{A}_{NR}(t,\eta)|}\frac{w_k(t,\eta)}{w_{NR}(t,\eta)}\langle t\rangle^{3/4}\langle\eta\rangle^{1/4}\lesssim_\delta  \langle t-\eta/k\rangle^2e^{(\lambda(t)/20)|k|^{1/2}}.
\end{equation}

Notice that, for $t\geq 1$, $\eta\in\mathbb{R}$, and $k\in\Z\setminus\{0\}$ we have
\begin{equation}\label{TLX48}
\frac{w_k(t,\eta)}{w_{NR}(t,\eta)}\lesssim_\delta \frac{\langle t-\eta/k\rangle}{\langle\eta\rangle}|k|^4.
\end{equation}
Indeed, the left-hand side is bounded by $1$, so \eqref{TLX48} is trivial unless $\{|\eta|>\delta^{-10},\,|k|\leq |\eta|^{1/4},|t-\eta/k|\leq |\eta|/(20k^2)\}$. In this case, however, the desired bounds \eqref{TLX48} follow from the definitions \eqref{eq:resonantweight} and \eqref{reb5.5}.

Moreover, we show that if $t,\eta,k$ are as above and satisfy $|t-\eta/k|\leq |t|/10$ then
\begin{equation}\label{TLX49}
\frac{A_{NR}(t,\eta)}{|\dot{A}_{NR}(t,\eta)|}\lesssim_\delta\langle t-\eta/k\rangle |k|^4.
\end{equation}
Indeed, as a consequence of \eqref{TLX3.5}
\begin{equation}\label{TLX49.5}
\frac{A_{NR}(t,x)}{|\dot{A}_{NR}(t,x)|}\lesssim_\delta \langle t\rangle^{1+\sigma_0}\langle x\rangle^{-1/2},\qquad\text{ for any }t\in[1,\infty),\,x\in\R.
\end{equation}
This suffices unless $\{|\eta|>\delta^{-10},\,|k|\leq |\eta|^{1/4},|t-\eta/k|\leq |\eta|/(20k^2)\}$. In this case, however, the left-hand side of \eqref{TLX49} is bounded by $w_{NR}(t,\eta)/\partial_tw_{NR}(t,\eta)$, which suffices due to \eqref{reb8}.

Notice that \eqref{TLX47} easily follows from \eqref{TLX48} and \eqref{TLX49}, which completes the proof of the bounds \eqref{TLX41} in this case.

{\bf{Case 2.}} We assume now that
\begin{equation}\label{TLX54}
|\eta|\geq|\rho|\qquad\text{ and }\qquad |t-\eta/k|\geq |t|/10.
\end{equation}
It suffices to prove that
\begin{equation}\label{TLX56}
\frac{A^2_{NR}(t,\eta+\rho)}{|\dot{A}_{NR}(t,\eta+\rho)|}\langle t\rangle^{3/4}\langle\eta\rangle^{1/4}\lesssim_\delta  A_k(t,\eta)\langle t\rangle(\langle t\rangle+|\eta/k|)A_{-k}(t,\rho)e^{-(\lambda(t)/20)[\langle \rho\rangle+|k|]^{1/2}}.
\end{equation}
We use again the bounds \eqref{TLX46.5} and further estimate $\frac{b_k(t,\eta)}{b_{NR}(t,\eta)}\lesssim_\delta 1$. In view of \eqref{vfc30}, for \eqref{TLX56} it suffices to prove that
\begin{equation}\label{TLX57}
\frac{A_{NR}(t,\eta)}{|\dot{A}_{NR}(t,\eta)|}\langle t\rangle^{3/4}\langle\eta\rangle^{1/4}\lesssim_\delta  \langle t\rangle(\langle t\rangle+|\eta/k|)e^{(\lambda(t)/20)|k|^{1/2}}.
\end{equation}
This follows easily using \eqref{TLX49.5}, which completes the proof of the bounds \eqref{TLX41} in this case.

{\bf{Case 3.}} Finally, assume that
\begin{equation}\label{TLX64}
|\eta|\leq|\rho|.
\end{equation}
It suffices to prove that
\begin{equation}\label{TLX66}
\frac{A^2_{NR}(t,\eta+\rho)}{|\dot{A}_{NR}(t,\eta+\rho)|}\langle t\rangle^{3/4}\langle\rho\rangle^{1/4}\lesssim_\delta  A_k(t,\eta)\frac{\langle t\rangle\langle t-\eta/k\rangle^2}{\langle t\rangle+|\eta/k|}A_{-k}(t,\rho)e^{-(\lambda(t)/20)(\langle \eta\rangle+|k|)^{1/2}}.
\end{equation}
The definitions \eqref{dor3}--\eqref{dor4} show that
\begin{equation}\label{TLX66.5}
\frac{A_{NR}(t,\eta+\rho)}{A_k(t,\eta)A_{-k}(t,\rho)}\lesssim \frac{b_k(t,\eta)b_{-k}(t,\rho)}{b_{NR}(t,\eta+\rho)}e^{-(\lambda(t)/8)\langle \eta,k\rangle^{1/2}}\lesssim_\delta e^{-(\lambda(t)/9)\langle \eta,k\rangle^{1/2}},
\end{equation}
where we used \eqref{b>a} in the first inequality, and \eqref{dor22} and the bounds $b_\ell(t,.)\leq b_{NR}(t,.)\leq 1$ in the second inequality. For \eqref{TLX66} it suffices to prove that 
\begin{equation*}
\frac{A_{NR}(t,\rho)}{|\dot{A}_{NR}(t,\rho)|}\langle t\rangle^{3/4}\langle\rho\rangle^{1/4}\lesssim_\delta  \frac{\langle t\rangle\langle t-\eta/k\rangle^2}{\langle t\rangle+|\eta/k|}e^{(\lambda(t)/20)\langle \eta,k\rangle^{1/2}}.
\end{equation*}
which follows again from \eqref{TLX49.5}. This completes the proof of the bounds \eqref{TLX41}.
\end{proof}

\begin{lemma}\label{TLX70}
For any $t\geq 1$ and $\xi,\eta\in\R$ we have, with $\rho=\xi-\eta$,
\begin{equation}\label{TLX71}
\frac{A^2_{NR}(t,\xi)}{|\dot{A}_{NR}(t,\xi)|}\frac{\langle t\rangle^{3/4}}{\langle\xi\rangle^{3/4}}\lesssim_\delta \frac{A^2_{NR}(t,\eta)}{|\dot{A}_{NR}(t,\eta)|}\frac{\langle t\rangle^{3/4}}{\langle\eta\rangle^{3/4}}A_{NR}(t,\rho)e^{-(\lambda(t)/40)\min(\langle\xi-\eta\rangle,\langle \eta\rangle)^{1/2}}.
\end{equation}
\end{lemma}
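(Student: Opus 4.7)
The plan is to prove \eqref{TLX71} by two successive applications of previously established multiplier bounds, followed by an elementary case analysis to absorb a polynomial ratio of Japanese brackets.

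First, I would use \eqref{vfc30} with the roles of $\xi$ and $\eta$ interchanged, namely
\begin{equation*}
\big|(\dot{A}_{NR}/A_{NR})(t,\eta)\big|\lesssim_\delta \big|(\dot{A}_{NR}/A_{NR})(t,\xi)\big|e^{4\sqrt{\delta}|\rho|^{1/2}},
\end{equation*}
which, after rearranging, gives
\begin{equation*}
\frac{1}{|\dot{A}_{NR}(t,\xi)|}\lesssim_\delta\frac{A_{NR}(t,\eta)}{A_{NR}(t,\xi)\,|\dot{A}_{NR}(t,\eta)|}\,e^{4\sqrt\delta|\rho|^{1/2}}.
\end{equation*}
Multiplying by $A^2_{NR}(t,\xi)$ yields
\begin{equation*}
\frac{A^2_{NR}(t,\xi)}{|\dot{A}_{NR}(t,\xi)|}\lesssim_\delta\frac{A_{NR}(t,\xi)\,A_{NR}(t,\eta)}{|\dot{A}_{NR}(t,\eta)|}\,e^{4\sqrt\delta|\rho|^{1/2}}.
\end{equation*}

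Next, I would insert \eqref{TLX4} with $\alpha=0$, which gives
\begin{equation*}
A_{NR}(t,\xi)\lesssim_\delta A_{NR}(t,\eta)\,A_{NR}(t,\rho)\,e^{-(\lambda(t)/20)\min(\langle\rho\rangle,\langle\eta\rangle)^{1/2}}.
\end{equation*}
Combining the two displays and multiplying by $\langle t\rangle^{3/4}\langle\xi\rangle^{-3/4}$, the desired estimate \eqref{TLX71} reduces to checking the purely scalar inequality
\begin{equation}\label{scalarcheck}
\frac{\langle\eta\rangle^{3/4}}{\langle\xi\rangle^{3/4}}\lesssim_\delta e^{-4\sqrt\delta|\rho|^{1/2}}\,e^{(\lambda(t)/40)\min(\langle\rho\rangle,\langle\eta\rangle)^{1/2}}.
\end{equation}

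The verification of \eqref{scalarcheck} is the only nontrivial remaining point, but it is elementary, by splitting into cases and using $\lambda(t)\geq\delta_0\gg\sqrt\delta$. If $|\rho|\leq|\eta|/10$ then $|\xi|\geq 9|\eta|/10$, so the left-hand side is bounded by a constant. If $|\rho|\geq 2|\eta|$ then $|\xi|\geq|\rho|/2\geq|\eta|$, and again the left-hand side is bounded. In the remaining regime $|\eta|/10\leq|\rho|\leq 2|\eta|$, we have $\min(\langle\rho\rangle,\langle\eta\rangle)\gtrsim\langle\eta\rangle$ and $|\rho|\lesssim|\eta|$, so the right-hand side is bounded below by $e^{(\lambda(t)/40-4\sqrt{2\delta})\langle\eta\rangle^{1/2}}\gtrsim e^{(\delta_0/50)\langle\eta\rangle^{1/2}}$, which absorbs the polynomial bound $\langle\eta\rangle^{3/4}/\langle\xi\rangle^{3/4}\leq\langle\eta\rangle^{3/4}$ with room to spare provided $\delta$ is small enough in terms of $\delta_0$. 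Thus \eqref{scalarcheck} holds and the lemma follows. No part of the argument is a real obstacle; the minor subtlety is merely to keep track of the $e^{-4\sqrt\delta|\rho|^{1/2}}$ factor picked up from \eqref{vfc30}, which forces the final exponential gain to drop from $(\lambda(t)/20)$ to $(\lambda(t)/40)$ in the statement.
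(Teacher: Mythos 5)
Your reduction to the scalar inequality \eqref{scalarcheck} is algebraically correct, but the verification of \eqref{scalarcheck} has a genuine gap in the case $|\rho|\geq 2|\eta|$. There you observe that the left-hand side $\langle\eta\rangle^{3/4}/\langle\xi\rangle^{3/4}$ is bounded, but you do not check that the right-hand side $e^{-4\sqrt\delta|\rho|^{1/2}}e^{(\lambda(t)/40)\min(\langle\rho\rangle,\langle\eta\rangle)^{1/2}}$ is bounded below, and in fact it is not. Take $\eta=0$ and $|\rho|=|\xi|\to\infty$: then $\min(\langle\rho\rangle,\langle\eta\rangle)=1$, the left-hand side of \eqref{scalarcheck} is $\langle\xi\rangle^{-3/4}$ (polynomial decay), while the right-hand side is $\approx e^{-4\sqrt\delta|\xi|^{1/2}}$ (exponential decay). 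Since polynomial decay beats exponential decay, the inequality $\langle\xi\rangle^{-3/4}\lesssim_\delta e^{-4\sqrt\delta|\xi|^{1/2}}$ is false for $|\xi|$ large, and \eqref{scalarcheck} fails.

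The root cause is the first step: applying \eqref{vfc30} to transfer $|\dot{A}_{NR}/A_{NR}|$ from $\xi$ to $\eta$ costs a factor $e^{4\sqrt\delta|\rho|^{1/2}}$. In the regime $|\eta|\ll|\rho|\approx|\xi|$ this factor is very large, and the only exponential gain you retain from \eqref{TLX4} is $e^{(\lambda(t)/40)\langle\eta\rangle^{1/2}}$, which is useless when $|\eta|$ is small. The paper avoids this by splitting first on $|\rho|\leq|\eta|$ (where your argument is essentially what is needed) versus $|\eta|\leq|\rho|$, and in the latter case further splits on $|\rho|\leq 100|\eta|$ versus $|\eta|\leq|\xi|/50$; in the final sub-case it does \emph{not} invoke \eqref{vfc30} at all, but rather compares $|\dot{A}_{NR}/A_{NR}|$ at $\eta$ and $\xi$ directly using the explicit structure in \eqref{TLX3.5} together with \eqref{reb8}. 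This direct comparison exploits the lower bound $|\dot{A}_{NR}(t,\xi)/A_{NR}(t,\xi)|\gtrsim_\delta\langle\xi\rangle^{1/2}/\langle t\rangle^{1+\sigma_0}$ and the upper bound $\partial_tw_{NR}(t,\eta)/w_{NR}(t,\eta)\lesssim\delta^2$ (with the additional constraint $t\lesssim|\eta|$ whenever the latter is nonzero), producing precisely the polynomial gain $\langle\xi\rangle^{3/4}/\langle\eta\rangle^{3/4}$ that appears in the intermediate inequality \eqref{TLX71.5} and suffices. To repair your argument you would need to replace the naive use of \eqref{vfc30} in this regime by a similar direct comparison.
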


\begin{proof} The bound follows easily if $|\rho|\leq |\eta|$, as a consequence of \eqref{TLX4} and \eqref{vfc30}. On the other hand, if $|\eta|\leq |\rho|$ we can still use \eqref{TLX4} and it suffices to show that
\begin{equation}\label{TLX71.5}
\frac{A_{NR}(t,\xi)}{|\dot{A}_{NR}(t,\xi)|}\frac{1}{\langle\xi\rangle^{3/4}}\lesssim_\delta \frac{A_{NR}(t,\eta)}{|\dot{A}_{NR}(t,\eta)|}\frac{1}{\langle\eta\rangle^{3/4}}e^{(\lambda(t)/40)\langle \eta\rangle^{1/2}}.
\end{equation}
The bound \eqref{TLX71.5} follows from \eqref{vfc30} if $|\xi-\eta|\leq 100|\eta|$. On the other hand, if $|\eta|\leq|\xi|/50$, then \eqref{TLX71.5} is equivalent to 
\begin{equation*}
\frac{|\dot{A}_{NR}(t,\eta)|}{A_{NR}(t,\eta)|}\lesssim_\delta \frac{|\dot{A}_{NR}(t,\xi)|}{A_{NR}(t,\xi)}\frac{\langle\xi\rangle^{3/4}}{\langle\eta\rangle^{3/4}}e^{(\lambda(t)/40)\langle \eta\rangle^{1/2}},
\end{equation*}
which is easy to prove using \eqref{TLX3.5} and \eqref{reb8}.
\end{proof}

\begin{lemma}\label{TLY1}
Assume that $t\geq 1$ and recall the definition of the sets $S_0,S_1,S_2,S_3$ in \eqref{tol4}. 

(i) If $(\xi,\eta)\in S_0\cup S_1$, $\alpha\in[0,4]$, and $\ast\in\{NR,R\}$ then, with $\rho=\xi-\eta$,
\begin{equation}\label{TLY2.1}
\begin{split}
\big|\eta &A^2_{\ast}(t,\xi)\langle\xi\rangle^{-\alpha}-\xi A_{\ast}^2(t,\eta)\langle\eta\rangle^{-\alpha}\big|\\
&\lesssim_\delta t^{1.6}\frac{\sqrt{|(A_\ast\dot{A}_\ast)(t,\xi)|}}{\langle\xi\rangle^{\alpha/2}}\frac{\sqrt{|(A_\ast\dot{A}_\ast)(t,\eta)|}}{\langle\eta\rangle^{\alpha/2}}\cdot A_{NR}(t,\rho)|\rho|e^{-(\lambda(t)/40)\langle\rho\rangle^{1/2}}.
\end{split}
\end{equation}

(ii) If $(\xi,\eta)\in S_2$ then
\begin{equation}\label{TLY2.2}
\langle\eta\rangle A^2_{R}(t,\xi)\lesssim_\delta t^{1.1}\langle\xi\rangle^{0.6}\sqrt{|(A_R\dot{A}_R)(t,\xi)|}\sqrt{|(A_{NR}\dot{A}_{NR})(t,\rho)|}\cdot A_{R}(t,\eta)e^{-(\lambda(t)/40)\langle\eta\rangle^{1/2}}
\end{equation}
and
\begin{equation}\label{TLY2.3}
\begin{split}
\langle\eta\rangle A^2_{NR}(t,\xi)&\lesssim_\delta t^{1.1}\langle\xi\rangle^{-0.4}\sqrt{|(A_{NR}\dot{A}_{NR})(t,\xi)|}\sqrt{|(A_{NR}\dot{A}_{NR})(t,\rho)|}\cdot A_{NR}(t,\eta)e^{-(\lambda(t)/40)\langle\eta\rangle^{1/2}}.
\end{split}
\end{equation}
\end{lemma}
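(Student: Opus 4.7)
The plan is to follow the template of Lemmas \ref{TLXH1}--\ref{TLXH3}. For part (i), the balanced regime $(\xi,\eta)\in S_0\cup S_1$, the bound $|\eta|A_*^2(t,\xi)\langle\xi\rangle^{-\alpha}$ alone is too large by a factor $|\eta|/|\rho|$, so one must symmetrize in order to extract the explicit $|\rho|$ on the right-hand side. For part (ii), the imbalanced regime $(\xi,\eta)\in S_2$ where $\langle\eta\rangle$ is much smaller than $\langle\xi\rangle,\langle\rho\rangle$, no symmetrization is needed and a direct factorization via \eqref{TLX4} will suffice.

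For part (i), I would set $B(t,s):=A_*^2(t,s)\langle s\rangle^{-\alpha}$ and decompose
\begin{equation*}
\eta A_*^2(t,\xi)\langle\xi\rangle^{-\alpha}-\xi A_*^2(t,\eta)\langle\eta\rangle^{-\alpha}=-\rho\,B(t,\eta)+\eta\bigl[B(t,\xi)-B(t,\eta)\bigr].
\end{equation*}
The first term carries $|\rho|$ automatically: \eqref{vfc25} yields $A_*(t,\eta)\lesssim_\delta A_*(t,\xi)e^{0.9\lambda(t)\langle\rho\rangle^{1/2}}$; the lower bound $A_{NR}(t,\rho)e^{-(\lambda(t)/40)\langle\rho\rangle^{1/2}}\gtrsim_\delta e^{(39\lambda(t)/40)\langle\rho\rangle^{1/2}}$ absorbs the exponential; and \eqref{TLX3.5} converts $A_*(t,\xi)A_*(t,\eta)$ into $\sqrt{|(A_*\dot{A}_*)(t,\xi)|}\sqrt{|(A_*\dot{A}_*)(t,\eta)|}$ at the cost of $\langle t\rangle^{1+\sigma_0}\langle\xi\eta\rangle^{-1/4}$, leaving the harmless residual $\langle t\rangle^{1+\sigma_0}/t^{1.6}\le\langle t\rangle^{-0.59}\le 1$. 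For the second term I would differentiate $B$ in its second argument and use \eqref{dor21} together with the explicit form of $A_*$ to obtain
\begin{equation*}
|\partial_sB(t,s)|\lesssim_\delta B(t,s)\bigl[\langle s\rangle^{-1/2}+L_\kappa(t,s)^{-1}\bigr].
\end{equation*}
Since $\langle s\rangle\approx\langle\xi\rangle\approx\langle\eta\rangle$ for $s$ between $\xi$ and $\eta$ in $S_0\cup S_1$, the mean value theorem combined with $\langle\xi\rangle/L_\kappa(t,\xi)\lesssim_\delta \langle\xi\rangle^{1/2}\langle t\rangle^{1/2}$ (essentially \eqref{Tlb2}) gives $|\eta||B(t,\xi)-B(t,\eta)|\lesssim_\delta |\rho|B(t,\xi)\langle\xi\rangle^{1/2}\langle t\rangle^{1/2}$. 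The same conversion of $B(t,\xi)$ into the square-root product as above then leaves the residual $\langle t\rangle^{3/2+\sigma_0-1.6}=\langle t\rangle^{-0.09}\le 1$, which closes the estimate.

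For part (ii), I would apply \eqref{TLX4} with $\alpha=0$ to obtain
\begin{equation*}
A_*^2(t,\xi)\lesssim_\delta A_*(t,\xi)A_*(t,\rho)A_*(t,\eta)e^{-(\lambda(t)/20)\langle\eta\rangle^{1/2}},
\end{equation*}
and absorb the prefactor $\langle\eta\rangle$ into the exponential to produce the desired $e^{-(\lambda(t)/40)\langle\eta\rangle^{1/2}}$. Using \eqref{TLX3.5} to convert $A_*(t,\xi)A_*(t,\rho)$ into $\sqrt{|(A_*\dot{A}_*)(t,\xi)|}\sqrt{|(A_*\dot{A}_*)(t,\rho)|}$ introduces a factor $\langle t\rangle^{1+\sigma_0}\langle\xi\rho\rangle^{-1/4}\approx\langle t\rangle^{1+\sigma_0}\langle\xi\rangle^{-1/2}$ (since $\langle\rho\rangle\approx\langle\xi\rangle$ in $S_2$). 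In the $NR$ case this fits directly within the allowed $t^{1.1}\langle\xi\rangle^{-0.4}$ (as $\langle t\rangle^{\sigma_0-0.1}\langle\xi\rangle^{-0.1}\le 1$). In the $R$ case, \eqref{TLY2.2} asks for $\sqrt{|(A_{NR}\dot{A}_{NR})(t,\rho)|}$ instead of $\sqrt{|(A_R\dot{A}_R)(t,\rho)|}$, which costs the extra factor $(A_R/A_{NR})(t,\rho)\lesssim_\delta w_{NR}(t,\rho)/w_R(t,\rho)\lesssim \langle\rho\rangle\approx \langle\xi\rangle$ (by \eqref{reb7}); this fits comfortably into the allowed $\langle\xi\rangle^{0.6}$ once combined with the $\langle\xi\rangle^{-1/2}$ from the conversion.

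The hard part will be the bookkeeping for the second term in (i). The challenge is to verify that the smoothness of the mollified weight $B(t,\cdot)$ at scale $L_\kappa(t,\cdot)$ is strong enough to absorb the factor $|\eta|\approx \langle\xi\rangle$ while still leaving both the exponential decay $e^{-(\lambda(t)/40)\langle\rho\rangle^{1/2}}$ from $A_{NR}(t,\rho)$ and enough margin between $\langle t\rangle^{1+\sigma_0}$ and $t^{1.6}$ to close uniformly for $t\ge 1$. This is precisely the point at which the design of $L_\kappa(t,\xi)$ in \eqref{dor1} (which produces \eqref{Tlb2}) and the smallness of $\sigma_0$ in the definition of $\lambda$ must conspire to leave a narrow but positive margin.
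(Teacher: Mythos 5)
Your strategy is essentially the same as the paper's: in the balanced regime $S_0\cup S_1$ you extract the factor $|\rho|$ by exploiting the $\xi$-smoothness of the mollified weights (your $|\partial_s B(t,s)|\lesssim_\delta B(t,s)[\langle s\rangle^{-1/2}+L_\kappa(t,s)^{-1}]$ is exactly the content of \eqref{TLY3}, derived from \eqref{dor21}), and then convert pairs of $A_*$ into $\sqrt{|A_*\dot{A}_*|}$ using \eqref{TLX3.5}; in the imbalanced regime $S_2$ you peel off one $A_*$ via \eqref{TLX4} and use $A_R/A_{NR}\lesssim_\delta w_{NR}/w_R\lesssim_\delta\langle\cdot\rangle$ for the extra $\langle\xi\rangle$-factor in the $R$-case. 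The constants close for the same reasons as in the paper.

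Two small remarks on (i). First, your application of the mean value theorem for the term $\eta[B(t,\xi)-B(t,\eta)]$ relies on $\langle s_0\rangle\approx\langle\xi\rangle$ for the intermediate point $s_0\in(\eta,\xi)$; this can fail on $S_0$, where $\xi$ and $\eta$ may have opposite signs and $s_0$ can sit near the origin. On $S_0$ all three frequencies are comparable, so no smoothness of the weight is needed and one can bound $|\eta|B(t,\xi)$ and $|\xi|B(t,\eta)$ separately using \eqref{vfc25}, \eqref{TLX3.5}, and the lower bound $A_{NR}(t,\rho)\gtrsim e^{\lambda(t)\langle\rho\rangle^{1/2}}$; the paper explicitly separates $S_0$ for this reason, and restricting the MVT argument to $S_1$ is the cleaner route. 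Second, your intermediate claim $B(t,s_0)\lesssim_\delta B(t,\xi)$ is a mild abuse: the ratio carries an $e^{c\langle\rho\rangle^{1/2}}$ penalty which matters in the final exponential bookkeeping, and your asymmetric split into $-\rho B(t,\eta)+\eta[B(t,\xi)-B(t,\eta)]$ doubles that penalty (since $B$ is quadratic in $A_*$). The fix is the antisymmetry of $\eta B(t,\xi)-\xi B(t,\eta)$ under $\xi\leftrightarrow\eta$: WLOG $|\xi|\ge|\eta|$, whence $B(t,s_0)\lesssim_\delta B(t,\xi)e^{O(\sqrt\delta)\langle\rho\rangle^{1/2}}$ with no $\lambda$-dependent exponential cost, and then only a single application of \eqref{vfc25} (in converting one $A_*(t,\xi)$ to $A_*(t,\eta)$) is needed. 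With this WLOG the margin between $e^{0.9\lambda(t)\langle\rho\rangle^{1/2}}$ and the allowance $A_{NR}(t,\rho)e^{-(\lambda(t)/40)\langle\rho\rangle^{1/2}}\gtrsim e^{(39\lambda(t)/40)\langle\rho\rangle^{1/2}}$ closes. The paper's phrasing, which bounds the entire symmetrized difference at once as $|\rho|\langle\xi\rangle^{1-\alpha}\frac{\langle\xi\rangle^{1/2}+t}{\langle\xi\rangle+t}A_*(t,\xi)A_*(t,\eta)\cdot A_{NR}(t,\rho)e^{-(\lambda(t)/20)\langle\rho\rangle^{1/2}}$, keeps the quadratic form in the balanced shape $A_*(t,\xi)A_*(t,\eta)$ from the outset and so sidesteps this bookkeeping issue. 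For part (ii) your argument is a genuine small variant: the paper first transports the $\rho$-factor to $\xi$ via \eqref{TLY4.5} and then proves a one-frequency estimate \eqref{TLY5}, whereas you keep the $\xi$- and $\rho$-factors separate and use $\langle\rho\rangle\approx\langle\xi\rangle$ on $S_2$; both hinge on the same fact about $w_{NR}/w_R$, and the computations are equivalent.
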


\begin{proof} (i) Notice that
\begin{equation}\label{TLY3}
\frac{|\partial_\xi A_\ast(t,\xi)|}{A_\ast(t,\xi)}\lesssim_\delta \frac{\langle\xi\rangle^{1/2}+t}{\langle\xi\rangle+t}
\end{equation}
for any $t\geq 1$, $\xi\in\R$, and $\ast\in\{R,NR\}$, as a consequence of \eqref{dor21} and the definitions. 

If $(\xi,\eta)\in S_0$ then there is no derivative loss, and the estimates \eqref{TLY2.1} follow directly from \eqref{vfc25} and \eqref{TLX3.5}. On the other hand, if $(\xi,\eta)\in S_1$, then $|\xi-\eta|$ is small, and we use \eqref{TLY3}. Recalling \eqref{vfc25}, we have
\begin{equation*}
\begin{split}
|\eta A^2_{\ast}(t,\xi)\langle\xi\rangle^{-\alpha}&-\xi A_{\ast}^2(t,\eta)\langle\eta\rangle^{-\alpha}|\\
&\lesssim_\delta |\rho|\langle\xi\rangle^{1-\alpha}\frac{\langle\xi\rangle^{1/2}+t}{\langle\xi\rangle+t}A_\ast(t,\xi)A_\ast(t,\eta)\cdot A_{NR}(t,\rho)e^{-(\lambda(t)/20)\langle\rho\rangle^{1/2}}.
\end{split}
\end{equation*}
For \eqref{TLY2.1} it suffices to show that, for any $(\xi,\eta)\in S_1$,
\begin{equation}\label{TLY4}
\langle\xi\rangle\frac{\langle\xi\rangle^{1/2}+t}{\langle\xi\rangle+t}\lesssim_\delta t^{1.6}\sqrt{|(\dot{A}_\ast/A_\ast)(t,\xi)|}\sqrt{|(\dot{A}_\ast/A_\ast)(t,\eta)|}\cdot e^{(\lambda(t)/40)\langle\xi-\eta\rangle^{1/2}}.
\end{equation}

Using now \eqref{TLX3.5}, we have 
\begin{equation*}
\sqrt{|(\dot{A}_\ast/A_\ast)(t,\xi)|}\sqrt{|(\dot{A}_\ast/A_\ast)(t,\eta)|}\gtrsim \frac{\langle\xi\rangle^{1/2}}{t^{1+\sigma_0}}.
\end{equation*}
The bounds \eqref{TLY4} follow by checking the cases $t\geq \langle\xi\rangle$, $\langle\xi\rangle\geq t\geq \langle\xi\rangle^{1/2}$, and $\langle\xi\rangle^{1/2}\geq t\geq 1$.

(ii) To prove \eqref{TLY2.2} we use first \eqref{vfc25} and \eqref{vfc30}, therefore
\begin{equation}\label{TLY4.5}
\sqrt{|(A_{NR}\dot{A}_{NR})(t,\xi-\eta)|}\gtrsim_\delta \sqrt{|(A_{NR}\dot{A}_{NR})(t,\xi)|}e^{-0.91\lambda(t)|\eta|^{1/2}}.
\end{equation}
Using this, for \eqref{TLY2.2} it suffices to prove that
\begin{equation*}
A^2_{R}(t,\xi)\lesssim_\delta t^{1.1}\langle\xi\rangle^{0.6}\sqrt{|(A_R\dot{A}_R)(t,\xi)|}\sqrt{|(A_{NR}\dot{A}_{NR})(t,\xi)|}
\end{equation*}
for any $t\geq 1$ and $\xi\in\R$ satisfying $|\xi|\geq 1$. In view of \eqref{vfc30.5}, this is equivalent to proving that
\begin{equation}\label{TLY5}
\frac{A_{NR}(t,\xi)}{A_R(t,\xi)}\frac{|\dot{A}_{R}(t,\xi)|}{A_{R}(t,\xi)}t^{1.1}\langle\xi\rangle^{0.6}\gtrsim_\delta 1.
\end{equation}
It follows from \eqref{reb5.5}, \eqref{dor20},  and the definitions that
\begin{equation*}
\frac{A_{NR}(t,\xi)}{A_R(t,\xi)}\gtrsim \frac{b_R(t,\xi)}{b_{NR}(t,\xi)}\gtrsim_\delta \frac{w_R(t,\xi)}{w_{NR}(t,\xi)}\gtrsim_\delta \langle\xi\rangle^{-1}.
\end{equation*}
The bounds \eqref{TLY5} follow using also \eqref{TLX3.5}. This completes the proof of \eqref{TLY2.2}.

To prove \eqref{TLY2.3} we use again \eqref{TLY4.5}; it remains to show that
\begin{equation*}
A^2_{NR}(t,\xi)\lesssim_\delta t^{1.1}\langle\xi\rangle^{-0.4}\sqrt{|(A_{NR}\dot{A}_{NR})(t,\xi)|}\sqrt{|(A_{NR}\dot{A}_{NR})(t,\xi)|},
\end{equation*}
for any $t\geq 1$ and $\xi\in\R$ satisfying $|\xi|\geq 1$. This follows easily from \eqref{TLX3.5}.
\end{proof}

\appendix
\section{Gevrey spaces and local wellposedness of Euler equations}\label{appendix}

In this section we review some general properties of the Gevrey spaces of functions and prove the local well-posedness result in Lemma \ref{lm:persistenceofhigherregularity}.

\subsection{The Gevrey spaces}\label{GevSec}  We start with a characterization of the Gevrey spaces on the physical side.

\begin{lemma}\label{lm:Gevrey}
(i) Suppose that $0<s<1$, $K>1$, and $f\in C_0^{\infty}(\R^d)$ satisfies the bounds
\begin{equation}\label{growth}
\big|D^{\alpha}f(x)\big|\leq K^{m}(m+1)^{m/s},
\end{equation}
for all integers $m\ge 0$ and multi-indeces $\alpha$ with $|\alpha|=m$. Assume that $({\rm supp}\,f)\subseteq[-L,\,L]^d$, $L\geq 1$. Then
\begin{equation}\label{gevrey}
\big|\widehat{f}(\xi)\big|\lesssim_{K,s} L^de^{-\mu|\xi|^s},
\end{equation}
for all $\xi\in \R^d$ and some $\mu=\mu(K,s)>0$.

Similarly, if $f\in C^{\infty}(\mathbb{T}\times \mathbb{R})$ with ${\rm supp}\,f\subseteq \mathbb{T}\times[0,1]$ satisfies \eqref{growth}, then
\begin{equation}\label{gevreyP}
\big|\widetilde{f}(k,\xi)\big|\lesssim_{K,s} Le^{-\mu|k,\xi|^s},
\end{equation}
for all $k\in\mathbb{Z}, \xi\in \R$ and some $\mu=\mu(K,s)>0$.

(ii) Conversely, assume that, for some $\mu>0$ and $s\in(0,1)$,
\begin{equation}\label{eq:fou}
\big|\widehat{f}(\xi)\big|\leq e^{-\mu |\xi|^s},
\end{equation}
for all $\xi\in \R^d$. Then there is $K>1$ depending on $s$ and $\mu$ such that
\begin{equation}\label{eq:four}
\left|D^{\alpha}f(x)\right|\lesssim_{\mu,s} K^m(m+1)^{m/s},
\end{equation}
for all multi-indices $\alpha$ with $|\alpha|=m$.

Similarly, if $f\in C^{\infty}(\mathbb{T}\times \mathbb{R})$ satisfies, for some $\mu>0,s\in(0,1)$,
\begin{equation}\label{eq:fouP}
\big|\widetilde{f}(k,\xi)\big|\leq e^{-\mu |k,\xi|^s},
\end{equation}
for all $k\in\mathbb{Z}, \xi\in \R$, then the bounds \eqref{eq:four} hold for some $K=K(\mu,s)>0$ and all $x\in \mathbb{T}\times \mathbb{R}$.
\end{lemma}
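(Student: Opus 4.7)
My plan is to prove the two implications separately by duality through the Fourier transform. The heart of both arguments is an elementary calculus inequality: for any $K,s\in(0,\infty)$ with $s<1$ and any $r>0$,
\begin{equation*}
\min_{m\in\mathbb{Z}_+} K^m (m+1)^{m/s}\, r^{-m}\;\leq\; C(K,s)\, e^{-\mu(K,s)\, r^{s}},
\end{equation*}
obtained by choosing $m+1 \approx r^s/(eK^s)$; for bounded $r$ the bound is trivial upon taking $m=0$. Symmetrically, for the reverse direction one uses Stirling's formula in the form $\Gamma(\tau/s)\leq C_0^{\lceil\tau\rceil}(\tau+1)^{\tau/s}$, valid for $\tau\geq 1$, with $C_0=C_0(s)$.

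For part (i) on $\mathbb{R}^d$, I start from the identity $(i\xi)^\alpha \widehat{f}(\xi)=\widehat{D^\alpha f}(\xi)$. Given $\xi\in\mathbb{R}^d$, I pick the component $\xi_j$ with $|\xi_j|\geq |\xi|/\sqrt{d}$ and set $\alpha=m e_j$; using the support assumption and \eqref{growth}, this yields
\begin{equation*}
|\widehat{f}(\xi)|\;\leq\; |\xi_j|^{-m}\,(2L)^{d}\, K^m (m+1)^{m/s} \;\leq\; (2L)^d \left(\frac{\sqrt{d}\, K\,(m+1)^{1/s}}{|\xi|}\right)^{\!m}.
\end{equation*}
The elementary inequality above then delivers \eqref{gevrey} with some $\mu=\mu(K,s)>0$. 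For the periodic version \eqref{gevreyP}, I do the same using the identities $k^m \widetilde{f}(k,\xi)=(-i)^m \widetilde{\partial_x^m f}(k,\xi)$ and $\xi^m \widetilde{f}(k,\xi)=(-i)^m \widetilde{\partial_y^m f}(k,\xi)$ (no boundary terms appear thanks to periodicity in $x$ and compact support in $y$), differentiating in whichever of $x,y$ carries the larger Fourier variable, so that the resulting denominator dominates $|(k,\xi)|/\sqrt{2}$.

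For part (ii), Fourier inversion gives $D^\alpha f(x)=(2\pi)^{-d}\int (i\xi)^\alpha \widehat{f}(\xi) e^{ix\cdot\xi}\,d\xi$, and \eqref{eq:fou} yields
\begin{equation*}
|D^\alpha f(x)|\;\lesssim_d\;\int_0^\infty r^{m+d-1}e^{-\mu r^s}\,dr \;=\; \frac{1}{s\,\mu^{(m+d)/s}}\,\Gamma\!\left(\frac{m+d}{s}\right)
\end{equation*}
after polar coordinates and the substitution $u=\mu r^s$. Stirling then produces \eqref{eq:four} with a constant $K=K(\mu,s,d)$. The periodic case is identical after replacing the $\xi$-integral by $\sum_k\int d\xi$ and noting that this mixed sum-integral is controlled by a continuous integral of the same form on $\mathbb{R}^2$ up to a harmless multiplicative constant.

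The only mildly delicate point is matching the exponent $s$ in the elementary inequality: at the optimal $m\sim r^s/(eK^s)$, the quantity $m\log K+(m/s)\log(m+1)-m\log r$ simplifies to $\approx -m/s \sim -r^s/(seK^s)$, confirming the exponential rate $r^s$ with $\mu\approx 1/(seK^s)$. All other steps are routine integration by parts, Stirling, and comparison of sums to integrals, so I expect no significant obstacle.
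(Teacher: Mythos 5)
Your proposal is correct and follows essentially the same strategy as the paper: for (i), integrate by parts $m$ times in the dominant direction and then optimize $m$ against $|\xi|$, and for (ii), estimate the Fourier inversion integral by controlling $|\xi|^{m+O(1)}e^{-\mu|\xi|^s}$. The only cosmetic difference is in (ii), where you route the estimate through the Gamma function and Stirling's formula while the paper uses the elementary bound $\sup_{r>0} r^N e^{-r}=(N/e)^N$ directly; these are interchangeable.
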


\begin{proof} (i) We prove only the harder estimates \eqref{gevrey}. We may assume that $|\xi|$ is large. Using the definition of the Fourier transform, integration by parts, and the bounds \eqref{growth}, we see that
\begin{equation}\label{eq:N}
\big|\widehat{f}(\xi)\big|\leq \frac{C^N}{|\xi|^N}K^N(N+1)^{N/s}L^d.
\end{equation}
This holds for all integers $N\geq 1$. Choose $N$ to be the largest integer so that $CK(N+1)^{1/s}\leq |\xi|/e$, thus
$$N=\frac{|\xi|^s}{(CKe)^s}+O(1).$$
Consequently, using \eqref{eq:N} we get that
$$\big|\widehat{f}(\xi)\big|\lesssim_s L^de^{-N}\lesssim_{K,\mu}L^de^{-\mu|\xi|^s},$$
for suitable $\mu>0$.

(ii) We consider only the case when $f\in C^\infty(\R^d)$. Using \eqref{eq:fou} we have 
\begin{equation}\label{eq:fou2}
\|D^\alpha f\|_{L^\infty}\leq C_0^m\|\langle\xi\rangle^m\widehat{f}(\xi)\|_{L^1}\leq C_1^m\big(1+\sup_{|\xi|\geq 2}(|\xi|^{m+d+1}e^{-\mu|\xi|^s})\big).
\end{equation}
We notice that the function $r\to r^Ne^{-r}$, $r>0$, $N\geq 1$, has a maximum at $r=N$. Thus 
\begin{equation}\label{eq:fou5}
r^Ne^{-r}\leq (N/e)^N \qquad\text{ for any }r>0\text{ and }N\geq 1,
\end{equation}
so the right-hand side of \eqref{eq:fou2} is bounded by
\begin{equation*}
C_1^m\big[1+\sup_{r>0}(r/\mu)^{(1/s)\cdot (m+d+1)}e^{-r}\big]\leq K_1^m(N/e)^N,
\end{equation*}
where $N=(m+d+1)/s$ and $K_1$ is sufficiently large. The desired bounds \eqref{eq:four} follow.
\end{proof}

\subsubsection{Gevrey cutoff functions}\label{GevCut} Using Lemma \ref{lm:Gevrey}, one can construct explicit cutoff functions in Gevrey spaces. For $a>0$ let
\begin{equation}\label{gev1}
\psi_a(x):=\begin{cases}
e^{-[1/x^a+1/(1-x)^a]}&\quad\text{ if }x\in[0,1],\\
0&\quad\text{ if }x\notin[0,1].
\end{cases}
\end{equation}
Clearly $\psi_a$ are smooth functions on $\R$, supported in the interval $[0,1]$. Using \eqref{eq:fou5} it is easy to verify that $\psi_a$ satisfies the bounds \eqref{growth} for $s:=a/(a+1)$. Thus, for some $\mu=\mu(a)>0$,
\begin{equation}\label{gev2}
|\widehat{\psi_a}(\xi)|\lesssim e^{-\mu|\xi|^{a/(a+1)}}.
\end{equation} 

One can also construct compactly supported Gevrey cutoff functions which are equal to $1$ in a given interval. Indeed, for any $\rho\in[9/10,1)$, the function
\begin{equation}\label{gev3}
\psi'_{a,\rho}(x):=\frac{\psi_a(x)}{\psi_a(x)+\psi_a(x-\rho)+\psi_a(x+\rho)}
\end{equation}
is smooth, non-negative, supported in $[0,1]$, and equal to $1$ in $[1-\rho,\rho]$. Moreover, it follows from Lemma \ref{lm:Gevrey} (i) that $|\widehat{\psi'_{a,\rho}}(\xi)|\lesssim e^{-\mu|\xi|^{a/(a+1)}}$ for some $\mu=\mu(a,\rho)>0$. 

\subsubsection{Compositions of Gevrey functions} The physical space characterization of Gevrey functions is useful when studying compositions. In our setting, we have the following lemma:

\begin{lemma}\label{GPF}
(i) Assume $\kappa_1>0$, $s\in(0,1)$, and $f\in \mathcal{G}^{\kappa_1,s}(\mathbb{T}\times \R)$. Suppose $M\in(0,\infty)$ and $g:\mathbb{T}\times \R \to \mathbb{T}\times \R$ satisfies, for any $m\geq 1$,
\begin{equation}\label{gbo1}
|D^\alpha g(x,y)|\leq M^m(m+1)^{m/s}\qquad \text{ for any }(x,t)\in\T\times\R\text{ and }|\alpha|\in [1,m].
\end{equation}
Suppose that $f$ and $f\circ g$ are supported in $\mathbb{T}\times [-2,2]$. Then, for a suitable $\kappa_2>0$ depending on $s,\kappa_1,M$,
we have $f\circ g\in \mathcal{G}^{\kappa_2,s}$ and
\begin{equation}\label{Ffgc}
\left\|f\circ g\right\|_{ \mathcal{G}^{\kappa_2,s}}\lesssim_{s,\kappa_1,M} \left\|f\right\|_{ \mathcal{G}^{\kappa_1,s}}.
\end{equation}

(ii) Assume $s\in(0,1)$, $L\in(0,\infty)$, $I,J\subseteq\mathbb{R}$ are open intervals, and $g:I\to J$ is a smooth bijective map satisfying, for any $m\geq 1$,
\begin{equation}\label{gbo2}
|D^\alpha g(x)|\leq L^m(m+1)^{m/s}\qquad \text{ for any }x\in I\text{ and }|\alpha|\in [1,m].
\end{equation}
If $|g'(x)|\geq 1/10$ for any $x\in I$ then the inverse function $g^{-1}:J\to I$ satisfies the bounds
\begin{equation}\label{gbo2.1}
|D^\alpha (g^{-1})(x)|\leq M^m(m+1)^{m/s}\qquad \text{ for any }x\in J\text{ and }|\alpha|\in [1,m],
\end{equation}
for some constant $M=M(L,s)\geq L$.
\end{lemma}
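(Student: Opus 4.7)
My plan is to reduce both parts to physical-space derivative estimates by passing through the equivalence established in Lemma \ref{lm:Gevrey}, and then to exploit Fa\`a di Bruno's formula together with standard combinatorial bounds on Bell polynomials. It will be convenient throughout to work with the equivalent form of the Gevrey estimates: by Stirling's formula, a bound of type $|D^\alpha h|\leq K^m(m+1)^{m/s}$ for $|\alpha|=m$ is equivalent to $|D^\alpha h|\leq K_\sharp^m (m!)^{1/s}$ (with $K_\sharp=K_\sharp(K,s)$), the factorial form being better suited to the combinatorics of composition.

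For part (i), I would first apply Lemma \ref{lm:Gevrey} (ii) to the hypothesis $f\in\mathcal{G}^{\kappa_1,s}$ to obtain physical-space bounds $|D^\alpha f|\leq K_1^m(m!)^{1/s}$ for $|\alpha|=m$, with $K_1=K_1(\kappa_1,s)$. Rewriting the assumption \eqref{gbo1} on $g$ in the same factorial form, I would then invoke Fa\`a di Bruno's formula
\begin{equation*}
D^\alpha(f\circ g)(x)=\sum_{\pi}(D^{|\pi|}f)(g(x))\prod_{B\in\pi}D^{|B|}g(x),
\end{equation*}
where the sum ranges over partitions $\pi$ of the multi-index $\alpha$ (of length $m=|\alpha|$) into nonempty blocks $B$. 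Using the classical combinatorial identity $\sum_\pi \prod_B (|B|!)^{-1}=\binom{m+?}{?}$-type bound, more precisely the Bell-polynomial estimate
\begin{equation*}
\sum_{\pi}\prod_{B\in\pi}\frac{(|B|!)^{1/s}}{|B|!}\,M_1^{|B|}\,K_1^{|\pi|}\lesssim K_2^m\,(m!)^{1/s-1},
\end{equation*}
together with $\prod_B|B|!\leq m!$, I would obtain $|D^\alpha(f\circ g)|\leq K_3^m(m!)^{1/s}$ uniformly on $\mathbb{T}\times\mathbb{R}$. Since $f\circ g$ is supported in $\mathbb{T}\times[-2,2]$, Lemma \ref{lm:Gevrey} (i) then yields $|\widetilde{f\circ g}(k,\xi)|\lesssim e^{-\kappa_2\langle k,\xi\rangle^s}$ for some $\kappa_2=\kappa_2(s,\kappa_1,M)>0$, which is the desired bound \eqref{Ffgc}.

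For part (ii), I would proceed by induction on $m$, using the identity $g^{-1}\circ g=\mathrm{id}$. Applying Fa\`a di Bruno to this relation gives, for every $m\geq 2$,
\begin{equation*}
0=\sum_{k=1}^m (g^{-1})^{(k)}(g(x))\,B_{m,k}\!\bigl(g'(x),\ldots,g^{(m-k+1)}(x)\bigr),
\end{equation*}
where $B_{m,k}$ are the partial Bell polynomials. Isolating the $k=1$ term and dividing by $g'(x)$ (which is safe since $|g'|\geq 1/10$) expresses $(g^{-1})^{(m)}(g(x))$ as an explicit polynomial in $(g^{-1})^{(k)}(g(x))$ for $k<m$ and in $g^{(j)}(x)$ for $j\leq m-k+1$. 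Feeding in the inductive hypothesis $|(g^{-1})^{(k)}|\leq M^k(k!)^{1/s}$ for $k<m$ together with the assumed bound on $g$, and using the same Bell-polynomial combinatorics as in part (i), I would verify that the right-hand side is bounded by $M^m(m!)^{1/s}$ provided $M=M(L,s)$ is chosen sufficiently large at the outset; the induction then closes and converts back to \eqref{gbo2.1} via Stirling.

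The main technical hurdle is the combinatorial estimate for Fa\`a di Bruno sums: one must verify that summing $\prod_B(|B|!)^{1/s}$ over all partitions of $\{1,\ldots,m\}$, weighted by constants to powers $|\pi|$ and $m$, produces a term of the form $K^m(m!)^{1/s}$ with no extra super-exponential loss. This is where the Gevrey exponent $1/s>1$ is essential—values $s\leq 0$ would not be admissible—and it relies on the fact that the number of partitions of $\{1,\ldots,m\}$ into blocks of prescribed sizes is controlled by $m!$ divided by the product of the block sizes' factorials, which exactly cancels the worst-case factor in the product. Once this counting is done carefully, both parts of the lemma follow by routine bookkeeping.
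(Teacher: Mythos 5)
Your overall route — pass to physical space via Lemma \ref{lm:Gevrey}, use Fa\`a di Bruno and Bell-polynomial combinatorics, and convert back — is exactly the one the paper gestures at (it cites Yamanaka's Theorems 3.2 and 6.1, which do precisely this), and your combinatorial sketch for part (i), including the observation that $1/s>1$ is what makes the partition sum controllable after converting to ordered compositions, is on target.

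Two remarks on part (ii), one cosmetic and one substantive. Cosmetically, your displayed Fa\`a di Bruno identity is the one for $g^{-1}\circ g$, in which $(g^{-1})^{(m)}$ sits in the $k=m$ term multiplied by $(g')^m$; the step ``isolate $k=1$ and divide by $g'$'' matches the \emph{other} formulation, Fa\`a di Bruno for $g\circ g^{-1}=\mathrm{id}$, where the $k=1$ term is $g'(g^{-1}(y))\,(g^{-1})^{(m)}(y)$. Either works, but the formula and the stated step should be made consistent. Substantively, the claim that the induction ``closes provided $M$ is chosen sufficiently large'' does not work as stated: in the inductive step, each block in $B_{m,k}$ contributes $M^{|B|}$, and the block sizes sum to $m$, so the right-hand side of the recursion already carries the full factor $M^m$. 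That factor cancels against the target $M^m(m!)^{1/s}$, and what remains to verify is the $M$-independent inequality $10\sum_{k\geq2}L^k(k!)^{1/s}\sum_{|\pi|=k}\prod_B(|B|!)^{1/s}\le (m!)^{1/s}$, which fails for moderate $m$ when $L$ is not small. The standard repair is either to strengthen the induction hypothesis to, say, $|(g^{-1})^{(m)}|\le M^m(m!)^{1/s}(m+1)^{-2}$, or (equivalently) to observe that for partitions into $k\geq 2$ blocks one has $k!\prod_B|B|!\le 2(m-1)!$, so that the normalized sum over $k\geq 2$ gains a factor $\lesssim m^{-(1/s-1)}\to 0$; one then verifies the inductive step only for $m>m_0(L,s)$ and handles $m\le m_0$ directly by enlarging $M$. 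With that adjustment the argument is complete.
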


This can be proved using Lemma \ref{lm:Gevrey}, and we omit the details. See also Theorem 6.1 and Theorem 3.2 of \cite{Yamanaka} for more general estimates on compositions of functions in Gevrey spaces.

\subsection{Proof of Lemma \ref{lm:persistenceofhigherregularity}} As we remarked earlier, the lemma can be obtained as a consequence of the more general theory developed in \cite{Foias,Levermore,Vicol2,Vicol}. For the sake of convenience, we provide a complete proof here in our special case, using the Fourier transform. 

To prove Gevrey bounds,  we have to work with suitable weights. First we define the functions $g:(0,\infty)\to (0,\infty)$, by
\begin{equation}\label{weg1}
\begin{split}
g'(r):=
\begin{cases}
s\, r^{s-1}-s\rho^{s-1}&\quad\text{ if }r\in(0,\rho],\\
0&\quad\text{ if }r\geq \rho,
\end{cases}
\qquad g(r):=\int_0^r g'(x)\,dx.
\end{split}
\end{equation}
Here $\rho\geq \rho_0$ is a large parameter (which is needed only to guarantee convergence and continuity in time of the energy functionals below), and the desired Gevrey bounds follow by proving estimates independent of $\rho$ and letting $\rho\to\infty$. 

Then we define the main weights $B:[0,T]\times\mathbb{R}^2\to (0,\infty)$,
\begin{equation}\label{ini5}
B(t,v):=\langle v\rangle^3\exp[\lambda(t)g(\langle v\rangle)],
\end{equation}
where $\lambda(t):[0,T]\to (0,\infty)$, $\lambda(0)=\lambda_0$, is a positive decreasing function to be chosen below.

With $v=(k,\xi)$, we define the energy functionals
\begin{equation}\label{ini6}
\mathcal{E}(t):=\sum_{k\in\mathbb{Z}}\int_{\mathbb{R}}B^2(t,k,\xi)\big|\widetilde{\omega}(t,k,\xi)\big|^2\,d\xi.
\end{equation}
Since $\omega\in C([0,T]:H^{10})$ and $B(t,v)\lesssim_\rho \langle v\rangle^{3}$, the function $\mathcal{E}$ is well-defined and continuous on $[0,T]$. Moreover, $\mathcal{E}(0)\leq \|\langle \nabla\rangle^3\omega_0\|_{\mathcal{G}^{\lambda_0,s}}$.

{\bf{Step 1.}} We fix a smooth function $\Psi(y)$ with 
\begin{equation}\label{ini9}
{\mathrm {supp}}\,\Psi\subseteq [\vartheta/8,1-\vartheta/8],\qquad\Psi|_{[\vartheta/4,1-\vartheta/4]}\equiv 1,\qquad |\widetilde{\Psi}(\xi)|\lesssim e^{-\langle \xi\rangle^{7/8}}\text{ for all }\xi\in\R,
\end{equation}
where, in the rest of this section, all implicit constants are allowed to depend on $\vartheta$. By the support property of $\omega(t)$ and the Euler equation, we calculate
\begin{equation}\label{ini12}
\begin{split}
\frac{d}{dt}\mathcal{E}(t)&=\sum_{k\in\mathbb{Z}}\int_{\mathbb{R}}2B(t,k,\xi)\dot{B}(t,k,\xi)\big|\widetilde{\omega}(t,k,\xi)\big|^2\,d\xi\\
&+2\Re\,\sum_{k\in\mathbb{Z}}\int_{\mathbb{R}}B^2(t,k,\xi)\,\overline{\widetilde{\omega}(t,k,\xi)}\,\widetilde{\partial_t\omega}(t,k,\xi)\,d\xi\\
&=\sum_{k\in\mathbb{Z}}\int_{\mathbb{R}}2B(t,k,\xi)\dot{B}(t,k,\xi)\big|\widetilde{\omega}(t,k,\xi)\big|^2\,d\xi+\mathcal{P}^1(t)+\mathcal{P}^2(t),
\end{split}
\end{equation}
where, using the equations and inserting a factor of $\Psi(y)$,
\begin{equation*}
\begin{split}
\mathcal{P}^1(t)&:=C\Re\sum_{k\in\Z}\int_{\R^2}B^2(t,k,\xi)\,\overline{\widetilde{\omega}(t,k,\xi)}\cdot {\widetilde{(y\Psi)}(\xi-\eta)\,ik\,\widetilde{\omega}(t,k,\eta)}\,d\xi\,d\eta,\\
&=C\Re\sum_{k\in\Z}\int_{\R^2}ik[B^2(t,k,\xi)-B^2(t,k,\eta)]\,\overline{\widetilde{\omega}(t,k,\xi)}\,\widetilde{\omega}(t,k,\eta)\widetilde{(y\Psi)}(\xi-\eta)\,d\xi\,d\eta,
\end{split}
\end{equation*}
and
\begin{equation*}
\begin{split}
\mathcal{P}^2&(t):=C\Re\sum_{k,\ell\in\Z}\int_{\R^2}B^2(t,k,\xi)\,\overline{\widetilde{\omega}(t,k,\xi)}\,\Big\{-\widetilde{\partial_y(\Psi\psi)}(t,k-\ell,\xi-\eta)\,i\ell\,\widetilde{\omega}(t,\ell,\eta)\\
&\qquad\qquad+\widetilde{\partial_x(\Psi\psi)}(t,k-\ell,\xi-\eta)\,i\eta\,\widetilde{\omega}(t,\ell,\eta)\Big\}\,d\xi d\eta\\
&=C\Re\sum_{k,\ell\in\Z}\int_{\R^2}i[\ell B^2(t,k,\xi)-kB^2(t,\ell,\eta)]\,\overline{\widetilde{\omega}(t,k,\xi)}\widetilde{\omega}(t,\ell,\eta)\widetilde{\partial_y(\Psi\psi)}(t,k-\ell,\xi-\eta)\,d\xi d\eta\\
&+C\Re\sum_{k,\ell\in\Z}\int_{\R^2}i[\eta B^2(t,k,\xi)-\xi B^2(t,\ell,\eta)]\,\overline{\widetilde{\omega}(t,k,\xi)}\widetilde{\omega}(t,\ell,\eta)\widetilde{\partial_x(\Psi\psi)}(t,k-\ell,\xi-\eta)\,d\xi d\eta.
\end{split}
\end{equation*}

{\bf{Step 2.}} We estimate now $|\mathcal{P}^1(t)|$ and $|\mathcal{P}^2(t)|$. Using the bounds in Lemma \ref{ineq0}, we see that
\begin{equation}\label{ini50}
\begin{split}
|\ell B^2(t,k,\xi)-&kB^2(t,\ell,\eta)|+|\eta B^2(t,k,\xi)-\xi B^2(t,\ell,\eta)|\lesssim B(t,k,\xi)B(t,\ell,\eta)\\
&\times B(t,k-\ell,\xi-\eta)(\min)^{-3}[1+\lambda(t)g(\langle k,\xi\rangle)]\langle k-\ell,\xi-\eta\rangle,
\end{split}
\end{equation}
where $\min:=\min\{\langle k,\xi\rangle,\langle\ell,\eta\rangle,\langle k-\ell,\xi-\eta\rangle\}$. Indeed, if $|k-\ell,\xi-\eta|\geq (|k,\xi|+|\ell,\eta|)/20$ then the bounds follow directly from \eqref{ineq3}, by estimating each term in the left-hand side independently. On the other hand, if $|k-\ell,\xi-\eta|\leq (|k,\xi|+|\ell,\eta|)/20$ then we use \eqref{ineq2}, which forces us to include the term $\lambda(t)g(\langle k,\xi\rangle)$ in the right-hand side of \eqref{ini50}.

Let
\begin{equation}\label{ini50.1}
\begin{split}
H(t,k,\xi)&:=\langle k,\xi\rangle^2|\widetilde{\Psi\psi}(t,k,\xi)|+|\widetilde{\omega}(t,k,\xi)|,\\
H'(\xi)&:=\langle \xi\rangle|\widetilde{y\Psi}(t,k,\xi)|.
\end{split}
\end{equation}
It follows from \eqref{ini50}, changes of variables, and the identities above that
\begin{equation}\label{ini50.20}
\begin{split}
|\mathcal{P}^1(t)|&\lesssim \sum_{k\in\Z}\int_{\R^2}|\widetilde{\omega}(t,k,\xi)||\widetilde{\omega}(t,k,\eta)||H'(\xi-\eta)|\cdot B(t,k,\xi)B(t,k,\eta)\\
&\times B(t,0,\xi-\eta)[1+\lambda(t)g(\langle k,\xi\rangle)]\,d\xi d\eta, 
\end{split}
\end{equation}
\begin{equation}\label{ini50.21}
\begin{split}
|\mathcal{P}^2(t)|&\lesssim \sum_{k,\ell\in\Z}\int_{\R^2}|H(t,k,\xi)||H(t,\ell,\eta)||H(t,k-\ell,\xi-\eta)|\\
&\times B(t,k,\xi)B(t,\ell,\eta)B(t,k-\ell,\xi-\eta)\mathbf{1}_{\mathcal{R}}((k,\xi),(\ell,\eta))\\
&\times \langle k-\ell,\xi-\eta\rangle^{-3}[1+\lambda(t)g(\langle k,\xi\rangle)]\,d\xi d\eta,
\end{split}
\end{equation}
where $\mathcal{R}:=\big\{\langle k-\ell,\xi-\eta\rangle\leq\min\{\langle k,\xi\rangle,\langle \ell,\eta\rangle\}\big\}$.

To estimate $|\mathcal{P}^1(t)|$ and $|\mathcal{P}^2(t)|$ we use the following elementary bounds (compare with Lemma \ref{Multi0}): for any functions $F_1,F_2,F_3:\Z\times\R\to[0,\infty)$ we have
\begin{equation}\label{ini50.4}
\sum_{k,\ell\in\Z}\int_{\R^2}F_1(k,\xi)F_2(\ell,\eta)F_3(k-\ell,\xi-\eta)\,d\xi d\eta\lesssim \|F_1\|_{L^2}\|F_2\|_{L^2}\|F_3\|_{L^1}.
\end{equation}
For  $t\in[0,T]$ let 
\begin{equation}\label{ini80}
Y(t):=\big\|B(t,k,\xi)\,\widetilde{\omega}(t,k,\xi)\big\|_{L^2_{k,\xi}},\qquad Y'(t):=\big\|\sqrt{g(\langle k,\xi\rangle)}B(t,k,\xi)\,\widetilde{\omega}(t,k,\xi)\big\|_{L^2_{k,\xi}}.
\end{equation}
Using Lemma \ref{ineq6}, we have
\begin{equation}\label{ini50.3}
\big\|W(t,k,\xi)H(t,k,\xi)\big\|_{L^2_{k,\xi}}\lesssim\big\|W(t,k,\xi)\,\widetilde{\omega}(t,k,\xi)\big\|_{L^2_{k,\xi}},
\end{equation}
provided that $W(t,k,\xi)=B(t,k,\xi)$ or $W(t,k,\xi)=\sqrt{g(\langle k,\xi\rangle)}B(t,k,\xi)$.

Using \eqref{ini50.20}, \eqref{ini9}, and \eqref{ini50.4}, we have, for any $t\in[0,T]$,
\begin{equation}\label{ini50.5}
|\mathcal{P}^1(t)|\lesssim (Y(t))^2+\lambda(t)(Y'(t))^2
\end{equation}

To estimate $\mathcal{P}^2$, we first observe that $B(t,v)\leq \langle v\rangle^3+\lambda(t)g(\langle v\rangle)B(t,v)$ with $v=(k,\xi)$. As a consequence, we have
\begin{equation}\label{BtL4}
B(t,v) [1+\lambda(t)g(\langle v\rangle)]\lesssim \langle v\rangle^3+\lambda(t)g(\langle v\rangle)B(t,v).
\end{equation}

Using \eqref{ini50.21}--\eqref{ini50.3} and (\ref{BtL4}), we estimate
\begin{equation}\label{ini50.6}
|\mathcal{P}^2(t)|\lesssim \lambda(t)Y(t)(Y'(t))^2+\|\omega(t)\|_{H^6}(Y(t))^2.
\end{equation}

{\bf{Step 3.}} We reexamine now the identities \eqref{ini12}. Notice that 
\begin{equation*}
\dot{B}(t,k,\xi)=\lambda'(t)g(\langle k,\xi\rangle)B(t,k,\xi).
\end{equation*}
We use \eqref{ini50.5}--\eqref{ini50.6} to estimate
\begin{equation*}
\begin{split}
\frac{d}{dt}\mathcal{E}(t)&\leq \lambda'(t)(Y'(t))^2+\mathcal{P}^1(t)+\mathcal{P}^2(t)\\
&\leq \lambda'(t)(Y'(t))^2+C_\vartheta\big\{(Y(t))^2+\lambda(t)(Y'(t))^2+\lambda(t)Y(t)(Y'(t))^2+\|\omega(t)\|_{H^6}(Y(t))^2\big\}.
\end{split}
\end{equation*}
Now suppose $\lambda(t)$ satisfies
\begin{equation}\label{LDY1}
\lambda'(t)+C_\vartheta (Y(t)+1)\lambda(t)\leq0,
\end{equation}
then we would obtain
\begin{equation*}
\frac{d}{dt}\mathcal{E}(t)\leq C_{\vartheta}(1+\|\omega(t)\|_{H^6})\mathcal{E}(t),
\end{equation*}
which gives
\begin{equation}\label{BonEA}
\mathcal{E}(t)\leq \mathcal{E}(0)\exp{\left[C_{\vartheta}\int_0^t(1+\|\omega(s)\|_{H^6})ds\right]}.
\end{equation}
From (\ref{BonEA}), we see that we can guarantee (\ref{LDY1}) if $\lambda(t)$ is chosen so that
\begin{equation}\label{ChOfL}
\lambda'(t)+4C_{\vartheta}\lambda(t)+4C_{\vartheta}Y(0)\exp{\left[C_{\vartheta}\int_0^t(1+\|\omega(s)\|_{H^6})ds\right]}\lambda(t)\leq0.
\end{equation}
The desired bounds (\ref{ini2}) and (\ref{fdLam1}) follow easily from (\ref{BonEA}), (\ref{ChOfL}), by sending $\rho\to\infty$.

We prove now suitable bounds on the weights $B$ which are used for \eqref{ini50}.

\begin{lemma}\label{ineq0}
For $v,w\in\Z\times\R$ we have
\begin{equation}\label{ineq3}
B(t,v+w)\lesssim B(t,v)B(t,w)\min(\langle v\rangle,\langle w\rangle)^{-3}.
\end{equation}
Moreover, if $|v|\leq |w|$ and $|w-v|\leq |v|/4$, then
\begin{equation}\label{ineq2}
\begin{split}
\big|B(t,w)-B(t,v)\big|&\lesssim[1+\lambda(t) g(\langle v\rangle)]\langle v\rangle^{-1}B(t,v)\cdot B(t,v-w)\langle v-w\rangle^{-2}.
\end{split}
\end{equation}
\end{lemma}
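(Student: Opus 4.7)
My plan is to deduce both estimates from the fact that the radial weight $g$ is concave on $[0,\infty)$ with $g(0)=0$, together with the triangle inequality $\langle v+w\rangle\le\langle v\rangle+\langle w\rangle$. The two elementary consequences of concavity that I would use are: \emph{(i) subadditivity}, $g(a+b)\le g(a)+g(b)$ for $a,b\ge 0$ (write $g(a)\ge\tfrac{a}{a+b}g(a+b)+\tfrac{b}{a+b}g(0)$ and symmetrically, then add); and \emph{(ii) the secant--tangent comparison} $g'(r)\le g(r)/r$ for $r>0$ (take $r_0=0$ in the concavity inequality $g(r_0)\le g(r)+g'(r)(r_0-r)$). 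Note also that $g$ is non-decreasing since $g'\ge 0$.

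For \eqref{ineq3}, I would combine the triangle inequality, the monotonicity of $g$, and (i) to get $g(\langle v+w\rangle)\le g(\langle v\rangle)+g(\langle w\rangle)$, so the exponential factor of $B(t,v+w)$ splits as $\exp[\lambda(t)g(\langle v\rangle)]\exp[\lambda(t)g(\langle w\rangle)]$. For the polynomial factor, $\langle v+w\rangle^3\le(\langle v\rangle+\langle w\rangle)^3\lesssim\max(\langle v\rangle,\langle w\rangle)^3=\langle v\rangle^3\langle w\rangle^3/\min(\langle v\rangle,\langle w\rangle)^3$. Multiplying yields \eqref{ineq3} immediately.

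For \eqref{ineq2}, I would integrate along the segment $u(\tau):=v+\tau(w-v)$, $\tau\in[0,1]$. A direct computation gives $\nabla_u B(t,u)=B(t,u)[3/\langle u\rangle+\lambda(t)g'(\langle u\rangle)]\,u/\langle u\rangle$ with $|u\cdot(w-v)|/\langle u\rangle\le|w-v|$. The closeness hypothesis $|w-v|\le|v|/4$ forces $\langle u(\tau)\rangle\in[\tfrac{3}{4}\langle v\rangle,\tfrac{5}{4}\langle v\rangle]$ and $\langle v-w\rangle\le \tfrac14\langle v\rangle+1\lesssim\langle v\rangle$, hence $g(\langle v-w\rangle)\lesssim g(\langle v\rangle)$ by monotonicity. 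Using (i) applied to $\langle u\rangle\le\langle v\rangle+\langle v-w\rangle$ gives $B(t,u)\lesssim B(t,v)\exp[\lambda(t)g(\langle v-w\rangle)]$, and (ii) at $\langle u\rangle\approx\langle v\rangle$ together with the same subadditivity gives $\lambda(t)g'(\langle u\rangle)\lesssim\lambda(t)[g(\langle v\rangle)+g(\langle v-w\rangle)]/\langle v\rangle\lesssim\lambda(t)g(\langle v\rangle)/\langle v\rangle$. Integrating the resulting pointwise bound in $\tau$ yields
\[|B(t,w)-B(t,v)|\lesssim[1+\lambda(t)g(\langle v\rangle)]\langle v\rangle^{-1}B(t,v)\exp[\lambda(t)g(\langle v-w\rangle)]\,|w-v|.\]
Rewriting $|w-v|\exp[\lambda(t)g(\langle v-w\rangle)]\le\langle v-w\rangle^{-2}\cdot\langle v-w\rangle^3\exp[\lambda(t)g(\langle v-w\rangle)]=\langle v-w\rangle^{-2}B(t,v-w)$ produces \eqref{ineq2}.

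The main (and essentially only) subtle point is absorbing the cross term $\lambda(t)g(\langle v-w\rangle)$ that inevitably appears when $g$ is split subadditively along the segment; the closeness hypothesis is used precisely to ensure $g(\langle v-w\rangle)\lesssim g(\langle v\rangle)$, so this term is dominated by $\lambda(t)g(\langle v\rangle)$ and absorbs cleanly into the factor $[1+\lambda(t)g(\langle v\rangle)]$ on the right of \eqref{ineq2}. Beyond this, the argument is routine calculus.
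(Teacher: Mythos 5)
Your proof is correct and takes essentially the same approach as the paper: both hinge on the concavity and monotonicity of $g$ (yielding subadditivity, $g(a+b)\le g(a)+g(b)$, and the secant bound $g'(r)\le g(r)/r$) together with a fundamental-theorem-of-calculus estimate for the increment in \eqref{ineq2}. The only cosmetic difference is that the paper proves scalar estimates for $f_\lambda(r):=\exp[\lambda g(r)]$ and then reassembles the polynomial factor $\langle v\rangle^3$, whereas you integrate $\nabla_u B$ directly along the segment from $v$ to $w$ in $\mathbb{R}^2$; the two are minor presentational variants of the same argument.
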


\begin{proof} For $\lambda\in[0,1]$, $s\in[1/4,3/4]$, $\rho\geq 2^{40}$, and $g$ as in \eqref{weg1}, we define the functions $f_\lambda:(0,\infty)\to (0,\infty)$ as
\begin{equation}\label{weg2}
f_\lambda (r):=\exp[\lambda g(r)].
\end{equation}

{\bf{Step 1.}} We show first that for $x,y\geq 1$ we have
\begin{equation}\label{ini32.5}
\frac{f_\lambda(x+y)}{f_\lambda(x)f_\lambda(y)}\leq 1.
\end{equation}
Indeed, we may assume $x\leq y$ and calculate
\begin{equation*}
\frac{f_\lambda(x+y)}{f_\lambda(x)f_\lambda(y)}=\exp[\lambda (g(x+y)-g(x)-g(y))]=\exp\Big[-\lambda\int_0^xg'(r)\,dr+\lambda \int_y^{y+x}g'(r)\,dr\Big].
\end{equation*}
The bounds \eqref{ini32.5} follow since $g':(0,\infty)\to (0,\infty)$ is a continuous and decreasing function.

{\bf{Step 2.}} We show now that if $y\geq 1$ and $x\in[0,y]$ then
\begin{equation}\label{ini41}
|f_\lambda(y+x)-f_\lambda(y)|\leq f_\lambda(x)f_\lambda(y)\lambda g(y)(x/y).
\end{equation}
Indeed, using the monotonicity of $g'$ we write
\begin{equation}\label{ini42}
|f_\lambda(y+x)-f_\lambda(y)|=\int_y^{x+y}f_\lambda(r)\lambda g'(r)\,dr\leq \lambda\int_0^{x}f_\lambda(y+r)g'(y)\,dr.
\end{equation}
Using now \eqref{ini32.5} we estimate the right-hand side of \eqref{ini42} by
\begin{equation*}
\begin{split}
\lambda g'(y)\int_0^{x}f_\lambda (y+r)\,dr\leq \lambda g'(y)xf_\lambda (y+x)\leq f_\lambda(x)f_\lambda(y)\lambda g'(y)x.
\end{split}
\end{equation*}
Since $yg'(y)\leq g(y)$, this completes the proof of \eqref{ini41}. 

{\bf{Step 3.}} Recall that $B(t,v)=\langle v\rangle^3f_{\lambda(t)}(\langle v\rangle)$. The bounds \eqref{ineq3} follow from \eqref{ini32.5} and the monotonicity of the functions $f_\lambda$. The bounds \eqref{ineq2} follow from \eqref{ini41} and monotonicity. This completes the proof of the lemma.
\end{proof}
Finally, we prove weighted elliptic estimates for the functions $\Psi\,\psi$.

\begin{lemma} \label{ineq6}
If $W(k,\xi):=e^{\lambda g(\langle k,\xi\rangle)}\langle k,\xi\rangle^dg(\langle k,\xi\rangle)^p$, $\lambda,p\in[0,2]$, $d\in[0,6]$, and $t\in[0,T]$, then
\begin{equation}\label{eq:cutoffPhiMain}
\big\|\langle k,\xi\rangle^2 W(k,\xi)\widetilde{\big(\Psi\,\psi\big)}(t,k,\xi)\big\|_{L^2_{k,\xi}}\lesssim\big\|W(k,\xi)\,\widetilde{\omega}(t,k,\xi)\big\|_{L^2_{k,\xi}},
\end{equation}
uniformly in $\lambda,p,d$, and $\rho$ (the parameter in the definition of the function $g$).
\end{lemma}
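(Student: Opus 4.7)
The plan is to exploit that $\Psi \equiv 1$ on $\mathrm{supp}\,\omega(t)$ in order to convert the Dirichlet problem into a Fourier-side elliptic estimate modulo controllable boundary error. Using $\Delta\psi = \omega$ and the product rule,
\begin{equation*}
\Delta(\Psi\psi) = \Psi\omega + 2\Psi'\,\partial_y\psi + \Psi''\,\psi = \omega + E, \qquad E := 2\Psi'\,\partial_y\psi + \Psi''\,\psi,
\end{equation*}
where the identity $\Psi\omega=\omega$ uses $\mathrm{supp}\,\omega(t)\subseteq\mathbb{T}\times[\vartheta/4,1-\vartheta/4]$ and $\Psi|_{[\vartheta/4,1-\vartheta/4]}\equiv 1$. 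Since $\Psi\psi$ vanishes in a neighborhood of $y=0,1$, its zero extension is smooth, and taking a full Fourier transform in $(x,y)\in\mathbb{T}\times\R$ yields $-(k^2+\xi^2)\widetilde{(\Psi\psi)}(t,k,\xi)=\widetilde{\omega}(t,k,\xi)+\widetilde{E}(t,k,\xi)$. The claim \eqref{eq:cutoffPhiMain} therefore reduces to proving the bound $\|W(k,\xi)\widetilde{E}(t,k,\xi)\|_{L^2_{k,\xi}}\lesssim \|W(k,\xi)\widetilde{\omega}(t,k,\xi)\|_{L^2_{k,\xi}}$.

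The key point is that $E$ is supported in the two strips $[\vartheta/8,\vartheta/4]\cup[1-\vartheta/4,1-\vartheta/8]$, which lie strictly outside $\mathrm{supp}\,\omega(t)$. On these strips $\psi^\ast(t,k,\cdot)$ solves the homogeneous ODE $\partial_y^2\psi^\ast=k^2\psi^\ast$, and the formula \eqref{formulapsikMain}--\eqref{eq:GreenFunction} (or Section 6 analogues with $c_0=0$) gives explicitly
\begin{equation*}
\psi^\ast(t,k,y)=\alpha_k(t)\,\tfrac{\sinh(ky)}{k}\ \text{ on }[0,\vartheta/2],\qquad \psi^\ast(t,k,y)=\beta_k(t)\,\tfrac{\sinh(k(1-y))}{k}\ \text{ on }[1-\vartheta/2,1],
\end{equation*}
with $\alpha_k(t)=-\frac{1}{\sinh k}\int_0^1 \sinh(k(1-z))\omega^\ast(t,k,z)\,dz$ (and analogously $\beta_k$). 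Using $z\in[\vartheta/2,1-\vartheta/2]$ on the support of $\omega$, the elementary estimate $|\sinh(k(1-z))/\sinh k|\leq e^{-|k|\vartheta/2}$ (with the $k=0$ case interpreted by continuity) gives $|\alpha_k(t)|+|\beta_k(t)|\lesssim e^{-|k|\vartheta/2}\|\omega^\ast(t,k,\cdot)\|_{L^2_y}$. Writing $\widetilde{E}(t,k,\xi)=\alpha_k(t)\widehat{\Phi^0_k}(\xi)+\beta_k(t)\widehat{\Phi^1_k}(\xi)$, where $\Phi^0_k(y)=2\Psi'(y)\cosh(ky)+\Psi''(y)\sinh(ky)/k$ (and $\Phi^1_k$ its analogue near $y=1$), a standard Paley--Wiener/contour-shift argument using $|\widetilde\Psi(\xi)|\lesssim e^{-\langle\xi\rangle^{7/8}}$ yields
\begin{equation*}
|\widehat{\Phi^j_k}(\xi)|\lesssim e^{|k|\vartheta/4}\,e^{-c\langle\xi\rangle^{7/8}/2},
\end{equation*}
because $\widehat{\chi e^{\pm k\cdot}}(\xi)=\widehat{\chi}(\xi\pm ik)$ for $\chi\in\{\Psi',\Psi''\}$, and repeated integration by parts on $\partial_y^m(\chi e^{\pm k\cdot})$ together with compactness of the support in $[\vartheta/8,\vartheta/4]$ gives the stated decay.

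Combining, $|\widetilde{E}(t,k,\xi)|\lesssim M(k,\xi)\|\omega^\ast(t,k,\cdot)\|_{L^2_y}$ with $M(k,\xi):=e^{-|k|\vartheta/4}e^{-c\langle\xi\rangle^{7/8}/2}$. Since $g(r)\leq r^s$ with $s\in[1/4,3/4]$ and $\lambda,p,d$ are bounded, the weight satisfies $W(k,\xi)\leq e^{C\langle k,\xi\rangle^{3/4}}\langle k,\xi\rangle^C$; moreover $W(k,\xi)\geq W(k,0)$ because $g$ is increasing and $\langle k,\xi\rangle\geq\langle k\rangle$. Since $7/8>3/4$, the quantity $W(k,\xi)M(k,\xi)/W(k,0)$ is uniformly in $L^2_\xi$ (by splitting $|\xi|\leq|k|$ and $|\xi|\geq|k|$, and noting that in either case either $W(k,\xi)/W(k,0)\lesssim 1$ or the excess growth in $W$ is dominated by $e^{-c\langle\xi\rangle^{7/8}/4}$). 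Using Plancherel in $y$ to replace $\|\omega^\ast(t,k,\cdot)\|_{L^2_y}$ by $\|\widetilde\omega(t,k,\cdot)\|_{L^2_\xi}\leq W(k,0)^{-1}\|W\widetilde\omega(t,k,\cdot)\|_{L^2_\xi}$, we conclude
\begin{equation*}
\int_\R W^2(k,\xi)|\widetilde{E}(t,k,\xi)|^2\,d\xi\lesssim \|WM(k,\cdot)/W(k,0)\|_{L^2_\xi}^2\,\|W\widetilde\omega(t,k,\cdot)\|_{L^2_\xi}^2\lesssim \|W\widetilde\omega(t,k,\cdot)\|_{L^2_\xi}^2,
\end{equation*}
and summing in $k$ completes the proof. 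The main technical obstacle is the Paley--Wiener step of the second paragraph: one must preserve the Gevrey decay in $\mathrm{Re}\,\zeta$ after shifting the contour by $\pm ik$, which is precisely the reason the auxiliary exponent $7/8$ in \eqref{rec0}/\eqref{ini9} is chosen strictly greater than the maximal Gevrey exponent $s=3/4$ appearing in $g$.
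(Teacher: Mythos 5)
Your proof follows essentially the same route as the one in the appendix (Lemma \ref{ineq6}, equations \eqref{ini19}--\eqref{ini18.5}): the product-rule decomposition $\Delta(\Psi\psi)=\Psi\omega+E$ (your simplification $\Psi\omega=\omega$ using the support hypothesis is correct and slightly cleaner than the paper's convolution bound), the homogeneous-ODE representation of $\psi^\ast$ on the boundary strips, the Green-function expression for the boundary coefficients $b_k^0=\alpha_k$, $b_k^1=\beta_k$, and the Gevrey decay of $\Psi$ at exponent $7/8>3/4$ to absorb the weight $W$. One small slip worth noting: the claimed bound $|\widehat{\Phi^j_k}(\xi)|\lesssim e^{|k|\vartheta/4}e^{-c\langle\xi\rangle^{7/8}/2}$ is not quite correct uniformly for $|\xi|\lesssim|k|$, because the factors $|k|^m$ arising from $\partial_y^m(\cosh ky)$ in the integration-by-parts step cost an additional $e^{c|k|^{7/8}}$; this is harmless since it is still overwhelmed by the $e^{-|k|\vartheta/2}$ gain from $\alpha_k,\beta_k$, so the argument goes through after replacing $e^{|k|\vartheta/4}$ by, say, $e^{|k|\vartheta/3}$.
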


\begin{proof}
We only need to use the equation $\Delta\psi=\omega$, $\psi(x,0)=\psi(x,1)=0$, see \eqref{eu2}. For simplicity of notation, we also drop the parameter $t$.

{\bf{Step 1.}} As in subsection \ref{har10.6}, we take the partial Fourier transform along $\T$, thus
\begin{equation}\label{ini16}
\partial_y^2\psi^\ast(k,y)-k^2\psi^\ast(k,y)=\omega^\ast(k,y),\qquad y\in[0,1],\,k\in\Z.
\end{equation}
In particular, since $\omega^\ast(k,y)=0$ if $y\in[0,\vartheta/2]$ or $y\in[1-\vartheta/2,1]$, we have
\begin{equation}\label{ini16.1}
\begin{split}
\psi^\ast(k,y)=
\begin{cases}
b_k^0\sinh(ky)/k&\qquad\text{ if }k\neq 0\text{ and }y\in[0,\vartheta/2],\\
b_k^1\sinh(k(y-1))/k&\qquad\text{ if }k\neq 0\text{ and }y\in[1-\vartheta/2,1],\\
b_k^0y&\qquad\text{ if }k=0\text{ and }y\in[0,\vartheta/2],\\
b_k^1(y-1)&\qquad\text{ if }k=0\text{ and }y\in[1-\vartheta/2,1],
\end{cases}
\end{split}
\end{equation}
where $b_k^0:=(\partial_y\psi^\ast)(k,0)$ and $b_k^1:=(\partial_y\psi^\ast)(k,1)$.

We calculate the coefficients $b_k^0$ and $b_k^1$ using Green functions, as in subsection \ref{har10.6}. Indeed, it follows from \eqref{ini16} that 
\begin{equation}\label{ini15}
\psi^\ast(k,y)=-\int_0^1\omega^\ast(k,z)G_k(y,z)\,dz,
\end{equation}
where
\begin{equation}\label{ini15.1}
G_k(y,z)=\frac{1}{k\sinh k}
\begin{cases}
\sinh(k(1-z))\sinh(ky)&\qquad\text{ if }y\leq z,\\
\sinh(kz)\sinh(k(1-y))&\qquad\text{ if }y\geq z,
\end{cases}
\end{equation}
if $k\neq 0$, and
\begin{equation}\label{ini15.2}
G_0(y,z)=
\begin{cases}
(1-z)y&\qquad\text{ if }y\leq z,\\
z(1-y)&\qquad\text{ if }y\geq z.
\end{cases}
\end{equation}
In particular, for $\iota\in\{0,1\}$,
\begin{equation}\label{ini15.3}
b_k^\iota=-\int_0^1\omega^\ast(k,z)G_k^\iota(z)\,dz,
\end{equation}
where
\begin{equation}\label{ini15.4}
\begin{split}
&G_k^0(z)=\frac{\sinh(k(1-z))}{\sinh k}\,\,\text{ if }\,\,k\neq 0,\qquad G_0^0(t)=1-z,\\
&G_k^1(z)=\frac{-\sinh(k z)}{\sinh k}\,\,\text{ if }\,\,k\neq 0,\quad\qquad\,\,\,\, G_0^1(t)=-z.
\end{split}
\end{equation}

{\bf{Step 2.}} We return now to the proof of \eqref{eq:cutoffPhiMain}. We start from the equation
\begin{equation}\label{ini19}
\Delta(\Psi\psi)(x,y)=\Psi(y)\omega(x,y)+2\Psi'(y)\partial_y\psi(x,y)+\Psi''(y)\psi(x,y).
\end{equation}
As in Lemma \ref{dar20}, see \eqref{dar22}, using \eqref{ini15.3} and the support restriction on $\omega$, it follows that
\begin{equation}\label{ini18}
|b_k^0|^2+|b_k^1|^2\lesssim \int_{\mathbb{R}}|\widetilde{\omega}(k,\xi)|^2e^{-|k|\vartheta/3}e^{-|\xi|^{7/8}}\,d\xi,
\end{equation}
for any $k\in\mathbb{Z}$. Let $\phi(x,y):=2\Psi'(y)\partial_y\psi(x,y)+\Psi''(y)\psi(x,y)$ denote the last two terms in \eqref{ini19}. It follows from \eqref{ini16.1} and \eqref{ini18} that
\begin{equation*}
|\widetilde{\phi}(k,\xi)|\lesssim \|\widetilde{\omega}\|_{L^2_{k,\xi}}e^{-\vartheta|k|/20}e^{-|\xi|^{5/6}},
\end{equation*}
compare with \eqref{dar34}. Moreover, since the decay of $\widetilde{\Psi}$ is faster than the variation of the weights $W$ (compare with the definitions \eqref{ini5}), we have
\begin{equation*}
\big\|W(k,\xi)\widetilde{\big(\Psi\,\omega\big)}(k,\xi)\big\|_{L^2_{k,\xi}}\lesssim\big\|W(k,\xi)\,\widetilde{\omega}(k,\xi)\big\|_{L^2_{k,\xi}}.
\end{equation*}
The last two inequalities and the identity \eqref{ini19} show that
\begin{equation}\label{ini18.5}
\big\|(k^2+\xi^2) W(k,\xi)\widetilde{\big(\Psi\,\psi\big)}(k,\xi)\big\|_{L^2_{k,\xi}}\lesssim\big\|W(k,\xi)\,\widetilde{\omega}(k,\xi)\big\|_{L^2_{k,\xi}}.
\end{equation}

Finally, in the case $k=0$ we can use the formulas for $\psi^\ast(0,y)$ in \eqref{ini16.1} and the bounds \eqref{ini18}. It follows that $|\widetilde{\Psi\psi}(0,\xi)|\lesssim \|\widetilde{\omega}\|_{L^2_{k,\xi}}e^{-|\xi|^{5/6}}$. The desired conclusion \eqref{eq:cutoffPhiMain} follows.
\end{proof}


\begin{thebibliography}{99}

\bibitem{Arnold}
V. Arnold and B. Khesin, \emph{Topological Methods in Hydrodynamics}, Springer-Verlag,
New York, 1998.

\bibitem{BedMas} J. Bedrossian, N. Masmoudi \emph{Inviscid damping and the asymptotic stability of planar shear flows in the 2D Euler equations},  Publ. Math. Inst. Hautes Etudes Sci. {\bf{122}} (2015), 195-300.

\bibitem{Bed2}J. Bedrossian, M. Coti Zelati, and V. Vicol, \emph{Vortex axisymmetrization, inviscid damping, and vorticity depletion
in the linearized 2D Euler equations}, arXiv 1711.03668.



\bibitem{Bed4} J. Bedrossian, P. Germain and N. Masmoudi, \emph{On the stability threshold for the 3D Couette flow in Sobolev
regularity}, Ann. of Math. {\bf{185}} (2017), 541-608.

\bibitem{Bed5} J. Bedrossian, P. Germain, and N. Masmoudi, \emph{Stability of the Couette flow at high Reynolds number in
2D and 3D}, arXiv:1712.02855.

\bibitem{Bed6} J. Bedrossian, N. Masmoudi, and V. Vicol, \emph{Enhanced dissipation and inviscid damping in the inviscid
limit of the Navier-Stokes equations near the two dimensional Couette flow}, Arch. Ration. Mech. Anal. {\bf{219}} (2016), 1087-1159.

\bibitem{Bed7} J. Bedrossian, F. Wang, and V. Vicol, \emph{The Sobolev stability threshold for 2D shear flows near Couette},
arXiv:1604.01831.



\bibitem{Zillinger3} M. Coti Zelati and C. Zillinger, \emph{On degenerate circular and shear flows: the point vortex and power law circular flows}, arXiv:1801.07371.

\bibitem{Deng} Y. Deng and N. Masmoudi, \emph{Long time instability of the Couette flow in low Gevrey spaces}, preprint (2018), arXiv 1803.01246.

\bibitem{Faddeev}L. Faddeev, \emph{On the theory of the stability of plane-parallel flows of an ideal fluid}, Zapiski Nauchnykh Seminarov Leningradskogo Otdeleniya Matematicheskogo Instituta im. V. A. Steklova Akademii Nauk SSSR, Vol. 21, pp. 164-172, 1971

\bibitem{Foias} C. Foias and R. Temam, \emph{Gevrey class regularity for the solutions of the Navier-Stokes equations}, J. Funct. Anal. {\bf{87}} (1989), 359-369.

\bibitem{Grenier}E. Grenier,  T. Nguyen, F. Rousset, and A. Soffer, \emph{Linear inviscid damping and enhanced viscous dissipation of shear flows by using the conjugate operator method},  arXiv:1804.08291.

\bibitem{KiselevSverak}A. Kiselev and V. Sverak, \emph{Small scale creation for solutions of the incompressible two-dimensional Euler equation}, Ann. of Math. (2) {\bf{180}} (2014), 1205-1220.

\bibitem{Vicol} I. Kukavica and V. Vicol, \emph{On the analyticity and Gevrey-class regularity up to the boundary for the Euler equations}, Nonlinearity {\bf{24}} (2011), 765-796.
 
\bibitem{Vicol2} I. Kukavica and V. Vicol, \emph{On the radius of analyticity of solutions to the three-dimensional Euler equations} Proc. Amer. Math. Soc. {\bf{137}} (2009), 669-677.


\bibitem{Landau} L. Landau,  \emph{On the vibration of the electronic plasma}, J. Phys. USSR 10 (1946), 25. English
translation in JETP 16, 574. Reproduced in Collected papers of L.D. Landau, edited and with
an introduction by D. ter Haar, Pergamon Press, 1965, pp. 445-460; and in Men of Physics:
L.D. Landau, Vol. 2, Pergamon Press, D. ter Haar, ed. (1965).

\bibitem{Levermore} C. Levermore and M. Oliver,  \emph{Analyticity of solutions for a generalized Euler
equation}, J. Differential Equations {\bf{133}} (1997), 321-339.

\bibitem{ZhiWu} Z. Lin and C. Zeng, \emph{Inviscid dynamical structures near Couette flow}, Arch. Ration. Mech. Anal. {\bf{200}} (2011), 1075-1097.

\bibitem{Villani} C. Mouhot and C. Villani, \emph{On Landau damping}, Acta Math. {\bf{207}} (2011), 29-201.

\bibitem{Orr}W. Orr, \emph{The stability or instability of steady motions of a perfect liquid and of a viscous liquid}, Part I: a perfect liquid, Proc. R. Ir. Acad., A Math. Phys. Sci. {\bf{27}} (1907), 9-68


\bibitem{Stepin}S. Stepin, \emph{Nonself-adjoint Friedrichs Model in Hydrodynamic Stability}, Functional analysis and its applications, Vol. 29, No. 2, 1995, Translated from Funktsionaltnyi Analiz i Ego Prilozheniya, Vol. 29, No. 2, pp. 22-35, April- June, 1995. Original article submitted August 3, 1994.

\bibitem{SverakNotes} V. Sverak,  lecture notes, available at \emph{http://www-users.math.umn.edu/~sverak/course-notes2011.pdf}

\bibitem{dongyi}D. Wei, Z. Zhang, and W. Zhao, \emph{Linear inviscid damping for a class of monotone shear flow in Sobolev spaces}, Comm. Pure Appl. Math. {\bf{71}} (2018), 617-687


\bibitem{Dongyi2} D. Wei, Z. Zhang, and W. Zhao, \emph{Linear inviscid damping and vorticity depletion for shear flows},
arXiv:1704.00428.

\bibitem{Dongyi3} D. Wei, Z. Zhang, and W. Zhao, \emph{Linear inviscid damping and enhanced dissipation for the Kolmogorov
flow}, arXiv:1711.01822.

\bibitem{Dongyi4} D. Wei and Z. Zhang, \emph{Transition threshold for the 3D Couette flow in Sobolev space}, arXiv:1803.01359

\bibitem{Yamanaka} T. Yamanaka, \emph{A new higher order chain rule and Gevrey class}, Ann. Global Anal. Geom. {\bf{7}} (1989), 179-203.

\bibitem{Yudovich1}  V. Yudovich, \emph{Non-stationary flows of an ideal incompressible fluid (Russian)}, Z. Vycisl. Mat. i Mat. Fiz. {\bf{3}} (1963), 1032-1066.

\bibitem{Yudovich2} V. Yudovich, \emph{Uniqueness theorem for the basic nonstationary problem in the dynamics of an ideal incompressible fluid}, Math. Res. Lett. {\bf{2}} (1995), 27-38.




\bibitem{Zillinger2} C. Zillinger, \emph{Linear inviscid damping for monotone shear flows in a finite periodic channel, boundary effects, blow-up and critical Sobolev regularity},
Arch. Ration. Mech. Anal. {\bf{221}} (2016), 1449-1509.


\bibitem{Zillinger1} C. Zillinger, \emph{Linear inviscid damping for monotone shear flows},
Trans. Amer. Math. Soc. {\bf{369}} (2017), 8799-8855.



\end{thebibliography}
\end{document}